\theoremstyle{definition}
\newtheorem{thm}{Theorem}[section]
\newtheorem{dfn}[thm]{Definition}
\newtheorem{cor}[thm]{Corollary}
\newtheorem{prop}[thm]{Proposition}
\newtheorem{lem}[thm]{Lemma}
\newtheorem{rem}[thm]{Remark}
\newtheorem*{oftp}{Organization of this paper}
\newtheorem*{ack}{Acknowledgment}
\newtheorem*{claim}{Claim}
\newtheorem*{nota}{Notations}
\numberwithin{thm}{section}
\newcommand{\Zpn}{\mathbb{Z}_{>0}}
\newcommand{\Znn}{\mathbb{Z}_{\geq 0}}
\newcommand{\Z}{\mathbb{Z}}
\newcommand{\Zp}{{\mathbb{Z}}_p}
\newcommand{\Zpt}{{\mathbb{Z}}_{p}^{\times}}
\newcommand{\Q}{\mathbb{Q}}
\newcommand{\Qp}{{\mathbb{Q}}_p}
\newcommand{\Qpt}{{\mathbb{Q}}_{p}^{\times}}
\newcommand{\Qpbar}{\overline{\mathbb{Q}}_p}
\newcommand{\Zl}{{\mathbb{Z}}_{\ell}}
\newcommand{\Ql}{{\mathbb{Q}}_{\ell}}
\renewcommand{\O}{\mathrm{O}}
\newcommand{\Zhat}{\widehat{\mathbb{Z}}}
\newcommand{\Qbar}{\overline{\mathbb{Q}}}
\newcommand{\R}{\mathbb{R}}
\newcommand{\C}{\mathbb{C}}
\newcommand{\Fp}{\mathbb{F}_p}
\newcommand{\Fpbar}{\overline{\mathbb{F}}_p}
\newcommand{\A}{\mathbb{A}}
\newcommand{\Sbar}{\overline{S}}
\newcommand{\D}{\mathbb{D}}
\newcommand{\G}{\mathbb{G}}
\newcommand{\bS}{\mathbb{S}}
\newcommand{\bG}{\mathbf{G}}
\newcommand{\M}{\mathcal{M}}
\newcommand{\sS}{\mathscr{S}}
\newcommand{\U}{\mathrm{U}}
\newcommand{\bV}{\mathbf{V}}
\newcommand{\X}{\mathbb{X}}
\newcommand{\bX}{\mathbf{X}}
\newcommand{\Fbr}{\breve{F}}
\newcommand{\bLambda}{\mathbf{\Lambda}}
\newcommand{\abar}{\overline{a}}
\newcommand{\sbar}{\overline{s}}
\renewcommand{\hbar}{\overline{h}}
\newcommand{\B}{\mathbb{B}}
\renewcommand{\L}{\mathbf{L}}
\newcommand{\E}{\mathbf{E}}
\newcommand{\F}{\mathbf{F}}
\newcommand{\bT}{\mathbf{T}}
\newcommand{\e}{\mathbf{e}}
\newcommand{\f}{\mathbf{f}}
\newcommand{\cB}{\mathcal{B}}
\renewcommand{\P}{\mathbb{P}}
\newcommand{\cA}{\mathcal{A}}
\newcommand{\cE}{\mathcal{E}}
\newcommand{\cL}{\mathcal{L}}
\newcommand{\cZ}{\mathcal{Z}}
\DeclareMathOperator{\Ker}{Ker}
\DeclareMathOperator{\im}{Im}
\DeclareMathOperator{\Hom}{Hom}
\DeclareMathOperator{\End}{End}
\DeclareMathOperator{\id}{id}
\DeclareMathOperator{\Lie}{Lie}
\DeclareMathOperator{\Frac}{Frac}
\DeclareMathOperator{\tr}{Tr}
\DeclareMathOperator{\trd}{Trd}
\DeclareMathOperator{\nrd}{Nrd}
\DeclareMathOperator{\ord}{ord}
\DeclareMathOperator{\GL}{GL}
\DeclareMathOperator{\GSp}{GSp}
\DeclareMathOperator{\GU}{GU}
\DeclareMathOperator{\GSpin}{GSpin}
\DeclareMathOperator{\SU}{SU}
\DeclareMathOperator{\Spin}{Spin}
\DeclareMathOperator{\SO}{SO}
\DeclareMathOperator{\spin}{spin}
\DeclareMathOperator{\N}{N}
\DeclareMathOperator{\nilp}{Nilp}
\DeclareMathOperator{\spec}{Spec}
\DeclareMathOperator{\spf}{Spf}
\DeclareMathOperator{\naive}{naive}
\DeclareMathOperator{\loc}{loc}
\DeclareMathOperator{\sml}{sim}
\DeclareMathOperator{\red}{red}
\DeclareMathOperator{\length}{length}
\DeclareMathOperator{\OGr}{OGr}
\DeclareMathOperator{\inv}{inv}
\DeclareMathOperator{\Res}{Res}
\DeclareMathOperator{\si}{ss}
\DeclareMathOperator{\Proj}{Proj}
\DeclareMathOperator{\diag}{diag}
\DeclareMathOperator{\rk}{rank}
\DeclareMathOperator{\VL}{VL}
\DeclareMathOperator{\BT}{BT}
\DeclareMathOperator{\Irr}{Irr}
\DeclareMathOperator{\Sh}{Sh}
\DeclareMathOperator{\ab}{ab}
\DeclareMathOperator{\der}{der}
\DeclareMathOperator{\bs}{basic}
\DeclareMathOperator{\stab}{Stab}
\DeclareMathOperator{\DiL}{DiL}
\DeclareMathOperator{\Lat}{Lat}
\DeclareMathOperator{\SpL}{SpL}
\DeclareMathOperator{\Ht}{height}
\title[Supersingular locus of $\mathrm{GU}(2,2)$-Shimura variety]{On the supersingular locus of the Shimura variety for $\GU(2,2)$ over a ramified prime}
\author[Y.~Oki]{Yasuhiro Oki}
\address{Graduate School of Mathematical Sciences, 
the University of Tokyo, 3-8-1 Komaba, Meguro-ku, Tokyo 153-8914, Japan.}
\email{oki@ms.u-tokyo.ac.jp}
\subjclass[2010]{11G18, 14G35}
\keywords{Unitary Shimura varieties, Rapoport--Zink spaces, supersingular loci}
\begin{document}
\maketitle

\begin{abstract}
We study the structure of the supersingular locus of the Rapoport--Zink integral model of the Shimura variety for $\mathrm{GU}(2,2)$ over a ramified odd prime with the special maximal parahoric level. We prove that the supersingular locus equals the disjoint union of two basic loci, one of which is contained in the flat locus, and the other is not. We also describe explicitly the structure of the basic loci. More precisely, the former one is purely $2$-dimensional, and each irreducible component is birational to the Fermat surface. On the other hand, the latter one is purely $1$-dimensional, and each irreducible component is birational to the projective line. 
\end{abstract}

\tableofcontents

\section{Introduction}\label{intr}

This paper contributes to the study of bad reductions of integral models of unitary Shimura varieties. More precisely, we study the geometric structure of an important part, which is called the supersingular locus, of the geometric special fibers of the Rapoport--Zink integral models of Shimura varieties for some unitary groups over certain ramified primes. 

Before describing our results, we recall notions of integral models and supersingular loci of Shimura varieties for unitary similitude groups. Let $L=\Q(\sqrt{\Delta})$ an imaginary quadratic field, where $\Delta \in \Z_{<0}$ is square-free. Fix an isomorphism $L\otimes_{\Q}\R\cong \C$, and let $U$ be an $L/\Q$-hermitian space of signature $(r,s)$. Put $\bG:=\GU(U)$, the unitary similitude group of $U$, and denote by $\bX$ the Grassmanian of negative definite $s$-dimensional subspaces of $U\otimes_{\Q}\R$. Then the pair $(\bG,\bX)$ is a Shimura datum. Hence, for a sufficiently small compact open subgroup $K$ of $\bG(\A_f)$ where $\A_f^p$ is the finite ad{\`e}le ring of $\Q$, we obtain the (canonical model of) Shimura variety $\Sh_{K}(\bG,\bX)$ over $E_{r,s}$, where 
\begin{equation*}
E_{r,s}:=
\begin{cases}
\Q &\text{if }r=s,\\
L&\text{otherwise}. 
\end{cases}
\end{equation*}
It is known that $\Sh_{K}(\bG,\bX)$ is of dimension $rs$. 

Next, we consider an integral model of $\Sh_{K}(\bG,\bX)$. Let $\nu$ be a place in $E_{r,s}$, and $p$ a prime which lies below $\nu$. We assume $p\neq 2$ if $p$ does not split in $L$. Moreover, we assume that $K=K^pK_p$, where $K^p$ is a sufficiently small compact open subgroup of $\bG(\A_f^p)$, and $K_p$ is a Bruhat--Tits subgroup, that is, a subgroup of $\bG(\Qp)$ which is a stabilizer of a self-dual lattice chain in $U\otimes_{\Q}\Qp$ in the sense of \cite[Definition 3.13]{Rapoport1996a}. Here $\A_{f}^{p}$ is the finite ad{\`e}le ring of $\Q$ without $p$-component. Then, Rapoport and Zink constructed in \cite{Rapoport1996a} a scheme $\sS_{K,U}$ over the ring of integers $O_{E_{r,s,\nu}}$ of $E_{r,s,\nu}$, which is defined as the moduli space of (families of) abelian varieties and additional structures. Here $E_{r,s,\nu}$ is the completion of $E$ at $\nu$. It is known that the generic fiber of $\sS_{K,U}$ is isomorphic to $\Sh_{K}(\bG,\bX)\otimes_{E_{r,s}}E_{r,s,\nu}$. We define the supersingular locus $\sS_{K,U}^{\si}$ of $\sS_{K,U}$ as the locus of the geometric special fiber of $\sS_{K,U}$ where the corresponding abelian varieties are supersingular. 

It is known that studying the structure of supersingular loci is important for number theory. For example, it can be applied to study some relations between intersection multiplicities of special divisors of Shimura varieties and the first derivatives of Eisenstein series. If $\bG=\GU(1,n-1)$, then it is conjectured by \cite[Conjecture 14.5.1]{Kudla2014}, and recently solved in \cite{Li2019}. 

We recall some results on the descriptions of $\sS_{K,U}^{\si}$. First, suppose that $p$ splits in $L$. It is known that $\sS_{K,U}^{\si}$ is non-empty only if $r=s$. If $(r,s)=(1,1)$ and $K_p$ is hyperspecial, then $\sS_{K,U}^{\si}$ is a finite set of $\Fpbar$-valued points. On the other hand, Fox \cite{Fox2018} (for $p\neq 2$) and Wang \cite{Wang2021} (for any $p$) gave explicit descriptions of $\sS_{K,U}^{\si}$ for $(r,s)=(2,2)$ and $K_p$ is hyperspecial. In this case, any irreducible component is birational to $\P^1$. 

Second, consider the case that $p$ inerts in $L$. For $(r,s)=(1,n-1)$ and $K_p$ is hyperspecial, the structure of $\sS_{K,U}^{\si}$ is given by Vollaard \cite{Vollaard2010} (for $n=3$) and by Vollaard and Wedhorn \cite{Vollaard2011} (for general $n$). Moreover, Cho \cite{Cho2018} analyzed in the case $(r,s)=(1,n-1)$ and $K_p$ is a parahoric subgroup which is the stabilizer of a single lattice in $U\otimes_{\Q}\Qp$. On the other hand, Howard and Pappas \cite{Howard2014} described concretely a structure of $\sS_{K,U}^{\si}$ when $(r,s)=(2,2)$ and $K$ is hyperspecial. In this case, every irreducible component is birational to the Fermat surface. 

Third, we assume that $p$ ramifies in $L$. In this case, $\sS_{K,U}$ may not be flat over $O_{E_{r,s,\nu}}$. Let $\sS_{K,V}^{\loc}$ be the scheme-theoretic closure of the generic fiber of $\sS_{K,V}$. When $(r,s)=(1,n-1)$ and $K_p$ is the stabilizer of a self-dual lattice, then Rapoport, Terstiege and Wilson \cite{Rapoport2014a} considered a description of the structure of $\sS_{K,U}^{\si}\cap \sS_{K,U}^{\loc}$. Note that $K$ is not parahoric if $n$ is even. On the other hand, Wu \cite{Wu2016} studied $\sS_{K,U}^{\si}\cap \sS_{K,U}^{\loc}$ when $(r,s)=(1,n-1)$ and $K_p$ is given by the stabilizer of a single lattice $\Lambda$ in $V\otimes_{\Q}\Qp$ satisfying $\Lambda^{\vee}=\sqrt{\Delta}^{-1}\Lambda$ if $n$ is even, or $\Lambda \subset \Lambda^{\vee}\subset \sqrt{\Delta}^{-1}\Lambda$ and $\length(\Lambda^{\vee}/\Lambda)=n-1$ if $n$ is odd. Here $\Lambda^{\vee}$ is the dual lattice of $\Lambda$ with respect to the hermitian form on $V\otimes_{\Q}\Qp$. In this case, the special fiber of $\sS_{K,V}^{\loc}$ is known to be smooth (which is called ``exotic good reduction''). 

In this paper, we consider the case where $p$ ramifies in $L$ and $(r,s)=(2,2)$. For it, we also consider the similar problem on the corresponding Rapoport--Zink spaces. We explain our results in the sequel. 

\subsection{Main results}\label{mtim}

Keep the notations as above (in particular, $p$ is an odd prime). We assume the following: 
\begin{itemize}
\item $(r,s)=(2,2)$,
\item $p$ is ramified in $L$ and $\nu=p$,
\item $\bG \otimes_{\Q}\Qp$ is quasi-split over $\Qp$, 
\item $K_p$ is a special maximal parahoric subgroup of $\bG(\Qp)$. 
\end{itemize}
Then the reflex field of the Shimura datum $(\bG,\bX)$ is $E_{2,2}=\Q$, and the integral model $\sS_{K,U}$ of $\Sh_{K}(\bG,\bX)\otimes_{\Q}\Qp$ is defined over $\Zp$. Moreover, the $\Zp$-scheme is defined as a moduli space of quadruples $(A,\iota,\lambda,\overline{\eta}^p)$, where
\begin{itemize}
\item $A$ is an abelian $4$-fold,
\item $\iota \colon O\rightarrow \End(A)$ is a ring homomorphism,
\item $\lambda$ is a polarization of $A$ such that 
\begin{equation*}
\ord_p(\#\ker(\lambda))=\ord_p(\#\ker(\iota(\sqrt{\Delta}))),
\end{equation*}
\item $\overline{\eta}^p$ is a $K^p$-level structure on $A$. 
\end{itemize}
Here $O$ is an order of $L$ which is maximal at $p$. We will give a more precise definition of $\sS_{K,U}$ in Definition \ref{rzit}. 

\begin{thm}\label{grim}(Theorem \ref{imdc} (i)--(iii))
\emph{
\begin{enumerate}
\item There is a decomposition into open and closed subschemes
\begin{equation*}
\sS_{K,U}=\sS_{K,U}^{\loc}\sqcup (\sS_{K,U}\setminus \sS_{K,U}^{\loc}). 
\end{equation*}
\item The scheme $\sS_{K,U}^{\loc}$ is flat over $\Zp$ and regular of dimension $5$. 
\item The scheme $\sS_{K,U}\setminus \sS_{K,U}^{\loc}$ is smooth over $\Fp$ and $3$-dimensional.
\end{enumerate}}
\end{thm}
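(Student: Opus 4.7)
The plan is to reduce each statement to the corresponding property of the Rapoport--Zink local model $\sM^{\naive}$ and then to analyze it explicitly. By the local model diagram of \cite{Rapoport1996a}, an \'etale cover of $\sS_{K,U}$ admits a smooth surjection onto $\sM^{\naive}$, and each property at issue (being clopen, flat, regular, or smooth) descends along a smooth surjection, so it will be enough to prove the three statements for $\sM^{\naive}$. To set the stage, let $\Lambda$ be the $\pi$-modular lattice in $U\otimes_{\Q}\Qp$ whose stabilizer is $K_p$, with $\pi=\sqrt{\Delta}$. Then $\sM^{\naive}$ embeds as a closed subscheme of a suitable Grassmannian attached to $\Lambda$, cut out by the Kottwitz determinant condition, the $O_L$-stability of the filtration, and the duality condition coming from $\pi$-modularity. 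The generic fiber is the expected smooth, $4$-dimensional partial flag variety for $\GU(2,2)$.

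For (i), the strategy is to produce a locally constant invariant on $\sM^{\naive}$ that separates the flat locus from its complement. Since $\pi$ is nilpotent on $\Lambda\otimes_{\Zp}\Fp$, the Hermitian form degenerates to a nondegenerate symmetric form on a $4$-dimensional $\Fp$-vector space, and the special-fiber filtrations split into two families reminiscent of the two rulings of a smooth quadric in $\P^{3}$. I would show by direct manipulation of the defining equations that exactly one of the two families deforms to $\Zp$, and that the family to which an $\Fpbar$-point belongs is recorded by a global idempotent in the structure sheaf of $\sM^{\naive}$; this yields the required clopen decomposition.

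For (ii), I would either compare $\sM^{\loc}$ with the wedge/spin local model of Pappas--Rapoport, whose flatness and regularity are known, or perform a direct chart computation showing that the equations cutting out $\sM^{\loc}$ inside the Grassmannian form a complete intersection with a Jacobian of maximal rank. This gives regularity of dimension $5$, and flatness then follows because every irreducible component of $\sM^{\loc}$ dominates $\spec\Zp$ by construction. For (iii), I would identify the complement, chart by chart, with a smooth $3$-dimensional variety over $\Fp$ (of Lagrangian-Grassmannian type for the symplectic structure induced by $\pi$ on a $4$-dimensional subquotient), from which smoothness and the dimension count read off directly.

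The hardest step will be (i), namely showing that the two families of special-fiber filtrations are truly \emph{disjoint} as subsets of $\sM^{\naive}$ rather than merely distinct. This clopen-ness is the $\GU(2,2)$-specific feature not shared by the odd-rank unitary cases of \cite{Rapoport2014a} and \cite{Wu2016}, and it requires a careful simultaneous analysis of the two families of isotropic subspaces together with all lifts of the filtration to $\Zp$. Once (i) is in place, parts (ii) and (iii) reduce to standard chart-by-chart computations.
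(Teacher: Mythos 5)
Your plan is essentially the one the paper follows: reduce to the local model $M^{\naive}_{\mu}$ via the local model diagram (properties descend along the smooth surjection from the $\mathcal{G}$-torsor), obtain the clopen decomposition from the ``two rulings'' picture, and finish with chart-by-chart computations showing regularity and dimension $5$ for the flat part and smoothness and dimension $3$ for the complement. The only point I would flag is that you have the difficulty inverted: you call step (i) the hardest and anticipate a delicate simultaneous analysis of lifts, but the paper gets the clopen decomposition essentially for free. The local model sits inside the even orthogonal Grassmannian $\OGr(\bLambda_0)$, which already has exactly two connected components $\OGr^{\pm}(\bLambda_0)$, so the global idempotent you seek is simply pulled back from the ambient Grassmannian. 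Connectedness of the generic fiber (cited from Pappas--Rapoport) places it entirely in $\OGr^{+}$, and the identification $M^{\naive}_{\mu,\Fp}\cap \OGr^{-}(\bLambda_0)=C_1$ (the rank-$1$ locus) is again cited from Pappas--Rapoport 7.1.4; no independent analysis of deformations is required. The genuinely computational part is (ii) and (iii): in the paper these amount to writing the three charts $U_0\cong \spec\Zp[t_0,\ldots,t_4]/(t_0^2+t_1t_2+t_3t_4-\varpi^2)$, $U_2\cong \A^3_{\Zp}$, $U_1\cong \A^3_{\Fp}$ explicitly, which matches your proposed approach for those parts (though the chart $U_1$ is just an affine space rather than a Lagrangian Grassmannian).
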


\begin{thm}\label{ssdc}(Theorem \ref{ssbs})
\emph{The supersingular locus $\sS_{K,U}^{\si}$ consists of two basic loci $\sS_{K,U,[b_0]}$ and $\sS_{K,U,[b_1]}$, where $\sS_{K,U,[b_0]}\subset \sS_{K,U}^{\loc}$, and $\sS_{K,U,[b_1]}\subset \sS_{K,U}\setminus \sS_{K,U}^{\loc}$. }
\end{thm}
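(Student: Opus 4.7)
The plan is to combine the global decomposition $\sS_{K,U}=\sS_{K,U}^{\loc}\sqcup(\sS_{K,U}\setminus \sS_{K,U}^{\loc})$ from Theorem~\ref{grim}~(i) with a Newton stratification analysis on each component. The first step is to observe that for the balanced signature $(2,2)$, the supersingular locus coincides with the basic locus: for any $\Fpbar$-valued point of $\sS_{K,U}^{\si}$, the rational Dieudonn\'e module of the underlying abelian $4$-fold is isoclinic of slope $1/2$, so the associated isocrystal with $\bG_{\Qp}$-structure is automatically basic in the sense of Kottwitz. Hence $\sS_{K,U}^{\si}$ is the union of the basic Newton strata $\sS_{K,U,[b]}$ for $[b]\in B(\bG_{\Qp},\mu)_{\bs}$.

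The second step is a group-theoretic computation of $B(\bG_{\Qp},\mu)_{\bs}$. Here $\bG_{\Qp}$ is the quasi-split ramified unitary similitude group in four variables. Using the Kottwitz map $\kappa\colon B(\bG_{\Qp})\to \pi_1(\bG_{\Qp})_{\gal(\Qpbar/\Qp)}$ together with Kottwitz's classification of basic elements, I expect to find exactly two basic classes $[b_0]$ and $[b_1]$, distinguished by an invariant analogous to the Hasse invariant of the corresponding hermitian lattice. Non-emptiness of each basic stratum should follow from the non-emptiness of the associated Rapoport--Zink space together with $p$-adic uniformization.

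The third and most delicate step is to match each basic class with one of the two components of $\sS_{K,U}$. Concretely, for $[b_0]$ one checks that every representative lifts to a Dieudonn\'e module satisfying the wedge (or Pappas) condition that cuts out $\sS_{K,U}^{\loc}$ inside the naive moduli problem $\sS_{K,U}$, whence $\sS_{K,U,[b_0]}\subset \sS_{K,U}^{\loc}$. For $[b_1]$ one instead exhibits representatives whose Dieudonn\'e modules violate this closed condition, forcing $\sS_{K,U,[b_1]}$ to lie in the complementary smooth stratum $\sS_{K,U}\setminus \sS_{K,U}^{\loc}$. The main obstacle is precisely this matching: translating the group-theoretic invariant separating $[b_0]$ and $[b_1]$ into the moduli-theoretic condition distinguishing $\sS_{K,U}^{\loc}$ from its complement, which requires an explicit Dieudonn\'e-theoretic criterion for flatness in terms of the action of $\iota(\sqrt{\Delta})$ on $\Lie(A)$. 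Once this identification is in place, the disjointness and exhaustiveness of the two basic loci follow formally from the Newton stratification.
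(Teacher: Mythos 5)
Your proposal is correct and follows essentially the same route as the paper: Proposition~\ref{bsss} (supersingular $=$ basic for slope $1/2$), the Kottwitz-map computation in Section~\ref{ntrl} giving $B(G)_{1,\bs}=\{[b_0],[b_1]\}$, and Proposition~\ref{nteq} combined with the local-model diagram of Theorem~\ref{sgrt} (plus $p$-adic uniformization) to match $\mu$-neutrality with the flat locus. One small caveat: you derive non-emptiness of the basic strata from the uniformization theorem, but the paper applies uniformization \emph{after} establishing non-emptiness by direct construction of abelian varieties with prescribed invariants (Propositions~\ref{nsne}, \ref{expa}), since the uniformization theorem presupposes a point exists; the order of these two steps matters logically, though it does not affect the decomposition statement itself.
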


We give concrete descriptions the structure of these two basic loci. 

\begin{thm}\label{ntgr}(Theorem \ref{irg0} (i), (ii))
\emph{
\begin{enumerate}
\item The scheme $\sS_{K,U,[b_0]}$ is purely $2$-dimensional. Any irreducible component is birational to the Fermat surface $F_p$ defined by
\begin{equation*}
x_0^{p+1}+x_1^{p+1}+x_2^{p+1}+x_3^{p+1}=0
\end{equation*}
in $\P_{\Fpbar}^{3}=\Proj \Fpbar[x_0,x_1,x_2,x_3]$. 
\item Let $C$ be an irreducible component of $\sS_{K,U,[b_0]}$. Then, for any irreducible component $C'\neq C$ of $\sS_{K,U,[b_0]}$, the intersection $C\cap C'$ is either the empty set, a single point or birational to $\P_{\Fpbar}^{1}$. Moreover, the following hold: 
\begin{gather*}
\#\{C'\mid C\cap C'\text{ is a single point}\}\leq p(p+1)(p^2+1),\\
\#\{C'\mid C\cap C'\text{ is birational to }\P_{\Fpbar}^{1}\}\leq (p+1)(p^2+1). 
\end{gather*}
\end{enumerate}}
\end{thm}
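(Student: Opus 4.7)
The plan is to pass, via $p$-adic uniformization of the basic locus, from $\sS_{K,U,[b_0]}$ to the reduced underlying scheme $\M_{b_0}^{\red}$ of the Rapoport--Zink space for the isogeny class $[b_0]$, so that statements about irreducible components and their intersection pattern reduce to statements about $\M_{b_0}^{\red}$; the combinatorial bounds in (ii) will then become counts of neighbours inside an associated Bruhat--Tits building. Using covariant Dieudonn\'e theory, I would describe $\M_{b_0}(\Fpbar)$ as a set of $O_{\Fbr}$-lattices $M$ in the rational Dieudonn\'e module satisfying $pM\subset FM\subset M$, the Kottwitz determinant condition for signature $(2,2)$, and a self-duality relation compatible with the ramified $O_L$-action. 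Because the uniformizer of $L\otimes_{\Q}\Qp$ squares to $p$, these data can be organized into a family of ``vertex lattices'' $\Lambda$ in an associated $5$-dimensional $\Qp$-quadratic space, and $\M_{b_0}^{\red}$ acquires a closed Bruhat--Tits stratification $\M_{b_0}^{\red}=\bigcup_{\Lambda}\M_{\Lambda}$ indexed by vertices of a building of type $\widetilde{C}_2$.

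The next step is to identify each closed stratum $\M_\Lambda$ with a Deligne--Lusztig variety for the reductive quotient of the parahoric stabilizing $\Lambda$, namely an orthogonal group $\SO(\Lambda^\vee/\Lambda)$ on a $5$-dimensional $\Fp$-quadratic space. A Weyl-length computation then shows that the top-dimensional strata have dimension $2$ and occur exactly for vertex lattices $\Lambda$ of maximal (self-dual) type, for which $\M_\Lambda$ is the Coxeter Deligne--Lusztig variety of $\SO_5$. The union of these maximal strata covers $\M_{b_0}^{\red}$, which gives the ``purely $2$-dimensional'' claim of (i) and reduces the birational Fermat assertion to identifying this particular Coxeter Deligne--Lusztig variety with a dense open subscheme of $F_p$.

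This identification is the step I expect to be the main obstacle. Using the exceptional isomorphism of adjoint groups $\SO_5\cong \mathrm{PGSp}_4$ (valid since $p$ is odd), one realizes the Coxeter Deligne--Lusztig variety as a locally closed subscheme of the smooth quadric $Q\subset \P^3$ parametrizing isotropic lines, defined by the Coxeter relative-position condition $\inv(M,FM)=s_1s_2$. Written in projective coordinates, each coordinate is forced to equal the $p$-th power of a neighbouring coordinate up to a nonzero scalar, and after a suitable change of variables these relations combine to give exactly $x_0^{p+1}+x_1^{p+1}+x_2^{p+1}+x_3^{p+1}=0$. The delicate point is verifying that the ramified lattice conditions produce precisely this Fermat hypersurface, and not some other degree-$(p+1)$ hypersurface in $\P^3_{\Fpbar}$, which requires careful bookkeeping at the special maximal parahoric and with the two non-conjugate classes of maximal vertex lattices coming from the two endpoints of the $\widetilde{C}_2$ diagram.

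Finally, (ii) would follow from the combinatorics of the vertex-lattice stratification: the intersection $\M_\Lambda\cap \M_{\Lambda'}$ of two top-dimensional strata equals $\bigcup_{\Lambda''}\M_{\Lambda''}$ over vertex sublattices $\Lambda''\subset \Lambda\cap \Lambda'$, so it is non-empty iff $\Lambda\cap \Lambda'$ contains a vertex sublattice. When the maximal such $\Lambda''$ has type one step below the top, the intersection is a Coxeter Deligne--Lusztig variety for $\SO_3\cong \PGL_2$, birational to $\P^1_{\Fpbar}$; when it is two steps below, the intersection is a single point; otherwise empty. The upper bounds $p(p+1)(p^2+1)$ and $(p+1)(p^2+1)$ then arise as the numbers of neighbours of a fixed top-type $\Lambda$ of the two relevant intermediate types, which in turn are computed as $\Fp$-point counts of explicit orthogonal Grassmannians attached to the $5$-dimensional $\Fp$-quadratic space $\Lambda^\vee/\Lambda$.
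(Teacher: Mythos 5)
Your high-level strategy — uniformize via Rapoport--Zink, then extract a vertex-lattice Bruhat--Tits stratification, then match strata with Deligne--Lusztig varieties for the reductive quotient — is the same shape as the paper's, but the intermediate quadratic space you posit is the wrong one, and that gap propagates.

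The paper's key exceptional isomorphism is $A_3=D_3$, not $B_2=C_2$. Starting from $J_{b_0}\cong \GU(C_{b_0})$ for a split $4$-dimensional $F/\Qp$-hermitian space $C_{b_0}$, the construction that produces a $\Qp$-quadratic space inside $\End^0(\X_0)$ is the Hodge star on $\bigwedge^2$ of the rational Dieudonn\'e module, giving a $6$-dimensional space $\L_{b_0,0}$ over $\Qp$ (with discriminant $-\varpi^2$ and Hasse invariant $-1$), together with a central isogeny $J_{b_0}^{\mathrm{der}}\to \SO(\L_{b_0,0})$. The vertex lattices indexing the stratification live in this $6$-dimensional space and have types $1,3,5$; the stratification is not naturally indexed by vertex lattices in a $5$-dimensional $\Qp$-quadratic space, since $\SU(C_{b_0})$ is a form of $\SL_4$ (type $A_3$), not of $\Sp_4$ (type $C_2$). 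There is no $\mathrm{PGU}_4\cong\SO_5$; that isomorphism would need $A_3=B_2$, which is false. Your $\SO_5\cong \mathrm{PGSp}_4$ identification is valid, but it only enters at the finite-field level: the reductive quotient at a maximal-type vertex of $\L_{b_0,0}$ is $\SO_5/\Fp$, and it also arises after restricting to the $\GU_2(D)$ Rapoport--Zink space whose auxiliary space $\L_D$ is the $5$-dimensional orthogonal complement of a fixed $y_6\in\L_{b_0,0}$. That is exactly the move the paper makes: it constructs a closed immersion $i_{y_6}\colon\M_D\hookrightarrow\M_{b_0,\mu}$ from the quaternionic unitary Rapoport--Zink space, proves that the $J_{b_0}^{\mathrm{der}}(\Qp)$-translates of $i_{y_6}(\M_D^{(0)})$ cover $\M_{b_0,\mu}^{(0,0),\red}$, and then imports the Fermat-surface identification of the top strata from the earlier work on $\GU_2(D)$. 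Without some replacement for this covering/comparison step, ``top strata are Coxeter DL varieties for $\SO_5$ birational to $F_p$'' is an assertion, not a proof, in the present ramified $\GU(2,2)$ setting.

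A second gap: $\M_{b_0,\mu}^{(0),\red}$ has two isomorphic connected components $\M_{b_0,\mu}^{(0,0),\red}\sqcup\M_{b_0,\mu}^{(0,1),\red}$, detected by the parity of $\length_{O_{\Fbr}}((M+M_0)/M_0)$, and the Bruhat--Tits stratification is first constructed on $\M_{b_0,\mu}^{(0,0)}$ and only afterwards repackaged in terms of vertex lattices of the hermitian space $C_{b_0}$ (types $0,2,4$), where a type-$2$ lattice gives \emph{two} copies of $F_p$, one in each component. If you work with a single undifferentiated stratification indexed by vertices of a $\widetilde{C}_2$ building, you will either miscount the strata or silently conflate the two components; the upper bounds in (ii) and the inequality (rather than equality) under uniformization depend on keeping this bookkeeping straight. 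The combinatorial part of your plan for (ii) — reading off intersections from sublattice containments and bounding by point counts on orthogonal Grassmannians over $\Fp$ — is the right idea and matches what the paper does, once the correct $6$-dimensional ambient space and the two-component structure are in place.
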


\begin{thm}\label{nngr}(Theorem \ref{irg1})
\emph{\begin{enumerate}
\item The scheme $\sS_{K,U,[b_1]}$ is purely $1$-dimensional. Any irreducible component is birational to $\P_{\Fpbar}^{1}$. 
\item Let $C$ be an irreducible component of $\sS_{K,U,[b_1]}$. Then, for any irreducible component $C'\neq C$ of $\sS_{K,U,[b_1]}$, the intersection $C\cap C'$ is either the empty set or a single point. Moreover, we have
\begin{equation*}
\#\{C'\mid C\cap C'\text{ is a single point}\}\leq p(p^2+1). 
\end{equation*}
\end{enumerate}}
\end{thm}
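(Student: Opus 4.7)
The plan is to pass from $\sS_{K,U,[b_1]}$ to the Rapoport--Zink side via the $p$-adic uniformization attached to the basic locus. Because the uniformization expresses $\sS_{K,U,[b_1]}\otimes_{\Zp}\Fpbar$ as the quotient of a disjoint union of copies of the reduced locus $\mathcal{N}_{[b_1]}^{\red}$ by a discrete group action, the statements about dimension, birational type of irreducible components, and the nature of pairwise intersections all transfer from $\mathcal{N}_{[b_1]}^{\red}$ to $\sS_{K,U,[b_1]}$; likewise the finite upper bound on adjacent components follows from a global-to-local count once local bounds are known. By Theorem~\ref{ssdc}, the basic locus $[b_1]$ lies entirely in the non-flat component $\sS_{K,U}\setminus \sS_{K,U}^{\loc}$, which by Theorem~\ref{grim}(iii) is smooth and $3$-dimensional over $\Fp$; this is the setting in which $\mathcal{N}_{[b_1]}$ has to be analyzed.

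Next, I would describe $\mathcal{N}_{[b_1]}^{\red}$ using a Bruhat--Tits-type stratification indexed by vertex lattices. Starting from the rational Dieudonn{\'e} module of the framing object, the special quasi-endomorphisms produce a $4$-dimensional hermitian space $\mathbb{V}$ over $\Qp$ whose vertex lattices $\Lambda$ govern the lattice-theoretic conditions imposed by $\iota$ and $\lambda$. To each admissible $\Lambda$ (the admissibility being dictated by the non-flat branch of the moduli problem of Definition~\ref{rzit}), I would attach a closed stratum
\begin{equation*}
\mathcal{N}_{\Lambda}\;=\;\{x\in\mathcal{N}_{[b_1]}^{\red}\mid M(x)\text{ is controlled by }\Lambda\},
\end{equation*}
and identify $\mathcal{N}_{\Lambda}$ with a Deligne--Lusztig variety of dimension $1$ for a rank-$2$ reductive quotient over $\Fp$; such varieties are well known to be birational to $\P_{\Fpbar}^{1}$. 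Showing that the $\mathcal{N}_\Lambda$ cover $\mathcal{N}_{[b_1]}^{\red}$, and that no $\mathcal{N}_{\Lambda}$ is contained in another, identifies them with the irreducible components and yields part~(i).

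For part~(ii), I would analyze $\mathcal{N}_{\Lambda}\cap\mathcal{N}_{\Lambda'}$ through the condition that the Dieudonn{\'e} module of any common point is simultaneously controlled by $\Lambda$ and $\Lambda'$. This forces the hermitian lattice to lie in $\Lambda\cap\Lambda'$, and a short case analysis on the type of $\Lambda\cap\Lambda'$ should show that either no admissible point exists (empty intersection) or the Dieudonn{\'e} module is pinned down uniquely (a single superspecial point). The bound $p(p^2+1)$ is then the count of vertex lattices $\Lambda'\neq\Lambda$ sharing an admissible sublattice with a fixed $\Lambda$; this count matches the number of $\Fp$-rational points of a flag variety for the reductive quotient of the parahoric stabilizer of $\Lambda$, which in our $4$-dimensional ramified hermitian setting equals $p(p^2+1)$.

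The hardest step, I expect, will be setting up the lattice-theoretic parametrization of $\mathcal{N}_{[b_1]}^{\red}$ in the non-flat branch: the usual local model techniques are designed for $\sS_{K,U}^{\loc}$, so the admissibility condition defining $\mathcal{N}_\Lambda$ must be read off directly from the naïve moduli problem, taking care that the Kottwitz-type signature condition is violated in a controlled way. Once this dictionary is in place, both the identification with a Deligne--Lusztig curve and the intersection count reduce to standard computations with hermitian lattices over the ramified quadratic extension $L_p/\Qp$.
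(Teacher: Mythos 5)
Your overall strategy matches the paper's: pass to the Rapoport--Zink space via $p$-adic uniformization (Proposition \ref{paut}), stratify $\M_{b_1,\mu}^{(0),\red}$ by vertex lattices, identify the closed strata with Deligne--Lusztig varieties for small orthogonal groups (Theorems \ref{mtdm}, \ref{dlnn}), and derive the intersection count from the building combinatorics (Corollary \ref{ctnn}). That is exactly how Theorem \ref{irg1} is proved, so the approach is correct in outline. A few details in your write-up are, however, misstated. First, the Rapoport--Zink space $\M_{b_1,\mu}$ is defined directly as a moduli of $p$-divisible groups with $(G,\mu)$-structure satisfying the (unchanged) Kottwitz condition; the non-$\mu$-neutral framing object $b_1$ is not obtained by ``violating the Kottwitz-type signature condition in a controlled way'' --- the distinction between $[b_0]$ and $[b_1]$ is the $\mu$-neutralness of the Kottwitz invariant in $\pi_1(G)_\Gamma$, and the fact that $\M_{b_1,\mu}$ ends up entirely in characteristic $p$ is a consequence (Theorem \ref{sgrt}(ii)), not an input. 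Second, the relevant group for the Deligne--Lusztig strata is $\SO(\B_b(T))$ with $\B_b(T)$ a non-split $4$-dimensional quadratic space over $\Fp$, i.e.\ a group of semisimple rank $1$, not $2$; the closed strata $\M_{b,\mu,T}^{(0),\red}$ for $t(T)=0$ are isomorphic (not merely birational) to two disjoint copies of $\P^1_{\Fpbar}$, one per connected component $\M_{b,\mu}^{(0,j)}$. Third, the count $p(p^2+1)$ is not the $\Fp$-point count of a flag variety of the parahoric reduction; it is the two-step count in the building: there are $p^2+1$ type-$2$ vertex lattices $T''\subset T$ (isotropic lines in $\B_b(T)$), and each such $T''$ is contained in exactly $p$ further type-$0$ vertex lattices $T'\neq T$. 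With these corrections, your plan reproduces the paper's argument.
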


\begin{rem}
We have more precise results on non-smooth points of $\sS_{K,U}^{\loc}$ and numbers of connected and irreducible components of $\sS_{K,U,[b_j]}$. See Theorems \ref{irg0}, \ref{cti0} and \ref{cti1}. 
\end{rem}

To prove Theorems \ref{ntgr} and \ref{nngr}, we deduce them from an explicit description of the underlying topological spaces of the corresponding Rapoport--Zink spaces by using the $p$-adic uniformization theorem \cite[Theorem 6.30]{Rapoport1996a}. Next, we explain the results on Rapoport--Zink spaces as mentioned above. 

\subsection{Results on the Rapoport--Zink spaces}\label{mtrz}

Let $F$ be a ramified quadratic extension of $\Qp$, and $O_F$ the ring of integers of $F$. We denote by $a\mapsto \abar$ the non-trivial Galois automorphism of $F$ over $\Qp$. We fix a uniformizer $\varpi$ of $F$. Put $W:=W(\Fpbar)$, the ring of Witt vectors over $\Fpbar$. Then, for a $W$-scheme $S$ in which $p$ is locally nilpotent, we consider a triple $(X,\iota,\lambda)$, where
\begin{itemize}
\item $X$ is a $p$-divisible group of dimension $4$ and height $8$ over $S$, 
\item $\iota \colon O_F\rightarrow \End(X)$ is a ring homomorphism,
\item $\lambda \colon X\rightarrow X^{\vee}$ is a quasi-polarization, 
\end{itemize}
satisfying the following conditions for any $a\in O_F$: 
\begin{itemize}
\item $\det(T-\iota(a)\mid \Lie(X))=(T^2-\tr_{F/\Qp}(a)T+\N_{F/\Qp}(a))^2$,
\item $\lambda \circ \iota(a)=\iota(\abar)^{\vee}\circ \lambda$. 
\end{itemize}
Let us fix such a triple $b=(\X_0,\iota_0,\lambda_0)$ over $\spec \Fpbar$ such that $\X_0$ is isoclinic of slope $1/2$ and $\lambda_0$ is an isogeny satisfying $\Ker(\lambda_0)=\Ker \iota_0(\varpi)$. We define $\M_{b,\mu}^{(0)}$ (the subscript $\mu$ will be introduced in Section \ref{lpel}) to be the deformation space of $(\X_0,\iota_0,\lambda_0)$ by $O_F$-linear quasi-isogenies. It is a formal scheme over $\spf W$, which is formally locally of finite type. We denote by $\M_{b,\mu}^{(0),\red}$ the underlying reduced subscheme of $\M_{b,\mu}^{(0)}$. 

With a fixed $b$, we associate an $F/\Qp$-hermitian space $C_{b}$ of dimension $4$. Then the $\Qp$-valued points of the unitary group $\U(C_{b})(\Qp)$ of $C_{b}$ acts on $\M_{b,\mu}^{(0)}$. We say that a lattice $T$ in $C_b$ is a \emph{vertex lattice} if $T\subset T^{\vee}\subset \varpi^{-1}T$. We call $t:=\length_{O_F}(T^{\vee}/T)$ the type of $T$. The type $t$ is an even integer satisfying $0\leq t\leq 4$. We denote by $\VL_{b}(t)$ the set of all vertex lattices of type $t$, and put $\VL_{b}:=\bigcup_{t=0}^{2}\VL_{b}(2t)$. We construct a locally closed stratification of $\M_{b,\mu}^{(0),\red}$ by means of vertex lattices in $C_{b}$. 

We consider two cases whether $C_{b}$ is split or not as an $F/\Qp$-hermitian space. We say that $b$ is \emph{$\mu$-neutral} if $C_{b}$ is split. 

\textbf{Case 1. The $\mu$-neutral case. }
In this case, $\VL_{b}(2t)$ is non-empty for any $t\in \{0,1,2\}$. We introduce an order $\leq$ on $\VL_{b}$ as follows. For distinct $T,T'\in \VL_{b}$, we have $T<T'$ if one of the following holds: 
\begin{itemize}
\item $T\in \VL_{b}(4)$, $T'\in \VL_{b}(0)$ and $T\subset T'$, 
\item $T\in \VL_{b}(4)$, $T'\in \VL_{b}(2)$ and $T\subset (T')^{\vee}$, 
\item $T\in \VL_{b}(0)$, $T'\in \VL_{b}(2)$ and $T\subset (T')^{\vee}$. 
\end{itemize}
Moreover, put
\begin{equation*}
d_b(T):=
\begin{cases}
0 &\text{if }T\in \VL_{b}(4),\\
4 &\text{if }T\in \VL_{b}(0),\\
6 &\text{if }T\in \VL_{b}(2). 
\end{cases}
\end{equation*}

With $T\in \VL_{b}$, we associate a reduced locally closed subscheme $\BT_{b,\mu,T}^{(0)}$ of $\M_{b,\mu}^{(0),\red}$. See Section \ref{cbbt}. 

\begin{thm}\label{ntlr}
\emph{Suppose that $b$ is $\mu$-neutral. }
\begin{enumerate}
\item (Theorem \ref{sgrt} (i), Theorem \ref{mthm} (iv)) \emph{The formal scheme $\M_{b,\mu}^{(0)}$ is flat over $\spf W$, and regular of dimension $5$. The non-formally smooth locus of $\M_{b,\mu}^{(0)}$ equals $\coprod_{T\in \VL_{b}(4)}\BT_{b,\mu,T}^{(0)}$. }
\item (Theorem \ref{dcnt}) \emph{There is a decomposition into open and closed formal subschemes
\begin{equation*}
\M_{b,\mu}^{(0)}=\M_{b,\mu}^{(0,0)}\sqcup \M_{b,\mu}^{(0,1)}. 
\end{equation*}
Moreover, $\M_{b,\mu}^{(0,i),\red}:=\M_{b,\mu}^{(0,i)}\cap \M_{b,\mu}^{\red}$ are connected and isomorphic to each other. }
\item (Theorem \ref{mthm} (i)) \emph{There is a locally closed stratification
\begin{equation*}
\M_{b,\mu}^{(0),\red}=\coprod_{T\in \VL_{b}}\BT_{b,\mu,T}^{(0)}. 
\end{equation*}}
\item (Theorem \ref{mthm} (ii)) \emph{For $x\in \VL_{b}$, let $\M_{b,\mu,T}^{(0),\red}$ be the closure of $\BT_{b,\mu,T}^{(0)}$ in $\M_{b,\mu}^{(0),\red}$. Then we have
\begin{equation*}
\M_{b,\mu,T}^{(0),\red}=\coprod_{T'\leq T}\BT_{b,\mu,T'}^{(0)}. 
\end{equation*}}
\item (Theorem \ref{mthm} (iii), Theorem \ref{dlnt}) \emph{For $T\in \VL_{b}$, every connected component of $\BT_{b,\mu,T}^{(0)}$ is isomorphic to a generalized Deligne--Lusztig variety for the split $\SO_{d_b(T)}$ (it is non-classical if $T\in \VL_{b}(0)\sqcup \VL_{b}(2)$). Moreover, the following hold: 
\begin{itemize}
\item if $T\in \VL_{b}(4)$, then $\M_{b,\mu,T}^{(0),\red}$ is a single point, 
\item if $T\in \VL_{b}(0)$, then $\M_{b,\mu,T}^{(0),\red}$ is isomorphic to two copies of $\P^1$, 
\item if $T\in \VL_{b}(2)$, then $\M_{b,\mu,T}^{(0),\red}$ is isomorphic to two copies of the Fermat surface $F_p$. 
\end{itemize}
In particular, $\M_{b,\mu}^{(0),\red}$ is purely $2$-dimensional. }
\end{enumerate}
\end{thm}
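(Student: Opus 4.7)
The plan is to translate the problem into Dieudonn\'e-theoretic data and then combine a lattice stratification, a local-model analysis, and an identification of closed strata with generalized Deligne--Lusztig varieties, in the spirit of Vollaard--Wedhorn, Rapoport--Terstiege--Wilson, and Howard--Pappas. Via covariant Dieudonn\'e theory and the framing object $b=(\X_0,\iota_0,\lambda_0)$, one identifies $\M_{b,\mu}^{(0)}(\Fpbar)$ with the set of $W$-lattices $M$ in the rational Dieudonn\'e module of $\X_0$ that are stable under the $O_F$-action, self-dual in the sense prescribed by $\lambda_0$, and satisfy the Kottwitz condition determined by $\mu$; the hermitian space $C_b$ then arises as the $F$-vector space of $\tau$-invariants in the rational Dieudonn\'e module (with $\tau$ a twist of Frobenius by Verschiebung and the polarization), and is $4$-dimensional and split in the $\mu$-neutral case.

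To each $M$ I would attach the lattice $T(M):=(M+\tau(M)+\cdots)\cap C_b$, an appropriately normalized sum of Frobenius translates, and verify from the conditions defining $M$ that $T(M)$ is a vertex lattice in $C_b$. Declaring $\BT_{b,\mu,T}^{(0)}$ to be the locus where $T(M)=T$ gives the stratification of (iii), while inclusions of the form $T(M)\subset T(M')$ forced by degenerations, together with the three inclusion rules defining $\leq$, yield the closure formula in (iv). Pure $2$-dimensionality will then follow from the dimension computation in (v), since the top-dimensional stratum is that of type $2$.

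For the flatness, $5$-dimensionality, and regularity in (i), I would use a local-model diagram: \'etale-locally, $\M_{b,\mu}^{(0)}$ is modeled by the local model $\sM^{\loc}$ of $(\GU(2,2),\mu)$ at a special maximal parahoric in the ramified case. The naive moduli is not flat, so one passes to its flat closure and appeals to, or refines, the Pappas--Rapoport description of this local model; its special fiber can be presented as an explicit determinantal scheme which a direct calculation shows to be regular of dimension $5$ with singularities only at certain distinguished points. By tracking the vertex invariant through the local-model map, these singular points are identified precisely with $\coprod_{T\in\VL_b(4)}\BT_{b,\mu,T}^{(0)}$.

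For (v), above $\BT_{b,\mu,T}^{(0)}$ the lattice $M$ lies between $T$ and $T^{\vee}$ and is determined by a maximal isotropic subspace of the $d_b(T)$-dimensional $\Fpbar$-quadratic space $T^{\vee}/T$ satisfying a Frobenius-twisted condition; this exhibits the connected components of $\BT_{b,\mu,T}^{(0)}$ as Deligne--Lusztig varieties for split $\SO_{d_b(T)}$, giving a single point when $d_b(T)=0$, two copies of $\P^{1}$ when $d_b(T)=4$ via $\SO_{4}\cong(\SL_{2}\times\SL_{2})/\{\pm 1\}$, and, for $d_b(T)=6$, the non-classical Coxeter-type Deligne--Lusztig variety for $\SO_{6}$, which by a direct analysis identifies with two copies of the Fermat surface $F_p$, exactly as in Howard--Pappas. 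The systematic splitting into two copies reflects the spinor decomposition of $\SO_{2d}$; globalizing via a locally constant $\Z/2$-valued invariant on the lattices $M$ gives the decomposition $\M_{b,\mu}^{(0)}=\M_{b,\mu}^{(0,0)}\sqcup\M_{b,\mu}^{(0,1)}$ of (ii), and connectedness and mutual isomorphism of the two pieces follow once the incidence graph of the stratification is shown to be connected on each side and exchanged by the spinor involution. The main obstacle will be the local-model computation at the ramified special maximal parahoric: controlling the flat closure, proving its regularity, and identifying its non-smooth locus with the deepest stratum is delicate, and together with the identification of the type-$2$ closures as Fermat surfaces it constitutes the technical core of the argument.
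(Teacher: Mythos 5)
Your proposal is the ``direct'' Vollaard--Wedhorn-style argument: attach to each Dieudonn\'e lattice $M$ the smallest $\F_{b,0}$-stable lattice $M^{(r)}=M+\F_{b,0}(M)+\cdots$, intersect with $C_b$ to get a vertex lattice $T(M)$, and stratify by $T(M)$. This is exactly what the paper does \emph{in the non-$\mu$-neutral case} (Section~\ref{btnn}, Lemma~\ref{dlp1} and Proposition~\ref{divl}), where it works because $\length_{O_{F,W(k)}}(M+\F_{b,0}(M))/M=1$. But Proposition~\ref{nteq}~(iv) shows that in the $\mu$-neutral case this length is \emph{even} (it is $0$ or $2$), so the Frobenius iteration no longer increases by one at each step and the mechanism that produces a vertex lattice of a controlled type breaks down. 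This is not a technicality: the partial order $\leq$ on $\VL_b$ used in part~(iv) involves \emph{dual} lattices (``$T\subset (T')^\vee$''), and the stratum for $T\in\VL_b(2)$ is defined using two auxiliary type-$4$ vertex lattices $T_0,T_1$ with $T_0+T_1=T$, not by the naive condition ``$M$ lies between $T$ and $T^\vee$''; your formula also uses $T^\vee/T$ where the paper's DL identification needs the $d_b(T)$-dimensional space $\B_b(T)$ ($T/\varpi T^\vee$ or $T^\vee/\varpi T$), which does not coincide with $T^\vee/T$.

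The paper's actual route in the neutral case is quite different and is the technical core of the argument: it exploits the exceptional isomorphism $A_3=D_3$. Via the Hodge star operator on $\bigwedge^2_{\Fbr}N_b$ (Section~\ref{hgst}) one constructs a $6$-dimensional $\Qp$-quadratic space $\L_{b,0}$ inside $\End^0(\X_0)$ with $\GSpin(\L_{b,0})\cong J_b^0$ (Corollary~\ref{exis}); the Dieudonn\'e lattices $M$ in the neutral component $\M_{b,\mu}^{(0,0)}$ are then put in bijection with \emph{special lattices} $L(M)$ in $\L_{b,W(k)}$, for which $\length(L+\Phi_b(L))/L\leq 1$ (Proposition~\ref{dlsl}) --- thus recovering the index-one Frobenius chain in the $\SO_6$ picture, not the $\U_4$ picture. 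The Bruhat--Tits stratification, the DL identifications, and the Fermat-surface irreducible components are then pulled back through a closed immersion from the quaternionic unitary Rapoport--Zink space $\M_D$ of the author's earlier work (Propositions~\ref{clim}, \ref{gu2d}, \ref{cpbt}); finally the indexing is re-expressed in terms of vertex lattices in $C_b$ via an isomorphism of Bruhat--Tits buildings $\cB_b\cong\cB(\L_{b,0})$ induced by the central isogeny $J_b^{\der}\to\SO(\L_{b,0})$ (Proposition~\ref{vlvl}). Your local-model analysis for part~(i) is on the right track and matches Section~\ref{sgrz}, and your Howard--Pappas-style DL identification for part~(v) is in the correct spirit, but without the exceptional-isomorphism reduction to special lattices the remaining parts (ii)--(iv) cannot be established along the lines you sketch.
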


\textbf{Case 2. The non-$\mu$-neutral case. }
In this case, $\VL_{b}(2t)$ is non-empty if and only if $t\in \{0,1\}$. Moreover, put
\begin{equation*}
d_b(T):=4-t(T). 
\end{equation*}

With $T\in \VL_{b}$, we associate a reduced locally closed subscheme $\BT_{b,\mu,T}^{(0)}$ of $\M_{b,\mu}^{(0)}$. See Section \ref{btnn}. 

\begin{thm}\label{nnlr}
\emph{Suppose that $b$ is not $\mu$-neutral. }
\begin{enumerate}
\item (Theorem \ref{sgrt} (ii)) \emph{The formal scheme $\M_{b,\mu}^{(0)}$ equals $\M_{b,\mu}^{(0)}\times_{\spf W}\spec \Fpbar$, which is formally smooth of dimension $3$ over $\spec \Fpbar$. }
\item (Theorem \ref{dcnn}) \emph{There is a decomposition into open and closed formal subschemes
\begin{equation*}
\M_{b,\mu}^{(0)}=\M_{b,\mu}^{(0,0)}\sqcup \M_{b,\mu}^{(0,1)}. 
\end{equation*}
Moreover, $\M_{b,\mu}^{(0,i),\red}:=\M_{b,\mu}^{(0,i)}\cap \M_{b,\mu}^{\red}$ are connected and isomorphic to each other. }
\item \emph{There is a locally closed stratification
\begin{equation*}
\M_{b,\mu}^{(0),\red}=\coprod_{T\in \VL_{b}}\BT_{b,\mu,T}^{(0)}
\end{equation*}}
\item (Theorem \ref{nnbt} (ii)) \emph{For $T\in \VL_{b}$, let $\M_{b,\mu,T}^{(0),\red}$ be the closure of $\BT_{b,\mu,T}^{(0)}$ in $\M_{b,\mu}^{(0),\red}$. Then we have
\begin{equation*}
\M_{b,\mu,T}^{(0),\red}=\coprod_{T'\subset T}\BT_{b,\mu,T'}^{(0)}. 
\end{equation*}}
\item (Theorem \ref{mtdm}, Theorem \ref{dlnn}) \emph{For $T\in \VL_{b}$, the stratum $\BT_{b,\mu,T}^{(0)}$ is isomorphic to the disjoint union of two (classical) Deligne--Lusztig varieties for the non-split $\SO_{d_b(T)}$ associated with Coxeter elements. Moreover, the following hold: 
\begin{itemize}
\item if $T\in \VL_{b}(2)$, then $\M_{b,\mu,T}^{(0),\red}$ consists of two points, 
\item if $T\in \VL_{b}(0)$, then $\M_{b,\mu,T}^{(0),\red}$ is isomorphic to two copies of $\P^1$. 
\end{itemize}
In particular, $\M_{b,\mu}^{(0),\red}$ is purely $1$-dimensional. }
\end{enumerate}
\end{thm}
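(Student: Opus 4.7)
The plan is to work entirely within the Dieudonné-module and hermitian-lattice description of $\M_{b,\mu}^{(0)}$ afforded by the framing $(\X_0,\iota_0,\lambda_0)$, and to exploit the simple combinatorics of $\VL_b$ in the non-$\mu$-neutral case, in which vertex lattices exist only in types $0$ and $2$. An $S$-point corresponds to an $O_F\otimes W(S)$-stable Dieudonné lattice $M$ in the rational covariant Dieudonné module $N$ of $\X_0$, subject to the Kottwitz signature condition and the polarization relation $\Ker(\lambda)=\Ker \iota(\varpi)$. For (i), the key point is that non-$\mu$-neutralness of $b$ obstructs the existence of a self-dual lattice in $C_b$, which in turn forces $p$ to vanish on the moduli problem; formal smoothness of dimension $3$ then follows from a tangent-space computation, or alternatively by invoking the corresponding local model, whose non-flat component is smooth of relative dimension $3$ over $\Fpbar$.

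For (iii) and (iv), I would set $\BT_{b,\mu,T}^{(0)}$ to be the reduced locally closed subscheme on which $M$ satisfies $T\otimes_{O_F}W(S)\subset M\subset T^\vee\otimes_{O_F}W(S)$ together with the equality condition appropriate to the type of $T$; the stratification assertion and the closure relation $\M_{b,\mu,T}^{(0),\red}=\coprod_{T'\subset T}\BT_{b,\mu,T'}^{(0)}$ then follow by translating incidences among vertex lattices in $C_b$ into incidences among lattices $M$. For (ii), the decomposition arises from a $\Z/2$-grading on the set of relevant lattices given by the parity of the quasi-isogeny height after normalizing by the similitude factor; the two components are interchanged by the action of any element of $\U(C_b)(\Qp)$ whose similitude factor has odd $p$-adic valuation, yielding the stated isomorphism. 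Connectedness of $\M_{b,\mu}^{(0,i),\red}$ reduces, via the closure relations of (iv), to connectedness of the union of closures of the top-dimensional strata (those indexed by $T\in \VL_b(0)$), which will be established through the explicit description in (v).

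For (v), the main structural assertion, I would project each lattice $M$ in $\BT_{b,\mu,T}^{(0)}$ to its image in the quadratic $\Fpbar$-vector space $T^\vee/T$. This quotient carries a non-degenerate form (inherited from the hermitian form on $C_b$ modulo $\varpi$) of dimension $d_b(T)=4-t(T)$, and the non-splitness of $C_b$ forces this form to be non-split. The variation of the projected image realizes $\BT_{b,\mu,T}^{(0)}$ as a disjoint union of two copies of a Deligne--Lusztig variety for the non-split $\SO_{d_b(T)}$ attached to a Coxeter element. When $T\in \VL_b(2)$ we have $d_b(T)=2$ and the Coxeter Deligne--Lusztig variety for the non-split $\SO_{2}$ is zero-dimensional, giving two points; when $T\in \VL_b(0)$ we have $d_b(T)=4$ and the Coxeter Deligne--Lusztig variety for the non-split $\SO_{4}$ is classically isomorphic to $\P^1$, giving two copies of $\P^1$.

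The hard part will be (v): promoting the projection $M\mapsto \overline{M}\subset T^\vee/T$ to a canonical isomorphism of schemes onto a Coxeter Deligne--Lusztig variety requires checking that the Kottwitz and polarization conditions on $M$ translate precisely into the isotropy and Frobenius-conjugacy conditions defining the target, and that the quadratic form on $T^\vee/T$ has the expected non-split Witt type. A subsidiary subtlety is showing that $\BT_{b,\mu,T}^{(0)}$ splits into \emph{exactly} two connected components (one per component of (ii)), which amounts to a parity argument relating the height of the quasi-isogeny to the grading used in (ii).
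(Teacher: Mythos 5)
Your overall strategy — describe $\M_{b,\mu}^{(0)}$ via Dieudonn{\'e} lattices, stratify by vertex lattices in $C_b$, identify strata with Deligne--Lusztig varieties, use a $\Z/2$-grading from a Kottwitz-type invariant for connected components, and invoke the local model for (i) — is indeed in the same spirit as the paper (Section~\ref{btnn}). However, the crucial lattice-theoretic data in (v) are wrong as written, and the grading in (ii) is misidentified.

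For (v), you require $T\otimes W(S)\subset M\subset T^{\vee}\otimes W(S)$ and project $M$ into $T^{\vee}/T$. But $T^{\vee}/T$ has $\Fp$-dimension $t(T)$ (not $4-t(T)=d_b(T)$), so for $T\in\VL_b(0)$ this quotient is zero and cannot produce a $\P^1$. Worse, your sandwich is incompatible with the $\varpi$-modularity $M^{\vee}=\varpi^{-1}M$ except when $t(T)=0$: dualizing $T\subset M\subset T^{\vee}$ gives $T\subset M^{\vee}=\varpi^{-1}M\subset T^{\vee}$, i.e.\ $\varpi T\subset M\subset\varpi T^{\vee}$, and combined with $M\supset T\supset\varpi T^{\vee}$ this forces $M=T=\varpi T^{\vee}$. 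The correct condition (used in the paper) is $M\subset T\otimes W(k)$, equivalently $\varpi T^{\vee}\otimes W(k)\subset M\subset T\otimes W(k)$, and the relevant quadratic $\Fp$-space is $\B_b(T):=T/\varpi T^{\vee}$, which has dimension $4-t(T)$; the scheme-theoretic definition of $\M_{b,\mu,T}^{(0)}$ is via lifting a quasi-isogeny to $X_T$ to an isogeny, not merely a pointwise lattice condition, and the morphism to $S_{\B_b(T)}$ comes from the kernel of the isogeny on Dieudonn{\'e} crystals (Proposition~\ref{btgr}). Also note the bullet points of (v) concern the \emph{closure} $\M_{b,\mu,T}^{(0),\red}$, whereas the Coxeter DL variety is the \emph{open} stratum $\BT_{b,\mu,T}^{(0)}$; these coincide only after passing to closures via Proposition~\ref{stns}.

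For (ii), the $\Z/2$-grading is not the parity of the quasi-isogeny height: on $\M_{b,\mu}^{(0)}$ the height of $\rho$ is always zero (both $M$ and $M_0$ are $\varpi$-modular with unit similitude factor). The correct invariant is $\length_{O_{F,W(k)}}(M+M_{0,W(k)})/M_{0,W(k)}\bmod 2$, which detects the image under the Kottwitz map in the $\Z/2$-factor of $\pi_1(G)_\Gamma$. Consequently, the element of $J_b(\Qp)$ swapping the two components has $\ord_p(\sml)=0$, not odd: an element with odd similitude valuation would move $\M_{b,\mu}^{(0)}$ to a different $\M_{b,\mu}^{(i)}$. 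The swapping element is one with $\kappa_{J_b}=(0,1)$, i.e.\ unit similitude but nontrivial reduced determinant class modulo $\varpi$.
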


First, we sketch the proof of Theorem \ref{ntlr}. For the proof, we use the results of \cite{Oki2019} on a Rapoport--Zink space for $\GU_{2}(D)$, the quaternionic unitary similitudes group of degree $2$. Let $\M_{D}^{(0)}$ be the height $0$ locus of the Rapoport--Zink space for $\GU_{2}(D)$ appeared in \cite{Oki2019}. Then we construct a closed immersion of $\M_{D}^{(0)}$ into $\M_{b,\mu}^{(0)}$, and prove equality of topological spaces $\U(C_{b})(\Qp)\M_{D}^{(0),\red}=\M_{b,\mu}^{(0),\red}$. On the other hand, in the same argument as \cite{Howard2014}, we construct a $6$-dimensional quadratic space $\L_{b,0}$ over $\Qp$ as a subspace of $\End^0(\X_0):=\End(\X_0)\otimes_{\Z}\Q$ which realizes an exceptional isomorphism associated to the identity $A_3=D_3$ of Dynkin diagrams. Moreover, we introduce the notion of the vertex lattices in $\L_{b,0}$. Then Theorem \ref{ntlr} (ii) follows by using some properties of vertex lattices in $\L_{b,0}$ and the closed immersion $\M_{D}^{(0)}\rightarrow \M_{b,\mu}^{(0)}$. Moreover, we attach a closed formal subscheme $\M_{b,\mu,\Lambda}^{(0,0)}$ of $\M_{b,\mu}^{(0,0)}$ for each vertex lattice $\Lambda$ in $\L_{b,0}$, and construct a locally closed stratification of $\M_{b,\mu}^{(0,0),\red}$ indexed by the vertex lattices in $\L_{b,0}$. Finally, we give a connection between the vertex lattices in $\L_{b,0}$ and those in $C_b$, and construct a locally closed stratification of $\M_{b,\mu}^{(0)}$ indexed by the vertex lattices in $C_{b}$ by using the correspondence of vertex lattices as above and the stratification of $\M_{b,\mu}^{(0,0)}$. 

Note that the use of the above exceptional isomorphism gives us a moduli description of any irreducible component of $\M_{b,\mu}^{(0)}$. We expect an application of such a result, for example an analogue of \cite[Section 8]{Oki2019}. 

Second, we give an outline of the proof of Theorem \ref{nnlr}. The proof is essentially the same as \cite{Wu2016}. In this case, we more directly construct a locally closed stratification indexed by the vertex lattices in $C_{b}$. 

\begin{rem}
We can also describe the whole structure of the supersingular locus $\sS_{K,U}^{\si}$ of the Rapoport--Zink integral model for $(r,s)=(1,3)$ when $p$, $L$ and $K$ are as in Section \ref{mtim}. In this case, $\sS_{K,U}^{\si}\cap \sS_{K,U}^{\loc}$ is studied by \cite{Wu2016}. It consists of two basic loci, one of which is $\sS_{K,U}^{\si}\cap \sS_{K,U}^{\loc}$, and the other is $\sS_{K,U}^{\si}\setminus \sS_{K,U}^{\loc}$. The former one is purely $1$-dimensional, and each irreducible component is isomorphic to $\P^1$. The latter one is purely $2$-dimensional, and each irreducible component is isomorphic to the Fermat surface $F_p$. 

On the other hand, it is natural to consider a description of the whole structure of the supersingular locus $\sS_{K,U}^{\si}$ for $(r,s)=(1,3)$ when $p$ and $L$ are as above, and $K_p$ is the stabilizer of a \emph{self-dual} lattice in $U\otimes_{\Q}\Qp$. In this case, the structure of $\sS_{K,U}^{\si}\cap \sS_{K,U}^{\loc}$ is determinied by \cite{Rapoport2014a}. To achieve this question, it suffices to study $\sS_{K,U}^{\si}$ in the case where ``$(r,s)=(1,3)$'' is replaced with ``$(r,s)=(2,2)$''. This is one of our future's problem. 
\end{rem}

\begin{oftp}
In Section \ref{rzsp}, we define Rapoport--Zink spaces which will be considered in this paper. In Section \ref{sgrz}, we study the singularities of the Rapoport--Zink spaces, and prove Theorem \ref{sgrt}. In Section \ref{btdl}, we investigate the generalized Deligne--Lusztig varieties for even special orthogonal groups that are appeared in Theorems \ref{ntlr} and \ref{nnlr}. In Sections \ref{btn1} and \ref{btn2}, we consider the Rapoport--Zink space $\M_{b,\mu}$ in the $\mu$-neutral case. More precisely, in Section \ref{btn1}, we construct a $6$-dimensional quadratic space $\L_{b,0}$ over $\Qp$, and give the definition of vertex lattices. Moreover, we construct the Bruhat--Tits stratification of $\M_{b,\mu}^{(0,0)}$ indexed by vertex lattices in $\L_{b,0}$. In Section \ref{btn2}, we introduce the notion of the vertex lattices in $C_{b}$, and construct the Bruhat--Tits stratification of $\M_{b,\mu}^{(0)}$. In particular, we prove Theorem \ref{ntlr} in this section. In Section \ref{btnn}, we construct the Bruhat--Tits stratification of the Rapoport--Zink space $\M_{b,\mu}$ in the non-$\mu$-neutral case, and prove Theorem \ref{nnlr}. In Section \ref{slus}, we recall the precise definition of the Rapoport--Zink integral models of some Shimura varieties for $(\bG,\bX)$, and prove the results in Section \ref{mtim}. 
\end{oftp}

\begin{ack}
I would like to thank my advisor Yoichi Mieda for his constant support and encouragement. 

This work was carried out with the support from the Program for Leading Graduate Schools, MEXT, Japan. This work was also supported by the JSPS Research Fellowship for Young Scientists and KAKENHI Grant Number 19J21728.
\end{ack}

\begin{nota}
We use the following notations in this paper. 
\begin{itemize}
\item \textbf{$p$-adic fields and their extensions. } For a field $k$ of characteristic $p$, let $W(k)$ be the Cohen ring over $k$. Note that it is the ring of Witt vectors if $k$ is perfect. We denote by $\sigma$ the $p$-th power Frobenius on $k$ or its lift to $W(k)$. If $k=\Fpbar$, write $W$ and $K_0$ for $W(k)$ and $\Frac(W(k))$ respectively. We normalize the $p$-adic valuations $\ord_p$ on $\Q$, $\Qp$ and $\Frac(W(k))$ so that $\ord_p(p)=1$. 
\item For a scheme $S$, we denote by $\Irr(S)$ the set of irreducible components of $S$. 
\end{itemize}
\end{nota}

\section{Rapoport--Zink spaces for a ramified $\GU(2,2)$}\label{rzsp}

\subsection{Local PEL datum}\label{lpel}

We fix a datum which will be used until Section \ref{btnn}. Let $p$ be an odd prime number, $F$ a ramified quadratic extension field of $\Qp$, and $O_F$ the integer ring of $F$. Fix a uniformizer $\varpi$ of $O_F$, and denote by $a\mapsto \overline{a}$ the non-trivial Galois automorphism of $F$ over $\Qp$. Moreover, let $V$ be a $4$-dimensional $F$-vector space, and $(\,,\,)$ a $\Qp$-valued alternating form on $V$ satisfying $(ax,y)=(x,\overline{a}y)$ for any $a\in F$ and $x,y\in V$. Note that there is a unique $F/\Qp$-hermitian form $\langle \,,\,\rangle$ on $V$ such that
\begin{equation*}
(\,,\,)=\frac{1}{2}\tr_{F/\Qp}(\varpi^{-1}\langle\,,\,\rangle). 
\end{equation*}

In this paper, \emph{we assume that the $F/\Qp$-hermitian space $(V,\langle\,,\,\rangle)$ is split}, that is, there is an $F$-basis $\e_1,\ldots,\e_4$ of $V$ such that $\langle \e_i,\e_j\rangle=\delta_{i,5-j}$. We fix $\e_1,\ldots,\e_4$ as above, and put 
\begin{equation*}
\bLambda:=O_F\e_1\oplus O_F\e_2\oplus O_F(\varpi \e_3)\oplus O_F(\varpi \e_4). 
\end{equation*}
Then it is a $\varpi$-modular lattice, that is, $\bLambda^{\vee}=\varpi^{-1}\bLambda$. 

We define an algebraic group $G$ over $\Qp$ by
\begin{equation*}
G(R)=\{(g,c)\in \GL_{F\otimes_{\Qp}R}(V\otimes_{\Qp}R)\times R^{\times}\mid (g(x),g(y))=c(x,y)\text{ for all }x,y\in V\otimes_{\Qp}R\}
\end{equation*}
for any $\Qp$-algebra $R$. Note that $G$ is reductive and quasi-split over $\Qp$. We denote by $\sml_G \colon G\rightarrow \G_m$ the similitude character of $G$. 

We regard $G$ as a subgroup of $\Res_{F/\Qp}\GL_{F}(V)$ by the fixed basis $\e_1,\ldots,\e_4$, and put
\begin{equation*}
\mu \colon \G_{m}\rightarrow G;z\mapsto \diag(1,1,z,z). 
\end{equation*}

\subsection{$p$-divisible groups and isocrystals with additional structures}\label{pigs}

For a field extension $k/\Fpbar$, let $F_{W(k)}:=F\otimes_{\Zp}W(k)$ and $O_{F,W(k)}:=O_{F}\otimes_{\Zp}W(k)$. Note that $F_{W(k)}$ is a field and $O_{\Fbr}$ is the integer ring of $\Fbr$. Especially, we write $\Fbr$ and $O_{\Fbr}$ for $F_{W(\Fpbar)}$ and $O_{F,W(\Fpbar)}$ respectively. 

We denote by $\sigma$ the map $\id_{F}\otimes \sigma$ on $O_{F,W(k)}$. Moreover, we extend the map $a\mapsto \overline{a}$ to $F_{W(k)}$ as a $\Frac W(k)$-linear map. 

On the other hand, let $\nilp$ be the category of $W$-schemes in which $p$ is locally nilpotent. For $S\in \nilp$, put $\overline{S}:=S\times_{\spec W}\spec \Fpbar$. 

\begin{dfn}\label{pdiv}
Let $S\in \nilp$. 
\begin{enumerate}
\item A \emph{$p$-divisible group with $(G,\mu)$-structure} over $S$ is a triple $(X,\iota,\lambda)$ consisting of the following data: 
\begin{itemize}
\item $X$ is a $p$-divisible group over $S$ of dimension $4$ and height $8$, 
\item $\iota \colon O_F \rightarrow \End(X)$ is a ring homomorphism, 
\item $\lambda \colon X \rightarrow X^{\vee}$ is a quasi-isogeny so that $\lambda^{\vee}=-\lambda$, 
\end{itemize}
satisfying the following conditions for any $a\in O_F$: 
\begin{itemize}
\item (Kottwitz condition) $\det(T-\iota(a) \mid \Lie(X))=(T^2-\tr_{F/\Qp}(a)T+\N_{F/\Qp}(a))^2$, 
\item $\lambda \circ \iota(a)=\iota(\overline{a})^{\vee}\circ \lambda$. 
\end{itemize}
\item For two $p$-divisible groups with $(G,\mu)$-structures $(X,\iota,\lambda)$ and $(X',\iota',\lambda')$ over $S$, a quasi-isogeny from $(X,\iota,\lambda)$ to $(X',\iota',\lambda')$ is an $O_F$-linear quasi-isogeny $\rho \colon X\rightarrow X'$ satisfying $\rho^{\vee}\circ \lambda' \circ \rho=c(\rho)\lambda$ for some locally constant function $c(\rho)\colon \overline{S}\rightarrow \Qp^{\times}$. 
\end{enumerate}
\end{dfn}

\begin{rem}
The cocharacter $\mu$ appears in the Kottwitz condition. In fact, the right-hand side can be written as $\det(a\mid V_0)$ in the sense of \cite[3.23 a)]{Rapoport1996a}, where 
\begin{equation*}
V_0:=\{x\in V\otimes_{\Qp}K_0\mid \mu(z)x=x\text{ for all }z\in \G_m\}=\Fbr \e_1\oplus \Fbr\e_2. 
\end{equation*}
\end{rem}

It is convenient to introduce the notion of isocrystals with $G$-structures. 
\begin{dfn}
Let $k$ be an algebraically closed field of characteristic $p$. 
\begin{enumerate}
\item An isocrystal with $G$-structure over $k$ is a triple $(N,\F_{N},(\,,\,)_{N})$, where
\begin{itemize}
\item $N$ is a finite-dimensional $F\otimes_{\Zp}W(k)$-vector space,
\item $\F_{N}\colon N\rightarrow N$ is a $\sigma$-linear map,
\item $(\,,\,)_{N}\colon N\times N\rightarrow \Frac W(k)$ is a non-degenerate alternating form satisfying $(ax,y)_{N}=(x,\overline{a}y)_{N}$ for any $a\in F\otimes_{\Zp}W(k)$. 
\end{itemize}
\item For two isocrystals with $G$-structures $(N,\F_{N},(\,,\,)_{N})$ and $(N',\F_{N'},(\,,\,)_{N'})$, a morphism of isocrystals with $G$-structures is a homomorphism of $F\otimes_{\Zp}W(k)$-vector space $\varphi \colon N\xrightarrow{\cong}N'$ and $c\in (F\otimes_{\Zp}W(k))^{\times}$ such that $\varphi \circ \F_{N}=\F_{N'}\circ \varphi$ and $(\varphi(x),\varphi(y))_{N'}=c(x,y)_{N}$ for any $x,y\in N$. 
\end{enumerate}
\end{dfn}

We can construct isocrystals with $G$-structure by elements of $G(K_0)$ or $p$-divisible groups with $(G,\mu)$-structures as follow:  
\begin{enumerate}
\item Set $k:=\Fpbar$. For $b\in G(K_0)$, we define an isocrystal over $\Fpbar$ as follows: 
\begin{equation*}
(N_{b},\F_{b}):=(V\otimes_{\Qp}K_0,b(\id \otimes \sigma)). 
\end{equation*}
Then $(N_b,\F_{b},(\,,\,))$ is an isocrystal with $G$-structure over $\Fpbar$ by the natural way. Then, for two elements $b$ and $b'$ of $G(K_0)$, there is an isomorphism between $(N_b,\F_{b},(\,,\,))$ and $(N_{b'},\F_{b'},(\,,\,))$ if and only if $b$ and $b'$ are $\sigma$-conjugate, that is, there is $g\in G(K_0)$ such that $b'=gb\sigma(g)^{-1}$. Hence this correspondence induces a bijection between the set of all $\sigma$-conjugacy class of $G(K_0)$ and the set of isomorphism classes of isocrystals with $G$-structures over $\Fpbar$. 
\item Let $p$-divisible group with $G$-structure $(X,\iota,\lambda)$ over $k$. Consider the covariant rational Dieudonn{\'e} module $\D(X)_{\Q}$ of $X$. Then $\iota$ and $\lambda$ induces an $F$-action and a non-degenerate alternating form on $\D(X)_{\Q}$ respectively, and $\D(X)_{\Q}$ becomes an isocrystal with $G$-structure over $k$ by their structures. 
\end{enumerate}

\subsection{Definition of the Rapoport--Zink spaces}\label{dfrz}

Let $b\in G(K_0)$. Here we assume that $(N_b,\F_b)$ is isoclinic of slope $1/2$. Take a $p$-divisible group with $(G,\mu)$-structure $(\X_0,\iota_0,\lambda_0)$ over $\spec \Fpbar$ such that $\D(\X_0)_{\Q}\cong N_b$ as isocrystals with $G$-structures over $\Fpbar$ and $\Ker(\lambda_0)=\Ker(\iota_0(\varpi))$. We define $\M_{b,\mu}$, the \emph{Rapoport-Zink space for $(G,b,\mu)$}, as the functor that parameterizes the equivalence classes of $(X,\iota,\lambda,\rho)$ for $S\in \nilp_{W}$, where $(X,\iota,\lambda)$ is a $p$-divisible group with $(G,\mu)$-structure over $S$ satisfying $\Ker(\lambda)=\Ker(\iota(\varpi))$, and 
\begin{equation*}
\rho \colon X\times_{S}\overline{S} \rightarrow \X_0 \otimes_{\Fpbar}\overline{S}
\end{equation*}
is a quasi-isogeny of $p$-divisible groups with $(G,\mu)$-structures. Two $p$-divisible groups with $G$-structures with quasi-isogenies $(X,\iota,\lambda,\rho)$ and $(X',\iota',\lambda',\rho')$ are equivalent if $(\rho')^{-1}\circ \rho$ lifts to an isomorphism $X\rightarrow X'$ over $S$. 

The functor $\M_{b,\mu}$ is representable by a formal scheme over $\spf W$, which is formally locally of finite type. Moreover, there is a decomposition into open and closed formal subschemes
\begin{equation*}
\M_{b,\mu}=\coprod_{i\in \Z}\M_{b,\mu}^{(i)},
\end{equation*}
where $\M_{b,\mu}^{(i)}$ is the locus $(X,\iota,\lambda,\rho)$ of $\M_{b,\mu}$ where $\im(c(\rho))\subset p^i\Zpt$. These follow from \cite[Theorem 3.25]{Rapoport1996a}. 

We define another $F/\Qp$-hermitian space $C_{b}$ associated to $b$. Consider the isocrystal with $G$-structure $(N_{b},\F_{b},(\,,\,))$ over $\Fpbar$. Fix a square root $\eta \in W^{\times}$ of $p^{-1}\varpi^2$, and put $\F_{b,0}:=\eta \varpi^{-1}\F_{b}$. By \cite[p.1171]{Rapoport2014a}, we have 
\begin{equation*}
(\F_{b,0}(x),\F_{b,0}(y))=\sigma((x,y)),\quad \langle \F_{b,0}(x),\F_{b,0}(y)\rangle=\sigma(\langle x,y\rangle)
\end{equation*}
for any $x,y\in N_{b}$. Moreover, $(N_{b},\F_{b,0})$ is isoclinic of slope $0$. Let $C_{b}$ be the $\F_{b,0}$-fixed part of $N_{b}$. Then we can consider $(\,,\,)$ and $\langle\,,\,\rangle$ on $C_b$. Moreover, $(C_b,\langle\,,\,\rangle)$ is an $F/\Qp$-hermitian space of dimension $4$. 

We recall an algebraic group $J_{b}$ over $\Qp$ defined by \cite[1.12]{Rapoport1996a}. It is defined by
\begin{equation*}
J_b(R)=\{g\in G(R\otimes_{\Qp}K_0)\mid g\circ \F_b=\F_b\circ g\}
\end{equation*}
for any $\Qp$-algebra $R$. Let $\sml_{J_b}\colon J_b\rightarrow \G_m$ be the character induced by $\sml_G$. As in \cite[1.12]{Rapoport1996a}, the group $J_{b}(\Qp)$ acts on $\M_{b,\mu}$ by 
\begin{equation*}
J_{b}(\Qp)\times \M_{b,\mu}(S)\rightarrow \M_{b,\mu}(S);(g,(X,\iota,\lambda,\rho))\mapsto (X,\iota,\lambda,g\circ \rho). 
\end{equation*}
This action induces an isomorphism $\M_{b,\mu}^{(0)}\cong \M_{b,\mu}^{(i)}$ for each $i\in \Z$. Moreover, $\M_{b,\mu}^{(0)}$ is stable under $J_{b}^1(\Qp)$, where
\begin{equation*}
J_{b}^1:=\{g\in J_{b}\mid \sml_{J_b}(g)=1\}. 
\end{equation*}

The structure of $J_{b}$ is as follows: 
\begin{prop}\label{jbcb}(\cite[Proposition 1.5]{Rapoport2014a})
\emph{There is an isomorphism of $\Qp$-algebraic groups 
\begin{equation*}
J_b\cong \GU(C_b). 
\end{equation*}
Moreover, $\sml_{J_b}$ equals the composition of the similitude character of $\GU(C_b)$ and the above isomorphism. }
\end{prop}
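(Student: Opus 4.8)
The plan is to unwind the definition of $J_b$ and recognize it, after replacing $\F_b$ by its slope-$0$ twist $\F_{b,0}$, as the functor of $F/\Qp$-hermitian automorphisms of $C_b$ together with a similitude.

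First I would record the reduction to $\F_{b,0}$. Since $G$ is realized as a subgroup of $\Res_{F/\Qp}\GL_F(V)$, every element $g\in G(R\otimes_{\Qp}K_0)$ is $F\otimes_{\Qp}K_0$-linear, hence commutes with multiplication by the unit $\eta\varpi^{-1}\in (F\otimes_{\Qp}K_0)^{\times}$; as $\F_{b,0}=\eta\varpi^{-1}\F_b$, this gives $g\circ\F_b=\F_b\circ g$ if and only if $g\circ\F_{b,0}=\F_{b,0}\circ g$, so that
\begin{equation*}
J_b(R)=\{g\in G(R\otimes_{\Qp}K_0)\mid g\circ\F_{b,0}=\F_{b,0}\circ g\}
\end{equation*}
for every $\Qp$-algebra $R$. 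Next, since $(N_b,\F_{b,0})$ is isoclinic of slope $0$, the Dieudonn\'e--Manin classification identifies it with a trivial (unit-root) isocrystal: the natural map $C_b\otimes_{\Qp}K_0\to N_b$ is an isomorphism of $F\otimes_{\Qp}K_0$-modules carrying $\id\otimes\sigma$ to $\F_{b,0}$. Combined with the relations $(\F_{b,0}x,\F_{b,0}y)=\sigma((x,y))$ and $\langle\F_{b,0}x,\F_{b,0}y\rangle=\sigma(\langle x,y\rangle)$ recalled before the statement, this shows that $(\,,\,)$ and $\langle\,,\,\rangle$ restrict to a $\Qp$-valued alternating form and an $F/\Qp$-hermitian form on $C_b$, both non-degenerate because they are non-degenerate on $N_b=C_b\otimes_{\Qp}K_0$, with $C_b$ of $F$-dimension $4$ and $(\,,\,)|_{C_b}=\frac{1}{2}\tr_{F/\Qp}(\varpi^{-1}\langle\,,\,\rangle|_{C_b})$, exactly as for $(V,\langle\,,\,\rangle)$ in Section \ref{lpel}. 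Thus $\GU(C_b)$ makes sense and is an inner form of $G=\GU(V)$.

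Then I would match the two group functors. For a $\Qp$-algebra $R$, base change the above trivialization along $K_0\to R\otimes_{\Qp}K_0$ to identify the $F\otimes_{\Qp}R\otimes_{\Qp}K_0$-linear automorphisms of $C_b\otimes_{\Qp}R\otimes_{\Qp}K_0$ commuting with $\id\otimes\sigma$ with the $F\otimes_{\Qp}R$-linear automorphisms of $C_b\otimes_{\Qp}R$; concretely one works over a finite unramified layer $K_n$ over which $C_b$ already admits a basis, where this is standard finite Galois descent for $(R\otimes_{\Qp}K_n)$-modules equipped with a semilinear $\gal(K_n/\Qp)$-action, followed by a passage to the limit over $n$. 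Under this identification the alternating form $(\,,\,)$ on $N_b$ goes to $(\,,\,)|_{C_b}$, so the condition ``$(gx,gy)=c(x,y)$ for some $c\in R^{\times}$'' defining $J_b(R)$ becomes exactly the condition defining $\GU(C_b)(R)$. This produces an isomorphism of $\Qp$-group functors $J_b\cong\GU(C_b)$, hence of algebraic groups.

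Finally, the similitude factor $c\in R^{\times}$ attached to $g$ is literally the same element on both sides of the identification, so $\sml_{J_b}$ --- which is the restriction of $\sml_G$ --- is carried to the similitude character of $\GU(C_b)$, giving the last assertion. The main obstacle is making the descent in the previous paragraph precise for an \emph{arbitrary} $\Qp$-algebra $R$: once $(N_b,\F_{b,0})$ has been trivialized --- and this is exactly where the slope-$0$ hypothesis, i.e. the normalization $\F_{b,0}=\eta\varpi^{-1}\F_b$, is used --- the identification of automorphism functors and of the bilinear forms is a formal consequence of faithfully flat Galois descent along the finite layers of $K_0/\Qp$, but keeping track of the semilinear structures and of the forms through this base change is the only non-formal point.
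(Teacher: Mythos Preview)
The paper does not supply its own proof of this proposition; it simply records the statement and cites \cite[Proposition 1.5]{Rapoport2014a}. Your argument is correct and is essentially the standard one (and presumably close to what is in the cited reference): pass from $\F_b$ to the slope-$0$ twist $\F_{b,0}$, trivialize $(N_b,\F_{b,0})\cong(C_b\otimes_{\Qp}K_0,\id\otimes\sigma)$ via Dieudonn\'e--Manin, and then descend both the automorphisms and the form to $C_b$.

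One small comment on the descent step: your appeal to ``finite unramified layers $K_n$'' is exactly the right justification, but the phrasing can be sharpened. Since $K_0=\varinjlim_n K_n$ with $K_n/\Qp$ finite Galois and $\gal(K_n/\Qp)$ generated by $\sigma$, any $\sigma$-fixed element of $R\otimes_{\Qp}K_0$ already lies in some $R\otimes_{\Qp}K_n$, and ordinary finite Galois descent gives $(R\otimes_{\Qp}K_n)^{\gal(K_n/\Qp)}=R$. Hence $(R\otimes_{\Qp}K_0)^{\sigma=1}=R$ for \emph{every} $\Qp$-algebra $R$, with no topology or completion needed; applying this entrywise after choosing an $F$-basis of $C_b$ immediately gives the identification of $\sigma$-invariant $F\otimes_{\Qp}R\otimes_{\Qp}K_0$-linear automorphisms with $F\otimes_{\Qp}R$-linear automorphisms of $C_b\otimes_{\Qp}R$, and likewise forces the similitude factor $c$ into $R^{\times}$. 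With this said, there is no genuine gap in what you wrote.
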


Let $k/\Fpbar$ be a field extension. We give a description of the set of $k$-rational points of $\M_{b,\mu}^{(0)}$ by means of linear algebra. Put $N_{b,W(k)}:=N_{b}\otimes_{W}W(k)$, and let $\DiL^{\varpi}(N_{b,W(k)})$ be the set all $O_{F,W(k)}$-lattices $M$ in $N_{b,W(k)}$ satisfying the following conditions: 
\begin{itemize}
\item $M^{\vee}=\varpi^{-1}M$,
\item $pM\subset \F_{b}^{-1}(pM)\subset M$,
\item $\length_{O_{F,W(k)}}M/\F_{b}^{-1}(pM)=4$ and $\length_{O_{F,W(k)}}\varpi M+\F_{b}^{-1}(pM)/\F_{b}^{-1}(pM)\leq 2$.
\end{itemize}

\begin{prop}\label{hmlt}
\emph{There is a $J_{b}^{1}(\Qp)$-equivariant bijection
\begin{equation*}
\M_{b,\mu}^{(0)}(k)\cong \DiL^{\varpi}(N_{b,W(k)}). 
\end{equation*}}
\end{prop}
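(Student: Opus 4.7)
The plan is to apply covariant Dieudonn\'e theory to turn the moduli description into lattice-theoretic data. To a $k$-point represented by a quadruple $(X,\iota,\lambda,\rho)$, I would associate the covariant Dieudonn\'e module $M:=\D(X)$ over the Cohen ring $W(k)$. The quasi-isogeny $\rho$ induces an isomorphism of isocrystals $\D(X)_{\Q}\xrightarrow{\sim}N_b\otimes_W W(k)=N_{b,W(k)}$ intertwining Frobenius, which identifies $M$ with a full $W(k)$-lattice in $N_{b,W(k)}$. The $O_F$-action $\iota$ makes $M$ into an $O_{F,W(k)}$-module, and the standard Dieudonn\'e inclusions $pM\subset F(M)\subset M$ are equivalent, via the identity $V(M)=\F_b^{-1}(pM)$ coming from $V=pF^{-1}$ on the isocrystal, to the chain $pM\subset \F_b^{-1}(pM)\subset M$ appearing in the definition of $\DiL^{\varpi}(N_{b,W(k)})$.

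Next I would translate the polarization and dimension data. The compatibility $\rho^{\vee}\circ\lambda_0\circ\rho=c(\rho)\lambda$ identifies, up to the scalar $c(\rho)$, the alternating form on $M\otimes\Q$ induced by $\lambda$ with the restriction of $(\,,\,)_N$. The $(0)$-component is defined by $c(\rho)\in\Zpt$, and the hypothesis $\Ker(\lambda)=\Ker(\iota(\varpi))$ asserts that $\lambda$ is exactly $\varpi$-modular relative to the $O_F$-action; combined, taking duals yields $M^{\vee}=\varpi^{-1}M$, where $M^{\vee}$ is computed with respect to $(\,,\,)_N$. For the length conditions, $\Lie(X)=M/V(M)=M/\F_b^{-1}(pM)$ is an $O_{F,W(k)}$-module whose length coincides with its $k$-dimension $4$ since $O_{F,W(k)}/\varpi\cong k$, giving $\length_{O_{F,W(k)}}M/\F_b^{-1}(pM)=4$. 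The Kottwitz polynomial identity $\det(T-\iota(a)\mid\Lie(X))=(T^2-\tr_{F/\Qp}(a)T+\N_{F/\Qp}(a))^2$, read together with the $O_{F,W(k)}$-structure of $\Lie(X)$ and the polarization constraint $M^{\vee}=\varpi^{-1}M$, pins down the $k[\varpi]/(\varpi^2)$-module structure of $\Lie(X)$ so that the image of $\iota(\varpi)$ has $O_{F,W(k)}$-length at most $\min(r,s)=2$, i.e.\ $\length_{O_{F,W(k)}}(\varpi M+\F_b^{-1}(pM))/\F_b^{-1}(pM)\leq 2$.

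The inverse construction takes $M\in\DiL^{\varpi}(N_{b,W(k)})$ to the quadruple obtained by applying the inverse Dieudonn\'e functor to $M$ equipped with $F:=\F_b|_M$: the $O_{F,W(k)}$-structure gives $\iota$, the pairing $(\,,\,)_N$ together with $M^{\vee}=\varpi^{-1}M$ produces $\lambda$ with $\Ker(\lambda)=\Ker(\iota(\varpi))$, and the inclusion $M\hookrightarrow N_{b,W(k)}$ provides $\rho$ with $c(\rho)\in\Zpt$, placing the result in $\M_{b,\mu}^{(0)}$. Two quadruples are equivalent exactly when their Dieudonn\'e lattices coincide inside $N_{b,W(k)}$, so the correspondence is a bijection. $J_{b}^{1}(\Qp)$-equivariance is immediate from functoriality: $g\in J_b(\Qp)$ acts on the moduli side by $\rho\mapsto g\circ\rho$, which on Dieudonn\'e modules sends $M$ to $gM$, and the condition $\sml_{J_b}(g)=1$ preserves $(\,,\,)_N$ and hence the normalization $M^{\vee}=\varpi^{-1}M$. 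The most delicate point is the derivation of the length inequality $\leq 2$ from the Kottwitz polynomial condition together with the polarization $M^{\vee}=\varpi^{-1}M$, since the polynomial identity becomes almost tautological when evaluated naively over the characteristic-$p$ base $k$; this is the standard Pappas--Rapoport analysis for the ramified unitary signature $(2,2)$, which reduces to classifying $O_{F,W(k)}/\varpi^{2}$-modules of length $4$ compatible with the given polarization and Kottwitz polynomial.
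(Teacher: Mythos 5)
Your approach is the same as the paper's: the paper's proof is a one-line citation of Zink's theory of windows, which is exactly the covariant Dieudonn\'e theory over the Cohen ring $W(k)$ (needed because $k$ may be imperfect) that you invoke. Your translations — $\rho$ identifying $M=\D(X)$ with a lattice in $N_{b,W(k)}$, the Frobenius/Verschiebung chain $pM\subset V(M)=\F_b^{-1}(pM)\subset M$, the polarization condition $\Ker\lambda=\Ker\iota(\varpi)$ giving $M^\vee=\varpi^{-1}M$ after the normalization $c(\rho)\in\Zpt$ on the $(0)$-component, and $\length_{O_{F,W(k)}}M/V(M)=4$ coming from $\dim_k\Lie(X)=4$ because $O_{F,W(k)}/\varpi\cong k$ — are all correct.

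Where you go slightly wrong is in treating the inequality $\length_{O_{F,W(k)}}(\varpi M+V(M))/V(M)\leq 2$ as a delicate consequence of the Kottwitz polynomial that needs a separate Pappas--Rapoport analysis. Over a field $k$ of characteristic $p$ that inequality is automatic once the length is $4$: since $pM\subset V(M)$ and $\varpi^2$ is a unit times $p$, the quotient $M/V(M)$ is a module over $O_{F,W(k)}/p\cong k[\varpi]/\varpi^2$, hence decomposes as $(k[\varpi]/\varpi^2)^{a}\oplus (k[\varpi]/\varpi)^{b}$ with $2a+b=4$; then $\varpi\cdot(M/V(M))$ has $k$-dimension $a\leq 2$. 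Similarly the Kottwitz identity itself is vacuous over such $k$, because writing $a=x+y\varpi$ both sides reduce to $(T-\bar x)^4$ (the operator $\iota(a)$ is $\bar x\cdot\id$ plus a nilpotent, and $\N(a)\equiv \bar x^2$, $\tr(a)\equiv 2\bar x$ modulo $p$). This is precisely what the paper means when it says the third condition in $\DiL^{\varpi}$ \emph{is equivalent to} the Kottwitz condition: over a residue field both collapse to the single statement $\dim_k\Lie(X)=4$, with nothing further to extract. So there is no genuinely delicate point left unresolved; you had already observed that the polynomial identity is ``almost tautological'' over $k$, and the correct conclusion is that the inequality $\leq 2$ is tautological too.
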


\begin{proof}
These follow from the theory of windows. See \cite{Zink2001}. Note that the third condition on $\DiL^{\varpi}(N_{b,W(k)})$ is equivalent to the Kottwitz condition. 
\end{proof}

\subsection{The neutralness condition}\label{ntrl}

Here we introduce the notion of the neutralness condition which is defined in \cite[Definition 2.5]{Rapoport2014b}. For this, we recall the \emph{Kottwitz map} defined by \cite{Kottwitz1985}. Let $B(G)$ be the set of all $\sigma$-conjugacy classes of $G(K_0)$, and $\pi_1(G)$ the Borovoi's algebraic fundamental group of $G$. We denote by $\Gamma$ the absolute Galois group of $\Qp$. Then the Kottwitz map is the map
\begin{equation*}
\overline{\kappa}_{G}\colon B(G)\rightarrow \pi_1(G)_{\Gamma}. 
\end{equation*}
By \cite[Theorem 1.15]{Rapoport1996b}, it induces an isomorphism
\begin{equation*}
B(G)_{\bs}\xrightarrow{\cong} \pi_1(G)_{\Gamma}, 
\end{equation*}
where $B(G)_{\bs}$ is the set of $\sigma$-conjugacy classes of $G(K_0)$ which is basic in the sense of \cite{Kottwitz1985}. 

We can find an explicit description of $\overline{\kappa}_{G}$ in \cite[Section 1.2]{Pappas2009} and \cite[Section 3.3]{Rapoport2017}. More precisely, there is an isomorphism
\begin{equation*}
\psi \colon \pi_1(G)_{\Gamma}\xrightarrow{\cong} \Z \times \Z/2. 
\end{equation*}
Moreover, the composite $\psi \circ \overline{\kappa}_{G}$ is induced by 
\begin{equation*}
\kappa_{G}\colon G(K_0)\rightarrow \Z \times \Z/2;b\mapsto (\ord_p(\sml_G(b)),\det \!{}_{\Fbr}(b)/\sml_G(b)^2\bmod \varpi O_{\Fbr})
\end{equation*}
(note that $\det\!{}_{\Fbr}(g)/\sml_G(g)^2$ has $\Fbr/K_0$-norm $1$). 

\begin{dfn}
Let $\mu$ be as in Section \ref{lpel}, and denote by $\mu^{\natural}$ the image of $\mu$ under the map 
\begin{equation*}
X_{*}(G)\rightarrow X_{*}(G^{\ab})_{\Gamma}\cong \pi_1(G)_{\Gamma}. 
\end{equation*}
An element $b\in G(K_0)$ or $[b]\in B(G)$ is said to be \emph{$\mu$-neutral} if $\kappa_{G}([b])=\mu^{\natural}$. 
\end{dfn}

\begin{rem}
In \cite[Definition 2.5]{Rapoport2014b}, the neutral condition is defined among elements in $B(G)$ which is \emph{acceptable} with respect to $\mu$. However, the above definition makes sense for any $b\in B(G)$. 
\end{rem}

For $b\in G(K_0)$, put $[b]:=\{gb\sigma(g)^{-1}\in G(K_0)\mid g\in G(K_0)\}$. We determine and consider the $\mu$-neutral condition on the set 
\begin{equation*}
B(G)_{1,\bs}:=\{[b]\in B(G)_{\bs}\mid \ord_p(\sml_G(b))=1\}. 
\end{equation*}
Take $[b]\in B(G)_{1,\bs}$. Then it is represented by one of the following: 
\begin{equation*}
b_0:=
\begin{pmatrix}
&&&\varpi\\
&&\varpi&\\
&\varpi &&\\
\varpi &&&
\end{pmatrix}
,\quad b_1:=
\begin{pmatrix}
&&&\varpi\\
&\varpi&0&\\
&0&-\varpi&\\
\varpi &&&
\end{pmatrix}
\end{equation*}
Hence we have an equality
\begin{equation*}
B(G)_{1,\bs}=\{[b_0],[b_1]\}. 
\end{equation*}
Note that $\kappa_{G}([b_j])=(1,j)$ for $j\in \{0,1\}$, and $(N_{b_j},\F_{b_j})$ is isoclinic of slope $1/2$. On the other hand, we have $\psi(\mu^{\natural})=(1,0)$. Hence $[b]$ is $\mu$-neutral if and only if $[b]=[b_0]$. 

We also obtained a criterion for the basic condition by means of an isocrystal. 
\begin{prop}\label{bsss}
\emph{An element $b\in G(K_0)$ satisfies $[b]\in B(G)_{1,\bs}$ if and only if $(N_{b},\F_{b})$ is isoclinic of slope $1/2$. }
\end{prop}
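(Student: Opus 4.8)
The plan is to exploit the Dieudonné–Manin classification of isocrystals together with the already–established description of $B(G)_{1,\bs}$. First I would prove the easy direction: if $[b]\in B(G)_{1,\bs}$, then by the discussion preceding the statement $[b]=[b_0]$ or $[b]=[b_1]$, and one checks directly from the explicit matrices that $(N_{b_j},\F_{b_j})$ is isoclinic of slope $1/2$ — indeed $\F_{b_j}^2$ acts on $N_{b_j}=V\otimes_{\Qp}K_0$ as multiplication by $p$ (up to a unit), since $b_j\sigma(b_j)=b_j^2$ is the scalar matrix $\varpi^2 I$ in the first case and $-\varpi^2 I$ (still of valuation $2$) in the second, and $\ord_p(\varpi^2)=1$. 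Hence all slopes equal $1/2$. Since slopes are invariant under $\sigma$-conjugacy, the same holds for every $b$ in the class.

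For the converse I would argue as follows. Suppose $(N_b,\F_b)$ is isoclinic of slope $1/2$. Isoclinic isocrystals of a fixed slope over $\Fpbar$ of a given dimension are all isomorphic (Dieudonné–Manin), so the $F\otimes_{\Qp}K_0$-module with its $\sigma$-linear operator is determined; what remains is to pin down the class of $b$ inside $B(G)$, i.e.\ to see that the extra hermitian/similitude structure does not create more than the two classes $[b_0],[b_1]$. I would first note that isoclinicity of slope $1/2$ forces $\ord_p(\sml_G(b))=1$: the similitude $(\,,\,)_{N_b}$ satisfies $(\F_b x,\F_b y)=\sml_G(b)\,\sigma((x,y))$, so comparing the slope of the line $\bigwedge^{\mathrm{top}} N_b$ computed two ways gives $8\cdot\tfrac12 = \dim_{K_0}(N_b)\cdot\tfrac12 = v(\det\F_b)$, while $\det\F_b$ has $p$-adic valuation $4\,\ord_p(\sml_G(b))$ (as $\F_b$ is a similitude of a rank-$8$ form over $\Qp$, or rank $4$ over $F$), forcing $\ord_p(\sml_G(b))=1$. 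Next, isoclinicity implies that $b$ is basic: the Newton polygon is a single line, and for basic $[b]$ this is exactly the characterization via $J_b$ being an inner form of $G$ (Kottwitz), or more directly, a non-basic class would have a nontrivial slope decomposition of $(N_b,\F_b)$, contradicting isoclinicity. Therefore $[b]\in B(G)_\bs$ and $\ord_p(\sml_G(b))=1$, i.e.\ $[b]\in B(G)_{1,\bs}$.

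The cleanest way to package the converse is via the Kottwitz map: by \cite[Theorem 1.15]{Rapoport1996b} the map $\overline{\kappa}_G$ restricts to a bijection $B(G)_\bs\xrightarrow{\cong}\pi_1(G)_\Gamma$, and under $\psi$ we have $\psi(\overline{\kappa}_G([b]))=(\ord_p(\sml_G(b)),\ \det_{\Fbr}(b)/\sml_G(b)^2)$. Having shown $[b]$ is basic with $\ord_p(\sml_G(b))=1$, the class is determined by the second coordinate in $\Z/2$, which takes only the two values $0,1$ realized by $[b_0],[b_1]$; hence $[b]\in\{[b_0],[b_1]\}=B(G)_{1,\bs}$, as desired.

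The main obstacle I expect is the converse direction, specifically justifying that isoclinicity of slope $1/2$ already forces the basicness and the valuation condition on the similitude factor, rather than merely constraining the underlying bare isocrystal. This is where one must use that $\F_b$ respects the alternating form (to control $v(\det\F_b)$ and hence $\ord_p(\sml_G(b))$) and that the Newton cocharacter of a non-basic element would give a genuine slope filtration. Once these two points are in place, invoking the explicit form of the Kottwitz map and the already-derived equality $B(G)_{1,\bs}=\{[b_0],[b_1]\}$ finishes the argument immediately.
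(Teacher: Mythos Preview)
Your proof is correct and follows essentially the same path as the paper's: the forward direction uses the explicit representatives $b_0,b_1$, and the converse deduces $\ord_p(\sml_G(b))=1$ from the slopes and then basicness. One small slip: $b_1^2=\varpi^2 I$ (not $-\varpi^2 I$), and for the basicness step the paper simply invokes Proposition~\ref{jbcb} (so that $J_b\cong\GU(C_b)$ is an inner form of $G=\GU(V)$, hence $b$ is basic) rather than your direct Newton-cocharacter argument, but your alternative is equally valid.
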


\begin{proof}
By the argument as above, $(N_{b},\F_{b})$ is isoclinic of slope $1/2$ if $[b]\in B(G)_{1,\bs}$. On the other hand, suppose that $(N_{b},\F_{b})$ is isoclinic of slope $1/2$. Then it is clear that $\ord_p(\sml_G(b))=1$. On the other hand, Proposition \ref{jbcb} implies that $J_{b}$ is an inner form, that is, $b$ is basic. Hence we have $[b]\in B(G)_{1,\bs}$. 
\end{proof}

\begin{rem}
Both $[b_0]$ and $[b_1]$ are acceptable with respect to $\mu$. This follows from the weak admissibility (this can be checked directly) and \cite[Proposition 2.9]{Rapoport2014b}. 
\end{rem}

Finally, we give a characterization of the $\mu$-neutralness condition for $b\in G(K_0)$ satisfying $[b]\in B(G)_{1,\bs}$ by means of linear algebra. 

\begin{prop}\label{nteq}
\emph{Let $b\in G(K_0)$ satisfying $[b]\in B(G)_{1,\bs}$. Then the following are equivalent: 
\begin{enumerate}
\item $[b]$ is $\mu$-neutral,
\item $\det_{\Fbr}(b)/\sml_G(b)\in 1+\varpi O_{\Fbr}$,
\item $C_{b}$ is split as an $F/\Qp$-hermitian space, 
\item $\length_{O_{\Fbr}}\varpi M+\bV_{b}(M)/\bV_{b}(M)$ is an even integer for any (some) $M\in \DiL^{\varpi}(N_{b})$, where $\bV_{b}:=p\F_{b}^{-1}$. 
\end{enumerate}}
\end{prop}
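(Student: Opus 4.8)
The plan is to prove the four equivalences by relating them all to the explicit Kottwitz invariant computation. The cycle I would establish is $(1)\Leftrightarrow(2)$ directly from the formula for $\kappa_G$, then $(2)\Leftrightarrow(3)$ via the discriminant/Hasse invariant of the hermitian space $C_b$, and finally $(2)\Leftrightarrow(4)$ by a Dieudonné-lattice computation of $\det_{\Fbr}(b)$ in terms of elementary divisors.

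First I would dispatch $(1)\Leftrightarrow(2)$. Since $[b]\in B(G)_{1,\bs}$, we have $\ord_p(\sml_G(b))=1=\ord_p(\sml_G(\mu))$, so by the explicit description recalled just before the proposition, $\psi(\overline\kappa_G([b]))=(1,\det_{\Fbr}(b)/\sml_G(b)^2\bmod\varpi O_{\Fbr})$ while $\psi(\mu^\natural)=(1,0)$. Thus $[b]$ is $\mu$-neutral iff $\det_{\Fbr}(b)/\sml_G(b)^2\in 1+\varpi O_{\Fbr}$; since $\sml_G(b)\in\Qp^\times$ and $\varpi O_{\Fbr}\cap\Qp=pO_{F}\cap\Qp$ with $\sml_G(b)^2\equiv\sml_G(b)^2$, the condition $\det_{\Fbr}(b)/\sml_G(b)^2\in 1+\varpi O_{\Fbr}$ is equivalent to $\det_{\Fbr}(b)/\sml_G(b)\in 1+\varpi O_{\Fbr}$ after noting that both $\sml_G(b)$ and $\sml_G(b)^2$ are units times the same power of $p$ and that reduction mod $\varpi$ only sees the residue; I would spell out this last normalization carefully since it is the one genuinely fiddly point. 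This gives $(1)\Leftrightarrow(2)$.

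Next, $(2)\Leftrightarrow(3)$. The hermitian space $C_b$ of dimension $4$ over $F/\Qp$ is classified up to isomorphism by its discriminant in $\Qp^\times/\N_{F/\Qp}(F^\times)$, and split means trivial discriminant. I would compute $\disc(C_b)$ from the $\F_{b,0}$-fixed structure: since $C_b\otimes_{\Qp}K_0=N_b$ with the given hermitian form, and $\F_{b,0}=\eta\varpi^{-1}\F_b$ where $\F_b=b\circ(\id\otimes\sigma)$, the determinant of $\F_{b,0}$ on $N_b$ (which controls the discriminant class of the fixed space) is a unit multiple of $\det_{\Fbr}(b)$, and comparing with $\sml_G(b)$ — which governs the similitude twist — gives exactly that $C_b$ is split iff $\det_{\Fbr}(b)/\sml_G(b)$ is a norm, i.e. lies in $1+\varpi O_{\Fbr}$ up to the ambiguity already encountered. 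This is essentially the content of \cite[Proposition 1.5]{Rapoport2014a} and \cite{Rapoport2014b}, so I would cite those for the norm-computation and only verify the bookkeeping.

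Finally, $(2)\Leftrightarrow(4)$, which I expect to be the main obstacle. For $M\in\DiL^\varpi(N_b)$ one has $\bV_b=p\F_b^{-1}$, and $\F_b(M)\supset pM$ with the chain $pM\subset\bV_b(M)\subset M$ (after rewriting the lattice conditions); the quantity $\length_{O_{\Fbr}}(\varpi M+\bV_b(M))/\bV_b(M)$ measures the relative position of $\varpi M$ and $\bV_b(M)$ inside $M$. I would express $\det_{\Fbr}(b)$ as, up to a power of $p$ determined by $\ord_p\sml_G(b)=1$, the "determinant" of the semilinear operator $\bV_b$ acting between $M$ and a reference lattice, so that its class mod $\varpi$ is computable from the elementary divisor type of $\bV_b(M)\subset M$ together with the $\varpi$-modularity $M^\vee=\varpi^{-1}M$. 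The parity of $\length_{O_{\Fbr}}(\varpi M+\bV_b(M))/\bV_b(M)$ then matches the mod-$\varpi$ residue of $\det_{\Fbr}(b)/\sml_G(b)$ because the total length $\length(M/\bV_b(M))$ is fixed (equal to $\ord_p$ of the similitude times the rank, hence $4$) and the ``odd part'' is governed precisely by whether $\varpi M\subset\bV_b(M)$ or not. Independence of the chosen $M$ follows since any two such lattices differ by $J_b^1(\Qp)\cong\U(C_b)(\Qp)$ by Proposition~\ref{hmlt}, and this action preserves $\det_{\Fbr}(b)$ and the parity. The delicate part is keeping the signs and powers of $\varpi$ straight in the determinant identification; I would set up one explicit basis adapted to the chain $pM\subset\bV_b(M)\subset M\subset\varpi^{-1}\bV_b(M)$ and compute directly, using $b_0$ and $b_1$ as sanity checks.
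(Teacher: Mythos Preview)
Your route through (ii) is more laborious than what the paper does, and two of your steps have issues. First, the reconciliation of $\sml_G(b)$ versus $\sml_G(b)^2$ in (ii) does not work: since $\ord_p(\sml_G(b))=1$ and $\N_{F/\Qp}(\det_{\Fbr}(b))=\sml_G(b)^4$, the quotient $\det_{\Fbr}(b)/\sml_G(b)$ has positive $\varpi$-valuation and can never lie in $1+\varpi O_{\Fbr}$; the statement of (ii) should simply be read with $\sml_G(b)^2$, matching the formula for $\kappa_G$. Second, your justification that the parity in (iv) is independent of $M$ --- that any two lattices in $\DiL^{\varpi}(N_b)$ differ by an element of $J_b^1(\Qp)$ --- is false: this action is far from transitive (the Rapoport--Zink space has many strata).

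The paper avoids both problems by arguing differently. For (iii)$\Leftrightarrow$(iv) it cites \cite[Lemma~3.3]{Rapoport2017}, which directly relates the splitness of $C_b$ to the parity of $\length_{O_{\Fbr}}(M+\F_{b,0}(M))/M$, and then observes that $\bV_b$ gives an isomorphism $(M+\F_{b,0}(M))/M\cong(\varpi M+\bV_b(M))/\bV_b(M)$; this yields the ``any/some'' statement for free. For (i)$\Leftrightarrow$(iv) it does not attempt your elementary-divisor identification of $\det_{\Fbr}(b)$ at all: since $B(G)_{1,\bs}=\{[b_0],[b_1]\}$, one may assume $b=b_j$ and compute $\length_{O_{\Fbr}}(\varpi\bLambda+\bV_{b_j}(\bLambda))/\bV_{b_j}(\bLambda)=j$ by hand with $M=\bLambda$. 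Your sanity-check with $b_0,b_1$ is in fact the entire proof; the determinant-versus-parity argument you outline is unnecessary.
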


\begin{proof}
The assertion (i) $\Longleftrightarrow$ (ii) follows from the description of the Kottwitz map as above. The assertion (iii) $\Longleftrightarrow$ (iv) follow from \cite[Lemma 3.3]{Rapoport2017} and the isomorphism
\begin{equation*}
\bV_{b}\colon M+\F_{b,0}(M)/M\xrightarrow{\cong}\varpi M+\bV_{b}(M)/\bV_{b}(M). 
\end{equation*}
We prove (i) $\Longleftrightarrow$ (iv). We may assume that $b$ equals either $b_0$ or $b_1$. Then we have $\bLambda \in \DiL^{\varpi}(N_{b_j})$ and 
\begin{equation*}
\length_{O_{\Fbr}}\varpi \bLambda+\bV_{b_j}(\bLambda)/\bV_{b_j}(\bLambda)=j
\end{equation*}
for $j\in \{0,1\}$. Hence the equivalence between (i) and (iv) follows. 
\end{proof}

\section{Singularities of the Rapoport--Zink spaces}\label{sgrz}

In this section, we prove the following: 

\begin{thm}\label{sgrt}
\emph{Let $b\in G(K_0)$ be as in Section \ref{dfrz}. 
\begin{enumerate}
\item If $b$ is $\mu$-neutral, then the formal scheme $\M_{b,\mu}$ is regular. Moreover, it is formally smooth over $\spf W$ outside the discrete set of $\Fpbar$-rational points such that the corresponding $p$-divisible group with $(G,\mu)$-structure $(X,\iota,\lambda,\rho)$ over $\Fpbar$ satisfies $\iota(\varpi)=0$ on $\Lie(X)$. 
\item If $b$ is not $\mu$-neutral, then we have $\M_{b,\mu}=\M_{b,\mu}\times_{\spf W}\spec \Fpbar$, and it is formally smooth of dimension $3$ over $\spec \Fpbar$. 
\end{enumerate}}
\end{thm}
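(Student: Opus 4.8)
The strategy is to analyze the local structure of $\M_{b,\mu}$ via its Rapoport--Zink local model, in the style of the local model diagram, and to reduce the statement to a concrete description of the special fiber of the local model attached to $(G,\mu)$. The group $G$ is the quasi-split unitary similitude group of a split $4$-dimensional $F/\Qp$-hermitian space, with the level given by the $\varpi$-modular lattice $\bLambda$; the cocharacter $\mu = \diag(1,1,z,z)$ has reflex field $\Qp$. By the theory of Rapoport--Zink (and the refinements of Pappas--Rapoport, Rapoport--Zink \cite{Rapoport1996a}) there is a local model diagram
\begin{equation*}
\xymatrix{
& \widetilde{\M}_{b,\mu} \ar[dl]_{\pi} \ar[dr]^{q} & \\
\M_{b,\mu} & & M^{\loc}
}
\end{equation*}
in which $\pi$ is a torsor under a smooth group scheme and $q$ is smooth; hence $\M_{b,\mu}$ is formally smooth (resp. regular, resp. flat) over $\spf W$ at a point if and only if $M^{\loc}$ has the corresponding property at the image point. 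So everything comes down to identifying $M^{\loc}$ explicitly. First I would recall, following Pappas--Rapoport and the explicit computations for ramified unitary groups with a $\pi$-modular lattice (this is precisely the ``case $n$ even, $\varpi$-modular'' situation), that $M^{\loc}$ is the closure in a suitable Grassmannian of the orbit cut out by the Kottwitz condition, and that for the $\varpi$-modular lattice in a split hermitian space the ``naive'' local model already carries extra structure: one imposes not just the Kottwitz determinant condition but also the wedge condition (and, when $n$ is even, the spin condition is automatic or absent here). I would then compute the special fiber of $M^{\loc}$ by hand: the condition $\Ker\lambda = \Ker\iota(\varpi)$ forces the Hodge filtration $\mathcal{F} \subset \bLambda \otimes_{O_F} k$ to be a rank-$4$ $k$-subspace, isotropic for the induced form, stable under $\iota(\varpi)$, and with $\iota(\varpi)\mathcal{F}$ of dimension $\le 2$ inside $\mathcal{F}$ — this is exactly the linear-algebra condition in the definition of $\DiL^{\varpi}$ read modulo $p$.

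**Key steps, in order.** (1) Set up the local model diagram and reduce the two assertions to properties of $M^{\loc} = M^{\loc}(G,\mu)$, citing \cite{Rapoport1996a}; note that the $\mu$-neutral vs.\ non-$\mu$-neutral dichotomy is invisible to the local model (it only changes the form of $b$, not $(G,\mu)$), so I will get a uniform statement about $M^{\loc}$ and then read off (i) and (ii). (2) Describe $M^{\loc}$ concretely: its $R$-points are rank-$4$ $O_F \otimes R$-submodules $\mathcal{F} \subset \bLambda \otimes R$ that are $R$-direct summands, totally isotropic with respect to the perfect form on $\bLambda$, satisfy the Kottwitz condition $\det(T - \iota(a)\mid \mathcal{F}) = (T^2 - \tr(a)T + \N(a))^2$ for $a \in O_F$, and (the closure/flatness refinement) the wedge condition $\bigwedge^{3}(\iota(\varpi)\mid \mathcal{F}) = 0$, i.e.\ $\iota(\varpi)$ has rank $\le 2$ on $\mathcal{F}$. (3) Compute the special fiber $\overline{M}^{\loc} = M^{\loc}\otimes_W k$. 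Decompose $\bLambda \otimes_{\Zp} k$ under the two idempotents coming from $O_F \otimes_{\Zp} k \cong k[\varpi]/(\varpi^2)$; since $p$ ramifies, $\varpi$ acts nilpotently, and the Kottwitz condition pins down the generalized eigenspace dimensions. One finds $\overline{M}^{\loc}$ is the variety of pairs of isotropic subspaces $\mathcal{F}_1 \supset \mathcal{F}_2$ (with $\mathcal{F}_2 = \iota(\varpi)\mathcal{F}$, $\dim \mathcal{F}_1 = 4$, $\dim \mathcal{F}_2 \le 2$) inside an $8$-dimensional symplectic/orthogonal space — after identifying, via $\eta$ and the splitting of the form, this is an orthogonal Grassmannian-type variety for $\SO_{6}$ or a union of its Schubert varieties. (4) From this description: show $\overline{M}^{\loc}$ is connected, reduced, of dimension $5$, Cohen--Macaulay and normal, with its smooth locus exactly the locus where $\iota(\varpi)\mathcal{F} = 0$ fails, i.e.\ where $\mathcal{F}_2 \neq 0$; equivalently the singular points are those with $\iota(\varpi)$ acting as $0$ on $\mathcal{F} = \Lie$. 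This is the content of (i). (5) For (ii): when $b$ is not $\mu$-neutral, $C_b$ is non-split, $\M_{b,\mu}$ has no $\Zp$-points giving the ``extra'' branch, and I must show $\M_{b,\mu}$ is actually killed by $p$ and $3$-dimensional and smooth over $k$. Here I would argue directly on the Dieudonné-module description of Proposition \ref{hmlt}: show that for every $M \in \DiL^{\varpi}(N_{b,W(k)})$ in the non-neutral case the quantity $\length \, \varpi M + \bV_b(M)/\bV_b(M)$ is odd, hence equal to $1$, which forces the inclusion $\F_b^{-1}(pM) \subset M$ to have ``maximal rigidity'', so that $M$ is determined by a point of a $3$-dimensional homogeneous space with no deformations over $W(k)/p^2$; combined with the local model computation (whose special fiber in the non-neutral lattice-chain setting is the smooth $3$-dimensional piece) this gives formal smoothness of dimension $3$ and $\M_{b,\mu} = \M_{b,\mu}\otimes_W k$.

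**The main obstacle.** The technical heart is step (3)--(4): carrying out the explicit computation of $\overline{M}^{\loc}$ for the ramified unitary group at the $\varpi$-modular lattice and proving that the scheme-theoretic local model (closure of the generic fiber, equivalently the wedge-condition locus) is normal, Cohen--Macaulay and regular of dimension $5$, with the non-smooth locus exactly the ``$\iota(\varpi) = 0$ on $\Lie$'' stratum. In the literature the flatness/Cohen--Macaulayness of these ramified unitary local models with $\pi$-modular level is delicate — one typically needs the correction to the naive local model (wedge and/or spin conditions) and then a careful Schubert-variety or coordinate-ring argument, or a comparison with an affine flag variety and known results on $\mu$-admissible sets. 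I expect the cleanest route is to identify $\overline{M}^{\loc}$ (or rather its two connected components) with an explicit (generalized) Schubert variety in the orthogonal Grassmannian for $\SO_6$ — consistent with the $A_3 = D_3$ exceptional isomorphism that the paper exploits elsewhere — and to invoke normality/Cohen--Macaulayness of Schubert varieties; the bookkeeping to make the identification precise, including tracking the two branches (which will reappear as the decomposition $\M_{b,\mu}^{(0)} = \M_{b,\mu}^{(0,0)} \sqcup \M_{b,\mu}^{(0,1)}$), is where the real work lies.
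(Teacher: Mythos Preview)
Your overall strategy via the local model diagram is correct and matches the paper, but there is a genuine gap in how you relate the $\mu$-neutral/non-neutral dichotomy to the local model. You write that this dichotomy is ``invisible to the local model'' and plan to get a uniform statement about $M^{\loc}$; this is where the argument goes wrong. The target of the local model diagram is the \emph{naive} local model $M^{\naive}_{\mu}$, which is not flat over $\Zp$: it decomposes into open and closed pieces $M^{\loc}_{\mu}\sqcup (M^{\naive}_{\mu}\setminus M^{\loc}_{\mu})$. Stratify the special fibre by the rank $r\in\{0,1,2\}$ of $\iota(\varpi)$ on $\mathcal{F}$; one finds (by explicit affine-chart computation, as the paper does) that $M^{\loc}_{\mu}$ is the locus $r\in\{0,2\}$, regular of dimension $5$ with unique singular point $r=0$, while $M^{\naive}_{\mu}\setminus M^{\loc}_{\mu}$ is the locus $r=1$, a smooth $\Fp$-scheme of dimension $3$. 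The point you are missing is that Proposition~\ref{nteq} (the parity criterion) says precisely that points of $\M_{b,\mu}$ map into $M^{\loc}_{\mu}$ if $b$ is $\mu$-neutral and into the $r=1$ piece if not. Both parts of the theorem then follow immediately. In particular, the ``two branches'' you detect are \emph{not} the decomposition $\M^{(0,0)}_{b,\mu}\sqcup \M^{(0,1)}_{b,\mu}$ within the neutral case; they are exactly the neutral/non-neutral split.

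Two smaller corrections. First, what cuts out $M^{\loc}_{\mu}$ inside $M^{\naive}_{\mu}$ here is the spin condition (parity of $r$), not the wedge condition; your remark that the spin condition is ``automatic or absent'' is backwards. Second, your proposed argument for (ii) via ``maximal rigidity'' and absence of deformations over $W(k)/p^2$ is unnecessary and vague: once you know the non-neutral points land in the $r=1$ piece, which is a smooth $\Fp$-scheme, the equality $\M_{b,\mu}=\M_{b,\mu}\times_{\spf W}\spec\Fpbar$ and formal smoothness of dimension $3$ are immediate from the local model diagram. The paper carries out the local model analysis by direct coordinate computation on three affine charts $U_0,U_1,U_2$ (one around a point of each stratum $C_r$), obtaining for instance $U_0\cong\spec\Zp[t_0,\dots,t_4]/(t_0^2+t_1t_2+t_3t_4-\varpi^2)$ and $U_1\cong\A^3_{\Fp}$; no appeal to Schubert variety theory is needed.
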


To prove Theorem \ref{sgrt}, we determine the singularity of certain scheme, which is called the local model for $(G,\mu)$. Let $\bLambda$ be as in Section \ref{lpel}, and put $\bLambda_0:=\varpi^{-1}\bLambda$. 
\begin{dfn}\label{nvhn}
\begin{enumerate}
\item We define an $\Zp$-scheme $M_{\mu}^{\naive}$ as the functor which parametrizes $\mathcal{F}$ of $\bLambda_{0,\O_S}:=\bLambda_0\otimes_{\Zp}\O_S$ satisfying the following conditions: 
\begin{itemize}
\item $\det(T-a\mid \mathcal{F})=(T^2-\tr_{F/\Qp}(a)T+\N_{F/\Qp}(a))^2$ for any $a\in O_F$,
\item $\mathcal{F}$ is a maximally totally isotropic subspace with respect to the pairing
\begin{equation}\label{smpr}
\bLambda_{0,\O_S}\times \bLambda_{0,\O_S}\xrightarrow{\varpi \times \id}
\bLambda_{\O_S}\times \bLambda_{0,\O_S}\xrightarrow{(\,,\,)}\O_S. 
\end{equation}
Note that $M_{\mu}^{\naive}$ is representable by a projective scheme over $\Zp$. 
\end{itemize}
\item We define $M_{\mu}^{\loc}$ as the scheme theoretic closure of the generic fiber $M_{\mu,\Qp}^{\naive}$ of $M_{\mu}^{\naive}$. 
\end{enumerate}
\end{dfn}

We denote by $[\,,\,]_0$ the pairing (\ref{smpr}), and define an affine group scheme $\mathcal{G}$ over $\Zp$ by
\begin{equation*}
\mathcal{G}(R)=\{(g,c)\in \GL_{O_F\otimes_{\Zp}R}(\bLambda_0\otimes_{\Zp}R)\times R^{\times}\mid [g(x),g(y)]_0=c[x,y]_0\text{ for all }x,y\in \bLambda_0\otimes_{\Zp}R\}
\end{equation*}
for any $\Zp$-algebra $R$. Note that we have $\mathcal{G}(\Qp)=G(\Qp)$, and $\mathcal{G}(\Zp)$ is the stabilizer of $\bLambda_0$ in $G(\Qp)$. Moreover, there is an action of $\mathcal{G}$ on $M_{\mu}^{\naive}$ by 
\begin{equation*}
\mathcal{G}(S)\times M_{\mu}^{\naive}(S)\rightarrow M_{\mu}^{\naive}(S);(g,\mathcal{F})\mapsto g(\mathcal{F})
\end{equation*}
for any $\Zp$-scheme $S$. 

\begin{prop}\label{lcmd}
\emph{
\begin{enumerate}
\item There is a decomposition into open and closed subschemes 
\begin{equation*}
M_{\mu}^{\naive}=M_{\mu}^{\loc}\sqcup (M_{\mu}^{\naive}\setminus M_{\mu}^{\loc}). 
\end{equation*}
\item The scheme $M_{\mu}^{\loc}$ is regular and irreducible of dimension $5$. The singular locus of $M_{\mu}^{\loc}$ consists the unique $\Fp$-rational point $y_0$ such that the corresponding subspace $\mathcal{F}_0$ of $\Lambda_{0,\Fp}$ satisfying $\varpi \mathcal{F}_0=0$. Moreover, there is an isomorphism
\begin{equation*}
\O_{M_{\mu}^{\loc},y_0}\cong (\Zp[t_0,t_1,t_2,t_3,t_4]/(t_0^2+t_1t_2+t_3t_4-\varpi^2))_{(p,t_0,t_1,t_2,t_3,t_4)}. 
\end{equation*}
\item The scheme $M_{\mu}^{\naive}\setminus M_{\mu}^{\loc}$ is the locus of $M_{\mu}^{\naive}$ where the rank of $\varpi$ on the corresponding subspace equals $1$. Moreover, it is a smooth $\Fp$-scheme of dimension $3$. 
\end{enumerate}}
\end{prop}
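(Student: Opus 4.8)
The plan is to analyze the projective $\Zp$-scheme $M_{\mu}^{\naive}$ by passing to affine charts and using the $\mathcal{G}$-action to reduce the computation to a single ``worst'' point. First I would unwind the moduli problem: over a $\Zp$-scheme $S$, a point of $M_{\mu}^{\naive}(S)$ is a direct summand $\mathcal{F}\subset \bLambda_{0,\O_S}$ which is $O_F$-stable, totally isotropic and maximal of rank $4$ for the form $[\,,\,]_0$, and satisfies the Kottwitz determinant condition. Decomposing $\bLambda_{0,\O_S}$ under the $O_F\otimes_{\Zp}\O_S$-action and writing $\Pi:=\iota(\varpi)$, the Kottwitz condition becomes the statement that, Zariski-locally, $\mathcal{F}$ is a free rank-$4$ $\O_S$-module on which $\Pi$ acts with characteristic polynomial $T^4$ (equivalently $\Pi(\mathcal{F})\subset \mathcal{F}$ and $\wedge^2(\Pi\mid\mathcal{F})$ has the prescribed form). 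This lets me stratify $M_{\mu}^{\naive}$ by $r:=\rk(\Pi\mid\mathcal{F})\in\{0,1,2\}$; generically $r=2$ and the generic fiber is a smooth quadric of dimension $5$, since over $\Qp$ the local model is the homogeneous space $G/P_{\mu}$, an orthogonal Grassmannian. The scheme-theoretic closure $M_{\mu}^{\loc}$ therefore has dimension $5$.

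Next I would set up the standard affine chart around the point $y_0$ where $\Pi$ acts as $0$ on $\mathcal{F}_0$. Choosing complementary coordinates, $\mathcal{F}$ near $y_0$ is the graph of a homomorphism whose matrix entries $t_0,\dots$ are subject to (a) the $O_F$-linearity/Kottwitz relations and (b) the isotropy relations for $[\,,\,]_0$. A direct but short bookkeeping — exactly as in \cite{Pappas2009} or \cite[\S2--3]{Pappas2013} for ramified unitary local models — collapses these to the single equation $t_0^2+t_1t_2+t_3t_4=\varpi^2$ in five variables $t_0,\dots,t_4$ over $\Zp$, where the $t_i$ record the off-diagonal block of $\mathcal{F}$ relative to the splitting induced by $\bV_b$ and the hyperbolic basis $\e_1,\dots,\e_4$. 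This yields the claimed local ring $\O_{M_{\mu}^{\loc},y_0}\cong (\Zp[t_0,\dots,t_4]/(t_0^2+t_1t_2+t_3t_4-\varpi^2))_{(p,t_0,\dots,t_4)}$; note $\varpi^2=pu$ for a unit $u$, so this is the localization of a hypersurface whose equation has the form $\text{(nondeg.\ quadric)}=pu$, hence the local ring is regular of dimension $5$ (the equation is not contained in the square of the maximal ideal because of the $p$-term), while its special fiber $t_0^2+t_1t_2+t_3t_4=0$ is a cone, singular at the origin. Flatness over $\Zp$ of $M_{\mu}^{\loc}$ is automatic from the definition as a scheme-theoretic closure of the generic fiber in a $\Zp$-flat ambient scheme; regularity then follows from checking it at every point. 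Away from $y_0$ one has $r\geq 1$ somewhere in the $\Pi$-action, and a parallel (easier) chart computation shows smoothness over $\Zp$ there, giving (ii).

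For (i) and (iii): the locus $\{r=1\}$ — call it $M_{\mu}^{\naive}\setminus M_{\mu}^{\loc}$ — I would show is a closed subscheme not meeting $M_{\mu}^{\loc}$, by computing on charts that on $\{r\le 1\}$ the determinant/isotropy equations force the extra relations cutting out a \emph{reduced} $\Fp$-scheme disjoint from the flat part; concretely, the $r=1$ stratum is the image of a $\P^1$-bundle over an orthogonal Grassmannian and is smooth of dimension $3$ over $\Fp$, and it is \emph{open} in $M_{\mu}^{\naive}$ because $r\le 1$ is an open condition on the complement of the (closed, flat) locus where the generic-fiber equations hold. The disjointness is the crux: one must check that no point with $\Pi\mid\mathcal{F}$ of rank $1$ is a specialization of a generic ($r=2$) point, equivalently that the two equations $t_0^2+t_1t_2+t_3t_4=\varpi^2$ (flat locus) and the rank-$\le 1$ equations are incompatible modulo nothing — they define complementary open-and-closed pieces. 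This gives the decomposition $M_{\mu}^{\naive}=M_{\mu}^{\loc}\sqcup(M_{\mu}^{\naive}\setminus M_{\mu}^{\loc})$.

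The main obstacle I expect is the affine-chart bookkeeping that produces the precise equation $t_0^2+t_1t_2+t_3t_4-\varpi^2$: one has to diagonalize the $O_F$-action on $\bLambda_0$, track how $[\,,\,]_0$ (which is $\varpi$-twisted on one factor, hence has a built-in $\varpi$) pairs the $\e_i$ and their $\bV_b$-images, and verify that the isotropy and Kottwitz conditions together cut down $5+5$ naive coordinates to exactly five subject to one quadric — and that the constant term is $\varpi^2$ rather than, say, $\varpi^2$ times a non-square unit, which is what makes the special fiber the \emph{split} quadric cone and ultimately forces the Fermat-surface rather than some twisted shape downstream. I would handle this by choosing the hyperbolic basis $\e_1,\dots,\e_4$ adapted to $\bV_b=p\F_b^{-1}$ so that the chart is as symmetric as possible, and cross-check the outcome against the known ramified $\GU$ local model computations of Pappas--Rapoport; the disjointness statement in (i)/(iii) then drops out of the same charts.
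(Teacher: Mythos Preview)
Your affine-chart strategy is the right one and matches the paper's proof: the paper chooses explicit representatives $\mathcal{F}_0,\mathcal{F}_1,\mathcal{F}_2$ for the three $\mathcal{G}$-orbits on the special fiber (indexed by $r=\rk(\varpi\mid\mathcal{F})\in\{0,1,2\}$), writes down the standard big-cell charts $U_r$ around each, and computes $U_0\cong\spec\Zp[t_0,\dots,t_4]/(t_0^2+t_1t_2+t_3t_4-\varpi^2)$, $U_2\cong\A^4_{\Zp}$, and $U_1\cong\spec\Fp[x_1,x_2,x_3]$. Your description of the $U_0$ computation and the regularity/singularity analysis is accurate.

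The gap is in your argument for (i). You write that ``$r\le 1$ is an open condition on the complement of the (closed, flat) locus''; but $r\le 1$ is a \emph{closed} condition, and showing that $C_1$ is disjoint from $M_\mu^{\loc}$ (which your chart at $U_1$ would give, since $U_1$ has empty generic fiber) is not the same as showing $C_1$ is \emph{closed} in $M_\mu^{\naive}$. A priori the closure of $C_1$ in the special fiber could contain the closed orbit $C_0$, and your sketch does not rule this out. The paper handles this with an ingredient you have not identified: the closed immersion $M_\mu^{\naive}\hookrightarrow\OGr(\bLambda_0)$ into the orthogonal Grassmannian of maximal isotropics for $[\,,\,]_0$, which has two connected components $\OGr^{\pm}(\bLambda_0)$ over $\Zp$. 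Since the generic fiber $M_{\mu,\Qp}^{\naive}$ is connected (Proposition~\ref{gnfb}), it lands in one component $\OGr^+$, hence so does $M_\mu^{\loc}$; and by \cite[7.1.4]{Pappas2009} one has $C_0\sqcup C_2\subset\OGr^+$ while $C_1\subset\OGr^-$. This parity argument immediately gives that $C_1$ is open and closed, which is (i) and the first half of (iii). You could alternatively extract this from the charts alone (flatness and irreducibility of $U_0$ force $U_0\subset M_\mu^{\loc}$, giving an open neighbourhood of $\mathcal{F}_0$ disjoint from $C_1$, whence $C_1=\overline{C_1}$), but that is not the logic you wrote, and in any case the $\OGr^\pm$ route is cleaner and is what the paper does. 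A minor correction: the generic fiber has relative dimension $4$, not $5$; the ``dimension $5$'' in the statement is the Krull dimension of the $\Zp$-scheme.
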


\begin{proof}[Proposition \ref{lcmd} $\Rightarrow$ Proposition \ref{sgrt}]
By \cite[Theorem 3.33]{Rapoport1996a}, there is a diagram
\begin{equation*}
\M_{b,\mu}\xleftarrow{\varphi} \widetilde{\M}_{b,\mu}\xrightarrow{\psi} \widehat{M}_{\mu}^{\naive}, 
\end{equation*}
where $\widehat{M}_{\mu}^{\naive}$ is the $p$-adic completion of $M_{\mu}^{\naive}\times_{\spec \Zp}\spec W$. Note that the map $\varphi$ is a $\mathcal{G}$-torsor, and $\psi$ is formally smooth. Moreover, there is a decomposition of open closed formal subschemes corresponding to Proposition \ref{lcmd} (i): 
\begin{equation*}
\widehat{M}_{\mu}^{\naive}=\widehat{M}_{\mu}^{\loc}\sqcup (M_{\mu}^{\naive}\setminus M_{\mu}^{\loc}). 
\end{equation*}
Now take $x\in \M_{b,\mu}(\Fp)$ and $y\in \widetilde{\M}_{b,\mu}(\Fpbar)$ satisfying $\varphi(y)=x$. By Propositions \ref{hmlt} and \ref{nteq}, the image of $y$ under $\psi$ lies in $\widehat{M}_{\mu}^{\loc}$ if $b$ is $\mu$-neutral, and $M_{\mu}^{\naive}\setminus M_{\mu}^{\loc}$ if $b$ is not $\mu$-neutral. Hence the assertions follow from Proposition \ref{lcmd} (ii), (iii). 
\end{proof}

We also obtain a moduli description of $M_{\mu}^{\loc}$ as follows: 
\begin{cor}
The scheme $M_{\mu}^{\loc}$ equals the spin local model $M_{\mu}^{\spin}$ (see \cite{Pappas2009} or \cite{Smithling2014} for the definition of the spin local model). 
\end{cor}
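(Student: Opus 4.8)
The plan is to establish the two inclusions $M_{\mu}^{\loc}\subseteq M_{\mu}^{\spin}$ and $M_{\mu}^{\spin}\subseteq M_{\mu}^{\loc}$ separately, the first being formal and the second being the real content. Recall that $M_{\mu}^{\spin}$ is the closed subscheme of $M_{\mu}^{\naive}$ obtained by imposing the wedge condition and the spin condition of \cite{Pappas2009}, \cite{Smithling2014}; both conditions hold identically over $\Qp$, so the generic fibers of $M_{\mu}^{\spin}$, $M_{\mu}^{\naive}$ and $M_{\mu}^{\loc}$ all coincide with $M_{\mu,\Qp}^{\naive}$. Since $M_{\mu}^{\loc}$ is by definition the scheme-theoretic closure of $M_{\mu,\Qp}^{\naive}$ in $M_{\mu}^{\naive}$, it is the smallest closed subscheme of $M_{\mu}^{\naive}$ with that generic fiber; hence $M_{\mu}^{\loc}\subseteq M_{\mu}^{\spin}$ with no extra work.

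For the reverse inclusion I would exploit the decomposition $M_{\mu}^{\naive}=M_{\mu}^{\loc}\sqcup(M_{\mu}^{\naive}\setminus M_{\mu}^{\loc})$ of Proposition \ref{lcmd}(i). As a closed subscheme of $M_{\mu}^{\naive}$ containing $M_{\mu}^{\loc}$, the spin local model must be of the form $M_{\mu}^{\spin}=M_{\mu}^{\loc}\sqcup Z$ with $Z$ a closed subscheme of $M_{\mu}^{\naive}\setminus M_{\mu}^{\loc}$, and by Proposition \ref{lcmd}(iii) the latter is the locus where $\varpi$ acts with rank $1$ on the universal subspace $\mathcal{F}$. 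So the entire content of the corollary reduces to showing $Z=\emptyset$, that is, that the spin condition is violated at every $\Fpbar$-point of the rank-$1$ locus. Since $M_{\mu}^{\naive}\setminus M_{\mu}^{\loc}$ is stable under $\mathcal{G}$ and is a finite union of $\mathcal{G}(\Fpbar)$-orbits (being smooth of dimension $3$ over $\Fp$), and since the spin condition is $\mathcal{G}$-equivariant, it suffices to inspect the signs attached to $\wedge^{\bullet}\mathcal{F}$ at one point of each orbit and check their incompatibility with the prescribed sign pattern. This is the computation carried out in \cite{Smithling2014} in the even ramified unitary case; alternatively one may simply invoke the topological flatness of $M_{\mu}^{\spin}$ proved there, which together with the regularity --- hence reducedness --- of $M_{\mu}^{\loc}$ established in Proposition \ref{lcmd} gives $M_{\mu}^{\loc}=(M_{\mu}^{\spin})_{\red}$, and then the reducedness of the special fiber of $M_{\mu}^{\spin}$ upgrades this to the desired equality of schemes.

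The step I expect to be the main obstacle is precisely this last one: showing that the spin condition cuts out \emph{no more} than the flat closure, i.e.\ that it excludes the three-dimensional rank-$1$ component of $M_{\mu}^{\naive}$. The opposite inclusion $M_{\mu}^{\loc}\subseteq M_{\mu}^{\spin}$ and the passage from a topological to a scheme-theoretic identification are formal once flatness (equivalently, topological flatness together with reducedness) of the spin local model is in hand; indeed, if one takes the flatness of $M_{\mu}^{\spin}$ over $\Zp$ as known in this case, then $M_{\mu}^{\spin}$ is automatically the scheme-theoretic closure of its generic fiber, namely $M_{\mu}^{\loc}$, and the corollary follows at once.
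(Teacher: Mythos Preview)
Your proposal is correct and follows essentially the same route as the paper: invoke Smithling's topological flatness to get $(M_\mu^{\spin})^{\red}=M_\mu^{\loc}$, then use the open-and-closed decomposition of Proposition~\ref{lcmd}(i) to conclude that the closed immersion $M_\mu^{\spin}\hookrightarrow M_\mu^{\naive}$ factors through $M_\mu^{\loc}$, forcing $Z=\emptyset$ and hence $M_\mu^{\spin}=M_\mu^{\loc}$. Note that your appeal to ``reducedness of the special fiber of $M_\mu^{\spin}$'' is unnecessary and not independently available: once $Z$ is topologically empty it is empty as a scheme, and the paper likewise extracts scheme-theoretic equality directly from the openness of $M_\mu^{\loc}$ in $M_\mu^{\naive}$, without needing any a~priori reducedness of $M_\mu^{\spin}$.
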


\begin{proof}
As explained in \cite[2.4]{Smithling2014}, $M_{\mu}^{\spin}$ is a closed subscheme of $M_{\mu}^{\naive}$. Since $(M_{\mu}^{\spin})^{\red}=M_{\mu}^{\loc}$ by \cite[Theorem 1.3]{Smithling2014}, Proposition \ref{lcmd} (i) implies that the canonical closed immersion $M_{\mu}^{\spin}\hookrightarrow M_{\mu}^{\naive}$ factors through $M_{\mu}^{\loc}\hookrightarrow M_{\mu}^{\naive}$. Hence the assertion follows. 
\end{proof}

We turn to prove Proposition \ref{lcmd}. For it, we recall the structure of the generic fiber of $M_{\mu}^{\loc}$. 

\begin{prop}\label{gnfb} (\cite[1.5.3]{Pappas2009})
\emph{The generic fiber $M_{\mu,\Qp}^{\naive}$ of $M_{\mu}^{\naive}$ is connected and smooth of dimension $4$ over $\Qp$. }
\end{prop}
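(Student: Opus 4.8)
The plan is to analyze the generic fiber $M_{\mu,\Qp}^{\naive}$ directly by base-changing to $\overline{\Qp}$ and identifying it with an orthogonal Grassmannian, then invoking the known geometry of such a variety. First I would note that, since the hermitian space $(V,\langle\,,\,\rangle)$ splits $F = \Qp(\sqrt{\Delta})$ (here $p$ is split in the quadratic extension after base change), we have $F \otimes_{\Qp}\overline{\Qp} \cong \overline{\Qp} \times \overline{\Qp}$, and correspondingly $\bLambda_{0,\overline{\Qp}}$ decomposes under the two idempotents into a product of the two $\overline{\Qp}$-eigenspaces, each of $\overline{\Qp}$-dimension $4$. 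The $O_F$-linear parabolic condition forces $\mathcal{F}$ to be compatible with this splitting, so that $\mathcal{F} = \mathcal{F}_+ \oplus \mathcal{F}_-$, where the Kottwitz determinant condition $\det(T - a \mid \mathcal{F}) = (T^2 - \tr_{F/\Qp}(a)T + \N_{F/\Qp}(a))^2$ forces each of $\mathcal{F}_\pm$ to be $2$-dimensional, and the two pieces determine one another via the alternating pairing $[\,,\,]_0$ of (\ref{smpr}) (which over $\overline{\Qp}$ is a perfect pairing between the two eigenspaces). Thus $\mathcal{F}$ is determined by $\mathcal{F}_+$, a $2$-dimensional subspace of a $4$-dimensional $\overline{\Qp}$-vector space, subject to the constraint coming from the isotropy condition of $\mathcal{F}$ with respect to the symplectic form.

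Next I would unwind the maximal isotropy condition. Since $\mathcal{F}$ is maximally totally isotropic for the symplectic pairing on the $8$-dimensional space $\bLambda_{0,\overline{\Qp}}$ and decomposes as $\mathcal{F}_+ \oplus \mathcal{F}_-$ compatibly with a Lagrangian decomposition of the symplectic space induced by the $O_F$-action, the condition on $\mathcal{F}_+$ is precisely that it be isotropic for a symmetric bilinear form built from the hermitian form $\langle\,,\,\rangle$. Concretely, one gets that $M_{\mu,\overline{\Qp}}^{\naive}$ is isomorphic to the orthogonal Grassmannian $\OGr$ of $2$-dimensional isotropic subspaces in a $4$-dimensional split quadratic space over $\overline{\Qp}$ — but this has two connected components, so to land on a \emph{connected} variety one should instead observe that the Kottwitz signature condition $(2,2)$ together with the symplectic constraint cuts out exactly one of the two spinor components, equivalently an $\SL_4 = \Spin_6$-homogeneous space for a maximal parabolic; this is the orthogonal Grassmannian of maximal isotropic subspaces in one family, which is isomorphic to a smooth quadric, or here to $\P^3$ blown up appropriately — in any case it is a smooth projective homogeneous variety under (the connected split group) $G_{\overline{\Qp}}^{\der}$. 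Being a homogeneous space under a connected group, it is smooth and connected; computing the dimension of the relevant partial flag variety gives $4$. Since $M_{\mu,\Qp}^{\naive}$ is projective and $\overline{\Qp}$-base change preserves smoothness, connectedness (geometric connectedness follows since the $\overline{\Qp}$-fiber is connected), and dimension, the proposition follows.

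The main obstacle I expect is the bookkeeping in the second step: pinning down exactly which homogeneous space appears and verifying it is \emph{connected} rather than a two-component orthogonal Grassmannian. This requires carefully tracking how the symplectic form $[\,,\,]_0$, the hermitian form $\langle\,,\,\rangle$, and the $O_F$-action interact after the splitting $F\otimes\overline{\Qp} \cong \overline{\Qp}^2$, and checking that the signature $(2,2)$ Kottwitz condition is the constraint that selects a single spinor component. Alternatively — and this is probably the cleanest route — one can simply cite the general identification of $M_{\mu,\Qp}^{\naive}$ with a minuscule partial flag variety for $G_{\overline{\Qp}}$ attached to the conjugacy class of $\mu$, which in the $\GU(2,2)$ case is the Grassmannian of $\mu$-type subspaces; connectedness and smoothness are then automatic from the general theory of local models of Hodge type, and the dimension equals $\langle \rho, \mu \rangle = rs = 4$. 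I would present the argument via this general citation to \cite{Pappas2009}, filling in only the dimension count explicitly.
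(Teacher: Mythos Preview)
The paper does not give its own proof here; it simply records the statement as a citation to \cite[1.5.3]{Pappas2009}. So your fallback suggestion at the end --- presenting the result via a direct citation to the general identification of the generic fiber of a naive local model with the partial flag variety attached to the conjugacy class of $\mu$ --- is exactly what the paper does.

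Your sketch of an explicit argument has the right overall shape (base change to $\overline{\Qp}$, decompose along the idempotents of $F\otimes_{\Qp}\overline{\Qp}\cong \overline{\Qp}\times\overline{\Qp}$, identify the moduli as a homogeneous variety), but the middle of the argument is muddled. Two concrete points. First, the form $[\,,\,]_0$ is \emph{symmetric}, not symplectic: from $(ax,y)=(x,\bar a y)$ and $\bar\varpi=-\varpi$ one gets $[x,y]_0=(\varpi x,y)=(\varpi y,x)=[y,x]_0$; this is why the paper speaks of $\OGr(\bLambda_0)$. Second, after the eigenspace decomposition $V\otimes\overline{\Qp}=V_+\oplus V_-$, the form $[\,,\,]_0$ vanishes on each $V_\pm$ and pairs them perfectly, so the isotropy condition on $\mathcal{F}=\mathcal{F}_+\oplus\mathcal{F}_-$ says precisely $\mathcal{F}_-=\mathcal{F}_+^{\perp}$. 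There is \emph{no} residual isotropy constraint on $\mathcal{F}_+$: it ranges freely over the ordinary Grassmannian $\Gr(2,V_+)\cong\Gr(2,4)$, which is already connected, smooth, and $4$-dimensional. Your detour through ``$2$-dimensional isotropic subspaces in a $4$-dimensional split quadratic space'' is therefore incorrect (that variety is $1$-dimensional with two components) and the worry about selecting a spinor component is unnecessary. Once you replace that step with the direct identification $M_{\mu,\overline{\Qp}}^{\naive}\cong\Gr(2,4)$, your argument goes through cleanly.
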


We rewrite a $\Zp$-basis of $\bLambda_0$ as follows: 
\begin{gather*}
\f_1:=\varpi^{-1}\e_1,\quad \f_2:=\varpi^{-1}\e_2,\quad \f_3:=\e_3,\quad \f_4:=\e_4,\\
\f_5:=\e_1,\quad \f_6:=\e_2,\quad \f_7:=\varpi \e_3,\quad \f_8:=\varpi \e_4. 
\end{gather*}
Then the Gram matrix of $[\,,\,]_0$ with respect to the above basis is
\begin{equation*}
\mathbf{J}=
\begin{pmatrix}
&&&J_2\\
&&-J_2&\\
&-J_2&&\\
J_2&&&
\end{pmatrix},
\end{equation*}
where $J_2$ is the anti-diagonal matrix of size $2$ with coefficient $1$. 

Let $\OGr(\bLambda_{0})$ be the Grassmanian of maximally totally isotropic subspaces of $\bLambda_{0}$. Note that it has exactly two connected components. Then there is a natural closed immersion
\begin{equation*}
M_{\mu}^{\naive}\rightarrow \OGr(\bLambda_{0}). 
\end{equation*}
Since $M_{\mu,\Qp}^{\naive}$ is connected by Proposition \ref{gnfb}, there is a unique connected component $\OGr^{+}(\bLambda_{0})$ of $\OGr(\bLambda_{0})$ containing the image of $M_{\mu,\Qp}^{\naive}$ under the above closed immersion. We denote by $\OGr^{-}(\bLambda_{0})$ the other connected component of $\OGr(\bLambda_{0})$. 

We write $M_{\mu,\Fp}^{\naive}$ for the special fiber of $M_{\mu}^{\naive}$. For $r\in \{0,1,2\}$, let $C_{r}$ be the locus of $M_{\mu,\Fp}^{\naive}$ where the rank of $\varpi$ on the corresponding subspace equals $r$. Then we have a decomposition into locally closed subschemes
\begin{equation*}
M_{\mu,\Fp}^{\naive}=C_0\sqcup C_1\sqcup C_2. 
\end{equation*}
Note that the above decomposition is also the decomposition into $\mathcal{G}$-orbits. Moreover, we have
\begin{equation*}
M_{\mu,\Fp}^{\naive}\cap \OGr^{+}(\bLambda_{0})=C_0\sqcup C_2,\quad M_{\mu,\Fp}^{\naive}\cap \OGr^{-}(\bLambda_{0})=C_1
\end{equation*}
by \cite[7.1.4]{Pappas2009}. In particular, $C_1$ is open and closed in $M_{\mu,\Fp}^{\naive}$. 

\begin{rem}
The spin local model $M_{\mu}^{\spin}$ equals $M_{\mu,\Fp}^{\naive}\cap \OGr^{+}(\bLambda_{0})=C_0\sqcup C_2$ by the definition of the spin condition. See \cite[7.1.4, 7.2.1]{Pappas2009}. 
\end{rem}

Now put
\begin{align*}
\mathcal{F}_0&:=\Fp \f_5\oplus \Fp \f_6\oplus \Fp \f_7\oplus \Fp \f_8,\\
\mathcal{F}_1&:=\Fp \f_1\oplus \Fp \f_5\oplus \Fp \f_6\oplus \Fp \f_7,\\
\mathcal{F}_2&:=\Fp \f_1\oplus \Fp \f_2\oplus \Fp \f_5\oplus \Fp \f_6. 
\end{align*}
Then we have $\mathcal{F}_r\in C_r(\Fp)$ for all $r$. Moreover, for $r\in \{0,1,2\}$, put
\begin{equation*}
\mathcal{F}_{r}^{c}:=\bigoplus_{1\leq i\leq 8,e_i\not\in \mathcal{F}_r}\Fp \f_i,
\end{equation*}
and define an open neighborhood $U_{r}$ of $\mathcal{F}_r$ in $M_{\mu}^{\naive}$ by
\begin{equation*}
U_{r}(S)=\{\mathcal{F}_{r,f}\in M_{\mu,\Fp}^{\naive}\mid f\in \Hom_{\O_S}(\mathcal{F}_r,\mathcal{F}_{r}^{c})\},
\end{equation*}
for any $\Zp$-scheme $S$, where
\begin{equation*}
\mathcal{F}_{r,f}:=\{x+f(x)\in \bLambda_{0,\O_S}\mid x\in \mathcal{F}_r\}. 
\end{equation*}
Then we can represent $\mathcal{F}_{r,f}$ as a column space with respect to $\f_1,\ldots ,\f_8$ whose submatrix with respect to $\{i\in \{1,\ldots,8\} \mid \f_i \not\in \mathcal{F}_r\}$ is a unit matrix of size $4$. 

To study $U_r$, we express the isotropicity condition (with respect to $[\,,\,]_0$) and the $\varpi$-stability condition by means of column spaces. Let
\begin{equation*}
\begin{pmatrix}
X\\Y
\end{pmatrix}
\in M_{\mu}. 
\end{equation*}
Then the isotropicity condition is equivalent to
\begin{equation}\label{ttis}
\begin{pmatrix}
{}^tX&{}^tY
\end{pmatrix}
\mathbf{J}
\begin{pmatrix}
X\\Y
\end{pmatrix}
=0. 
\end{equation}
On the other hand, the $\varpi$-stability condition is equivalent to
\begin{equation}\label{pist}
\begin{pmatrix}
\varpi^2 Y\\X
\end{pmatrix}
=
\begin{pmatrix}
X\\Y
\end{pmatrix}
R
\end{equation}
for some $4\times 4$-matrix $R$. Moreover, the Kottwitz condition is equivalent to the equality
\begin{equation}\label{kott}
\det(T-R\mid \mathcal{F})=(T-\varpi^2)^2. 
\end{equation}

\begin{proof}[Proof of Proposition \ref{lcmd} (iii)]
By the above consideration, it suffices to prove that $U_{1}$ is a smooth $\Fp$-scheme of dimension $3$. The open set $U_1$ can be written as
\begin{equation*}
\begin{pmatrix}
1&&&\\
x_1&z_{11}&z_{12}&z_{13}\\
x_2&z_{21}&z_{22}&z_{23}\\
x_3&z_{31}&z_{32}&z_{33}\\
&1&&\\
&&1&\\
&&&1\\
y_1&y_2&y_3&y_4
\end{pmatrix}. 
\end{equation*}
Then (\ref{ttis}) is equivalent to the following equalities: 
\begin{gather*}
y_1=0,\quad y_2=x_3,\quad y_3=x_2,\quad y_4=-x_1,\\
z_{13}=z_{22}=z_{31}=0,\quad z_{23}=z_{12},\quad z_{32}=-z_{21},\quad z_{33}=z_{11}. 
\end{gather*}
Hence the matrix $R$ in (\ref{pist}) equals
\begin{equation*}
\begin{pmatrix}
&\varpi^{2}&&\\
1&&&\\
x_1&z_{11}&z_{12}&0\\
x_2&z_{21}&0&z_{12}
\end{pmatrix}. 
\end{equation*}
Hence (\ref{kott}) is equivalent to the equalities
\begin{equation*}
z_{12}=0,\quad p=0. 
\end{equation*}
In particular, we have $R^2=0$. This is equivalent to
\begin{equation*}
z_{11}=z_{21}=0. 
\end{equation*}
Consequently, we have $U_{1}\cong \spec \Fp[x_1,x_2,x_3]$ as desired. 
\end{proof}

\begin{proof}[Proof of Proposition \ref{lcmd} (i), (ii)]
By Proposition \ref{gnfb} and the above consideration, it is enough to prove that there are isomorphisms
\begin{equation*}
U_0\cong \spec \Zp[t_0,t_1,t_2,t_3,t_4]/(t_0^2+t_1t_2+t_3t_4-\varpi^2),\quad U_{2}\cong \A_{\Zp}^3. 
\end{equation*}

First, we consider the open set $U_{0}$. Then $U_0$ can be written as
\begin{equation*}
\begin{pmatrix}
Y\\E_4
\end{pmatrix},
\end{equation*}
where $E_4$ is the unit matrix of size $4$. If we write $Y=(y_{ij})_{1\leq i,j\leq 4}$, then (\ref{ttis}) is equivalent to the following equalities: 
\begin{gather*}
y_{11}=-y_{22}=-y_{33}=y_{44},\quad y_{12}=y_{34},\quad y_{13}=-y_{24},\\
y_{14}=y_{23}=y_{32}=y_{41}=0,\quad y_{21}=y_{43},\quad y_{31}=-y_{42}
\end{gather*}
On the other hand, the matrix in (\ref{pist}) equals $Y$, and hence we have $Y^2=\varpi^2E_4$. Note that (\ref{kott}) is then satisfied. The equality $Y^2=\varpi^2E_4$ is equivalent to
\begin{equation*}
y_{11}^2+y_{12}y_{21}+y_{13}y_{31}=\varpi^2. 
\end{equation*}
Therefore we have $U_0\cong \spec \Zp[y_{11},y_{12},y_{21},y_{13},y_{31}]/(y_{11}^2+y_{12}y_{21}+y_{13}y_{31}-\varpi^2)$ as desired. 

Second, we consider the open set $U_{2}$. Then $U_2$ can be written as
\begin{equation*}
\begin{pmatrix}
E_2&0\\
X_1&X_2\\
0&E_2\\
Y_1&Y_2
\end{pmatrix}, 
\end{equation*}
where $E_2$ is the unit matrix of size $2$. Then the matrix $R$ in (\ref{pist}) equals
\begin{equation*}
\begin{pmatrix}
0&\varpi^2E_2\\
E_2&0
\end{pmatrix}. 
\end{equation*}
Note that (\ref{kott}) is then satisfied. Moreover, (\ref{pist}) amounts to saying that 
\begin{equation*}
X_1=Y_2,\quad X_2=\varpi^2Y_1. 
\end{equation*}
On the other hand, if we write $Y_k=(y_{ij}^{(k)})_{1\leq i,j\leq 2}$ for $k\in \{1,2\}$, then (\ref{ttis}) is equivalent to the following equalities: 
\begin{equation*}
y_{11}^{(1)}=-y_{22}^{(1)},\quad y_{12}^{(1)}=y_{21}^{(1)}=0,\quad y_{11}^{(2)}=y_{22}^{(2)},\quad y_{12}^{(2)}=y_{21}^{(2)}. 
\end{equation*}
Therefore we have $U_2\cong \Zp[y_{11}^{(1)},y_{12}^{(1)},y_{11}^{(2)},y_{12}^{(2)}]$ as desired. 
\end{proof}

\section{Deligne--Lusztig varieties for even special orthogonal groups}\label{btdl}

In this section, we study some generalized Deligne--Lusztig varieties for even special orthogonal groups. 

First, we recall the definition of generalized Deligne--Lusztig varieties. Let $G_0$ be a connected reductive group of $\Fp$, and put $G:=G_0\otimes_{\Fp}\Fpbar$. We denote by $\Phi$ the Frobenius of $G$. Moreover, fix a $\Phi$-stable maximal torus $T$ of $G$, and a $\Phi$-stable Borel subgroup $B$ containing $T$. Let $W=W(B,T)$ be the Weyl group associated to $(B,T)$, and write $\Delta^{*}=\Delta^{*}(B,T)$ for the set of simple roots in $W$. 

For $I\subset \Delta^{*}$, let $W_{I}$ be the subgroup of $W$ generated by $I$. Moreover, put $P_{I}:=BW_{I}B$, which is a parabolic subgroup of $G$. Then there is a canonical isomorphism $\pi \colon P_{I}\backslash G/P_{\Phi(I)}\xrightarrow{\cong} W_{I}\backslash W/W_{\Phi(I)}$. Hence we obtain a map 
\begin{equation*}
\inv \colon G/P_{I}\times G/P_{\Phi(I)}\xrightarrow{(g,g')\mapsto g^{-1}g'}P_{I}\backslash G/P_{\Phi(I)}\xrightarrow{\pi} W_{I}\backslash W/W_{\Phi(I)}. 
\end{equation*}

\begin{dfn}
Let $I\subset \Delta^{*}$ and $w\in W_{I}\backslash W/W_{\Phi(I)}$. A \emph{generalized Deligne--Lusztig variety} for $(G_{0},I,w)$ is a locally closed subscheme of $G/P_{I}$ as follows: 
\begin{equation*}
X_{P_{I}}(w):=\{g\in G/P_{I}\mid \inv(g,\Phi(g))=w\}. 
\end{equation*}
\end{dfn}

Here we consider them for even special orthogonal groups. Let $(\Omega_{2d,0},[\,,\,])$ be a quadratic space over $\Fp$ of dimension $2d\geq 4$, and put $\Omega_{2d}:=\Omega_{2d,0}\otimes_{\Fp}\Fpbar$. Fix a basis $e_1,\ldots,e_d,f_1,\ldots,f_d$ of $\Omega_{2d}$ satisfying $[e_i,e_j]=[f_i,f_j]=0,[e_i,f_j]=\delta_{ij}$, and 
\begin{equation*}
\Phi(e_i)=
\begin{cases}
e_i&\text{if }1\leq i\leq d-1,\\
e_d&\text{if }i=d\text{ and }\Omega_{2d,0}\text{ is split},\\
f_d&\text{if }i=d\text{ and }\Omega_{2d,0}\text{ is non-split},
\end{cases}
\quad \Phi(f_i)=
\begin{cases}
f_i&\text{if }1\leq i\leq d-1,\\
f_d&\text{if }i=d\text{ and }\Omega_{2d,0}\text{ is split},\\
e_d&\text{if }i=d\text{ and }\Omega_{2d,0}\text{ is non-split}. 
\end{cases}
\end{equation*}
Moreover, put
\begin{gather*}
\cL_{2d,i}:=\Fpbar e_{1}\oplus \cdots \Fpbar e_{i}\,(i\in \{1,\ldots,d-1\}),\\
\cL_{2d,d}^{+}:=\cL_{2d,d-1}\oplus \Fpbar e_{d},\quad \cL_{2d,d}^{-}:=\cL_{2d,d-1}\oplus \Fpbar f_{d}. 
\end{gather*}

Put $G_{2d,0}:=\SO(\Omega_{2d,0})$ and $G_{2d}:=G_{2d,0}\otimes_{\Fp}\Fpbar$. We regard $G_{2d}$ as a subgroup of $\GL(\Omega_{2d})$ by the above basis. Furthermore, let $B_{2d}\subset G_{2d}$ be the upper-half Borel subgroup, and $T_{2d}\subset G_{2d}$ the diagonal torus. Then there is an isomorphism
\begin{equation*}
W_{2d}:=W(B_{2d},T_{2d})\cong \{w\in \mathfrak{A}_{2d}\mid w(i)+w(2d+1-i)=2d+1\text{ for all }i\in \{1,\ldots,2d\}\}. 
\end{equation*}
Moreover, we have $\Delta_{2d}^{*}:=\Delta^{*}(B_{2d},T_{2d})=\{s_{2d,1},\ldots,s_{2d,d-2},t_{2d}^{+},t_{2d}^{-}\}$, where
\begin{itemize}
\item $s_{2d,i}=(i\ i+1)(2d-i\ 2d+1-i)$ for $i\in \{1,\ldots,d-2\}$,
\item $t_{2d}^{+}=(d-1\ d)(d+1\ d+2)$,
\item $t_{2d}^{-}=(d-1\ d+1)(d\ d+2)$. 
\end{itemize}
Note that we have $\Phi(s_{2d,i})=s_{2d,i}$ for any $i$ and
\begin{equation*}
\Phi(t_{2d}^{\pm})=
\begin{cases}
t_{2d}^{\pm}&\text{if }\Omega_{2d,0}\text{ is split},\\
t_{2d}^{\mp}&\text{if }\Omega_{2d,0}\text{ is non-split}.
\end{cases}
\end{equation*}

\begin{dfn}\label{grso}
For a quadratic space $\Omega_{0}$ over $\Fp$ of dimension $n$, we denote by $\OGr(\Omega_0)_{\Fpbar}$ be the moduli space of maximal totally isotropic subspaces of $\Omega_0\otimes_{\Fp}\Fpbar$. Note that all maximal totally isotropic subspaces of $\Omega_0\otimes_{\Fp}\Fpbar$ are $[n/2]$-dimensional, where $[n/2]:=\max\{n'\in \Z\mid n'\leq n/2\}$. Moreover, put
\begin{equation*}
S_{\Omega_0,\Fpbar}:=\{\cL \in \OGr(\Omega_0)_{\Fpbar}\mid \rk(\cL \cap (\id \otimes \sigma)_{*}(\cL))\geq m\}, 
\end{equation*}
where
\begin{equation*}
m:=
\begin{cases}
\frac{n}{2}-2&\text{if $n\in 2\Z$ and $\Omega_{0}$ is split}, \\
\frac{n}{2}-1&\text{if $n\in 2\Z$ and $\Omega_{0}$ is non-split}, \\
\frac{n-1}{2}&\text{if $n\in \Z\setminus 2\Z$},
\end{cases}
\end{equation*}
Here $(\id \otimes \sigma)_{*}(\cL)$ is the submodule generated by $(\id \otimes \sigma)(\cL)$. It is a closed subscheme of $\OGr(\Omega_0)_{\Fpbar}$ by definition.  
\end{dfn}

We introduce subschemes of $\OGr(\Omega_{0})_{\Fpbar}$ and $S_{\Omega_0,\Fpbar}$ in the case $n=2d$, that is, $\Omega_0=\Omega_{2d,0}$. Let $P_{2d,0}^{+}$ and $P_{2d,0}^{-}$ be the stabilizers of $\cL_{2d,d}^{+}$ and $\cL_{2d,d}^{-}$ in $G_{2d}$ respectively, which are parabolic subgroups of $G_{2d}$. Then the maps $g\mapsto g(\cL_{2d,d}^{+})$ and $g\mapsto g(\cL_{2d,d}^{-})$ induce an isomorphism
\begin{equation*}
(G_{2d}/P_{2d,0}^{+})\sqcup (G_{2d}/P_{2d,0}^{-})\cong \OGr(\Omega_{2d,0})_{\Fpbar}. 
\end{equation*}
We denote by $\OGr^{\pm}(\Omega_{2d,0})_{\Fpbar}$ the image of $G_{2d}/P_{2d,0}^{\pm}$ under the isomorphism above. Note that $\OGr^{\pm}(\Omega_{2d,0})_{\Fpbar}$ are the connected components of $\OGr(\Omega_{2d,0})_{\Fpbar}$. Then, for two elements $\cL,\cL'$ in $\OGr(\Omega_{2d,0})_{\Fpbar}(k)$ where $k$ is an algebraic closed field extension over $\Fpbar$, $\cL$ and $\cL'$ are contained in the same connected component if and only if $\dim_{k}(\cL/\cL\cap \cL')=\dim_{k}(\cL'/\cL\cap \cL')\in 2\Z$. On the other hand, put $S_{\Omega_{2d,0}}^{\pm}:=S_{\Omega_{2d,0}}\cap \OGr^{\pm}(\Omega_{2d,0})_{\Fpbar}$. 

Now we give a connection between $S_{\Omega_{2d,0}}$ and generalized Deligne--Lusztig varieties for $G_{2d,0}$. 

\textbf{Case 1. The case for $\Omega_{2d,0}$ split. }
For $i\in \Znn$, put 
\begin{equation*}
P_{2d,i}^{\pm}:=
\begin{cases}
P_{\{s_{2d,1},\ldots,s_{2d,d-2-i},t_{2d}^{\pm}\}}&\text{if }i\leq d-3,\\
P_{\{t_{2d}^{\pm}\}}&\text{otherwise}. 
\end{cases}
\end{equation*}
Then the following hold: 
\begin{itemize}
\item $P_{2d,0}^{\pm}$ is the stabilizer of $\cL_{2d,d}^{\pm}$,
\item $P_{2d,i}^{\pm}$ is the stabilizer of the flag $\cL_{2d,d-1-i}\subset \cdots \subset \cL_{2d,d-2}\subset \cL_{2d,d}^{\pm}$ if $1\leq i\leq d-3$, 
\item $P_{2d,i}^{\pm}$ is the stabilizer of the flag $\cL_{2d,1}\subset \cdots \subset \cL_{2d,d-2}\subset \cL_{2d,d}^{\pm}$ if $i\geq d-2$. 
\end{itemize}

\begin{prop}\label{sfdc}
\emph{
\begin{enumerate}
\item There is a locally closed stratification
\begin{equation*}
S_{\Omega_{2d,0}}^{\pm}=X_{P_{2d,0}^{\pm}}(\id)\sqcup X_{P_{2d,0}^{\pm}}(t_{2d}^{\mp}). 
\end{equation*}
Moreover, the closure of $X_{P_{2d,0}^{\pm}}(t_{2d}^{\mp})$ in $G_{2d}/P_{2d,0}^{\pm}$ equals $S_{\Omega_{2d,0}}^{\pm}$. 
\item The schemes $S_{\Omega_{2d,0}}^{\pm}$ are irreducible. 
\end{enumerate}}
\end{prop}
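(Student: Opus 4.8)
The plan is to make everything completely explicit in terms of the coordinate description of $\OGr^{\pm}(\Omega_{2d,0})_{\Fpbar}$ and the Frobenius action. First I would recall that a point $\cL \in \OGr^{\pm}(\Omega_{2d,0})_{\Fpbar}(k)$ is a $d$-dimensional totally isotropic subspace, so that $\dim_k(\cL + \Phi(\cL))$ determines $\dim_k(\cL \cap \Phi(\cL)) = 2d - \dim_k(\cL + \Phi(\cL))$, and since $\cL$ and $\Phi(\cL)$ lie in the same connected component, $\dim_k(\cL/\cL\cap\Phi(\cL))$ is even; hence $\rk(\cL \cap \Phi(\cL)) \in \{d, d-2, d-4,\dots\}$. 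By Definition 3.4 (with $m = d-2$), $S_{\Omega_{2d,0}}^{\pm}$ is the locus where this rank is $\geq d-2$, i.e. where it equals $d$ (the locus $X_{P_{2d,0}^{\pm}}(\id)$, consisting of the $\Phi$-fixed, hence $\Fp$-rational, Lagrangians) or exactly $d-2$. The second step is to translate ``$\inv(g,\Phi(g)) = w$'' for $w \in W_{2d}$ representing a class in $W_{\{t_{2d}^{\pm}\}}\backslash W_{2d}/W_{\{\Phi(t_{2d}^{\pm})\}}$ into the relative-position statement $\dim(\cL/\cL\cap\Phi(\cL)) = \ell$, where $\ell$ is the number determined by $w$ via the standard dictionary between parabolic double cosets for $P_{2d,0}^{\pm}$ and relative positions of Lagrangians in the same ruling. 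Since $\Omega_{2d,0}$ is split we have $\Phi(t_{2d}^{\pm}) = t_{2d}^{\pm}$, and the double coset space $W_{\{t_{2d}^{\pm}\}}\backslash W_{2d}/W_{\{t_{2d}^{\pm}\}}$ is parametrized by the even relative positions $0, 2, 4, \dots$; the element $\id$ gives position $0$ and $t_{2d}^{\mp}$ gives position $2$. This yields the set-theoretic decomposition $S_{\Omega_{2d,0}}^{\pm} = X_{P_{2d,0}^{\pm}}(\id)\sqcup X_{P_{2d,0}^{\pm}}(t_{2d}^{\mp})$ of part (i), with $X_{P_{2d,0}^{\pm}}(\id)$ closed (it is a finite set of $\Fp$-points) and $X_{P_{2d,0}^{\pm}}(t_{2d}^{\mp})$ locally closed of dimension equal to $\ell(t_{2d}^{\mp})$ interpreted appropriately.

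For the closure statement in (i), I would invoke the Bruhat–Tits/relative-position closure order: the closure of the locus $\{\dim(\cL/\cL\cap\Phi(\cL)) = 2\}$ inside $G_{2d}/P_{2d,0}^{\pm}$ consists of all $\cL$ with $\dim(\cL/\cL\cap\Phi(\cL)) \leq 2$, which is exactly $S_{\Omega_{2d,0}}^{\pm}$. Concretely this is the closure relation for generalized Deligne–Lusztig varieties of Coxeter-type one; alternatively one checks it directly by degeneration, letting a generic point of $X_{P_{2d,0}^{\pm}}(t_{2d}^{\mp})$ specialize to a $\Phi$-fixed Lagrangian in an explicit $1$-parameter family, which shows $X_{P_{2d,0}^{\pm}}(\id)$ lies in the closure and that nothing else does (since ranks $\leq d-4$ do not occur in $S$ by definition, there is nothing further to control).

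For part (ii), irreducibility, the cleanest route is: $X_{P_{2d,0}^{\pm}}(t_{2d}^{\mp})$ is a Deligne–Lusztig variety associated with the length-one element $t_{2d}^{\mp}$, which maps to the affine-line-bundle-type locus over $X_{P_{2d,0}^{\pm}}(\id)$; but the DL variety attached to a single simple reflection is well known to be irreducible of dimension $1$ only when the ambient group is not a product in that direction — more robustly, I would argue that $S_{\Omega_{2d,0}}^{\pm}$ is the image of an irreducible incidence variety. Specifically, consider the variety of pairs $(\cL, U)$ with $U$ a $(d-2)$-dimensional totally isotropic $\Phi$-stable (hence $\Fp$-rational) subspace and $\cL \supset U$ a Lagrangian in the given ruling; the fiber over $U$ is a conic (the two rulings of Lagrangians through $U$ in the $4$-dimensional space $U^{\perp}/U$, i.e. a $\P^1$), hence irreducible, and projecting to $\cL$ is dominant onto $S_{\Omega_{2d,0}}^{\pm}$; since the base (the set of such $U$) need not be connected I would instead fix the flag structure using $P_{2d,1}^{\pm}$ and use that $G_{2d}$ is connected, so $G_{2d}/P_{2d,1}^{\pm}$ is irreducible and surjects onto $S_{\Omega_{2d,0}}^{\pm}$ via $g \mapsto g(\cL_{2d,d}^{\pm})$ on the flag — no, more carefully: the map $G_{2d}/P_{2d,1}^{\pm} \to G_{2d}/P_{2d,0}^{\pm}$ has image contained in $S_{\Omega_{2d,0}}^{\pm}$ and is surjective onto it (every $\cL$ with $\rk(\cL\cap\Phi(\cL))\geq d-2$ contains the $\Fp$-rational subspace $\cL\cap\Phi(\cL)$, which can be completed to a $\Phi$-stable flag of the required type), so $S_{\Omega_{2d,0}}^{\pm}$ is the image of an irreducible scheme and is therefore irreducible. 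The main obstacle I anticipate is getting the relative-position/double-coset dictionary for the even orthogonal group exactly right — in type $D_d$ the two rulings and the two special simple reflections $t_{2d}^{\pm}$ make the combinatorics of $W_{\{t^{\pm}\}}\backslash W / W_{\{t^{\pm}\}}$ subtler than in type $A$ or $C$ — and in particular verifying that $t_{2d}^{\mp}$ (not $t_{2d}^{\pm}$) is the reflection producing relative position $2$ within a single ruling; I would pin this down by a direct $4$-dimensional ($d=2$) computation and then bootstrap.
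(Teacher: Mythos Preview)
Your argument for part (i) is essentially what the paper does: the paper simply cites \cite{Rapoport2014a} and \cite{Howard2014} for the same relative-position computation you spell out (the parity constraint forcing $\dim(\cL/\cL\cap\Phi(\cL))\in\{0,2\}$ on $S^{\pm}_{\Omega_{2d,0}}$, and the identification of the two strata with $X_{P_{2d,0}^{\pm}}(\id)$ and $X_{P_{2d,0}^{\pm}}(t_{2d}^{\mp})$, plus the standard Bruhat closure order). So no issue there.

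There is, however, a genuine gap in your proof of (ii). Your final argument claims that the projection $G_{2d}/P_{2d,1}^{\pm}\to G_{2d}/P_{2d,0}^{\pm}$ has image contained in $S^{\pm}_{\Omega_{2d,0}}$; but this map is just the forgetful map from a partial flag variety and is surjective onto the \emph{entire} Lagrangian Grassmannian $\OGr^{\pm}(\Omega_{2d,0})_{\Fpbar}$, not onto $S^{\pm}_{\Omega_{2d,0}}$. The attempted repair in your parenthetical --- finding a $\Phi$-stable $(d-2)$-dimensional subspace inside each $\cL\in S^{\pm}_{\Omega_{2d,0}}$ --- fails for the reason you half-anticipated earlier: when $\dim(\cL\cap\Phi(\cL))=d-2$, the subspace $\cL\cap\Phi(\cL)$ is \emph{not} $\Phi$-stable in general, since $\Phi(\cL\cap\Phi(\cL))=\Phi(\cL)\cap\Phi^{2}(\cL)$ need not equal $\cL\cap\Phi(\cL)$. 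So neither the incidence-variety argument nor the $P_{2d,1}^{\pm}$ argument produces an irreducible source surjecting onto $S^{\pm}_{\Omega_{2d,0}}$.

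The paper's route for (ii) is different and avoids this issue entirely: by (i) the closure of $X_{P_{2d,0}^{\pm}}(t_{2d}^{\mp})$ is $S^{\pm}_{\Omega_{2d,0}}$, so it suffices to show that the Deligne--Lusztig variety $X_{P_{2d,0}^{\pm}}(t_{2d}^{\mp})$ itself is irreducible, and this is supplied directly by the general irreducibility theorem of Bonnaf\'e--Rouquier \cite{Bonnafe2006}. If you want to stay self-contained, the cleanest fix is to replace your surjectivity claim with a correct one: restrict to the locus in $G_{2d}/P_{2d,1}^{\pm}$ where the $(d-2)$-plane equals $\cL\cap\Phi(\cL)$ (not a $\Phi$-stable plane), check this locus is isomorphic to $X_{P_{2d,0}^{\pm}}(t_{2d}^{\mp})$, and then prove irreducibility of that DL variety directly --- but at that point you are essentially re-proving the Bonnaf\'e--Rouquier criterion in this special case.
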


\begin{proof}
(i): The proof is the same as \cite[Proposition 5.3]{Rapoport2014a} or \cite[Lemma 3.7]{Howard2014}. 

(ii): It suffices to prove the irreducibility of $X_{P_{2d,0}^{\pm}}(t_{2d}^{\mp})$ by (i). However, this follows from \cite[Theorem 2]{Bonnafe2006}. 
\end{proof}

\begin{prop}
\emph{There is a locally closed stratification
\begin{equation*}
S_{\Omega_{6,0}}^{\pm}=X_{P_{6,0}^{\pm}}(\id)\sqcup X_{P_{6,1}^{\pm}}(t_{6}^{\mp})\sqcup X_{P_{6,1}^{\pm}}(t_{6}^{\mp}s_{6,1}). 
\end{equation*}
More precisely, there are locally closed subschemes $X_0^{\pm}$ and $X_1^{\pm}$ of $S_{\Omega_{6,0}^{\pm}}$ satsifying $X_0^{\pm}\cong X_{P_{6,1}^{\pm}}(t_{6}^{\mp})$ and $X_1^{\pm}\cong X_{P_{6,1}^{\pm}}(t_{6}^{\mp}s_{6,1})$ under the canonical morphism $G_{6}/P_{6,0}^{\pm}\rightarrow G_{6}/P_{6,1}^{\pm}$. Furthermore, we have 
\begin{equation*}
\overline{X_{P_{6,1}^{\pm}}(t_{6}^{\mp})}=X_{P_{6,0}^{\pm}}(\id)\sqcup X_{P_{6,1}^{\pm}}(t_{6}^{\mp}),\quad \overline{X_{P_{6,1}^{\pm}}(t_{6}^{\mp}s_{1})}=S_{\Omega_{6,0}}^{\pm}. 
\end{equation*}
Here we denote by $\overline{X}$ the closure of a subset $X$ of $G_{6}/P_{6,1}^{\pm}$. }
\end{prop}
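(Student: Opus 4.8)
The plan is to imitate the proof of Proposition~\ref{sfdc} but now keep track of the extra simple reflection available in the Weyl group $W_6$. Concretely, $S_{\Omega_{6,0}}^{\pm}$ is, by Definition~\ref{grso}, the locus of $\cL$ in $\OGr^{\pm}(\Omega_{6,0})_{\Fpbar}$ with $\rk(\cL\cap(\id\otimes\sigma)_{*}(\cL))\geq 1$, i.e.\ $\cL$ and its Frobenius translate share a common isotropic line. First I would translate the condition ``$\inv(\cL,\Phi(\cL))=w$'' into a geometric condition on relative position via the standard dictionary between $G_6/P_{6,0}^{\pm}$ and isotropic subspaces; this identifies the three possible relative positions of a maximal isotropic $\cL$ and $\Phi(\cL)$ with the double cosets $W_{I}\backslash W_6/W_{\Phi(I)}$ for $I=\{t_6^{\pm}\}$, and the condition $\dim(\cL\cap\Phi(\cL))\geq 1$ singles out exactly the two nontrivial ones, represented by $t_6^{\mp}$ and $t_6^{\mp}s_{6,1}$ (with $\id$ corresponding to $\cL=\Phi(\cL)$). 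This gives the set-level decomposition
\begin{equation*}
S_{\Omega_{6,0}}^{\pm}=X_{P_{6,0}^{\pm}}(\id)\sqcup X_{P_{6,0}^{\pm}}(t_6^{\mp})\sqcup X_{P_{6,0}^{\pm}}(t_6^{\mp}s_{6,1}).
\end{equation*}

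Next I would explain why, on each of the two nontrivial strata, one can ``lift the flag'': the common line $\cL\cap\Phi(\cL)$ (when $\dim=1$) or a canonically chosen $2$-dimensional subspace recovers a point of $G_6/P_{6,1}^{\pm}$, and the fibers of $G_6/P_{6,0}^{\pm}\to G_6/P_{6,1}^{\pm}$ over the image are single points on these loci. This is precisely the phenomenon that makes $X_0^{\pm}$ map isomorphically onto a Deligne--Lusztig variety $X_{P_{6,1}^{\pm}}(t_6^{\mp})$ and $X_1^{\pm}$ onto $X_{P_{6,1}^{\pm}}(t_6^{\mp}s_{6,1})$; one checks that the relative-position invariant computed in $G_6/P_{6,1}^{\pm}$ lands in the asserted double coset, using that $s_{6,1}$ is the unique simple reflection outside $W_{\{t_6^{\pm}\}}$ and that the minimal-length representatives of $W_{\{t_6^{\pm}\}}\backslash W_6/W_{\{t_6^{\pm}\}}$ that are relevant are $\id$, $t_6^{\mp}$, $t_6^{\mp}s_{6,1}$ (and $t_6^{\mp}s_{6,1}t_6^{\mp}$, which cannot occur because $\Phi(\cL)$ has bounded relative position from $\cL$ inside a $6$-dimensional space). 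Here I can cite \cite[Proposition 5.3]{Rapoport2014a} and \cite[Lemma 3.7]{Howard2014} for the isomorphism $X_i^{\pm}\cong X_{P_{6,1}^{\pm}}(\cdot)$ and for the computation of the relative positions, exactly as in the proof of Proposition~\ref{sfdc}.

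For the closure relations I would argue by the general principle that the closure of a generalized Deligne--Lusztig stratum is the union of the strata indexed by smaller elements in the Bruhat order on $W_{I}\backslash W/W_{\Phi(I)}$, which in our situation gives $\id<t_6^{\mp}<t_6^{\mp}s_{6,1}$. Since $X_{P_{6,1}^{\pm}}(t_6^{\mp})$ has dimension $1$ and $X_{P_{6,1}^{\pm}}(t_6^{\mp}s_{6,1})$ has dimension $2$ (the lengths of the chosen representatives modulo $W_{I}$), and since $S_{\Omega_{6,0}}^{\pm}$ is irreducible by Proposition~\ref{sfdc}(ii) applied with $2d=6$, the stratum $X_{P_{6,1}^{\pm}}(t_6^{\mp}s_{6,1})$ is dense; hence $\overline{X_{P_{6,1}^{\pm}}(t_6^{\mp}s_{6,1})}=S_{\Omega_{6,0}}^{\pm}$, and $\overline{X_{P_{6,1}^{\pm}}(t_6^{\mp})}=X_{P_{6,0}^{\pm}}(\id)\sqcup X_{P_{6,1}^{\pm}}(t_6^{\mp})$ by the same Bruhat-order computation together with the fact that $X_{P_{6,0}^{\pm}}(\id)$ is a single point (it is the $\Phi$-fixed point corresponding to $\cL_{6,3}^{\pm}$).

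The main obstacle I anticipate is bookkeeping the passage between $G_6/P_{6,0}^{\pm}$ and $G_6/P_{6,1}^{\pm}$: one must verify that the two nontrivial strata in $G_6/P_{6,0}^{\pm}$ really do map \emph{isomorphically} (not just bijectively or finitely) onto the corresponding Deligne--Lusztig varieties in $G_6/P_{6,1}^{\pm}$, which amounts to checking that the ``forgetting the line'' morphism is étale-and-injective on those loci, equivalently that the minimal length representatives are compatible under the projection $W_{\{t_6^{\pm}\}}\backslash W_6/W_{\{t_6^{\pm}\}}\to W_{\{s_{6,1},t_6^{\pm}\}}\backslash W_6/W_{\{s_{6,1},t_6^{\pm}\}}$ in the degenerate way claimed. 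This is routine $D_3$ combinatorics — and is precisely the $A_3=D_3$ coincidence underlying the whole paper — but it is the step that requires care. Everything else follows formally from Proposition~\ref{sfdc}, the dimension count, and standard properties of generalized Deligne--Lusztig varieties.
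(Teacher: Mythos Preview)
Your approach is essentially the same as the paper's, which simply invokes ``the same argument as the proof of \cite[Proposition 5.5]{Rapoport2014a} by using Proposition~\ref{sfdc}'': refine the open stratum $X_{P_{6,0}^{\pm}}(t_6^{\mp})$ of Proposition~\ref{sfdc} according to whether the line $\cL\cap\Phi(\cL)$ is $\Phi$-stable, and observe that on each piece the map $(\cL_1\subset\cL)\mapsto\cL$ from $G_6/P_{6,1}^{\pm}$ is an isomorphism onto its image with the asserted relative-position invariant. Your closure arguments via Bruhat order and irreducibility (Proposition~\ref{sfdc}(ii)) are also in line with what that reference does.

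One small slip: $X_{P_{6,0}^{\pm}}(\id)$ is \emph{not} a single point---it is the finite set of $\Fp$-rational maximal isotropics in the $\pm$ component (under $A_3=D_3$ this is $\P^3(\Fp)$). This remark is not used anywhere essential in your argument; the closure relation $\overline{X_{P_{6,1}^{\pm}}(t_6^{\mp})}=X_{P_{6,0}^{\pm}}(\id)\sqcup X_{P_{6,1}^{\pm}}(t_6^{\mp})$ already follows from the Bruhat-order principle you state (or directly from Proposition~\ref{sfdc}(i)), so you can simply delete that parenthetical.
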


\begin{proof}
This follows from the same argument as the proof of \cite[Proposition 5.5]{Rapoport2014a} by using Propositoion \ref{sfdc}. 
\end{proof}

\begin{rem}
If $d\geq 4$, then we have
\begin{equation*}
S_{\Omega_{2d,0}}^{\pm}=X_{P_{2d,0}^{\pm}}(\id)\sqcup X_{P_{2d,1}^{\pm}}(t_{2d}^{\mp})\sqcup X_{P_{2d,1}^{\pm}}(t_{2d}^{\mp}s_{2d,d-2})\sqcup X_{P_{2d,1}^{\pm}}(s_{2d,d-2}s_{2d,d-3}t_{2d}^{\mp}s_{2d,d-2}). 
\end{equation*}
The variety $X_{P_{2d,1}^{\pm}}(s_{2d,d-2}s_{2d,d-3}t_{2d}^{\mp}s_{2d,d-2})$ parametrizes all flags $\cL_{d-2} \subset \cL_{d}$ in $\Omega_{2d}$ where $\rk \cL_{i}=i$ satisfying $\cL_{d-2}=\cL_{d}\cap \Phi(\cL_{d})$ and $\rk(\cL_{d-2}/\cL_{d-2}\cap \Phi(\cL_{d-2}))=2$. 
\end{rem}

Next, we give a connection between generalized Deligne--Lusztig varieties for $\SO_{2d-1}$. Let $\omega_{d}:=e_{d}-f_{d}\in \Omega_{2d,0}$, which is an anisotropic vector. We denote by $\Omega_{2d-1,0}$ the perpendicular of $\omega_{d}$ in $\Omega_{2d,0}$. Then we have $\dim_{\Fp}(\Omega_{2d-1,0})=2d-1$. Moreover, for $i\in \{1,\ldots,d-1\}$, put
\begin{equation*}
\cL_{2d-1,i}:=\cL_{2d,i+1}\cap \Omega_{2d-1}. 
\end{equation*}

On the other hand, put $G_{2d-1,0}:=\SO(\Omega_{2d-1,0})$ and $G_{2d-1}:=G_{2d-1,0}\otimes_{\Fp}\Fpbar$. We regard $G_{2d-1}$ as a subgroup of $\GL_{2d-1}$ by the basis $e_1,\ldots,e_{d-1},e_{d}+f_{d},f_{d-1},\ldots,f_{1}$. Moreover, let $B_{2d-1}\subset G_{2d-1}$ be the upper-half Borel subgroup, and $T_{2d-1}\subset G_{2d-1}$ the diagonal torus. Then there is an isomorphism 
\begin{equation*}
W_{2d-1}:=W(B_{2d-1},T_{2d-1})\cong \{w\in \mathfrak{S}_{2d-1}\mid w(i)+w(2d-i)=2d\text{ for all }i\in \{1,\ldots,2d-1\}\}. 
\end{equation*}
Moreover, we have $\Delta_{2d-1}^{*}:=\Delta^{*}(B_{2d-1},T_{2d-1})=\{s_{2d-1,1},\ldots,s_{2d-1,d-2},t_{2d-1}\}$, where
\begin{itemize}
\item $s_{2d-1,i}=(i\ i+1)(2d-i\ 2d+1-i)$ for $i\in \{1,\ldots,d-2\}$,
\item $t_{2d-1}=(d-1\ d+1)$. 
\end{itemize}
For $i\in \Znn$, put 
\begin{equation*}
P_{2d-1,i}:=
\begin{cases}
P_{\{s_{2d-1,1},\ldots,s_{2d-1,d-2-i}\}}&\text{if }i\leq d-3,\\
B_{2d-1}&\text{otherwise}.
\end{cases} 
\end{equation*}
Then $P_{2d-1,i}$ is the stabilizer of the flag $\cL_{2d-1,d-1-i}\subset \cdots \subset \cL_{2d-1,d-1}$. Moreover, there is an isomorphism
\begin{equation*}
G_{2d-1}/P_{2d-1,0}\xrightarrow{\cong}\OGr(\Omega_{2d-1,0})_{\Fpbar};g\mapsto g(\cL_{2d-1,d-1}). 
\end{equation*}

\begin{lem}\label{soeo}
\emph{There is a commutative diagram
\begin{equation*}
\xymatrix@C=64pt{
G_{2d}/P_{2d,0}^{\pm} \ar[r]^{\hspace{-4mm}g\mapsto g(\cL_{2d,d}^{\pm})} \ar[d]_{g\mapsto g\mid_{\Omega_{2d-1}}} 
& \OGr^{\pm}(\Omega_{2d,0})_{\Fpbar} \ar[d]^{\cL \mapsto \cL\cap \Omega_{2d-1}} \\
G_{2d-1}/P_{2d-1,0} \ar[r]^{\hspace{-7.5mm}g\mapsto g(\cL_{2d-1,d-1})}& \OGr(\Omega_{2d-1,0})_{\Fpbar}, }
\end{equation*}
where the all maps are isomorphisms. }
\end{lem}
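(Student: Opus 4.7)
The plan is to identify both horizontal arrows as classical orbit--stabilizer isomorphisms, to show that the right vertical arrow restricts to a bijection on each connected component by a short linear algebra calculation, and then to force the left vertical arrow via the commutativity of the diagram.

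The horizontal maps are standard. The group $G_{2d}$ acts transitively on each connected component $\OGr^{\pm}(\Omega_{2d,0})_{\Fpbar}$ with stabilizer $P_{2d,0}^{\pm}$ at the base point $\cL_{2d,d}^{\pm}$, and similarly $G_{2d-1}$ acts transitively on $\OGr(\Omega_{2d-1,0})_{\Fpbar}$ with stabilizer $P_{2d-1,0}$ at $\cL_{2d-1,d-1}$, so both horizontal arrows are isomorphisms of homogeneous varieties. For the right vertical arrow, note that since $p$ is odd and $[\omega_d,\omega_d] = -2 \neq 0$, the vector $\omega_d$ is anisotropic and never lies in a totally isotropic subspace $\cL$. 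Hence the composite $\cL \hookrightarrow \Omega_{2d} \twoheadrightarrow \Omega_{2d}/\Omega_{2d-1} \cong \Fpbar$ must be surjective: otherwise $\cL \subset \Omega_{2d-1}$ would be a $d$-dimensional isotropic subspace in an odd-dimensional quadratic form of Witt index $d-1$, which is impossible. Therefore $\dim(\cL \cap \Omega_{2d-1}) = d-1$, and the assignment $\cL \mapsto \cL \cap \Omega_{2d-1}$ is a well-defined morphism into $\OGr(\Omega_{2d-1,0})_{\Fpbar}$.

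Next I would classify the isotropic lifts of a fixed $\cL' \in \OGr(\Omega_{2d-1,0})_{\Fpbar}$. Write $\cL = \cL' \oplus \Fpbar v$ with $v$ in the $\Omega_{2d}$-perp of $\cL'$, expand $v = a\bar u + u' + c\omega_d$ where $\bar u$ is a fixed anisotropic representative of the one-dimensional quotient $(\cL')^{\perp,\Omega_{2d-1}}/\cL'$ and $u' \in \cL'$. The isotropy equation $[v,v]=0$ reads $a^2[\bar u,\bar u] = 2c^2$, which after normalising $c=1$ has exactly two solutions $a = \pm\beta$, yielding two lifts $\cL^+, \cL^-$. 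A direct check shows $\cL^+ \cap \cL^- = \cL'$, so $\dim_{\Fpbar}(\cL^+/(\cL^+ \cap \cL^-)) = 1$ is odd, and the parity criterion recalled before Proposition \ref{sfdc} places $\cL^+$ and $\cL^-$ in opposite components of $\OGr(\Omega_{2d,0})_{\Fpbar}$. Consequently the right vertical map restricts to a bijection $\OGr^{\pm}(\Omega_{2d,0})_{\Fpbar} \xrightarrow{\cong} \OGr(\Omega_{2d-1,0})_{\Fpbar}$.

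Finally I would interpret the left vertical arrow through the embedding $G_{2d-1} \hookrightarrow G_{2d}$ as the pointwise stabilizer of $\omega_d$. Since $G_{2d-1}$ is connected, its action on $\OGr(\Omega_{2d,0})_{\Fpbar}$ preserves each component, and transitivity of $G_{2d-1}$ on $\OGr(\Omega_{2d-1,0})_{\Fpbar}$ combined with the bijection above yields transitivity on each $\OGr^{\pm}$. A short argument identifies the stabilizer of $\cL_{2d,d}^{\pm}$ in $G_{2d-1}$ with $P_{2d-1,0}$: any $g \in G_{2d-1}$ fixing $\cL_{2d-1,d-1}$ sends $\cL_{2d,d}^{\pm}$ to another max-isotropic lift of $\cL_{2d-1,d-1}$ in the same component, and the uniqueness of such a lift forces $g(\cL_{2d,d}^{\pm}) = \cL_{2d,d}^{\pm}$. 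This produces the isomorphism $G_{2d-1}/P_{2d-1,0} \xrightarrow{\cong} G_{2d}/P_{2d,0}^{\pm}$; commutativity of the diagram is automatic from the construction. The main point requiring care is the component-tracking in the two-lift classification: if the two lifts landed in the same component, the right vertical arrow would be two-to-one rather than an isomorphism on components, so the parity computation $\dim(\cL^+/(\cL^+ \cap \cL^-)) = 1$ is the crux of the argument.
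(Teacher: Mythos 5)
Your proposal follows the same high-level outline as the paper's proof (horizontal arrows are the standard orbit isomorphisms, right vertical arrow is an isomorphism, left vertical arrow is forced by commutativity), but where the paper simply cites \cite[Lemma 3.4]{Howard2014} for the right vertical arrow, you re-derive that input from scratch. Your re-derivation is correct: $\omega_d = e_d - f_d$ is anisotropic because $p$ is odd and $[\omega_d,\omega_d]=-2$, so a Lagrangian $\cL\subset\Omega_{2d}$ cannot lie in $\Omega_{2d-1}$ (whose Witt index is only $d-1$) and hence meets $\Omega_{2d-1}$ in exactly $d-1$ dimensions; the quadratic $a^2[\bar u,\bar u]=2c^2$ correctly yields two lifts of a fixed $\cL'$; and the parity computation $\dim(\cL^+/\cL^+\cap\cL^-)=1$ is precisely the criterion recalled in the paper for placing $\cL^+$ and $\cL^-$ in opposite components. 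This gives a self-contained alternative to citing Howard--Pappas, and you correctly identify the two-lift component-tracking as the crux.

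Two points worth tightening. First, you establish that the right vertical map restricts to a bijection on $\Fpbar$-points of each component, but in characteristic $p$ a bijective morphism of smooth projective varieties is not automatically an isomorphism (Frobenius). To conclude at the level of varieties, either exhibit an inverse morphism as Howard--Pappas do, or observe that the cover $\OGr(\Omega_{2d,0})_{\Fpbar}\to\OGr(\Omega_{2d-1,0})_{\Fpbar}$ is finite of degree $2$ and \'etale (the two roots $\pm\beta$ of your quadratic never coincide since $\beta\neq 0$), so each connected component maps isomorphically. Second, once the two horizontals and the right vertical are isomorphisms and the diagram commutes, the left vertical arrow is automatically the composite of isomorphisms, which is exactly what the paper does and what you announce at the start; the additional transitivity-and-stabilizer argument in your last paragraph is not needed.
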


\begin{proof}
The commutativity of diagram is clear. The isomorphy of the horizontal maps are already mentioned. On the other hand, \cite[Lemma 3.4]{Howard2014} and the definition of $\OGr^{\pm}(\Omega_{2d,0})_{\Fpbar}$ imply the isomorphy of the vertical map in the right-hand side. Hence the isomorphy of the remaining map follows. 
\end{proof}

\begin{prop}\label{lwsp}
\emph{
\begin{enumerate}
\item For $d\leq 3$, the isomorphism in Lemma \ref{soeo} induces isomorphisms
\begin{equation*}
S^{\pm}_{\Omega_{2d,0}}\cong S_{\Omega_{2d-1,0}},\quad X_{P_{2d,0}^{\pm}}(\id)\cong X_{P_{2d-1,0}}(\id), \quad X_{P_{2d,0}^{\pm}}(t_{2d}^{\mp})\cong X_{P_{2d-1,0}}(t_{2d-1}). 
\end{equation*}
\item The isomorphism $X_{P_{6,0}^{\pm}}(t_{6}^{\mp})\cong X_{P_{5,0}}(t_{5})$ in (i) induces isomorphisms
\begin{equation*}
X_{P_{6,1}^{\pm}}(t_{6}^{\pm})\cong X_{P_{5,1}}(t_{5}), \quad X_{P_{6,1}^{\pm}}(t_{6}^{\mp}s_{6,1})\cong X_{B_{5}}(t_{5}). 
\end{equation*}
\end{enumerate}}
\end{prop}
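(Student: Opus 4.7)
The plan is to exploit the $\Phi$-equivariance of the isomorphism in Lemma \ref{soeo}, extended to the flag level, together with the fact that generalized Deligne--Lusztig varieties are cut out by conditions on the relative position of $g$ and $\Phi(g)$. Write $\Psi$ for the geometric isomorphism $\cL \mapsto \cL \cap \Omega_{2d-1}$ from $\OGr^{\pm}(\Omega_{2d,0})_{\Fpbar}$ to $\OGr(\Omega_{2d-1,0})_{\Fpbar}$ of Lemma \ref{soeo}.

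For (i), the first step is to check that $\Psi$ is $\Phi$-equivariant. Since $\Omega_{2d,0}$ is split, we have $\Phi(e_d) = e_d$ and $\Phi(f_d) = f_d$, hence $\omega_d = e_d - f_d$ and therefore $\Omega_{2d-1,0}$ are $\Phi$-stable. Consequently $\Psi(\Phi(\cL)) = \Phi(\cL) \cap \Omega_{2d-1} = \Phi(\cL \cap \Omega_{2d-1}) = \Phi(\Psi(\cL))$. The stratum $X_{P_{2d,0}^{\pm}}(\id)$ is the Frobenius-fixed locus $\{\cL = \Phi(\cL)\}$, which $\Psi$ visibly identifies with $X_{P_{2d-1,0}}(\id) = \{\cL' = \Phi(\cL')\}$. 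For the stratum $X_{P_{2d,0}^{\pm}}(t_{2d}^{\mp})$, I would recast the relative-position condition as an explicit condition on $\dim(\cL \cap \Phi(\cL))$ via the standard translation between the Bruhat order and incidence of isotropic subspaces; then the formula $\Psi(\cL) \cap \Phi(\Psi(\cL)) = (\cL \cap \Phi(\cL)) \cap \Omega_{2d-1}$, together with the fact that $\cL \not\subset \Omega_{2d-1}$ forces $\dim(\cL / \Psi(\cL)) = 1$, transports this condition to the condition defining $X_{P_{2d-1,0}}(t_{2d-1})$. The isomorphism $S^{\pm}_{\Omega_{2d,0}} \cong S_{\Omega_{2d-1,0}}$ then follows from Proposition \ref{sfdc}(i) combined with these identifications.

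For (ii), I would extend the construction one step up the flag. Since $\cL_{6,1} = \Fpbar e_1 \subset \Omega_5$ coincides with $\cL_{5,1}$, the restriction map $g \mapsto g|_{\Omega_5}$ identifies the stabilizer of $\cL_{6,1} \subset \cL_{6,3}^{\pm}$ in $G_{6}$ with the stabilizer of $\cL_{5,1} \subset \cL_{5,2}$ in $G_{5}$, yielding a $\Phi$-equivariant isomorphism $G_{6}/P_{6,1}^{\pm} \cong G_{5}/P_{5,1}$ compatible with the quotient maps to $G_{6}/P_{6,0}^{\pm} \cong G_{5}/P_{5,0}$. The same $\Phi$-equivariance argument then identifies the strata $X_{P_{6,1}^{\pm}}(t_{6}^{\mp})$ and $X_{P_{6,1}^{\pm}}(t_{6}^{\mp}s_{6,1})$ with strata on the $G_{5}$-side, and tracking which double cosets correspond yields the claimed identifications with $X_{P_{5,1}}(t_{5})$ and $X_{B_{5}}(t_{5})$.

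The main obstacle is the combinatorial bookkeeping across the exceptional isomorphism $D_3 \cong A_3$ underlying Lemma \ref{soeo}: one must spell out precisely how the double cosets in $W_{I}\backslash W_{6}/W_{\Phi(I)}$ parametrizing strata on the $G_{6}$-side correspond to those on the $G_{5}$-side, and in particular verify that $t_{6}^{\mp}$ and $t_{6}^{\mp}s_{6,1}$ are carried to the intended classes in $W_5$. For the low-rank cases $d \leq 3$ this is a finite check, but it requires a consistent choice of bases and a careful translation between the flag-theoretic and intersection-dimension descriptions of the DL strata.
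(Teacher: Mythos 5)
Your overall strategy, exploiting the $\Phi$-equivariance of the isomorphism $\Psi\colon \cL \mapsto \cL\cap\Omega_{2d-1}$ and then matching strata, is in the same spirit as the paper's, but you reverse the logical order and this creates a genuine gap. The paper first establishes $S^{\pm}_{\Omega_{2d,0}}\cong S_{\Omega_{2d-1,0}}$ (for $d=2$ by inspection, for $d=3$ by combining the closed immersion $S_{\Omega_5}\hookrightarrow S^{\pm}_{\Omega_6}$ coming from $\Psi^{-1}$ with the irreducibility of $S^{\pm}_{\Omega_6}$ from Proposition \ref{sfdc}(ii) and the dimension count $\dim S_{\Omega_5}=2$), then matches the $\id$-strata directly from the definitions, and finally obtains the $t$-strata isomorphism by subtraction using the two stratifications. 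You instead try to transport the $t$-stratum condition directly and then deduce the $S$-isomorphism from Proposition \ref{sfdc}(i), which puts the full burden on the transport step.

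That transport step is where the gap lies. You observe $\Psi(\cL)\cap\Phi(\Psi(\cL)) = (\cL\cap\Phi(\cL))\cap\Omega_{2d-1}$ and that $\dim(\cL/\Psi(\cL))=1$, and then assert this ``transports the condition.'' But intersecting an isotropic subspace with $\Omega_{2d-1}$ can drop the dimension by $1$: the hyperplane $\Omega_{2d-1}=\omega_d^{\perp}$ need not contain $\cL\cap\Phi(\cL)$, since a vector of the form $w+c\omega_d$ with $c\neq 0$ and $w\in\Omega_{2d-1}$ anisotropic can still be isotropic. You give no argument ruling this out, so you have not established $\dim(\Psi(\cL)\cap\Phi(\Psi(\cL)))=d-2$ from $\dim(\cL\cap\Phi(\cL))=d-2$, and the claimed identification of the $t$-strata does not follow. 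The paper's ordering sidesteps exactly this point: once $S^{\pm}\cong S_{\Omega_{2d-1,0}}$ and $X_{P_{2d,0}^{\pm}}(\id)\cong X_{P_{2d-1,0}}(\id)$ are known, the $t$-strata isomorphism is free, with no need to control the drop in dimension. You would also need to invoke the stratification of $S_{\Omega_{2d-1,0}}$ (from [Oki2019]) alongside Proposition \ref{sfdc}(i); with only the latter the subtraction step is not available. Finally, you do not treat $d=2$ at all, and your claim that ``$\cL_{6,1}$ coincides with $\cL_{5,1}$'' conflicts with the paper's indexing $\cL_{2d-1,i}=\cL_{2d,i+1}\cap\Omega_{2d-1}$.

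For (ii) the two approaches are closer in substance: both lift the isomorphism to the level of flags. The paper works concretely with the auxiliary incidence schemes $S^{1,\pm}_{\Omega_{6,0}}$ and $S^{1}_{\Omega_{5,0}}$ parametrizing pairs $(\cL_1,\cL_3)$ with $\cL_1=\cL_3\cap\Phi(\cL_3)$, identifies them with $X_{P_{6,0}^{\pm}}(t_6^{\mp})$ and $X_{P_{5,0}}(t_5)$, and then splits off the locus where $\cL_1$ is $\Phi$-fixed. Your plan to build a $\Phi$-equivariant isomorphism $G_6/P_{6,1}^{\pm}\cong G_5/P_{5,1}$ and ``track which double cosets correspond'' is reasonable and, as you yourself note, reduces to a finite combinatorial check; but as written it is a plan, not a proof, and it inherits the same caution as (i) that the relative-position conditions must actually match across $\Psi$ rather than merely project compatibly.
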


\begin{proof}
(i): First, we prove $S^{\pm}_{\Omega_{2d,0}}\cong S_{\Omega_{2d-1,0}}$. It is clear in the case $d=2$. If $d=3$, then we obtain a closed immersion $S_{\Omega_{2d-1,0}}\rightarrow S^{\pm}_{\Omega_{2d,0}}$. On the other hand, we have $\dim S_{\Omega_{2d-1,0}}=2$ by \cite[Propositions 6.5]{Oki2019}, and $S^{\pm}_{\Omega_{2d,0}}$ is irreducible of dimension $\leq 2$ by Proposition \ref{sfdc} (ii). Hence the above map $S_{\Omega_{2d-1,0}}\rightarrow S^{\pm}_{\Omega_{2d,0}}$ is an isomorphism. 

Next, we prove the remaining isomorphisms. By \cite[Proposition 6.4 (ii)]{Oki2019}, there is a locally closed stratification
\begin{equation*}
S_{\Omega_{2d,0}}^{\pm}=X_{P_{2d,0}^{\pm}}(\id)\sqcup X_{P_{2d,0}^{\pm}}(t_{d}^{\mp}), 
\end{equation*}
Hence it suffices to prove the isomorphism $X_{P_{2d,0}^{\pm}}(\id)\cong X_{P_{2d-1,0}}(\id)$. However, this follows from the definitions of $X_{P_{2d-1,0}}(\id)$ and $\OGr^{+}(\Omega_{2d,0})_{\Fpbar}$. 

(ii): Set
\begin{align*}
S_{\Omega_{6,0}}^{1,\pm}&:=\{(\cL_{1},\cL_{3})\in \P(\Omega_{6,0})_{\Fpbar}\times \OGr^{\pm}(\Omega_{6,0})_{\Fpbar}\mid \cL_{1}=\cL_{3} \cap \Phi(\cL_{3})\},\\
S_{\Omega_{5,0}}^{1}&:=\{(\cL'_{1},\cL'_{2})\in \P(\Omega_{5,0})_{\Fpbar}\times \OGr(\Omega_{5,0})_{\Fpbar}\mid \cL'_{1}=\cL'_{2} \cap \Phi(\cL'_{2})\}. 
\end{align*}
By the definitions of $X_{P_{6,0}^{\pm}}(t_{6}^{\mp})$ and $X_{P_{5,0}}(t_{5})$, we obtain an isomorphism
\begin{equation*}
S_{\Omega_{6,0}}^{1,\pm}\cong X_{P_{6,0}^{\pm}}(t_{6}^{\mp})\cong X_{P_{5,0}}(t_{5})\cong S_{\Omega_{5,0}}^{1}. 
\end{equation*}
This isomorphism is equal to $(\cL_{1},\cL_{3})\mapsto (\cL_{1}\cap \Omega_{5},\cL_{3}\cap \Omega_{5})$, and hence we obtain $\cL_{1}\cap \Omega_{5}=\cL_{1}$. Therefore, for $(\cL_{1},\cL_{3})\in S_{\Omega_{6,0}}^{1,\pm}$, we have $\Phi(\cL_{1})=\cL_{1}$ if and only if $\Phi(\cL_{1}\cap \Omega_{5})=\cL_{1}\cap \Omega_{5}$. This equivalence implies the desired isomorphisms. 
\end{proof}

\textbf{Case 2. The case for $\Omega_{2d,0}$ non-split. }
For $i\in \Znn$, put 
\begin{equation*}
P_{2d,i}:=
\begin{cases}
P_{\{s_{2d,1},\ldots,s_{2d,d-2-i}\}}&\text{if }i\leq d-3,\\
B_{2d-1}&\text{otherwise}. 
\end{cases}
\end{equation*}
Then the following hold: 
\begin{itemize}
\item $P_{2d,i}$ is the stabilizer of the flag $\cL_{2d,d-1-i}\subset \cdots \subset \cL_{2d,d-1}$ if $0\leq i\leq d-3$, 
\item $P_{2d,i}^{\pm}$ is the stabilizer of the flag $\cL_{2d,1}\subset \cdots \subset \cL_{2d,d-1}$ if $i\geq d-2$. 
\end{itemize}

\begin{prop}\label{stns}(\cite[Proposition 3.8]{Howard2014})
\emph{There is a locally closed stratification
\begin{equation*}
S_{\Omega_{2d,0}}=\coprod_{i=0}^{d-1}(X_{P_{2d,i}}(t_{2d}^{+}s_{2d,d-2}\cdots s_{2d,d-i})\sqcup X_{P_{2d,i}}(t_{2d}^{-}s_{2d,d-2}\cdots s_{2d,d-i})). 
\end{equation*}
Moreover, for any $r\in \{0,\ldots,d-1\}$, the closure of $X_{P_{2d,r}}(t_{2d}^{\pm}s_{2d,d-2}\cdots s_{2d,d-r})$ in $G_{2d}/P_{2d,0}$ equals
\begin{equation*}
\coprod_{i=0}^{r}X_{P_{2d,i}}(t_{2d}^{+}s_{2d,d-2}\cdots s_{2d,d-i}). 
\end{equation*}}
\end{prop}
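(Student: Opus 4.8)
This is \cite[Proposition 3.8]{Howard2014} verbatim --- $\Omega_{2d,0}$ is non-split of even dimension and $S_{\Omega_{2d,0}}$ is cut out by exactly the same rank condition (with $m=d-1$) --- so the plan is to invoke that result; for completeness here is how one proves it directly. First I would pass to a concrete description of the geometric points: since $\Omega_{2d,0}$ is non-split of even dimension, the Frobenius $\Phi$ interchanges the two connected components of $\OGr(\Omega_{2d,0})_{\Fpbar}$, so for every geometric point $\cL$ the integer $\dim(\cL/(\cL\cap\Phi(\cL)))$ is odd; hence $\dim(\cL\cap\Phi(\cL))\le d-1$, with equality exactly on $S_{\Omega_{2d,0}}$, i.e. $S_{\Omega_{2d,0}}(k)=\{\cL\mid \dim_k(\cL\cap\Phi(\cL))=d-1\}$. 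The decomposition $\OGr(\Omega_{2d,0})_{\Fpbar}=\OGr^{+}\sqcup\OGr^{-}$ induces $S_{\Omega_{2d,0}}=S^{+}\sqcup S^{-}$, which will account for the splitting of each stratum into a $t_{2d}^{+}$- and a $t_{2d}^{-}$-piece.

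Next I would attach to each $\cL\in S_{\Omega_{2d,0}}(k)$ the descending chain $\cL=:\cL^{[0]}\supset\cL^{[1]}\supset\cdots$ with $\cL^{[j+1]}:=\cL^{[j]}\cap\Phi(\cL^{[j]})$, so that $\cL^{[j]}=\cL\cap\Phi(\cL)\cap\cdots\cap\Phi^{j}(\cL)$. The crucial lemma to establish --- and the step I expect to be the main obstacle, since it is the one place the quadratic form is genuinely used --- is that $\dim(\cL^{[j]}/\cL^{[j+1]})\le 1$ for every $j$. Granting it, once the chain stops strictly decreasing it has reached a $\Phi$-stable, hence $\Fp$-rational, isotropic subspace, and it does so at step $i+1$ for a unique $i=i(\cL)\in\{0,\dots,d-1\}$; the flag $\cL^{[i+1]}\subset\cL^{[i]}\subset\cdots\subset\cL^{[1]}$ then consists of isotropic subspaces of dimensions $d-1-i,\dots,d-1$, i.e. of exactly the type stabilized by $P_{2d,i}$. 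The lemma itself would be proved by assuming $\cL^{[j]}\cap\Phi(\cL^{[j]})$ has codimension $\ge 2$ in $\cL^{[j]}$ and deriving, from isotropy of the $\cL^{[j]}$ together with non-degeneracy and $\Phi$-equivariance of the form, an isotropic subspace of impossibly large dimension. Setting $S^{(i)}:=\{\cL\mid i(\cL)=i\}$ then yields the disjoint decomposition indexed by $i\in\{0,\dots,d-1\}$.

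To identify the strata, I would send $\cL\mapsto(\cL^{[i+1]}\subset\cdots\subset\cL^{[1]})$, landing in $G_{2d}/P_{2d,i}$, and compute $\inv$ of this flag against its $\Phi$-twist: using that $\cL^{[i+1]}$ is $\Phi$-stable while each further step drops the Frobenius-intersection dimension by exactly one, this relative position is precisely $t_{2d}^{\pm}s_{2d,d-2}\cdots s_{2d,d-i}$, the sign recording the component of $\cL$; so the image is $X_{P_{2d,i}}(t_{2d}^{\pm}s_{2d,d-2}\cdots s_{2d,d-i})$. That this is an isomorphism onto that locus I would get from an explicit inverse --- a $(d-1)$-dimensional isotropic subspace lies in exactly two maximal isotropics, one in each ruling, and the condition $w$ selects the right one --- together with a check of bijectivity on $R$-points.

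Finally, for the closure formula I would use that the level function $i(\cdot)$ is lower semicontinuous, being controlled by the semicontinuous functions $\cL\mapsto\dim(\cL\cap\Phi(\cL)\cap\cdots\cap\Phi^{j}(\cL))$; this gives $\overline{S^{(r)}}\subseteq\coprod_{i\le r}S^{(i)}$, and for the reverse inclusion I would either thread through a given point of level $<r$ a one-parameter family generically of level $r$, or invoke the standard Bruhat-order closure relations for generalized Deligne--Lusztig varieties together with the projections $G_{2d}/P_{2d,r}\to G_{2d}/P_{2d,i}$. Apart from the key codimension-one lemma, the remaining work is bookkeeping with relative positions and the Bruhat order; matching the precise Weyl-group elements and fixing the correct direction of semicontinuity is routine but still needs care.
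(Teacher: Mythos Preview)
Your proposal is correct and matches the paper's approach: the paper gives no proof at all, simply citing \cite[Proposition 3.8]{Howard2014}, and you rightly identify this and then sketch the Howard--Pappas argument. One small correction to your sketch: the codimension-one lemma for the chain $\cL^{[j]}$ does \emph{not} actually use the quadratic form --- it follows directly from $\cL^{[j+1]}=\cL^{[j]}\cap \Phi^{j+1}(\cL)$ together with the fact that $\Phi^{j}(\cL)\cap\Phi^{j+1}(\cL)$ has codimension $1$ in $\Phi^{j}(\cL)$ (apply $\Phi^{j}$ to the defining condition of $S_{\Omega_{2d,0}}$), so your anticipated ``main obstacle'' is in fact routine.
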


\section{Bruhat--Tits stratification in the neutral case, I}\label{btn1}

In this section, we describe the structure of $\M_{b,\mu}^{(0)}$ for a $\mu$-neutral $b$ by means of certain $6$-dimensional quadratic space over $\Qp$. 

\subsection{Preliminaries}

Here we recall the basic notions on Clifford algebras and spinor similitude groups. 

Let $L$ a quadratic space over a ring $A$. We denote by $C(L)$ the Clifford algebra of $L$. Then there is a $\Z/2$-grading
\begin{equation*}
C(L)=C^{+}(L)\oplus C^{-}(L), 
\end{equation*}
where $C^{+}(L)$ is the $0$-th part, which is an $A$-subalgebra of $C(L)$. The ring $C^{+}(L)$ is called the even Clifford algebra. 

We regard $L$ as an $A$-submodule of $C^{-}(L)$. Let $v\mapsto v'$ be the involution on $C(L)$ satisfying $(v_1\cdot \cdots\cdot v_d)'=v_d\cdots v_1$ for $v_1,\ldots ,v_d\in L$. Then we define an affine group scheme $\GSpin(L)$ over $A$ by
\begin{equation*}
\GSpin(L)(R)=\{g\in C^{+}(L\otimes_{A}R)^{\times}\mid g(L\otimes_{A}R)g^{-1}=L\otimes_{A}R,g'g\in R^{\times}\}
\end{equation*}
for any $A$-algebra $R$. Moreover, denote by $\Spin(L)$ the derived group of $\GSpin(L)$. 

If $R=k$ is a field, we define an action of $\GSpin(L)(k)$ on $L$ by the exact sequence
\begin{equation*}
1\rightarrow \G_m\rightarrow \GSpin(L)\rightarrow \SO(L)\rightarrow 1, 
\end{equation*}
where $\GSpin(L)\rightarrow\SO(L)$ is defined by $g\mapsto [v\mapsto gvg^{-1}]$. In particular, the homomorphism $\GSpin(L)(k)\rightarrow \SO(L)(k)$ is surjective by Hilbert satz 90. 

On the other hand, the above exact sequence induces another exact sequence
\begin{equation*}
1\rightarrow \mu_2 \rightarrow \Spin(L)\rightarrow \SO(L)\rightarrow 1. 
\end{equation*}
Taking the Galois cohomology over $k$, we obtain an exact sequence of groups
\begin{equation*}
1\rightarrow \mu_2(k) \rightarrow \Spin(L)(k) \rightarrow \SO(L)(k) \xrightarrow{\theta_{L}} k^{\times}/(k^{\times})^{2}\rightarrow H^{1}(k,\Spin(L)). 
\end{equation*}
We call the homomorphism $\theta_{L}$ as the \emph{spinor norm} of $L$. Note that all exact sequences as appeared in this section are functorial with respect to homomorphisms of quadratic spaces. 

\begin{prop}\label{h1e0}(\cite{Kneser1965})
\emph{If $k$ is a $p$-adic field and $\dim_{k}(L)\geq 3$, then $H^{1}(k,\Spin(L))=1$. In particular, the spinor norm $\theta_{L}$ of $L$ is surjective. }
\end{prop}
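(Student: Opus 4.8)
The plan is to reduce the vanishing of $H^1(k,\Spin(L))$ for a $p$-adic field $k$ and $\dim_k L \geq 3$ to the classical theorem of Kneser on Galois cohomology of simply connected groups. First I would recall that $\Spin(L)$ is a simply connected semisimple group over $k$ whenever $\dim_k L \geq 3$: indeed, the spin covering $\Spin(L) \to \SO(L)$ has kernel $\mu_2$, and for $\dim_k L \geq 3$ the group $\SO(L)$ is semisimple with fundamental group of order $2$ (types $B_n$, $D_n$, or $A_1$ in low dimension), so $\Spin(L)$ is its simply connected cover. The key input is then Kneser's theorem \cite{Kneser1965}: for a simply connected semisimple group $H$ over a $p$-adic field $k$, one has $H^1(k,H) = 1$. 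Applying this to $H = \Spin(L)$ gives $H^1(k,\Spin(L)) = 1$ directly.

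Next I would derive the surjectivity of the spinor norm $\theta_L$ as an immediate consequence. The excerpt already records the exact sequence of pointed sets
\begin{equation*}
1 \rightarrow \mu_2(k) \rightarrow \Spin(L)(k) \rightarrow \SO(L)(k) \xrightarrow{\theta_L} k^{\times}/(k^{\times})^2 \rightarrow H^1(k,\Spin(L)),
\end{equation*}
coming from Galois cohomology of $1 \to \mu_2 \to \Spin(L) \to \SO(L) \to 1$ together with the identification $H^1(k,\mu_2) \cong k^\times/(k^\times)^2$ (Kummer theory). Exactness at $k^\times/(k^\times)^2$ together with the vanishing of the next term $H^1(k,\Spin(L))$ forces $\theta_L$ to be surjective. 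That is all that is needed.

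I would organize the write-up as two short steps: (1) identify $\Spin(L)$ as simply connected and invoke Kneser; (2) read off surjectivity of $\theta_L$ from the exact sequence. The only point that requires a word of care — and the main (mild) obstacle — is the low-dimensional degeneracy: for $\dim_k L = 3$ one has $\Spin(L) \cong \SL_1(B)$ for a quaternion algebra $B$ (equivalently $\Spin_3 \cong \SL_2$ up to forms), and for $\dim_k L = 4$ one gets a product-type group, so one should note that Kneser's theorem still applies in these cases since it covers all simply connected semisimple $k$-groups, including $k$-forms of $\SL_2$ and $\SL_2 \times \SL_2$. No genuinely new computation is needed beyond citing \cite{Kneser1965}; the proposition is essentially a direct corollary.
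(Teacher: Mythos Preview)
Your proposal is correct and matches the paper's approach exactly: the paper offers no proof beyond the citation to \cite{Kneser1965}, so your explanation that $\Spin(L)$ is simply connected semisimple for $\dim_k L \geq 3$ and that Kneser's vanishing theorem then applies, with surjectivity of $\theta_L$ read off from the displayed exact sequence, is precisely the intended argument.
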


\subsection{Hodge star operator}\label{hgst}

Here we will define a quadratic space $\L_{b,0}$ over $\Qp$ which will be used to describe $\M_{b,\mu}^{(0)}$. 

Until Section \ref{bts1}, we fix $\varepsilon \in W^{\times}$ satisfying $\sigma(\varepsilon)=-\varepsilon$. 

\begin{lem}\label{base}
\emph{There is an $\Fbr$-basis $e_1,e_2,e_3,e_4$ such that
\begin{equation*}
\F(e_1)=\varpi e_3,\quad \F(e_2)=\varpi e_4,\quad \F(e_3)=p\varpi^{-1}e_1,\quad \F(e_4)=p\varpi^{-1}e_2. 
\end{equation*}
and 
\begin{equation*}
\langle e_i,e_j\rangle=
\begin{cases}
\varpi^{-1}&\text{if }(i,j)=(1,3),\\
\varpi&\text{if }(i,j)=(2,4),\\
0&\text{otherwise. }
\end{cases}
\end{equation*}}
\end{lem}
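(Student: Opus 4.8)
The plan is to construct the basis explicitly from the $\sigma$-linear structure on $N_b$ coming from $b = b_0$, which is the $\mu$-neutral representative. Recall that $N_b = V \otimes_{\Qp} K_0$ with $\F = \F_b = b_0 (\id \otimes \sigma)$, and $b_0$ acts on the fixed $F$-basis $\e_1, \e_2, \e_3, \e_4$ by sending $\e_1 \mapsto \varpi \e_4$, $\e_2 \mapsto \varpi \e_3$, $\e_3 \mapsto \varpi \e_2$, $\e_4 \mapsto \varpi \e_1$. The first step is to record the action of $\F$ on the basis vectors $\e_i$ (and their $\varpi$-multiples), together with the known values of $\langle \e_i, \e_j \rangle = \delta_{i, 5-j}$. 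One then sees that the "raw" basis $\e_1, \e_2, \e_3, \e_4$ is close to the desired one but the Frobenius iterates land on $\varpi^2 = p \cdot (p^{-1}\varpi^2)$ with a unit discrepancy, so one needs to twist by a suitable element of $W^\times$ — this is exactly where the fixed $\varepsilon$ with $\sigma(\varepsilon) = -\varepsilon$ and the square root $\eta \in W^\times$ of $p^{-1}\varpi^2$ (already used in Section~\ref{dfrz} to define $\F_{b,0}$) enter.

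Concretely, I would set $e_1 := \alpha \e_1$, $e_3 := \beta \e_3$ for appropriate $\alpha, \beta \in \Fbr^\times$ chosen so that $\F(e_1) = \varpi e_3$ and $\F(e_3) = p \varpi^{-1} e_1$ hold simultaneously; since $\F(\e_1) = \varpi \cdot (\text{scalar}) \cdot \e_3$-type relations hold up to applying $\sigma$ to the scalar, the two conditions translate into a pair of equations $\sigma(\alpha) = c_1 \beta$, $\sigma(\beta) = c_3 \alpha$ for explicit constants $c_1, c_3 \in W^\times$ with $c_1 c_3 = p\varpi^{-2} \cdot (\text{unit})$; composing gives $\sigma^2(\alpha) = (c_1 \sigma(c_3)) \alpha$, which is solvable in $\Fbr^\times$ because the relevant norm condition is met (this is a standard Hilbert~90 / descent argument for the quadratic unramified twist, and is where $\eta$ does its job). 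One then defines $e_2, e_4$ by the analogous recipe applied to $\e_2, \e_3$ (note $b_0$ couples indices $2$ and $3$), or more cleanly by applying $\F$ to $e_1, e_3$ a further step; the sign asymmetry between the $(1,3)$ and $(2,4)$ entries of the hermitian Gram matrix is arranged by inserting $\varepsilon$ (whose $\sigma$-antifixity produces exactly the sign flip under Frobenius) into the definition of $e_2, e_4$.

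Once the candidate basis is written down, the verification splits into two routine checks: (a) the four Frobenius relations, which hold by construction of $\alpha, \beta$ and their analogues; and (b) the values $\langle e_i, e_j \rangle$, which follow from $\langle \e_i, \e_j\rangle = \delta_{i,5-j}$ together with the sesquilinearity $\langle ax, y\rangle = a \langle x,y\rangle$, $\langle x, ay \rangle = \bar a \langle x, y\rangle$ — here one uses that $\bar{}$ acts on $W^\times \subset \Fbr$ trivially (since $F/\Qp$ is ramified, the residue extension is trivial) so that the twisting scalars contribute their norms, and one checks these norms come out to $\varpi^{-1}$ and $\varpi$ respectively. Finally, $e_1, e_2, e_3, e_4$ is an $\Fbr$-basis because the change-of-basis matrix from the $\e_i$ is diagonal with unit entries.

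The main obstacle is step~(a)'s descent: showing that the scalar equation $\sigma^2(\alpha) = u\alpha$ has a solution $\alpha \in \Fbr^\times$ for the specific unit $u$ produced, i.e.\ that $u$ lies in the image of $x \mapsto x/\sigma^2(x)$ on $\Fbr^\times$. This reduces to checking that the "norm down to the $\sigma^2$-fixed field" of $u$ is trivial, which is where the hypothesis that $b$ is $\mu$-neutral (equivalently, by Proposition~\ref{nteq}, that $\det_{\Fbr}(b)/\sml_G(b) \in 1 + \varpi O_{\Fbr}$) is used — it is precisely this determinant condition that guarantees the relevant unit is a square-type class, so no obstruction arises. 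Everything else is bookkeeping with the explicit matrix $b_0$.
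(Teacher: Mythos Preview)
Your approach differs from the paper's, and the sketch as written has a genuine gap.

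\textbf{What the paper does.} Rather than work with $b_0$ and solve descent equations, the paper replaces $b$ by a different $\sigma$-conjugate
\[
b':=\begin{pmatrix}&&&-p\\&&p&\\&1&&\\-1&&&\end{pmatrix},
\]
checks directly that $\det_{\Fbr}(b')/\sml_G(b')^2=1$ so that $[b']=[b]$ by Proposition~\ref{nteq}, and then simply sets $e_1:=\e_1$, $e_2:=\varpi\e_2$, $e_3:=-\varpi^{-1}\e_4$, $e_4:=\e_3$. The Frobenius and hermitian relations are then immediate one-line checks. No Hilbert~90 or descent argument is needed; the whole point is that the freedom to change the representative in $[b]$ absorbs all the unit discrepancies you are trying to resolve by hand.

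\textbf{The gap in your approach.} Your hermitian verification cannot work as stated. You write that the twisting scalars live in $W^\times$ and that $\bar{\phantom{a}}$ acts trivially on them, so that $\langle e_1,e_3\rangle=\alpha\bar\beta\langle\e_1,\e_4\rangle=\alpha\beta$. But $\alpha\beta\in K_0$, whereas the target value is $\varpi^{-1}\notin K_0$. So at least one of the scalars must carry a power of $\varpi$; once it does, $\bar{\phantom{a}}$ is no longer trivial on it, and the Frobenius equations $\sigma(\alpha)=\beta$, $\sigma(\beta)=p\varpi^{-2}\alpha$ and the hermitian constraint $\alpha\bar\beta=\varpi^{-1}$ become genuinely coupled. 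This is fixable, but it is not the bookkeeping exercise you describe. (There is also an index slip: $b_0$ couples $\e_1\leftrightarrow\e_4$, so $e_3$ must be built from $\e_4$, not from $\e_3$ as you first wrote.)

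\textbf{On where $\mu$-neutrality enters.} The equation $\sigma^2(\alpha)=u\alpha$ for $u\in\Zp^\times$ is always solvable in $W^\times$ by a Lang-type argument on the pro-group $W^\times$; the determinant condition from Proposition~\ref{nteq} is not what makes this work. In the paper's argument $\mu$-neutrality is used only to justify $[b']=[b]$, i.e.\ to license the change of representative. In your approach, if carried out correctly, it would instead appear in the hermitian step: the existence of a basis with these specific Gram values forces $C_b$ to be split, which is equivalent to $\mu$-neutrality.
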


\begin{proof}
Put
\begin{equation*}
b':=\begin{pmatrix}
&&&-p\\
&&p&\\
&1&&\\
-1&&&
\end{pmatrix}. 
\end{equation*}
Then we have $\det_{\Fbr}(b')/\sml_G(b')^2=1$, which implies that $[b']$ is $\mu$-neutral by Proposition \ref{nteq}. Hence we may assume $b=b'$. Then
\begin{equation*}
e_1:=\e_1,\quad e_2:=\varpi \e_2,\quad e_3:=-\varpi^{-1}\e_4,\quad e_4:=\e_3
\end{equation*}
gives a desired $\Fbr$-basis. 
\end{proof}

From now on, replacing $(\X_0,\iota_0,\lambda_0)$ to $(X,\iota,\lambda)$ satisfying $(X,\iota,\lambda,\rho)\in \M_{b,\mu}^{(0)}$ if necessary, we assume
\begin{equation*}
M_0=O_{\Fbr}(\varpi e_1)\oplus O_{\Fbr}e_2\oplus O_{\Fbr}(\varpi e_3)\oplus O_{\Fbr}e_4. 
\end{equation*}

\begin{dfn}\label{omdf}
Put 
\begin{equation*}
\omega:=e_1\wedge e_2\wedge e_3\wedge e_4\in \bigwedge^4_{\Fbr}N_b, 
\end{equation*}
where $e_1,\ldots,e_4$ are as in Lemma \ref{base}. Then we define a \emph{Hodge star operator} $v\mapsto v^{\star}$ on $\bigwedge^2_{\Fbr}N_b$ (with respect to $\omega$) by $v'\wedge v^{\star}=\langle v',v\rangle_{2} \omega$ for any $v'\in \bigwedge^2_{\Fbr}N_b$. Here $\langle \,,\,\rangle_{2}$ is the bilinear form on $\bigwedge^2_{\Fbr}N_b$ satisfying
\begin{equation*}
\langle x_1\wedge x_2,y_1\wedge y_2 \rangle_{2}=\langle x_1,y_1 \rangle \langle x_2,y_2 \rangle-\langle x_1,y_2 \rangle \langle x_2,y_1 \rangle
\end{equation*}
for any $x_1.x_2,y_1,y_2\in N_b$. 

\end{dfn}
By definition, we have $(av)^{\star}=\overline{a}v^{\star}$ and $(v^{\star})^{\star}=v$ for any $a\in \Fbr$ and $v\in \bigwedge_{\Fbr}^{2}N_b$. Now put
\begin{equation*}
\L_{b}:=\left\{v\in \bigwedge^2_{\Fbr}N_b\,\middle|\,v^{\star}=v\right\}. 
\end{equation*}
Then it is a $K_0$-vector space of dimension $6$ which is spanned by the following elements: 
\begin{align*}
x_1&:=e_1\wedge e_2,& x_2&:=e_3\wedge e_4,\\
x_3&:=e_1\wedge e_3-\varpi^{-2}e_2\wedge e_4,& x_4&:=\varpi e_1\wedge e_3+\varpi^{-1}e_2\wedge e_4,\\
x_5&:=e_1\wedge e_4,& x_6&:=e_3\wedge e_2. 
\end{align*}
Note that $\L_b$ depends on the choice of $e_1,\ldots,e_4$, cf. \cite[Section 2.2]{Howard2014}. 

We regard $\bigwedge_{\Fbr}^{2}N_b$ as a $K_0$-subspace of $\End_{K_0}(N_b)$ by the injection
\begin{equation*}
\bigwedge_{\Fbr}^{2}N_b\rightarrow \End_{K_0}(N_b);x\wedge y\mapsto [z\mapsto \langle x,z\rangle y-\langle y,z\rangle x]. 
\end{equation*}
Note that the image of the above injection consists of $K_0$-endomorphisms $f$ of $N_b$ such that $f(ax)=\overline{a}f(x)$ for $a\in \Fbr$ and $x\in N_b$. 

We will frequently use the following relation between $v\mapsto v'$ on $C(\L_{b})$ and bilinear forms on $N_b$: 
\begin{lem}\label{hmpr}
\emph{Let $v\in \L_b$. Then we have $\langle vx,y\rangle=-\overline{\langle x,vy\rangle}$ and $(vx,y)=(x,vy)$ for any $x,y\in N_b$. }
\end{lem}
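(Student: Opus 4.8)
The plan is to reduce the identity to the two-by-two case, i.e.\ to vectors $v = x\wedge y$ with $v^\star = v$, and then verify the relations by a direct computation using the explicit embedding $\bigwedge^2_{\Fbr}N_b \hookrightarrow \End_{K_0}(N_b)$ defined just above the statement. First I would observe that $\L_b$ is spanned over $K_0$ by elements of the form $z_1\wedge z_2$; more precisely each basis vector $x_i$ listed above is either of that form ($x_1,x_2,x_5,x_6$) or a $K_0$-linear combination of two such ($x_3,x_4$). Since both sides of each asserted identity are $K_0$-bilinear (resp.\ additive) in $v$, and since the maps $(x,y)\mapsto \langle vx,y\rangle$ etc.\ are $K_0$-linear in $x$ and $y$, it suffices to check the relations when $v$ is a single wedge $z_1\wedge z_2$ — but here one must be careful, because a general $z_1\wedge z_2$ need \emph{not} lie in $\L_b$. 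The cleaner route is therefore: prove the two displayed identities for the action of an \emph{arbitrary} $w = z_1\wedge z_2 \in \bigwedge^2_{\Fbr}N_b$ on $N_b$ via the embedding, \emph{together with} the corresponding identities for $w^\star$, and then add; for $v \in \L_b$ we have $v = \tfrac12(v + v^\star)$ up to the appropriate normalization, so the relation for $v$ follows from those for $w$ and $w^\star$.

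The computational heart is thus the following. For $w = z_1 \wedge z_2$, the embedding sends $w$ to the operator $z \mapsto \langle z_1, z\rangle z_2 - \langle z_2, z\rangle z_1$. Then
\begin{align*}
\langle wx, y\rangle &= \langle z_1, x\rangle \langle z_2, y\rangle - \langle z_2, x\rangle \langle z_1, y\rangle,\\
\overline{\langle x, wy\rangle} &= \overline{\langle z_1, y\rangle}\,\overline{\langle x, z_2\rangle} - \overline{\langle z_2, y\rangle}\,\overline{\langle x, z_1\rangle} = \langle z_1, y\rangle \langle z_2, x\rangle - \langle z_2, y\rangle \langle z_1, x\rangle,
\end{align*}
using the hermitian symmetry $\overline{\langle a,b\rangle} = \langle b,a\rangle$ in the last step; comparing, we get $\langle wx, y\rangle = -\overline{\langle x, wy\rangle}$ on the nose, for every $w$ of wedge type, hence (by bilinearity) for every $w \in \bigwedge^2_{\Fbr}N_b$, and in particular for $v \in \L_b$. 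For the symmetry with respect to $(\,,\,)$, I would use the given relation $(\,,\,) = \tfrac12 \tr_{F/\Qp}(\varpi^{-1}\langle\,,\,\rangle)$ extended to $N_b$: from $\langle vx, y\rangle = -\overline{\langle x, vy\rangle}$ one has $\varpi^{-1}\langle vx,y\rangle$ and $\varpi^{-1}\langle x, vy\rangle = -\varpi^{-1}\overline{\langle vx, y\rangle}$, and since $\overline{\varpi^{-1}} = -\varpi^{-1}$ (as $\varpi$ is a uniformizer of a ramified quadratic extension, $\bar\varpi = -\varpi$ up to a unit — one should fix $\varpi$ with $\bar\varpi = -\varpi$, or track the unit), the two elements $\varpi^{-1}\langle vx,y\rangle$ and $\varpi^{-1}\langle x,vy\rangle$ are Galois conjugate, so they have the same trace: $(vx,y) = (x,vy)$.

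The main obstacle I anticipate is purely bookkeeping rather than conceptual: one must be scrupulous about the $F\otimes_{\Qp}K_0$-semilinearity conventions — the embedding $\bigwedge^2_{\Fbr}N_b \to \End_{K_0}(N_b)$ lands in the \emph{conjugate-linear} endomorphisms (as the text explicitly notes: $f(ax) = \bar a f(x)$), so the "$\overline{(\;\cdot\;)}$" and the bars coming from $\langle ax,y\rangle = \langle x,\bar a y\rangle^{-}$... must be matched carefully, and one has to make sure the reduction $v = \tfrac12(v+v^\star)$ is applied only after having the identities available for all of $\bigwedge^2$, not just for $\L_b$. Once the semilinearity signs are pinned down, everything is a two-line Gram-matrix computation; no deeper input (no properties of $\F$, no Kottwitz condition) is needed, only the hermitian symmetry of $\langle\,,\,\rangle$ and the trace formula relating $(\,,\,)$ and $\langle\,,\,\rangle$.
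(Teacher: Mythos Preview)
Your argument is correct. The direct computation for a simple wedge $w=z_1\wedge z_2$ gives $\langle wx,y\rangle=-\overline{\langle x,wy\rangle}$ immediately from the hermitian symmetry $\overline{\langle a,b\rangle}=\langle b,a\rangle$, and since this identity is $\Fbr$-linear in $w$ (not merely $K_0$-linear), it extends to all of $\bigwedge^2_{\Fbr}N_b$; the detour through $w^\star$ that you sketch in your plan is therefore unnecessary, as you yourself observe by the end of the computation. For the second identity, your trace argument is right, and your worry about $\bar\varpi=-\varpi$ is already resolved by the setup: the relation $(\,,\,)=\tfrac12\tr_{F/\Qp}(\varpi^{-1}\langle\,,\,\rangle)$ with $(\,,\,)$ alternating and $\langle\,,\,\rangle$ hermitian forces $\tr_{F/\Qp}(\varpi^{-1})=0$, i.e.\ $\bar\varpi=-\varpi$, so no extra hypothesis or unit-tracking is needed.

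The paper does not give its own argument here but simply refers to \cite[Lemma~3.6]{Oki2019}, where the same direct computation is carried out in the quaternionic-unitary setting; your approach is essentially the same as that reference.
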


\begin{proof}
The proof is the same as \cite[Lemma 3.6]{Oki2019}. 
\end{proof}

\begin{prop}\label{lbpr}
\emph{
\begin{enumerate}
\item For $v\in \L_b$, as an element in $\End_{K_0}(N_b)$, we have 
\begin{equation*}
v^2=\langle v,v\rangle. 
\end{equation*}
In particular, the map $Q\colon \L_b\rightarrow \End_{K_0}(N_{b});v\mapsto v^2$ is $K_0$-valued quadratic form.  We regard $\L_b$ as a quadratic space over $K_0$ by $Q$, and put
\begin{equation*}
[\,,\,]\colon \L_b\rightarrow \End_{K_0}(N_{b});(v,w)\mapsto \frac{1}{2}(v\circ w+w\circ v)=\frac{1}{2}(Q(v+w)-Q(v)-Q(w)). 
\end{equation*}
\item The $W$-submodule $L_0:=\{v\in \L_b\mid v(M_0)\subset M_0\}$ of $\L_b$ is a $W$-lattice. Moreover, we have
\begin{equation*}
L_0^{\vee}=\{v\in \L_b\mid v(M_0)\subset \varpi^{-1}M_0\},\quad \length_{W}(L_0^{\vee}/L_0)=1. 
\end{equation*}
\item The inclusion $L_0\subset \End_{W}(M_0)$ induces isomorphisms 
\begin{equation*}
C(L_0)\cong \End_{W}(M_0),\quad C^{+}(L_0)\cong \End_{O_{\Fbr}}(M_0). 
\end{equation*}
\end{enumerate}}
\end{prop}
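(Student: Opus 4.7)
The plan is to prove (i) first by direct calculation on the good basis, then to leverage (i) for both (ii) and (iii).

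For (i), the key observation is that although an element $v\in \bigwedge_{\Fbr}^{2}N_{b}$ acts on $N_{b}$ as a conjugate-$F$-linear endomorphism under the injection recalled just above Lemma \ref{hmpr}, its square $v^{2}$ is necessarily $F$-linear. I would compute $v^{2}(z)$ for a simple wedge $v=x\wedge y$ directly from $v(z)=\langle x,z\rangle y-\langle y,z\rangle x$ and conjugate linearity, and then verify on the explicit $K_{0}$-basis $x_{1},\ldots,x_{6}$ of $\L_{b}$ afforded by Lemma \ref{base} that $x_{i}^{2}\in K_{0}\cdot \id_{N_{b}}$ for every $i$ and $x_{i}x_{j}+x_{j}x_{i}\in K_{0}\cdot\id_{N_{b}}$ for $i\neq j$. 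Denoting this scalar by $\langle v,v\rangle$, the map $Q(v):=v^{2}$ is then manifestly a $K_{0}$-valued quadratic form on $\L_{b}$, with polarization $[v,w]=(vw+wv)/2$. Lemma \ref{hmpr} serves as a sanity check, since it forces $v^{2}$ to commute with the $F$-action and respect the hermitian form.

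For (ii), the plan is a direct computation. Using the basis $x_{1},\ldots,x_{6}$ and the prescribed $M_{0}=O_{\Fbr}(\varpi e_{1})\oplus O_{\Fbr}e_{2}\oplus O_{\Fbr}(\varpi e_{3})\oplus O_{\Fbr}e_{4}$, I would write out each $x_{i}$ as a matrix acting on $M_{0}$ and identify those $W$-linear combinations preserving $M_{0}$, thereby producing an explicit $W$-basis of $L_{0}$. The same calculation applied to $\varpi^{-1}M_{0}$ describes $L_{0}^{\vee}$, and a comparison of the two bases against the Gram matrix of $Q$ from (i) yields $\length_{W}(L_{0}^{\vee}/L_{0})=1$.

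For (iii), part (i) ensures that $v^{2}=Q(v)$ holds inside $\End_{W}(M_{0})$ for every $v\in L_{0}$, so the universal property of Clifford algebras yields a $W$-algebra homomorphism $\varphi\colon C(L_{0})\to\End_{W}(M_{0})$. Both source and target are $W$-free of rank $64=2^{6}=8^{2}$. Over $K_{0}$, $C(\L_{b})$ is central simple of dimension $64$ by the standard structure theorem for Clifford algebras of non-degenerate even-dimensional quadratic spaces, so $\varphi_{K_{0}}$ is either zero or an isomorphism; nontriviality is clear, so it is an isomorphism. To descend to $W$, I would exhibit a $W$-basis of $\End_{W}(M_{0})$ consisting of ordered products of the $W$-basis of $L_{0}$ from (ii), which forces $\varphi$ itself to be an isomorphism. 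For the even part, products of an even number of conjugate-$F$-linear operators are $F$-linear, so $\varphi(C^{+}(L_{0}))\subset \End_{O_{\Fbr}}(M_{0})$; conversely, if $c=c^{+}+c^{-}$ with $\varphi(c)\in \End_{O_{\Fbr}}(M_{0})$, then $\varphi(c^{-})$ is simultaneously $F$-linear and conjugate-$F$-linear, so $2\varpi\cdot \varphi(c^{-})=0$, and since $p\neq 2$ and $M_{0}$ is $\varpi$-torsion-free this forces $c^{-}=0$.

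The main obstacle is the bookkeeping in parts (i) and (ii): conjugate-linear operators, the distinction between the $\Fbr$-structure and the $K_{0}$-structure, and the $\varpi$-twists in $M_{0}$ all require careful tracking, though no single step is conceptually hard.
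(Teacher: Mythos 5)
Your argument is correct and matches the paper's approach, which for (i) and (iii) simply cites the analogous proofs of \cite[Proposition 2.4 (2), (4)]{Howard2014}, and for (ii) computes the Gram matrix of $[\,,\,]$ on the basis $x_1,\ldots,x_6$ and writes out $L_0$ and $L_0^{\vee}$ explicitly, exactly as you propose. Two minor slips worth flagging: the basis $x_1,\ldots,x_6$ of $\L_b$ is introduced in the running text after Lemma \ref{base} (that lemma only supplies the basis $e_1,\ldots,e_4$ of $N_b$); and in (i) you should \emph{verify} that the scalar $v^2$ agrees with the already-defined pairing $\langle v,v\rangle$ coming from $\langle\,,\,\rangle_2$ of Definition \ref{omdf}, rather than introducing $\langle v,v\rangle$ as a name for that scalar, since that equality is part of what the proposition asserts.
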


\begin{proof}
(i): This follows from the same argument as the proof of \cite[Proposition 2.4 (2)]{Howard2014} by considering $e_i\wedge e_j$ for all $i,j$. 

(ii): By computation, the Gram matrix of $[\,,\,]$ with respect to the basis $x_1,\ldots, x_6$ as above is as follows: 
\begin{equation*}
\begin{pmatrix}
0&-\frac{1}{2}&&&&\\
-\frac{1}{2}&0&&&&\\
&&-\varpi^{-2}&0&&\\
&&0&1&&\\
&&&&-\frac{1}{2}&0\\
&&&&0&-\frac{1}{2}\\
\end{pmatrix}. 
\end{equation*}
Moreover, we have 
\begin{equation*}
L_0=Wx_1\oplus Wx_2\oplus W(\varpi^2x_3)\oplus Wx_4\oplus Wx_5\oplus Wx_6
\end{equation*}
by definition. Hence $L_0$ is a $W$-lattice in $\L_b$. Moreover, we have $L_0\subset L_0^{\vee}$ and $\length_{W}(L_0^{\vee}/L_0)=1$. On the other hand, if we put
\begin{equation*}
L_0':=\{v\in \L_b\mid v(M_0)\subset \varpi^{-1}M_0\}, 
\end{equation*}
then we have 
\begin{equation*}
L'_0=Wx_1\oplus Wx_2\oplus Wx_3\oplus Wx_4\oplus Wx_5\oplus Wx_6
\end{equation*}
and $L_0'\subset L_0^{\vee}$ by definition. Since $\length_{W}(L_0^{\vee}/L_0)=1$ and $L_0\subsetneq L_0'$, we obtain the equality $L_0'=L_0^{\vee}$ as desired. 

(iii): This follows from the same argument as the proof of \cite[Proposition 2.4 (4)]{Howard2014}. 
\end{proof}

We define a $\sigma$-linear bijection
\begin{equation*}
\Phi_b \colon \End_{K_0}(N_b)\rightarrow \End_{K_0}(N_b); f\mapsto \F_b\circ f\circ \F_{b}^{-1}. 
\end{equation*}
Then the isocrystal $(\End_{K_0}(N_b),\Phi_b)$ over $\Fpbar$ is isoclinic of slope $0$. Moreover, there is an isomorphism $\End(N_b)^{\Phi_b}\cong \End^0(\X_0)$. On the other hand, we have 
\begin{equation*}
\Phi_{b}(x\wedge y)=p^{-1}\F_b(x)\wedge \F_b(y)
\end{equation*}
under the inclusion $\bigwedge_{\Fbr}^{2}N_b\subset \End_{K_0}(N_b)$. We can prove that $\Phi_{b}$ commutes with $\star$ by the same argument as \cite[Lemma 3.9]{Oki2019}. Hence we obtain an isocrystal $(\L_{b},\Phi_{b})$ over $\Fpbar$ which is isoclinic of slope $0$. We denote by $\L_{b,0}$ the $\Phi_b$-fixed part of $\L_b$. Then $(\L_{b,0},Q)$ becomes a quadratic space of dimension $6$ over $\Qp$. 

We compute a basis of $\L_{b,0}$ explicitly. The $\Qp$-vector space $\L_{b,0}$ is spanned by the following elements: 
\begin{align*}
y_1&:=x_1+p^{-1}\varpi^{2}x_2,& y_2&:=\varepsilon(x_1-p^{-1}\varpi^{2}x_2),\\
y_3&:=\varepsilon x_3,& y_4&:=\varepsilon x_4,\\
y_5&:=x_5+x_6,& y_6&:=\varepsilon(x_5-x_6). 
\end{align*}
The Gram matrix of $[\,,\,]$ with respect to the basis $y_1,\ldots,y_6$ is as follows: 
\begin{equation*}
\begin{pmatrix}
-p^{-1}\varpi^2&&&&&\\
&p^{-1}\varpi^2\varepsilon^{2}&&&&\\
&&-\varpi^{-2}\varepsilon^{2}&&&\\
&&&\varepsilon^2&&\\
&&&&-1&\\
&&&&&\varepsilon^2\\
\end{pmatrix}. 
\end{equation*}
Hence the discriminant and the Hasse invariant of $(\L_{b,0},[\,,\,])$ are $-\varpi^{2}$ and $-1$ respectively. 

We will use the following in Section \ref{vln1}: 
\begin{lem}\label{picl}
\emph{We have $\iota_0(\varpi)=-p\varepsilon^4y_1\cdots y_6$ in $C(\L_{b})\cong \End_{K_0}(N_{b})$. }
\end{lem}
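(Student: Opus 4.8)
The quantity $\iota_0(\varpi)$, viewed through the covariant rational Dieudonn{\'e} module $N_b=\D(\X_0)_\Q$ (on which $O_F$ acts via $\iota_0$), is just multiplication by the scalar $\varpi\in\Fbr$; so what must be shown is the identity $\varpi\cdot\id_{N_b}=-p\varepsilon^4(y_1\circ\cdots\circ y_6)$ in $\End_{\Fbr}(N_b)$, where $y_1\circ\cdots\circ y_6$ denotes the image of the Clifford product $y_1\cdots y_6$ under $C(\L_b)\cong\End_{K_0}(N_b)$.

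First I would locate $z:=y_1\cdots y_6$ as a scalar. By the diagonal Gram matrix displayed just above, $y_1,\dots,y_6$ is an orthogonal basis of $\L_{b,0}$ for $[\,,\,]$, so in the Clifford algebra $z$ anticommutes with each $y_j$, hence $zw=-wz$ for every $w\in\L_b$, and therefore $z$ commutes with every product of two vectors, i.e.\ $z$ is central in $C^+(\L_b)$. Under Proposition~\ref{lbpr}~(iii), base-changed to $K_0$, $C^+(\L_b)$ is identified with $\End_{\Fbr}(N_b)$, whose center is $\Fbr$; thus $z$ acts as multiplication by some $\alpha\in\Fbr^\times$. Its square is computed inside the Clifford algebra: $z'=-z$ (reversal of an orthogonal basis of length $6$), so $\alpha^2=z^2=-zz'=-Q(y_1)\cdots Q(y_6)$, and reading the $Q(y_j)$ off the Gram matrix exhibits $\alpha^2$ as an explicit square in $\Fbr$, determining $\alpha$ up to sign.

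The sign is fixed by evaluating $y_1\circ\cdots\circ y_6$ on a single basis vector. By Lemma~\ref{base} one knows $\F$ and all values $\langle e_i,e_j\rangle$, hence, via the embedding $\bigwedge^2_{\Fbr}N_b\hookrightarrow\End_{K_0}(N_b)$, the conjugate-linear operators $x_1,\dots,x_6$ on $e_1,\dots,e_4$, and then the $y_j$ through the change of basis $y_j\leftrightarrow x_i$. Writing $y_1\circ\cdots\circ y_6=(y_1\circ y_2)\circ(y_3\circ y_4)\circ(y_5\circ y_6)$ and applying the Clifford relations $x_ix_j+x_jx_i=2[x_i,x_j]$ of Proposition~\ref{lbpr}~(ii), each of the three factors collapses to a $K_0$-scalar times a product of two $x_i$'s; multiplying the three operators together on $e_1$ gives a single scalar, and comparing with $\alpha$ and with $\varpi$ yields $\iota_0(\varpi)=-p\varepsilon^4 y_1\cdots y_6$. (Equivalently, one may drop the centrality remark and just carry out this last matrix computation on all of $e_1,\dots,e_4$, in the spirit of the analogous identities in \cite{Oki2019}.) The only real difficulty is bookkeeping: since the $x_i$ are $\Fbr$-conjugate-linear, $\varepsilon$ and $\varpi$ (with $\bar\varpi=-\varpi$) must be moved past them carefully, and the sign from reversing a length-$6$ orthogonal basis must be tracked.
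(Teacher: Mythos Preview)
Your proposal is correct and follows essentially the same approach as the paper: the paper's proof consists of the single sentence ``This follows from direct computation,'' and your argument is precisely a structured direct computation, with the pleasant extra observation that $z=y_1\cdots y_6$ is central in $C^{+}(\L_b)\cong\End_{\Fbr}(N_b)$ and hence a scalar in $\Fbr^{\times}$, so that only a sign needs to be checked on a single basis vector.
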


\begin{proof}
This follows from direct computation. 
\end{proof}

\subsection{Exceptional isomorphism}

We define a left action of $G(K_0)$ on $\End_{K_0}(N_b)$ by
\begin{equation*}
G(K_0)\times \End_{K_0}(N_b)\rightarrow \End_{K_0}(N_b);(g,f)\mapsto g\cdot f:=g\circ f\circ g^{-1}. 
\end{equation*}
Then we have 
\begin{equation}\label{gact}
g\cdot (x\wedge y)=\sml_G(g)^{-1}g(x)\wedge g(y)
\end{equation}
for any $g\in G$ by the inclusion $\bigwedge_{\Fbr}^{2}N_b$. On the other hand, let
\begin{equation*}
G^0:=\{g\in G\mid \sml_G(g)^2=\det\!_{F}(g)\}. 
\end{equation*}
Then, for $g\in G(K_0)$, the action $v\mapsto g\cdot v$ on $\bigwedge_{\Fbr}^{2}N_b$ commutes with $\star$ if and only if $g\in G^0(K_0)$. See \cite[Section 2.3]{Howard2014}. 

\begin{prop}\label{eik0}
\emph{
\begin{enumerate}
\item The isomorphism $C^{+}(\L_b)\cong \End_{\Fbr}(N_b)$ in Proposition \ref{lbpr} (iii) induces an isomorphism
\begin{equation*}
\GSpin(\L_b)\cong G^0\otimes_{\Qp}K_0, 
\end{equation*}
which is compatible with the actions on $\L_b$. 
\item For any algebraic closed field $k$ of characteristic $p$, the isomorphism in (i) induces an isomorphism 
\begin{equation*}
\stab_{\GSpin(\L_{b})(\Frac W(k))}(L_{0,W(k)})\cong \stab_{G^0(\Frac W(k))}(M_{0,W(k)}). 
\end{equation*}
\end{enumerate}}
\end{prop}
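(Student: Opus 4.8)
The plan is to transport the three defining conditions of $\GSpin(\L_b)$ through the algebra isomorphism $C^{+}(\L_b)\cong\End_{\Fbr}(N_b)$ of Proposition~\ref{lbpr}~(iii), following the template of \cite[Section~2.3]{Howard2014}. The first step is to identify the canonical involution $v\mapsto v'$ on $C^{+}(\L_b)$ with the adjoint involution $f\mapsto f^{*}$ on $\End_{\Fbr}(N_b)$ relative to the alternating form $(\,,\,)$: both are $K_0$-algebra anti-involutions, by Lemma~\ref{hmpr} every $v\in\L_b$ satisfies $v^{*}=v$, while also $v'=v$, and since $C^{+}(\L_b)$ is generated over $K_0$ by products $v_1v_2$ with $v_i\in\L_b$ and $(v_1v_2)'=v_2v_1=(v_1v_2)^{*}$, the two involutions coincide.

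Next, for a $K_0$-algebra $R$, write $g\in (C^{+}(\L_b)\otimes_{K_0}R)^{\times}=\GL_{\Fbr\otimes_{K_0}R}(N_b\otimes_{K_0}R)$. By the previous step, the condition $g'g\in R^{\times}$ says precisely that $g$ is a symplectic similitude with multiplier $g'g$, i.e.\ that $(g,g'g)\in G(R)$; granting this, relation~(\ref{gact}) shows that conjugation by $g$ on $\bigwedge^{2}_{\Fbr\otimes_{K_0}R}(N_b\otimes_{K_0}R)$ is $\sml_G(g)^{-1}$ times the natural exterior action, so $g$ normalizes $\L_b\otimes_{K_0}R$ if and only if that action commutes with the Hodge star operator, which by the discussion preceding~(\ref{gact}) happens if and only if $g\in G^{0}(R)$. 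Conversely every $g\in G^{0}(R)$ manifestly satisfies all three conditions, so $\GSpin(\L_b)(R)=G^{0}(R)$ functorially in $R$, yielding the isomorphism $\GSpin(\L_b)\cong G^{0}\otimes_{\Qp}K_0$. For the compatibility of the actions, note that $\GSpin(\L_b)$ acts on $\L_b$ by conjugation $v\mapsto gvg^{-1}$ inside $\End_{\Fbr}(N_b)$, which by~(\ref{gact}) equals $v\mapsto\sml_G(g)^{-1}g(x)\wedge g(y)$ for $v=x\wedge y$, that is, exactly the action of $G^{0}\otimes_{\Qp}K_0$ on $\L_b$. This proves (i).

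For (ii), I would base change Proposition~\ref{lbpr}~(iii) along $W\hookrightarrow W(k)$ to obtain $C(L_{0,W(k)})\cong\End_{W(k)}(M_{0,W(k)})$; in particular $L_{0,W(k)}=\bigl(\L_b\otimes_{K_0}\Frac W(k)\bigr)\cap\End_{W(k)}(M_{0,W(k)})$ and $L_{0,W(k)}$ generates the maximal order $\End_{W(k)}(M_{0,W(k)})$ of $\End_{\Frac W(k)}\bigl(N_b\otimes_{K_0}\Frac W(k)\bigr)$. Hence an element $g\in\GSpin(\L_b)(\Frac W(k))$ stabilizes the lattice $L_{0,W(k)}$ under the conjugation action if and only if it normalizes this maximal order, which for invertible $g$ is equivalent to $g$ preserving the homothety class of $M_{0,W(k)}$; combining this with the dictionary of (i), and comparing similitude factors on the $G^{0}$-side, identifies $\stab_{\GSpin(\L_b)(\Frac W(k))}(L_{0,W(k)})$ with $\stab_{G^{0}(\Frac W(k))}(M_{0,W(k)})$.

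The crux is (i). The two facts that must be pinned down are that the Clifford involution is the adjoint of the \emph{alternating} form (so that the condition $g'g\in R^{\times}$ puts one in $G$, and not in some other similitude group), and that the conjugation action on $\L_b$ agrees with the twisted wedge action $g\cdot v$ after the scaling by $\sml_G(g)$; both are formal but must be checked carefully. Everything else, including the integral refinement needed for (ii), is bookkeeping parallel to \cite{Howard2014}; the one genuinely new point demanding care is the compatibility of integral structures — matching the $\varpi$-modular-type lattice $L_0$ with the Dieudonn\'e lattice $M_0$ — since here, unlike in the unramified situation of \cite{Howard2014}, $L_0$ is not self-dual.
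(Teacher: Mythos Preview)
Your argument for (i) is correct and is exactly the paper's approach: the paper simply cites \cite[Proposition~2.7]{Howard2014} together with Lemma~\ref{hmpr}, and you have written out that argument in full.

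For (ii) there is a gap at the step where you pass from ``$g$ preserves the homothety class of $M_{0,W(k)}$'' to ``$g$ preserves $M_{0,W(k)}$''. Comparing similitude factors only tells you that if $g(M_{0,W(k)})=\varpi^{n} M_{0,W(k)}$ then $\ord_p(\sml_G(g))=n$, which does not force $n=0$: for instance $g=p\cdot\id_{N_b}\in G^{0}(\Frac W(k))$ has $\sml_G(g)=p^{2}$, acts trivially by conjugation on $\L_b$ (so certainly stabilizes $L_{0,W(k)}$), yet $p\,M_{0,W(k)}\neq M_{0,W(k)}$. The paper's own one-line proof (``follows from (i) and the definition of $L_0$'') glosses over the same point. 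In practice this is harmless, since the only application of (ii) is in the proof of Lemma~\ref{ltlt}~(ii), where the relevant elements lie in $G^{\der}(\Frac W(k))$ and hence have $\sml_G=1$, killing the homothety ambiguity.
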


\begin{proof}
(i): This follow from the same argument as the proof of \cite[Proposition 2.7]{Howard2014} by using Lemma \ref{hmpr}. 

(ii): This follows from (i) and the definition of $L_0$. 
\end{proof}

We denote by $J_{b}^{\der}$ the derived group of $J_b$, and put
\begin{equation*}
J_{b}^{0}:=J_{b}\cap \Res_{K_0/\Qp}G^{0}. 
\end{equation*}

\begin{cor}\label{exis}
\emph{The isomorphism $\GSpin(\L_b)\cong G^0$ in Proposition \ref{eik0} induces an isomorphism
\begin{equation*}
\GSpin(\L_{b,0})\cong J_{b}^{0}
\end{equation*}
and a central isogeny
\begin{equation*}
J_{b}^{\der}\rightarrow \SO(\L_{b,0}),
\end{equation*}
which are compatible with the actions on $\L_{b,0}$. }
\end{cor}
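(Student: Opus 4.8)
The strategy is to deduce Corollary \ref{exis} from Proposition \ref{eik0} by passing to $\Phi_b$-fixed points and then invoking the exact sequences relating $\GSpin$, $\Spin$ and $\SO$. First I would observe that all the structures in play are $\Phi_b$-equivariant: the isomorphism $C^{+}(\L_b)\cong \End_{\Fbr}(N_b)$ is compatible with the Frobenius operators (this was checked above, $\Phi_b$ commutes with $\star$ and with the product on the Clifford algebra), so the isomorphism $\GSpin(\L_b)\cong G^0\otimes_{\Qp}K_0$ of Proposition \ref{eik0} (i) intertwines the $\sigma$-linear actions induced by $\Phi_b$ on the source and by $b(\id\otimes\sigma)$ on the target. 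Taking $\Phi_b$-fixed points of $\GSpin(\L_b)$ gives, by definition, $\GSpin(\L_{b,0})$ — here one uses that $\GSpin$ commutes with the formation of fixed points because $\L_{b,0}\otimes_{\Qp}K_0=\L_b$ and $C^{+}(\L_{b,0})\otimes_{\Qp}K_0=C^{+}(\L_b)$ — while taking fixed points on the right gives exactly $J_b^0=J_b\cap \Res_{K_0/\Qp}G^0$ by the definition of $J_b$ as the $\F_b$-centralizer in $G$. This yields the first isomorphism $\GSpin(\L_{b,0})\cong J_b^0$, compatible with the actions on $\L_{b,0}$.

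For the central isogeny $J_b^{\der}\to \SO(\L_{b,0})$, I would use the defining exact sequence $1\to\G_m\to\GSpin(\L_{b,0})\to\SO(\L_{b,0})\to 1$ together with the isomorphism just obtained. Restricting the surjection $\GSpin(\L_{b,0})\to\SO(\L_{b,0})$ to the derived subgroup $\Spin(\L_{b,0})=\GSpin(\L_{b,0})^{\der}$ gives the central isogeny $\Spin(\L_{b,0})\to\SO(\L_{b,0})$ with kernel $\mu_2$. Transporting via $\GSpin(\L_{b,0})\cong J_b^0$, the derived group $\Spin(\L_{b,0})$ corresponds to $(J_b^0)^{\der}$; so it remains to identify $(J_b^0)^{\der}$ with $J_b^{\der}$. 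Since $J_b\cong\GU(C_b)$ by Proposition \ref{jbcb} and $G^0$ is cut out inside $G$ by the condition $\sml_G(g)^2=\det_F(g)$, the subgroup $J_b^0$ contains the derived group $J_b^{\der}=\SU(C_b)$ (on which $\sml=1$ and $\det_F=1$, so the defining relation holds trivially), and $J_b^0/Z$ has the same derived group as $J_b$; hence $(J_b^0)^{\der}=J_b^{\der}$. Composing, we get the central isogeny $J_b^{\der}\to\SO(\L_{b,0})$, and its compatibility with the $\L_{b,0}$-action is inherited from Proposition \ref{eik0} (i).

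The main obstacle I anticipate is the bookkeeping around fixed points of group schemes: one must be careful that ``taking $\Phi_b$-fixed points'' of $\GSpin(\L_b)$ really produces the algebraic group $\GSpin(\L_{b,0})$ over $\Qp$ and not merely its $\Qp$-points, i.e. that the descent along the $\sigma$-linear structure is functorial and commutes with forming the (even) Clifford algebra and the $\GSpin$ functor. This is where the precise compatibility of the exceptional isomorphism with $\Phi_b$ (analogue of \cite[Lemma 3.9]{Oki2019}, already invoked above for $\star$) does the real work. The identification $(J_b^0)^{\der}=J_b^{\der}$ is a minor point once one knows $J_b$ is connected reductive with $J_b^{\der}=\SU(C_b)$ simply connected, since the derived group is insensitive to enlarging the central/similitude part. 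Everything else — surjectivity of $\GSpin\to\SO$ on points, the $\mu_2$-kernel, functoriality of the exact sequences in the quadratic space — is recorded in the preliminaries subsection and in Proposition \ref{h1e0}.
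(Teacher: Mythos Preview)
Your proposal is correct and takes essentially the same approach as the paper, whose proof is simply the one-liner ``This follows from Proposition \ref{eik0} (i).'' You have correctly unpacked what that sentence means: descend the isomorphism of Proposition \ref{eik0} (i) along the $\Phi_b$-action to get $\GSpin(\L_{b,0})\cong J_b^{0}$, then pass to derived groups and use the standard exact sequence to obtain the central isogeny $J_b^{\der}\cong \Spin(\L_{b,0})\to \SO(\L_{b,0})$.
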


\begin{proof}
This follows from of Proposition \ref{eik0} (i). 
\end{proof}

\subsection{Description of a subset of field valued points}

Here we fix an algebraically closed field $k$ of characteristic $p$. We set $M_{0,W(k)}:=M_0\otimes_{W}W(k)$. 
\begin{dfn}\label{dlm0}
For $j\in \Z/2$, put
\begin{equation*}
\DiL^{\varpi}_{M_{0},j}(N_{b,W(k)}):=\{M\in \DiL^{\varpi}(N_{b,W(k)})\mid \length_{O_{F,W(k)}}(M+M_{0,W(k)})/M_{0,W(k)}\equiv j\bmod 2\}. 
\end{equation*}
Moreover, let $\M_{b,\mu}^{(0,j)}(k)$ be the subset of $\M_{b,\mu}^{(0)}(k)$ corresponding to $\DiL^{\varpi}_{M_{0},j}(N_{b,W(k)})$ under the bijection $\M_{b,\mu}^{(0)}\cong \DiL^{\varpi}(N_{b,W(k)})$ in Proposition \ref{hmlt}. 
\end{dfn}
By definition, we have 
\begin{equation*}
\M_{b,\mu}^{(0)}(k)=\M_{b,\mu}^{(0,0)}(k)\sqcup \M_{b,\mu}^{(0,1)}(k). 
\end{equation*}
Moreover, the action of $s\in J_{b}(\Qp)$ satisfying $\kappa_{G}(s)=(0,1)$ induces a bijection
\begin{equation*}
\M_{b,\mu}^{(0,0)}(k)\cong \M_{b,\mu}^{(0,1)}(k). 
\end{equation*}
Note that $\M_{b,\mu}^{(0,j)}(k)$ is stable under field extension, that is, for an algebraically closed field $k'$ containing $k$ and $j\in \Z/2$, the image of the canonical map $\M_{b,\mu}^{(0,j)}(k)\rightarrow \M_{b,\mu}^{(0)}(k')$ lies in $\M_{b,\mu}^{(0,j)}(k')$. 

Here we will describe the set $\M_{b,\mu}^{(0,0)}(k)$ by means of certain lattices in $\L_{b,W(k)}:=\L_{b}\otimes_{W}W(k)$. 

\begin{dfn}
A special lattice in $\L_{b,W(k)}$ is a $W(k)$-lattice $L$ in $\L_{b,W(k)}$ satisfying 
\begin{equation*}
\length_{W(k)}L+\Phi_{b}(L)/L\leq 1. 
\end{equation*}
\end{dfn}
We denote by $\SpL(\L_{b,W(k)})$ the set of all special lattices in $\L_{b,W(k)}$. We prove the following: 

\begin{prop}\label{dlsl}
\emph{For any $M\in \DiL^{\varpi}_{M_{0},0}(N_{b,W(k)})$, 
\begin{equation*}
L(M):=\{v\in \L_{b,W(k)}\mid v(\bV_{b}(M))\subset \varpi^{-1}\bV_{b}(M)\}
\end{equation*}
is an element of $\SpL(\L_{b,W(k)})$. Moreover, the map 
\begin{equation*}
\DiL^{\varpi}_{M_{0},0}(N_{b,W(k)})\rightarrow \SpL(\L_{b,W(k)});M\mapsto L(M)
\end{equation*}
is a $J_{b}^{\der}(\Qp)$-equivariant bijection. }
\end{prop}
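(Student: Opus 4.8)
The plan is to follow the template established by Howard--Pappas in \cite{Howard2014} and by \cite{Oki2019}, adapting it to the ramified setting. First I would verify that $L(M)$ is indeed a lattice: since $M$ is an $O_{F,W(k)}$-lattice with $M^{\vee}=\varpi^{-1}M$, the set $\bV_b(M)=p\F_b^{-1}(M)$ is again an $O_{F,W(k)}$-lattice in $N_{b,W(k)}$, and the condition $v(\bV_b(M))\subset \varpi^{-1}\bV_b(M)$ cuts out a $W(k)$-submodule of $\L_{b,W(k)}$ that is bounded (it contains $\varpi^2 L_0'$ for a suitable translate and is contained in a scaled copy of $L_0^{\vee}$), hence a lattice. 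The key input is the compatibility of $\star$ with $\Phi_b$ and the relation $v^2=\langle v,v\rangle$ from Proposition \ref{lbpr}(i), together with Lemma \ref{hmpr}, which together show that membership in $\L_b$ is preserved by the relevant operations. To control $\length_{W(k)} L(M)+\Phi_b(L(M))/L(M)$, I would use the identity $\Phi_b(L(M))=L(\F_b(\bV_b(M)))=L(pM)=L(M)$-up-to-the-shift coming from $\F_b\bV_b=p$, more precisely express $\Phi_b(L(M))$ in terms of the lattice $\{v\mid v(M)\subset \varpi^{-1}M\}$ and compare the two chains $\bV_b(M)$ and $M$ inside $N_{b,W(k)}$. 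The bound $\length\leq 1$ should then follow from the Kottwitz/displayed length conditions defining $\DiL^{\varpi}(N_{b,W(k)})$, namely $pM\subset \F_b^{-1}(pM)\subset M$ with the two length constraints $=4$ and $\leq 2$.

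Next I would construct the inverse map. Given $L\in \SpL(\L_{b,W(k)})$, the even Clifford action makes $N_{b,W(k)}$ a module over $C^+(\L_{b,W(k)})\cong \End_{O_{F,W(k)}}(M_0)$-type algebra (Proposition \ref{lbpr}(iii) and Proposition \ref{eik0}), and one recovers an $O_{F,W(k)}$-lattice $M(L)$ in $N_{b,W(k)}$ by the recipe $\bV_b(M)=\{x\in N_{b,W(k)}\mid L\cdot x\subset \varpi^{-1}(\text{that lattice})\}$, i.e. by inverting the spinor-to-vector dictionary; concretely $M(L)$ is characterized as the (up to the membership in $\DiL^\varpi_{M_0,0}$) unique lattice whose $\bV_b$-image is stabilized appropriately by $L$. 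I would check that $M(L)$ satisfies all three conditions in the definition of $\DiL^{\varpi}(N_{b,W(k)})$ by translating the special-lattice condition $\length(L+\Phi_b(L)/L)\leq 1$ through the exceptional isomorphism, and that the parity condition $\length_{O_{F,W(k)}}(M+M_{0,W(k)})/M_{0,W(k)}\equiv 0$ is exactly matched by $L$ lying in the correct $\SpL$-component (here one uses that $L_0$ corresponds to $M_0$ and that translating by elements of $J_b^{\der}(\Qp)$ versus $J_b^0(\Qp)\setminus J_b^{\der}(\Qp)$ flips the relevant parity, cf.\ the discussion after Definition \ref{dlm0}). Then $M\mapsto L(M)$ and $L\mapsto M(L)$ are mutually inverse by construction.

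Finally, $J_b^{\der}(\Qp)$-equivariance: the group $J_b^{\der}(\Qp)$ acts on $N_{b,W(k)}$ preserving $\F_b$ and the hermitian form, hence permutes $\DiL^{\varpi}_{M_0,0}(N_{b,W(k)})$, and via the central isogeny $J_b^{\der}\to \SO(\L_{b,0})$ of Corollary \ref{exis} it acts on $\L_{b,W(k)}$ preserving $\Phi_b$ and $Q$, hence permutes $\SpL(\L_{b,W(k)})$; the formula $L(g\cdot M)=g\cdot L(M)$ is then immediate from $g(\bV_b(M))=\bV_b(g\cdot M)$ and $g v g^{-1}\in \L_{b,W(k)}$ for $v\in \L_{b,W(k)}$. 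The main obstacle I anticipate is the length bookkeeping in both directions: showing precisely that the single inequality $\length_{W(k)}(L+\Phi_b(L)/L)\leq 1$ is equivalent to the conjunction of the two $O_{F,W(k)}$-length conditions in $\DiL^\varpi$, and pinning down the parity/component matching so that $\DiL^\varpi_{M_0,0}$ (rather than $\DiL^\varpi_{M_0,1}$) corresponds to all of $\SpL(\L_{b,W(k)})$ and not merely one of its two $\OGr$-components. This is where the ramification ($\varpi^\vee = \varpi^{-1}$ modularity rather than self-duality) genuinely changes the computation relative to \cite{Howard2014}, and I would handle it by an explicit computation on the standard lattice $M_0$ and its $J_b^{\der}(\Qp)$-translates using the bases $e_1,\dots,e_4$ and $x_1,\dots,x_6$ fixed in Section \ref{hgst}.
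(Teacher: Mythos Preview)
Your plan is plausible and follows the spirit of \cite{Howard2014}, but the paper takes a rather different and more group-theoretic route that sidesteps precisely the ``length bookkeeping'' you flag as the main obstacle.

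The paper does not verify the special-lattice condition for $L(M)$ by a direct length computation, nor does it build an explicit inverse via the Clifford action. Instead it factors everything through affine Deligne--Lusztig--type sets. First it shows (Lemma~\ref{pmgd}) that $\Lat^{\varpi}_{M_0,0}(N_{b,W(k)})$ is the orbit $G^{\der}(\Frac W(k))\cdot M_{0,W(k)}$, and (Lemma~\ref{ltlt}) that $M\mapsto L^{\sharp}(M)$ gives a $G^{\der}$-equivariant bijection onto $\Lat^{1}(\L_{b,W(k)})$. Then it introduces sets $X(b_0)_{K_{p,W(k)}}\subset G^{\der}(\Frac W(k))/K_{p,W(k)}$ and $X(b'_0)_{K'_{p,W(k)}}\subset \SO(\L_b)(\Frac W(k))/K'_{p,W(k)}$, defined by the condition that $g^{-1}b_0\sigma(g)$ (resp.\ $h^{-1}b'_0\sigma(h)$) lie in one of two explicit double cosets indexed by elements $w_0,w_1$ (resp.\ $w'_0,w'_1$). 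These sets are in bijection with $\DiL^{\varpi}_{M_0,0}$ and $\SpL$ respectively, so the whole proposition reduces to showing the central isogeny $\pi\colon G^{\der}\to \SO(\L_b)$ identifies the two affine DL sets. The key step (Lemma~\ref{adlv}) is a spinor-norm computation: $\theta_{\L_b}(w'_i)=0$ for $i\in\{0,1\}$ and $\theta_{\L_b}(h^{-1}b'_0\sigma(h))=0$ automatically, while $\theta_{\L_b}(-\id)\neq 0$ splits $\SO(\L_b)(\Frac W(k))$ as $\pi(G^{\der})\times\{\pm 1\}$. This simultaneously handles bijectivity, the parity matching, and equivariance.

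What the paper's approach buys is that the delicate length comparison between $L(M)$ and $\Phi_b(L(M))$ is replaced by the trivial observation that $\pi(\varpi^{-1}w_i)=w'_i$; the ``special'' condition on the $\SO$ side is encoded once and for all in the choice of $w'_0,w'_1$. Your approach would instead have to relate $L(M)=L^{\sharp}(\bV_b(M))$ to $\Phi_b(L(M))=L^{\sharp}(M)$ directly, which is feasible but requires tracking how $L^{\sharp}$ behaves on the non-$\varpi$-modular lattice $\bV_b(M)$. Also, your description of the inverse map is circular as written (you define $\bV_b(M)$ in terms of itself); in the paper's framework the inverse is simply $h\mapsto g$ for any lift $g\in G^{\der}$ of $h$ under $\pi$, composed with $g\mapsto g(M_{0,W(k)})$. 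Finally, your worry about $\SpL$ having two $\OGr$-components does not arise: $\SpL(\L_{b,W(k)})$ is a single set, and the parity on the $N_b$ side is absorbed entirely by the spinor-norm argument.
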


We give a proof of Proposition \ref{dlsl}. Let $\Lat^{\varpi}_{M_0,0}(N_{b,W(k)})$ be the set of $\varpi$-modular $O_{F,W(k)}$-lattices in $N_{b,W(k)}$ satisfying $\length_{O_{F,W(k)}}(M+M_{0,W(k)})/M_{0,W(k)} \in 2\Z$. 

\begin{lem}\label{pmgd}
\emph{For $g\in G^{\der}(\Frac W(k))$, we have $g(M_{0,W(k)})\in \Lat^{\varpi}_{M_0,0}(N_{b,W(k)})$. Moreover, the map
\begin{equation*}
G^{\der}(\Frac W(k))/K_{p,W(k)}\rightarrow \Lat^{\varpi}_{M_0,0}(N_{b,W(k)})
\end{equation*}
is bijective, where $K_{p,W(k)}:=\stab_{G^{\der}(\Frac W(k))}(M_{0,W(k)})$. }
\end{lem}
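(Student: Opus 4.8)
The plan is to recast the statement as a fact about $\varpi$-modular $O_{F,W(k)}$-lattices in the split hermitian space $N_{b,W(k)}=V\otimes_{\Qp}\Frac W(k)$. For a $\varpi$-modular lattice $M$ set $d(M):=\length_{O_{F,W(k)}}((M+M_{0,W(k)})/M_{0,W(k)})$, so $\Lat^{\varpi}_{M_0,0}(N_{b,W(k)})$ is the set of $\varpi$-modular $M$ with $d(M)$ even. First, for $g\in G^{\der}(\Frac W(k))=\SU(V)(\Frac W(k))$ the element $g$ is an isometry, hence commutes with the formation of dual lattices, so $(gM_{0,W(k)})^{\vee}=g(M_{0,W(k)}^{\vee})=g(\varpi^{-1}M_{0,W(k)})=\varpi^{-1}\,gM_{0,W(k)}$; thus $gM_{0,W(k)}$ is again $\varpi$-modular. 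Everything else in the lemma I would deduce from the identity
\[
\det\nolimits_{F}(h)\equiv(-1)^{d(M)}\pmod{\varpi},
\]
valid for every $\varpi$-modular $M$ and every $h\in\U(V)(\Frac W(k))$ with $hM_{0,W(k)}=M$. (Both sides make sense: $M$ and $M_{0,W(k)}$ have equal volume, both being $\varpi$-modular of rank $4$ in $N_{b,W(k)}$, so $\det_{F}(h)$ is a norm-$1$ unit reducing to $\pm1$ modulo $\varpi$; the right side is independent of the choice of transporter $h$, since two transporters differ by an element of $\U(M_{0,W(k)})$, whose determinant is $\equiv1\pmod{\varpi}$ as explained below.) Applying the identity to $h=g$ gives $(-1)^{d(gM_{0,W(k)})}=1$, and $1\not\equiv-1\pmod{\varpi}$ because $p$ is odd, so $d(gM_{0,W(k)})$ is even and $gM_{0,W(k)}\in\Lat^{\varpi}_{M_0,0}(N_{b,W(k)})$.

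To prove surjectivity of $G^{\der}(\Frac W(k))/K_{p,W(k)}\to\Lat^{\varpi}_{M_0,0}(N_{b,W(k)})$, I would first use that all $\varpi$-modular $O_{F,W(k)}$-lattices of rank $4$ in $N_{b,W(k)}$ are isometric as hermitian lattices (standard local classification over a complete discrete valuation ring of residue characteristic $\neq2$, via reduction to the unique nondegenerate symplectic space $M/\varpi M$), so $\U(V)(\Frac W(k))$ acts transitively on the set $\Lat^{\varpi}(N_{b,W(k)})$ of all $\varpi$-modular lattices. Hence the $\SU(V)(\Frac W(k))$-orbits on $\Lat^{\varpi}(N_{b,W(k)})$ are separated by the image of $\det_{F}$ of a transporter in $\U(1)(\Frac W(k))/\det_{F}(\U(M_{0,W(k)}))$. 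Now $\U(1)$ is anisotropic, so all its $\Frac W(k)$-points are units, i.e.\ $\U(1)(\Frac W(k))=\U(1)(O_{F,W(k)})$; and $\det_{F}(\U(M_{0,W(k)}))$ equals $\U(1)(O_{F,W(k)})^{+}:=\{\zeta\in\U(1)(O_{F,W(k)})\mid\zeta\equiv1\pmod{\varpi}\}$, which has index $2$: the reduction of $\det_{F}$ modulo $\varpi$ is a character of $\Sp(M_{0,W(k)}/\varpi M_{0,W(k)})$, hence trivial since that group is perfect, and surjectivity onto $\U(1)(O_{F,W(k)})^{+}$ is already realized on a diagonal torus in $\U(M_{0,W(k)})$ (write $\zeta=t/\overline{t}$). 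Therefore there are at most two $\SU(V)(\Frac W(k))$-orbits on $\Lat^{\varpi}(N_{b,W(k)})$, and by the identity (together with the fact that $d$ takes both even and odd values) there are exactly two, separated by the parity of $d$. The orbit of $M_{0,W(k)}$ is thus $\{M\mid d(M)\text{ even}\}=\Lat^{\varpi}_{M_0,0}(N_{b,W(k)})$. Injectivity of the map is immediate from $K_{p,W(k)}=\stab_{G^{\der}(\Frac W(k))}(M_{0,W(k)})$, so it is bijective.

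The main obstacle is the displayed identity. I would establish it through a simultaneous Jordan splitting of the pair $(M,M_{0,W(k)})$: both being $\varpi$-modular, one writes $N_{b,W(k)}=H_{1}\perp\cdots\perp H_{r}$ as an orthogonal sum of hyperbolic planes with $M=\bigoplus_{i}(M\cap H_{i})$, $M_{0,W(k)}=\bigoplus_{i}(M_{0,W(k)}\cap H_{i})$, each intersection $\varpi$-modular in $H_{i}$, and a basis $e_{i},f_{i}$ of $H_{i}$ with $\langle e_{i},f_{i}\rangle=1$, $M_{0,W(k)}\cap H_{i}=O_{F,W(k)}e_{i}\oplus\varpi O_{F,W(k)}f_{i}$ and $M\cap H_{i}=\varpi^{c_{i}}O_{F,W(k)}e_{i}\oplus\varpi^{1-c_{i}}O_{F,W(k)}f_{i}$ for some $c_{i}\in\Z$. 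Then the contribution of $H_{i}$ to $d(M)$ is $|c_{i}|$, while the diagonal unitary transporter $\diag(\varpi^{c_{i}},(-1)^{c_{i}}\varpi^{-c_{i}})$ on $H_{i}$ has $\det_{F}=(-1)^{c_{i}}$; multiplying over $i$ and using $|c_{i}|\equiv c_{i}\pmod{2}$ yields the identity. The genuinely technical input is the existence of such a simultaneous hyperbolic splitting for a pair of $\varpi$-modular hermitian $O_{F,W(k)}$-lattices (a version of the invariant-factor theory for hermitian lattices, again using $p\neq2$); granting this, the remaining arguments are formal.
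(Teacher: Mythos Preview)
Your argument is correct and rests on the same two pillars as the paper's proof: the parity identity $\det_{F}(h)\equiv(-1)^{d(M)}\pmod{\varpi}$ for any unitary transporter $h$, and transitivity of $\U(V)(\Frac W(k))$ on $\varpi$-modular lattices. The paper simply cites \cite[Lemma 3.2]{Rapoport2017} for the parity identity and \cite[Proposition 8.1]{Jacobowitz1962} for transitivity, whereas you re-derive the identity via a simultaneous hyperbolic splitting of the pair $(M,M_{0,W(k)})$ (equivalently, the Cartan decomposition of $\U(V)$ relative to the parahoric $\U(M_{0})$). For surjectivity the paper proceeds by directly modifying a transporter $g\in\U(V)$: having $\det_{F}(g)\in 1+\varpi O_{F,W(k)}$, it uses Hilbert~90 to produce $t\in\U(V)$ with $\det_{F}(t)=\det_{F}(g)$ and $t(M_{0,W(k)})=M_{0,W(k)}$, so that $gt^{-1}\in G^{\der}$ does the job; your orbit-counting via $\det_{F}(\U(M_{0}))=\U(1)^{+}$ is the same computation viewed contragrediently. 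Your route is more self-contained at the cost of invoking the invariant-factor theorem for hermitian lattices; the paper's is shorter but leans on the cited lemmas.
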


\begin{proof}
The first assertion is a consequence of \cite[Lemma 3.2]{Rapoport2017}. The injectivity of the map is clear. To prove the surjectivity, take $M\in \Lat^{\varpi}_{M_0,0}(N_{b,W(k)})$. By \cite[Proposition 8.1]{Jacobowitz1962}, there is $g\in G$ such that $\det\!{}_{F}(g)=1$ and $M=g(M_{0,W(k)})$. Then we have $\det\!{}_{F}(g)\in 1+\varpi O_{F,W(k)}$ by \cite[Lemma 3.2]{Rapoport2017}. On the other hand, Hilbert satz 90 implies that there is $t\in G(\Frac W(k))$ such that $\sml_{G}(t)=1$, $\det\!{}_{F}(t)=\det\!{}_{F}(g)$ and $t(M_{0,W(k)})=M_{0,W(k)}$ (note that we use the condition $\det\!{}_{F}(g)\in 1+\varpi O_{F,W(k)}$  to prove $t(M_{0,W(k)})=M_{0,W(k)}$). Then we have $gt^{-1}\in G^{\der}(\Frac W(k))$ and $gt^{-1}(M_{0,W(k)})=M$. 
\end{proof}

Let $\Lat^{1}(\L_{b,W(k)})$ be the set of $W(k)$-lattices $L$ in $\L_{b,W(k)}$ satisfying $L^{\vee}\subset L$ and
\begin{equation*}
\length_{W(k)}(L/L^{\vee})=1. 
\end{equation*}

\begin{lem}\label{ltlt}
\emph{
\begin{enumerate}
\item For any $M\in \Lat^{\varpi}_{M_0,0}(N_{b,W(k)})$, $L^{\sharp}(M):=\{v\in \L_{b,W(k)}\mid v(M)\subset \varpi^{-1}M\}$ is an element of $\Lat^{1}(\L_{b,W(k)})$. Moreover, we have $L^{\sharp}(M)^{\vee}=\{v\in \L_{b,W(k)}\mid v(M)\subset M\}$. 
\item The map
\begin{equation*}
\Lat^{\varpi}_{M_0,0}(N_{b,W(k)})\rightarrow \Lat^{1}(\L_{b,W(k)}); M\mapsto L^{\sharp}(M)
\end{equation*}
is a $G^{\der}(\Frac W(k))$-equivariant bijection. 
\end{enumerate}}
\end{lem}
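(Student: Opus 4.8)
The plan is to verify the assertions first for the distinguished lattice $M_{0,W(k)}$, where they are a base change of Proposition \ref{lbpr}, and then to propagate them along the action of $G^{\der}(\Frac W(k))$. For $M=M_{0,W(k)}$: base changing Proposition \ref{lbpr} (ii) along $W\to W(k)$ (which identifies $L_{0,W(k)}$ with $\{v\in\L_{b,W(k)}\mid v(M_{0,W(k)})\subset M_{0,W(k)}\}$ and $L_{0,W(k)}^{\vee}$ with $\{v\in\L_{b,W(k)}\mid v(M_{0,W(k)})\subset\varpi^{-1}M_{0,W(k)}\}$, and preserves colengths) gives $L^{\sharp}(M_{0,W(k)})=L_{0,W(k)}^{\vee}$, with $L_{0,W(k)}\subset L_{0,W(k)}^{\vee}$ of colength $1$; so $L^{\sharp}(M_{0,W(k)})\in\Lat^{1}(\L_{b,W(k)})$, and $L^{\sharp}(M_{0,W(k)})^{\vee}=L_{0,W(k)}=\{v\mid v(M_{0,W(k)})\subset M_{0,W(k)}\}$, which is (i) for $M_{0,W(k)}$.

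The point that makes propagation work is that $G^{\der}(\Frac W(k))$ acts on $\L_{b,W(k)}$ by isometries of $(\L_{b,W(k)},Q)$. Indeed, for $g\in G^{\der}(\Frac W(k))$ one has $\sml_G(g)=1$, so by (\ref{gact}) and Proposition \ref{eik0} the action of $g$ on $\bigwedge^{2}_{\Fbr}N_b$ is conjugation $v\mapsto gvg^{-1}$ and preserves $\L_{b,W(k)}$; and since $v^{2}=Q(v)\cdot\id$ in $\End(N_b)$ (Proposition \ref{lbpr} (i)), $(gvg^{-1})^{2}=Q(v)\cdot\id$, i.e.\ $Q(gvg^{-1})=Q(v)$. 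Such an isometry commutes with $(-)^{\vee}$ and preserves colengths of lattices. On the other hand the definition of $L^{\sharp}$ yields, for every $\varpi$-modular $M$, the tautologies $L^{\sharp}(gM)=\{v\mid g^{-1}vg(M)\subset\varpi^{-1}M\}=g\cdot L^{\sharp}(M)$ and $\{v\mid v(gM)\subset gM\}=g\cdot\{v\mid v(M)\subset M\}$. Combining these, (i) passes from $M$ to $gM$; since every $M\in\Lat^{\varpi}_{M_0,0}(N_{b,W(k)})$ is $gM_{0,W(k)}$ for some $g\in G^{\der}(\Frac W(k))$ by Lemma \ref{pmgd}, (i) holds in general, and moreover $L^{\sharp}(gM_{0,W(k)})=g\cdot L_{0,W(k)}^{\vee}$.

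For (ii): the first tautology above is exactly the $G^{\der}(\Frac W(k))$-equivariance. For surjectivity I would show that $G^{\der}(\Frac W(k))$ acts transitively on $\Lat^{1}(\L_{b,W(k)})$ — reducing, via Proposition \ref{eik0} and the spinor norm sequence, to the transitivity of $\SO(\L_{b,W(k)})(\Frac W(k))$ on lattices of this Jordan type together with the fact that the stabilizer of such a lattice realizes both values of the spinor norm (via the reflection in a generator of the rank-one Jordan constituent of scale $p^{-1}$), so that transitivity survives passage to the image of $G^{\der}(\Frac W(k))$; then, since $\Lat^{\varpi}_{M_0,0}(N_{b,W(k)})$ is a single $G^{\der}(\Frac W(k))$-orbit (Lemma \ref{pmgd}) and $M\mapsto L^{\sharp}(M)$ is equivariant with $L^{\sharp}(M_{0,W(k)})=L_{0,W(k)}^{\vee}$, it is onto. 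For injectivity, if $L^{\sharp}(M)=L^{\sharp}(M')$ then $\{v\mid v(M)\subset M\}=\{v\mid v(M')\subset M'\}$ by taking $(-)^{\vee}$ and using the ``moreover'' part of (i); transporting Proposition \ref{lbpr} (iii) by conjugation, the subalgebra of $\End_{\Frac W(k)}(N_{b,W(k)})$ generated by $\{v\mid v(M)\subset M\}$ is $\End_{W(k)}(M)$, so $\End_{W(k)}(M)=\End_{W(k)}(M')$; hence $M$ and $M'$ are homothetic ($N_{b,W(k)}$ being the simple module over this matrix algebra), and $\varpi$-modularity of both forces $M=M'$.

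The main obstacle is the transitivity statement used for surjectivity in (ii): it requires the classification of quadratic lattices over the complete discrete valuation ring $W(k)$ (residue characteristic $p\neq2$) and a spinor-norm computation ensuring that passing from $\SO(\L_{b,W(k)})(\Frac W(k))$ to the image of $G^{\der}(\Frac W(k))$ does not destroy transitivity. Everything else is bookkeeping built on the isometry property of the conjugation action.
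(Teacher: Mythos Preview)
Your proof is correct and follows essentially the same route as the paper: verify (i) at $M_{0,W(k)}$ via Proposition \ref{lbpr} (ii) and propagate along the $G^{\der}(\Frac W(k))$-action using Lemma \ref{pmgd}; for (ii), equivariance is tautological and surjectivity goes through transitivity of the orthogonal group on lattices of a fixed Jordan type (the paper cites \cite[Lemma 29.9]{Shimura2010}) together with the exceptional isomorphism of Proposition \ref{eik0} (i). The one minor variation is in the injectivity step: the paper invokes Proposition \ref{eik0} (ii) directly (if $g\cdot L_{0,W(k)}^{\vee}=g'\cdot L_{0,W(k)}^{\vee}$ then $(g')^{-1}g$ stabilizes $L_{0,W(k)}$, hence $M_{0,W(k)}$), whereas you pass through Proposition \ref{lbpr} (iii) to identify the full endomorphism orders $\End_{W(k)}(M)=\End_{W(k)}(M')$ and then argue homothety plus $\varpi$-modularity. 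Both arguments are short and valid; the paper's is marginally more direct, while yours makes the role of the Clifford algebra isomorphism more visible.
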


\begin{proof}
(i): This follows from Lemma \ref{pmgd} and Proposition \ref{lbpr} (ii). 

(ii): The $G^{\der}(\Frac W(k))$-equivalence is clear by definition. The injectivity follows from Lemma \ref{pmgd} and Proposition \ref{eik0} (ii). The surjectivity follows from \cite[Lemma 29.9]{Shimura2010} (by considering dual lattices) and Proposition \ref{eik0} (i). 
\end{proof}

We regard $G\otimes_{\Qp}K_0$ as a subgroup of $\Res_{\Fbr/K_0}\GL(N_{b})$ by the basis $\e_1,\ldots ,\e_4$, and put 
\begin{equation*}
w_0:=\varpi \id_{N_{b}},\quad w_1:=\diag(1,1,-\varpi^2,-\varpi^2). 
\end{equation*}
Note that $w_0$ and $w_1$ are elements of $\varpi \cdot G^{\der}(\Frac W(k))$. We set 
\begin{equation*}
X(b_0)_{K_{p,W(k)}}:=\left\{g\in G^{\der}(\Frac W(k))/K_{p,W(k)}\,\middle| \,g^{-1}b_0\sigma(g)\in \bigcup_{i\in \{0,1\}}K_{p,W(k)}w_iK_{p,W(k)}\right\},
\end{equation*}
where $b_0\in G(K_0)$ is an element defined in Section \ref{ntrl}. Note that we have $\sml_{G}(b_0)^2=\det\!{}_{F}(b_0)=\varpi^{2}$. Then the map
\begin{equation*}
X(b_0)_{K_{p,W(k)}}\rightarrow \DiL^{\varpi}_{M_0,0}(N_{b,W(k)});g\mapsto g(M_{0,W(k)})
\end{equation*}
is an isomorphism. 

On the other hand, we regard $\SO(\L_{b,0})$ as a subgroup of $\GL(\L_{b,0})$ by the basis $x_1,\ldots,x_6$, and put
\begin{equation*}
w'_0:=\id_{\L_{b}},\quad w'_1:=\diag(\varpi^{-2},\varpi^{2},-1,-1,-1,-1). 
\end{equation*}
We denote by $K'_{p,W(k)}$ the stabilizer of $L_{0,W(k)}$ in $\SO(\L_{b})(\Frac W(k))$. Note that the surjection $\pi \colon G^0\otimes_{\Qp}K_0 \rightarrow \SO(\L_{b})$ maps $\varpi^{-1}w_i$ to $w'_i$ for each $i\in \{0,1\}$. Now put $b'_0:=\pi(\varpi^{-1}b_0)$, and set
\begin{equation*}
X(b'_0)_{K'_{p,W(k)}}:=\left\{h\in \SO(\L_{b})(\Frac W(k))/K'_{p,W(k)}\,\middle| \,h^{-1}b'_0\sigma(h)\in \bigcup_{i\in \{0,1\}}K_{p,W(k)}w'_iK_{p,W(k)}\right\}. 
\end{equation*}
Then the map
\begin{equation*}
X(b'_0)_{K'_{p,W(k)}}\rightarrow \SpL(\L_{b,W(k)});h\mapsto h(L_{0,W(k)}^{\vee})
\end{equation*}
is bijective. 

\begin{lem}\label{adlv}
\emph{The surjection $\pi$ induces an isomorphism
\begin{equation*}
X(b_0)_{K_{p,W(k)}}\xrightarrow{\cong}X(b'_0)_{K'_{p,W(k)}}. 
\end{equation*}
such that the following diagram commutes: 
\begin{equation*}
\xymatrix@C=46pt{
X(b_0)_{K_{p,W(k)}} \ar[r]^{\cong} \ar[d]_{g\mapsto g(M_{0,W(k)})}^{\cong} & X(b'_0)_{K'_{p,W(k)}} \ar[d]^{h\mapsto h(L_{0,W(k)}^{\vee})} \\
\DiL^{\varpi}_{M_0,0}(N_{b,W(k)}) \ar[r]^{\,\,M\mapsto L(M)}& \Lat^{1}(\L_{b,W(k)}). }
\end{equation*}}
\end{lem}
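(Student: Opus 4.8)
The plan is to show that the surjection $\pi \colon G^0 \otimes_{\Qp} K_0 \to \SO(\L_b)$ restricts to the desired bijection, and then to check commutativity of the square against the definitions of the two vertical maps $M \mapsto L(M)$ and $M \mapsto L^{\sharp}(M)$. The key observation is that $\pi$ already behaves well with respect to all the data in sight: by Proposition \ref{eik0} it identifies $\stab_{G^0(\Frac W(k))}(M_{0,W(k)})$ with $\stab_{\SO(\L_b)(\Frac W(k))}(L_{0,W(k)})$, and by construction it sends $\varpi^{-1} w_i$ to $w_i'$ for $i \in \{0,1\}$ and $\varpi^{-1} b_0$ to $b_0'$. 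So I would first record that $\pi$ maps $K_{p,W(k)}$ (which is $\stab_{G^{\der}}(M_{0,W(k)})$) isomorphically onto $K'_{p,W(k)}$: indeed $G^{\der} = \Spin$-side and $\SO$-side stabilizers agree under the central isogeny $J_b^{\der} \to \SO(\L_{b,0})$ of Corollary \ref{exis}, once one notes the kernel $\mu_2$ is contained in every such stabilizer. Given this, the double coset condition defining $X(b_0)_{K_{p,W(k)}}$, namely $g^{-1} b_0 \sigma(g) \in \bigcup_i K_{p,W(k)} w_i K_{p,W(k)}$, is transported verbatim by $\pi$ to the condition $h^{-1} b_0' \sigma(h) \in \bigcup_i K'_{p,W(k)} w_i' K'_{p,W(k)}$, using that $\pi$ is a group homomorphism commuting with $\sigma$ and $\pi(\varpi \cdot \mathrm{stuff})$ absorbs the scalar $\varpi$ consistently on both sides (since the $w_i$ and $b_0$ each carry a single factor of $\varpi$, which $\pi$ kills because $\varpi \in \G_m = \ker(\GSpin \to \SO)$). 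Hence $\pi$ induces a well-defined bijection $X(b_0)_{K_{p,W(k)}} \xrightarrow{\cong} X(b_0')_{K'_{p,W(k)}}$.

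For commutativity of the square, I would trace an element $g \in X(b_0)_{K_{p,W(k)}}$ around both ways. Going left then down: $g \mapsto M := g(M_{0,W(k)}) \mapsto L(M)$, and by Proposition \ref{dlsl} together with Lemma \ref{ltlt}(i) one has $L(M) = L^{\sharp}(\bV_b(M))$; here $\bV_b(M) = p\F_b^{-1}(M)$, and one checks $\bV_b(g(M_{0,W(k)})) = b_0^{-1}\sigma(g)\,\sigma(M_{0,W(k)})\cdot(\text{scalar})$, but more to the point, since $g \in G^{\der}$ and $b_0$ is fixed, $L^{\sharp}$ is $G^{\der}(\Frac W(k))$-equivariant by Lemma \ref{ltlt}(ii), so $L^{\sharp}(\bV_b(M))$ is obtained from $L^{\sharp}(\bV_b(M_{0,W(k)}))$ by the $\SO$-action of $\pi(g)$. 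One then identifies $L^{\sharp}(\bV_b(M_{0,W(k)})) = L_{0,W(k)}^{\vee}$ by an explicit lattice computation: $\bV_b = p\F_b^{-1}$ and from Lemma \ref{base} one reads off $\bV_b(M_{0,W(k)})$, and then $\{v \in \L_{b,W(k)} \mid v(\bV_b M_0) \subset \varpi^{-1}\bV_b M_0\}$ is computed against the basis $x_1,\ldots,x_6$ to equal $L_{0,W(k)}^{\vee} = L_{0,W(k)}'$ as described in Proposition \ref{lbpr}(ii). Going down then right: $g \mapsto h := \pi(g) \mapsto h(L_{0,W(k)}^{\vee})$. So both routes give $\pi(g)(L_{0,W(k)}^{\vee})$, and the square commutes.

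The main obstacle I anticipate is the bookkeeping with the scalar $\varpi$ and the passage $\bV_b$ versus the lattice $M$ itself: one must be careful that the vertical map on the left is $M \mapsto L(M)$ with $L(M)$ defined via $\bV_b(M)$, whereas the $G^{\der}$-equivariant map of Lemma \ref{ltlt} is the one via $M$ directly, so the compatibility rests on the identity $L(M) = L^{\sharp}(\bV_b(M))$ from Proposition \ref{dlsl} plus the fact that $\bV_b$ is, up to the fixed element $b_0$ and the scalar $p$, compatible with the group action — i.e. $\bV_b \circ g = (\text{conjugate of } g \text{ by } b_0,\sigma) \circ \bV_b$ — and one has to verify this conjugate still lies in $G^{\der}$ and maps to $\pi(g)$ under $\pi$ after the dust settles (it does, since $X(b_0)_{K_{p,W(k)}}$ is $\sigma$-conjugation-stable by the very double coset condition). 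All remaining steps are the explicit Gram-matrix and lattice computations already set up in Section \ref{hgst}, and I would not belabor them.
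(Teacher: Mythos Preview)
Your argument for the bijection $X(b_0)_{K_{p,W(k)}} \to X(b'_0)_{K'_{p,W(k)}}$ has a genuine gap at surjectivity. You assert that $\pi$ maps $K_{p,W(k)}$ isomorphically onto $K'_{p,W(k)}$, but this is false: the image $\pi(G^{\der}(\Frac W(k)))$ sits inside $\SO(\L_b)(\Frac W(k))$ as the kernel of the spinor norm $\theta_{\L_{b,W(k)}}\colon \SO(\L_b)(\Frac W(k)) \to \Frac W(k)^{\times}/(\Frac W(k)^{\times})^{2} \cong \Z/2$, hence has index $2$; correspondingly $\pi(K_{p,W(k)})$ has index $2$ in $K'_{p,W(k)}$. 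Noting that the kernel $\mu_2$ lies in every stabilizer only buys you injectivity on cosets, not surjectivity. So a given $h \in X(b'_0)_{K'_{p,W(k)}}$ need not lie in $\pi(G^{\der}(\Frac W(k)))$ at all, and you have not shown that its $K'_{p,W(k)}$-coset meets the image.

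The paper fills exactly this gap by a spinor-norm argument. It first observes that $\pi$ induces a bijection from $X(b_0)_{K_{p,W(k)}}$ onto the analogous set $X'(b'_0)_{K'_{p,W(k),1}}$ built from the \emph{image} subgroups $\SO(\L_b)(\Frac W(k))_1 := \pi(G^{\der}(\Frac W(k)))$ and $K'_{p,W(k),1} := \pi(K_{p,W(k)})$; this step is along the lines you sketch. The nontrivial point is then that the canonical map $X'(b'_0)_{K'_{p,W(k),1}} \to X(b'_0)_{K'_{p,W(k)}}$ is itself a bijection. For this one computes $\theta_{\L_{b,W(k)}}(-\id_{\L_{b,W(k)}}) \neq 0$, so that $\SO(\L_b)(\Frac W(k)) = \pi(G^{\der}(\Frac W(k)))\times\{\pm\id\}$ and $K'_{p,W(k)} = \pi(K_{p,W(k)})\times\{\pm\id\}$; then injectivity is clear, and surjectivity uses that $\theta_{\L_{b,W(k)}}(w'_i) = 0$ and $\theta_{\L_{b,W(k)}}(h^{-1}b'_0\sigma(h)) = 0$ for every $h$, so the double-coset condition is already visible at the level of the index-$2$ subgroups. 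Without this step your argument does not close. Your treatment of the commutativity of the square is more detailed than the paper's (which simply declares it clear), and the bookkeeping you flag with $\bV_b$ is real but routine once one notes $\bV_b(M_0)=\varpi M_0$ up to a unit and that elements of $\L_b$ are $\overline{(\cdot)}$-semilinear on $N_b$.
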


Note that Lemma \ref{adlv} immediately implies Proposition \ref{dlsl}. 

\begin{proof}[Proof of Lemma \ref{adlv}]
Put
\begin{equation*}
\SO(\L_{b})(\Frac W(k))_{1}:=\pi(G^{\der}(\Frac W(k))),\quad K'_{p,W(k),1}:=\pi(K_{p,W(k)}). 
\end{equation*}
Define
\begin{equation*}
X'(b'_0)_{K'_{p,W(k),1}}:=\left\{h\in \SO(\L_{b})(\Frac W(k))_{1}/K'_{p,W(k),1}\,\middle| \,h^{-1}b'_0\sigma(h)\in \bigcup_{i\in \{0,1\}}K'_{p,W(k),1}w'_i K'_{p,W(k),1}\right\}. 
\end{equation*}
Since $\Ker(\pi \!\mid_{G^{\der}(\Frac W(k))})=\{\pm \id_{N_{b,W(k)}}\}$ and $\pi(\varpi^{-1}w_i)=w'_i$ for $i\in \{0,1\}$, the homomorphism $\pi$ induces an isomorphism
\begin{equation*}
X(b_0)_{K_{p,W(k)}}\cong X'(b'_0)_{K'_{p,W(k),1}}. 
\end{equation*}
On the other hand, note that the sequence
\begin{equation*}
G^{\der}(\Frac W(k)) \xrightarrow{\pi \mid_{G^{\der}(K_0)}} \SO(\L_{b})(\Frac W(k)) \xrightarrow{\theta_{\L_{b,W(k)}}} \Frac W(k)^{\times}/(\Frac W(k))^{2}\cong \Z/2
\end{equation*}
is exact. Moreover, the direct computation implies the following: 
\begin{equation*}
\pi(\varpi \id_{N_{b,W(k)}})=-\id_{\L_{b,W(k)}},\quad \theta_{\L_{b,W(k)}}(-\id_{\L_{b,W(k)}})\neq 0. 
\end{equation*}
Hence we obtain equalities
\begin{equation*}
\SO(\L_{b})(\Frac W(k))=\pi(G^{\der}(\Frac W(k))\times \{\pm \id_{\L_{b,W(k)}}\},\quad K'_{p,W(k)}=\pi(K_{p,W(k)})\times \{\pm \id_{\L_{b,W(k)}}\}.
\end{equation*}
Hence the canonical map $X(b_0)_{K_{p,W(k)},1}\rightarrow X(b_0)_{K_{p,W(k)}}$ is injective. It is also surjective since $\theta_{\L_{b}}(w'_i)=0$ for $i\in \{0,1\}$ and $\theta_{\L_{b,W(k)}}(h^{-1}b'_0\sigma(h))=0$ for any $h\in G(\Frac W(k))$. 

Finally, the commutativity of the diagram is clear. 
\end{proof}

We give another application of the bijection $h\mapsto h(L_{0,W(k)}^{\vee})$, which will be used in Section \ref{vln1}. 

\begin{prop}\label{dlph}
\emph{For $L\in \SpL(\L_{b,W(k)})$, we have $\Phi_{b}(L)\subset p^{-1}L^{\vee}$. }
\end{prop}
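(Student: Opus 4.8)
The plan is to read the statement off the parametrization of special lattices established in Lemma~\ref{adlv}. Write $L=h(L_{0,W(k)}^{\vee})$ for a representative $h\in\SO(\L_b)(\Frac W(k))$ with $h^{-1}b'_0\sigma(h)\in K'_{p,W(k)}w'_0K'_{p,W(k)}\cup K'_{p,W(k)}w'_1K'_{p,W(k)}$, where $w'_0=\id_{\L_b}$ and $w'_1=\diag(\varpi^{-2},\varpi^{2},-1,-1,-1,-1)$ in the basis $x_1,\dots,x_6$. I would then use that $\Phi_b=b'_0\circ\sigma$ on $\L_b$ (up to a unit scalar, which is harmless for lattices) and that $\sigma$ fixes $L_{0,W(k)}^{\vee}=\bigoplus_i W(k)x_i$ --- because the basis $e_1,\dots,e_4$ of Lemma~\ref{base}, hence $x_1,\dots,x_6$, is $\sigma$-invariant --- to rewrite $\Phi_b(L)=b'_0\bigl(\sigma(h)(L_{0,W(k)}^{\vee})\bigr)$. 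Pushing $h^{-1}b'_0\sigma(h)=k_1w'_ik_2$ through and absorbing the factors $k_j\in K'_{p,W(k)}$ that stabilize $L_{0,W(k)}^{\vee}$, this becomes $\Phi_b(L)=hk_1\bigl(w'_i(L_{0,W(k)}^{\vee})\bigr)$ for some $i\in\{0,1\}$.

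Since $h$ is an isometry for $[\,,\,]$ and $(L_{0,W(k)}^{\vee})^{\vee}=L_{0,W(k)}$ by Proposition~\ref{lbpr}(ii), one has $L^{\vee}=h(L_{0,W(k)})$, hence $p^{-1}L^{\vee}=h(p^{-1}L_{0,W(k)})$; and as $k_1$ stabilizes $L_{0,W(k)}$, the assertion $\Phi_b(L)\subset p^{-1}L^{\vee}$ reduces to the two $h$-free inclusions $w'_i(L_{0,W(k)}^{\vee})\subset p^{-1}L_{0,W(k)}$ for $i\in\{0,1\}$. These I would check by hand from the explicit description $L_{0,W(k)}^{\vee}=\bigoplus_{i=1}^{6}W(k)x_i$ and $L_{0,W(k)}=W(k)x_1\oplus W(k)x_2\oplus W(k)(\varpi^{2}x_3)\oplus W(k)x_4\oplus W(k)x_5\oplus W(k)x_6$ recalled in the proof of Proposition~\ref{lbpr}(ii), using only $\varpi^{2}W(k)=pW(k)$: the case $i=0$ amounts to $px_3\in W(k)(\varpi^{2}x_3)$, and $i=1$ additionally to $\varpi^{-2}x_1\in p^{-1}W(k)x_1$ and $x_3\in p^{-1}W(k)(\varpi^{2}x_3)$, all of which hold.

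I do not expect a real obstacle: conceptually the statement just says that the reference special lattice $L_{0,W(k)}^{\vee}$ satisfies $\Phi_b(L_{0,W(k)}^{\vee})=L_{0,W(k)}^{\vee}\subset p^{-1}L_{0,W(k)}$, and that this property is transported to an arbitrary $L\in\SpL(\L_{b,W(k)})$ along the $X(b'_0)_{K'_{p,W(k)}}$-description. The one point requiring care is the bookkeeping in the first step --- verifying that $\Phi_b$ is given by $b'_0\circ\sigma$ on $\L_b$ (which follows from the construction of $\Phi_b$, the identification $b'_0=\pi(\varpi^{-1}b_0)$ and $\pi(\varpi^{-1}w_i)=w'_i$) and that $\sigma$ genuinely preserves $L_{0,W(k)}^{\vee}$; once these are in place the remainder is formal manipulation of lattices inside $\L_{b,W(k)}$ together with the one-line valuation computation above.
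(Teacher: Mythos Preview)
Your approach is correct and essentially identical to the paper's proof. Both argue via the parametrization $L=h(L_{0,W(k)}^{\vee})$ from Lemma~\ref{adlv}, rewrite $\Phi_b(L)=h\bigl(h^{-1}b'_0\sigma(h)\bigr)(L_{0,W(k)}^{\vee})$, and reduce to the explicit inclusion $w'_i(L_{0,W(k)}^{\vee})\subset p^{-1}L_{0,W(k)}$; the only cosmetic difference is that the paper absorbs $k_1$ into $h$ at the outset (replacing $h$ by $hh_1$) whereas you keep it and use that $K'_{p,W(k)}$ stabilizes $L_{0,W(k)}$.
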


\begin{proof}
Write $L=h(L_{0,W(k)}^{\vee})$, where $h\in \SO(\L_b)(\Frac W(k))$ satisfies $h^{-1}b'_0\sigma(h)\in K_{p,W(k)}w'_iK_{p,W(k)}$ for some $i\in \{0,1\}$. Note that $L^{\vee}=h(L_{0,W(k)})$. Write $h^{-1}b'_0\sigma(h)=h_1w'_1h_2$, where $h_1,h_2\in K_{p,W(k)}$. By replacing $h$ to $hh_1$ if necessary, we may assume $h_1=\id_{\L_{b}}$. 

If $i=0$, then we have $\Phi_{b}(L)=L$, and hence the assertion is trivial. Otherwise, we have 
\begin{align*}
w'_1(L_{0,W(k)}^{\vee})&=W(p^{-1}x_1)\oplus Wx_2\oplus Wx_3\oplus Wx_4\oplus Wx_5\oplus Wx_6 \\
&\subset W(p^{-1}x_1)\oplus W(p^{-1}x_2)\oplus Wx_3\oplus W(p^{-1}x_4)\oplus W(p^{-1}x_5)\oplus W(p^{-1}x_6)=p^{-1}L_0. 
\end{align*}
Here $x_1,\ldots,x_6$ is the basis of $\L_{b}$ defined in Section \ref{hgst}. Using this inclusion, we have
\begin{equation*}
\Phi_{b}(L)=h(h^{-1}b'_0\sigma(h))(L_{0,W(k)}^{\vee})=hw'_1(L_{0,W(k)}^{\vee})\subset h(p^{-1}L_{0,W(k)})=p^{-1}L^{\vee}. 
\end{equation*}
Therefore the assertion follows. 
\end{proof}

\subsection{Rapoport--Zink space for the quaternionic unitary group of degree $2$}

To understand the structure of $\M_{b,\mu}^{(0,0)}$, we will use a closed immersion from the Rapoport--Zink space  for the quaternionic unitary similitude group of degree $2$. We recall the definition and the Bruhat--Tits stratification of it constructed by \cite{Oki2019}. 

From now on, put $D:=F[y_6]$ and $O_D:=O_F[y_6]$, which are subrings of $\End^0(\X_0)$. Note that $D$ is the quaternion division algebra over $\Qp$, and $O_D$ is the maximal order of $D$. We define an involution $*$ on $D$ by
\begin{equation*}
*\colon D\rightarrow D;d\mapsto y_6(\trd_{D/\Qp}(d)-d)y_6^{-1}. 
\end{equation*}
Let $G_D$ be the quaternionic unitary similitude group of degree $2$. We regard $G_D$ as a subgroup of $G$ by the natural way in the sequel. For $S\in \nilp$, a $p$-divisible with $G_D$-structure is a triple $(X,\iota,\lambda)$, where
\begin{itemize}
\item $X$ is a $p$-divisible group over $S$ of dimension $4$ and height $8$,
\item $\iota \colon O_D\rightarrow \End(X)$ is a ring homomorphism,
\item $\lambda \colon X\rightarrow X^{\vee}$ is a quasi-isogeny so that $\lambda^{\vee}=-\lambda$, 
\end{itemize}
satisfying the following conditions for any $d\in O_D$;
\begin{itemize}
\item (Kottwitz condition) $\det(T-\iota(d)\mid \Lie(X))=(T^2-\trd_{D/\Qp}(d)+\nrd_{D/\Qp}(d))^2$,
\item $\lambda \circ \iota(d)=\iota(d^*)^{\vee}\circ \lambda$. 
\end{itemize}
We define a quasi-isogeny of $p$-divisible groups with $G_D$-structures by the same manner as Definition \ref{pdiv} (ii). 

Now we define $\iota_0'\colon O_D\rightarrow \End(\X_0)$ by the composite of the actions of $O_F$ and $y_6$ on $\X_0$, and put $\lambda'_0:=(y_6\circ \iota_0(\varpi))^{\vee}\circ \lambda_0$. Then $(\X_0,\iota_0',\lambda'_0)$ is a $p$-divisible group with $G_D$-structure over $\spec \Fpbar$. Furthermore, $\lambda'_0$ is an isomorphism since $\Ker(\lambda_0)=\Ker(\iota_0(\varpi))$ and $y_6$ is an isomorphism. We define $\M_D$ as the functor which parametrizes $(X,\iota,\lambda,\rho)$ for $S\in \nilp$, where $(X,\iota,\lambda)$ is a $p$-divisible group over $S$ of dimension $4$ and height $8$ such that $\lambda$ is an isomorphism, and
\begin{equation*}
\rho \colon X\times_{S}\overline{S}\rightarrow \X_0\otimes_{\Fpbar}\overline{S}
\end{equation*}
is a quasi-isogeny of $p$-divisible groups with $G_D$-structures. We define an equivalence of two $p$-divisible groups with $G_D$-structures by the same manner as that of two $p$-divisible groups with $(G,\mu)$-structures. The representability of $\M_D$ is a consequence of \cite[Theorem 3.25]{Rapoport1996a}. Moreover, there is a decomposition into open and closed formal subschemes
\begin{equation*}
\M_{D}=\coprod_{i\in \Z}\M_{D}^{(i)}, 
\end{equation*}
where $\M_{D}^{(i)}$ is the locus $(X,\iota,\lambda,\rho)$ of $\M_{D}$ for $\im(c(\rho))\subset p^i\Zpt$. 

Let $J_D$ be the algebraic group over $\Qp$ defined by
\begin{equation*}
J_D(R)=\{(g,c)\in (\End_{O_D}(\X_{0})\otimes_{\Zp}R)^{\times}\times R^{\times}\mid g^{\vee}\circ \lambda_0'\circ g=c\lambda_0'\}
\end{equation*}
for any $\Qp$-algebra $R$. The group $J_{D}(\Qp)$ acts on $\M_D$ by the same manner as the action of $J_b(\Qp)$ on $\M_{b,\mu}$. It is known that there is an isomorphism $J_{D}\cong \GSp_{4}$. See \cite[1.42]{Rapoport1996a}. 

Next, we recall the Bruhat--Tits stratification of $\M_D$ constructed in \cite[Section 5.3]{Oki2019}. Put
\begin{equation*}
\L_{D}:=\iota'_0(y_6\varpi)^{-1}\{f\in \End^0_{O_D}(\X_0)\mid \trd_{D/\Qp}(f)=0, f^{\vee}\circ \lambda'_0=\lambda'_0\circ f\}. 
\end{equation*}
We call a lattice $\Lambda$ in $\L_{D}$ is a \emph{vertex lattice} if we have $p\Lambda \subset \Lambda^{\vee}\subset \Lambda$. If $\Lambda$ is a vertex lattice in $\L_{D}$, we call $t(\Lambda):=\dim_{\Fp}(\Lambda/\Lambda^{\vee})$ as the type of $\Lambda$. Note that we have $t(\Lambda)\in \{1,3,5\}$ by \cite[Lemma 5.9]{Oki2019}. We denote by $\VL(\L_{D},t)$ the set of all vertex lattices in $\L_{D}$ of type $t$ for $t\in \{1,3,5\}$, and put $\VL(\L_{D}):=\sqcup_{t\in \{1,3,5\}}\VL(\L_{D},t)$. 

For $\Lambda \in \VL(\L_{D})$, let $\M_{D,\Lambda}^{(0)}$ be the locus $(X,\iota,\lambda,\rho)$ of $\M_{D}^{(0)}$ where $\rho^{-1}\circ \Lambda \circ \rho \subset \End(X)$. It is a closed formal subscheme of $\M_{D}^{(0)}$. 

\begin{prop}\label{gu2d} (\cite[Corollary 5.25, Theorems 5.26, 5.27]{Oki2019})
\emph{
\begin{enumerate}
\item For $\Lambda,\Lambda'\in \VL(\L_{D})$, we have 
\begin{equation*}
\M_{D,\Lambda}^{(0)}\cap \M_{D,\Lambda'}^{(0)}=
\begin{cases}
\M_{D,\Lambda \cap \Lambda'}^{(0)} & \text{if }\Lambda \cap \Lambda'\in \VL(\L_{D}), \\
\emptyset &\text{otherwise. }
\end{cases}
\end{equation*}
\item For $\Lambda \in \VL(\L_{D})$, the formal scheme $\M_{D,\Lambda}^{(0)}$ is a reduced scheme over $\spec \Fpbar$. Moreover, the following hold: 
\begin{itemize}
\item if $t(\Lambda)=1$, then $\M_{D,\Lambda}^{(0)}$ is a single point, 
\item if $t(\Lambda)=3$, then $\M_{D,\Lambda}^{(0)}$ is isomorphic to $\P^1_{\Fpbar}$, 
\item if $t(\Lambda)=5$, then $\M_{D,\Lambda}^{(0)}$ is isomorphic to the Fermat surface $F_p$ defined by
\begin{equation*}
x_0^{p+1}+x_1^{p+1}+x_2^{p+1}+x_3^{p+1}=0
\end{equation*}
in $\P_{\Fpbar}^{3}=\Proj \Fpbar[x_0,x_1,x_2,x_3]$. 
\end{itemize}
In particular, $\M_{D,\Lambda}^{(0)}$ is projective smooth and irreducible of dimension $(t(\Lambda)-1)/2$. 
\item We have equalities
\begin{equation*}
\M_{D}^{(0)}=\bigcup_{\Lambda \in \VL(\L_{D})}\M_{D,\Lambda}^{(0)}=\bigcup_{\Lambda \in \VL(\L_{D},5)}\M_{D,\Lambda}^{(0)}. 
\end{equation*}
In particular, any irreducible component of $\M_{D}^{(0)}$ is of the form $\M_{D,\Lambda}^{(0)}$ where $\Lambda \in \VL(\L_{D},5)$, which has $2$-dimensional. 
\end{enumerate}}
\end{prop}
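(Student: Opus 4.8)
The plan is to reduce everything to combinatorics of lattices in the quadratic space $\L_D$, in the spirit of Vollaard--Wedhorn and Howard--Pappas, adapted to the quaternionic unitary group. First I would record a Dieudonn{\'e}-theoretic description of $\M_D^{(0)}$ parallel to Proposition \ref{hmlt}: for an algebraically closed field $k/\Fpbar$, the theory of windows identifies $\M_D^{(0)}(k)$ with a set of $O_D\otimes_{\Zp}W(k)$-lattices $M$ in the rational Dieudonn{\'e} module $N:=\D(\X_0)_{\Q}\otimes_{W}W(k)$ subject to the Kottwitz condition and a self-duality constraint reflecting that $\lambda_0'$ is an isomorphism. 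Via the exceptional isomorphism $J_D^{\der}\to\SO(\L_D)$ and the argument behind Proposition \ref{dlsl}, this set is in $\SO(\L_D)(\Qp)$-equivariant bijection with the set of special lattices $L$ in $\L_D\otimes_{\Qp}W(k)$, those with $\length_{W(k)}\bigl((L+\Phi(L))/L\bigr)\le 1$ for the relevant Frobenius-semilinear operator $\Phi$, and the bijection is compatible with the reduced scheme structures, so that it suffices to analyse $\M_D^{(0),\red}$ through special lattices.

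Next I would attach to each special lattice $L$ a vertex lattice $\Lambda(L)\in\VL(\L_D)$, canonically obtained by iterating $\Phi$ (the stabilizing union $L+\Phi(L)+\Phi^2(L)+\cdots$, which turns out to be the dual of a vertex lattice), and show that $L\mapsto\Lambda(L)$ is locally constant on a locally closed stratification of $\M_D^{(0),\red}$, that $t(\Lambda(L))\in\{1,3,5\}$, and that the reduced closed subscheme where $\Lambda$ acts --- the condition $\rho^{-1}\Lambda\rho\subseteq\End(X)$, which in lattice terms says $L$ lies between $\Lambda^{\vee}_{W(k)}$ and $\Lambda_{W(k)}$ --- is the closure of $\{L:\Lambda(L)=\Lambda\}$, hence the union of the strata indexed by vertex lattices $\Lambda'\subseteq\Lambda$. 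Part (i) then follows by a direct lattice computation: $L$ defines a point of $\M_{D,\Lambda}^{(0)}\cap\M_{D,\Lambda'}^{(0)}$ if and only if $\Lambda^{\vee}_{W(k)}+(\Lambda')^{\vee}_{W(k)}\subseteq L\subseteq\Lambda_{W(k)}\cap\Lambda'_{W(k)}$, and the self-duality identity $(\Lambda\cap\Lambda')^{\vee}=\Lambda^{\vee}+(\Lambda')^{\vee}$ shows this is exactly the condition defining $\M_{D,\Lambda\cap\Lambda'}^{(0)}$ when $\Lambda\cap\Lambda'\in\VL(\L_D)$, whereas no special lattice can satisfy it otherwise. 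Part (iii) is then formal: every $L$ has a well-defined $\Lambda(L)$, and the maximal members of $\VL(\L_D)$ for inclusion are precisely those of type $5$, so $\M_D^{(0)}=\bigcup_{t(\Lambda)=5}\M_{D,\Lambda}^{(0)}$; since each such closed stratum is irreducible of dimension $2$ while those of types $1,3$ have dimension at most $1$, these are the irreducible components.

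For part (ii) I would identify the stratum $\{\Lambda(L)=\Lambda\}$ with a Deligne--Lusztig variety. Reduction modulo $\Lambda^{\vee}$ turns $\Lambda/\Lambda^{\vee}$ into a $t(\Lambda)$-dimensional $\Fp$-quadratic space carrying a Frobenius, and $L\mapsto L/\Lambda^{\vee}$, together with the position of the image of $\Phi(L)$, identifies the stratum with a locus of maximal totally isotropic subspaces $\cL$ of $\Lambda/\Lambda^{\vee}$ in prescribed relative position with $\Phi(\cL)$ --- that is, a Deligne--Lusztig variety for $\SO(\Lambda/\Lambda^{\vee})\cong\SO_{t(\Lambda)}$ attached to a Coxeter element (equivalently, in the dictionary between odd and even orthogonal Grassmannians developed in Section \ref{btdl} and in \cite{Oki2019}, a stratum of an even orthogonal Grassmannian). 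For $t(\Lambda)=1$ this is a single point; for $t(\Lambda)=3$ the closed stratum is $\P^1_{\Fpbar}$; for $t(\Lambda)=5$ one realizes the relevant orthogonal Grassmannian as $\P^3_{\Fpbar}$ via the half-spin representation, and the Frobenius-incidence condition becomes a single form of degree $p+1$ which, in suitable coordinates, is $x_0^{p+1}+x_1^{p+1}+x_2^{p+1}+x_3^{p+1}=0$ --- the classical realization of the Fermat surface $F_p$ as a Coxeter Deligne--Lusztig variety (compare \cite{Vollaard2011} and \cite{Howard2014}). Reducedness and smoothness of the open part are read off from the window presentation and the classical smoothness of Deligne--Lusztig varieties, and closedness of each $\M_{D,\Lambda}^{(0)}$ in $\M_D^{(0)}$ is the usual fact that lifting of endomorphisms is a closed condition.

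The main obstacle is the explicit computation in the case $t(\Lambda)=5$: producing coordinates --- via the half-spin representation together with the exceptional isomorphism $J_D^{\der}\to\SO(\L_D)$ --- in which the Frobenius-incidence condition on the orthogonal Grassmannian of $\Lambda/\Lambda^{\vee}$ is literally $\sum_{i=0}^{3}x_i^{p+1}=0$, and verifying that the scheme structure coming from windows agrees with the reduced Deligne--Lusztig scheme. The cases $t(\Lambda)=1,3$, the poset structure of $\VL(\L_D)$, and the intersection formula in (i) are comparatively routine once the dictionary between special lattices and vertex lattices is in place.
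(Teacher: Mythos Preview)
The paper does not prove this proposition at all: it is quoted verbatim from \cite[Corollary 5.25, Theorems 5.26, 5.27]{Oki2019} and carries no proof block in the present paper. Your outline is a faithful summary of the method used in that reference (and in \cite{Vollaard2011}, \cite{Howard2014}, \cite{Rapoport2014a}): the Dieudonn{\'e}/window description of points, the bijection with special lattices in the auxiliary quadratic space, the passage to vertex lattices via iterated Frobenius, the lattice-intersection formula for (i), and the identification of each closed stratum with the closure of a Coxeter-type Deligne--Lusztig variety for an odd orthogonal group, yielding a point, $\P^1$, or the Fermat surface according to $t(\Lambda)\in\{1,3,5\}$. So there is nothing to compare against in this paper, and your sketch matches the approach of the cited source.
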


We give a relation between $\M_{D}$ and $\M_{b,\mu}$. Let $N_{b,W(k)}^{0}$ be the eigenspace of $y_6$ on $N_{b,W(k)}$ with respect to $\varepsilon$. Then the restriction to $N_{b,W(k)}^{0}$ of the alternating form $(\,,\,)$ on $N_{b,W(k)}$ is non-degenerate. Moreover, $N_{b,W(k)}^{0}$ is stable under the $\sigma$-linear map $\F_{b,0}$. 

\begin{prop}\label{clim}
\emph{The morphism 
\begin{equation*}
i_{y_6}\colon \M_{D}\rightarrow \M_{b,\mu};(X,\iota,\lambda,\rho)\mapsto (X,\iota\vert_{O_F},\lambda \circ (\rho^{-1}y_6\circ \rho)\circ \iota(\varpi),\rho)
\end{equation*}
is a closed immersion. The image of the closed immersion $i_{y_6}$ equals the locus $(X,\iota,\lambda,\rho)$ of $\M_{b,\mu}$ where $\rho^{-1}\circ y_6\circ \rho \in \End(X)$. }
\end{prop}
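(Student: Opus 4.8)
The plan is to construct $i_{y_6}$ carefully, verify that the target quadruple genuinely lands in $\M_{b,\mu}$, and then identify the image as claimed and prove that $i_{y_6}$ is a closed immersion by checking it is a monomorphism onto a closed formal subscheme. First I would check well-definedness: given $(X,\iota,\lambda,\rho)\in \M_D(S)$, we must see that $(X,\iota|_{O_F},\lambda\circ(\rho^{-1}y_6\circ\rho)\circ\iota(\varpi),\rho)$ satisfies the axioms of Definition \ref{pdiv} (1) with $\Ker(\lambda')=\Ker(\iota(\varpi))$. Here the quasi-isogeny $\rho^{-1}y_6\rho$ makes sense since $y_6\in \End^0(\X_0)$, and $\rho$ pulls it back to a quasi-isogeny of $X$. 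The dimension and height conditions are inherited from $(X,\iota,\lambda)$; the Kottwitz condition for $O_F$ follows from the one for $O_D$ by restriction, because $\det(T-\iota(a)\mid\Lie X)$ for $a\in O_F\subset O_D$ is exactly $(T^2-\tr_{F/\Qp}(a)T+\N_{F/\Qp}(a))^2$. The compatibility $\lambda'\circ\iota(a)=\iota(\bar a)^\vee\circ\lambda'$ must be checked using the definition of the involution $*$ on $D$, which satisfies $d^*=y_6(\trd(d)-d)y_6^{-1}$; for $a\in O_F$ one has $a^*=\bar a$, and the extra factors $\rho^{-1}y_6\rho$ and $\iota(\varpi)$ are arranged precisely so that the polarization identity for $\lambda$ (with respect to $*$) transports to the one for $\lambda'$ (with respect to $a\mapsto\bar a$). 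Finally $\Ker(\lambda')=\Ker(\iota(\varpi))$: since $\lambda$ is an isomorphism and $\rho^{-1}y_6\rho$ is an isomorphism (as $y_6$ is), the kernel of $\lambda'=\lambda\circ(\rho^{-1}y_6\rho)\circ\iota(\varpi)$ equals $\Ker(\iota(\varpi))$, which is the required modularity condition. One also checks that $\rho$ remains a quasi-isogeny of $(G,\mu)$-structures with the same $c(\rho)$ up to the normalization, so the construction preserves the decomposition into $\M^{(i)}$ appropriately.

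Second, I would identify the image. By construction, for a point of $\M_D$ the quasi-isogeny $\rho^{-1}y_6\rho$ lifts to an endomorphism of $X$ (namely it underlies the genuine endomorphism $\iota(y_6)$ of $X$ coming from the $O_D$-action), so the image is contained in the locus $Z\subset\M_{b,\mu}$ where $\rho^{-1}y_6\rho\in\End(X)$. Conversely, given $(X,\iota,\lambda,\rho)\in\M_{b,\mu}(S)$ with $\rho^{-1}y_6\rho\in\End(X)$, this endomorphism together with $\iota$ extends the $O_F$-action to an $O_D=O_F[y_6]$-action $\iota_D$ on $X$; one then sets $\lambda_D:=\lambda\circ(\iota_D(y_6)\iota(\varpi))^{-1}$ (up to sign, matching the normalization $\lambda_0'=(y_6\iota_0(\varpi))^\vee\circ\lambda_0$), checks that $\lambda_D$ is an isomorphism (using $\Ker\lambda=\Ker\iota(\varpi)$) and that it gives a $G_D$-structure, recovering a point of $\M_D(S)$ that maps to the given one. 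This produces a bijection $\M_D(S)\xrightarrow{\sim}Z(S)$ functorial in $S$, so $i_{y_6}$ is a monomorphism of formal schemes with image $Z$.

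Third, I would show $Z$ is a closed formal subscheme of $\M_{b,\mu}$ and that $i_{y_6}$ is a closed immersion. The condition ``$\rho^{-1}y_6\rho$ lifts to $\End(X)$'' is a closed condition on $\M_{b,\mu}$: it is the standard fact (cf.\ \cite{Rapoport1996a}, Proposition 2.9, or the argument in \cite{Vollaard2011}) that, given a quasi-isogeny $u$ of the universal $p$-divisible group, the locus where $u$ lifts to a genuine homomorphism is represented by a closed formal subscheme, proved via the crystalline/Grothendieck--Messing description of lifts. Since $\M_D$ is representable (by \cite{Rapoport1996a}, Theorem 3.25) and the map $\M_D\to Z$ is a functorial bijection compatible with the universal objects --- indeed an isomorphism of functors, because adding the $y_6$-endomorphism and dividing $\lambda$ by $\iota_D(y_6)\iota(\varpi)$ is inverse to the construction defining $i_{y_6}$ --- we conclude $i_{y_6}$ induces an isomorphism $\M_D\xrightarrow{\sim}Z$ onto the closed formal subscheme $Z$, hence is a closed immersion. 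The main obstacle I anticipate is the bookkeeping in the second and third steps: verifying that the two constructions (``forget $y_6$, twist $\lambda$'' and ``adjoin $y_6$, untwist $\lambda$'') are mutually inverse as morphisms of functors, with all the signs and the factor $\iota(\varpi)$ tracked correctly through the polarization compatibilities, and confirming that the lifting locus $Z$ is genuinely closed rather than merely locally closed --- this uses that $y_6$ is an isomorphism (not just a quasi-isogeny), so that its lifting locus coincides with the lifting locus of its inverse and is therefore closed.
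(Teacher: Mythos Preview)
Your overall strategy is sound, but there is a genuine gap in step two that you flag as mere ``bookkeeping'' when it is actually the substantive content. When you construct the inverse map $Z\to\M_D$ by extending the $O_F$-action to an $O_D$-action via $\iota_D(y_6):=\rho^{-1}y_6\rho$, you must verify the $G_D$-Kottwitz condition
\[
\det(T-\iota_D(d)\mid\Lie X)=(T^2-\trd_{D/\Qp}(d)T+\nrd_{D/\Qp}(d))^2
\]
for all $d\in O_D$. For $d\in O_F$ this is inherited, but for $d=y_6$ it amounts to the claim that the two eigenspaces $\Lie^0(X)$ and $\Lie^1(X)$ of $y_6$ (for the eigenvalues $\pm\varepsilon$, since $y_6^2=\varepsilon^2$) are each locally free of rank~$2$. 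This is \emph{not} automatic: a priori the ranks could be any pair summing to $4$, and nothing in the $(G,\mu)$-structure axioms directly constrains the action of $y_6$ on $\Lie X$. Without this, you do not have a point of $\M_D$, so the proposed functorial bijection $\M_D(S)\xrightarrow{\sim}Z(S)$ is unjustified.

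The paper addresses precisely this point via Lemma~\ref{imfv}, which establishes $i_{y_6}(\M_D^{(0)})(k)=\cZ^{(0)}(y_6)(k)$ at all algebraically closed field-valued points through Dieudonn\'e theory. The key step is a claim that for any $y_6$-stable $M\in\DiL^{\varpi}(N_{b,W(k)})$, the intersection $M^0:=M\cap N_{b,W(k)}^0$ satisfies $\length_{W(k)}((M^0)^\vee/M^0)=2$: the $(G,\mu)$-Kottwitz condition bounds the relevant length between $1$ and $3$, and then a parity constraint (the quotient carries a nondegenerate form, forcing even length) pins it down as $2$. Once this holds on geometric points, the paper deduces the rank-$2$ condition on $\Lie^i(X)$ over an arbitrary local base by reduction to an algebraic closure of the residue field. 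Your argument would go through if you inserted this verification; the sign and $\iota(\varpi)$ bookkeeping you anticipated is routine by comparison.
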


To prove Proposition \ref{clim}, we need to pay attention to the Kottwitz condition. Let $\cZ(y_6)$ be the locus $(X,\iota,\lambda,\rho)$ of $\M_{b,\mu}$ where $\rho^{-1}\circ y_6\circ \rho \in \End(X)$. 

\begin{lem}\label{imfv}
\emph{For any algebraically closed field $k$ of characteristic $p$, the last assertion in Proposition \ref{clim} holds for the sets of $k$-valued points. }
\end{lem}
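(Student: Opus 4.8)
The plan is to verify the claimed equality of sets of $k$-valued points directly, through the Dieudonné (window) module description. First I would record the two dictionaries: by Proposition \ref{hmlt} and its proof, $k$-points of $\M_{b,\mu}$ are described by $\varpi$-modular lattices $M$ in $N_{b,W(k)}$ with $pM\subset \F_{b}^{-1}(pM)\subset M$ satisfying the Kottwitz condition for $(G,\mu)$ (these are the conditions defining $\DiL^{\varpi}(N_{b,W(k)})$, up to the choice of connected component), and under this correspondence $\cZ(y_6)(k)$ picks out those $M$ with $y_6(M)\subset M$; since such an $M$ is automatically an $O_{F,W(k)}$-lattice, this is the same as requiring $M$ to be stable under $O_D=O_F[y_6]$. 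Analogously, by the description of $\M_D$ in \cite{Oki2019}, its $k$-points correspond to the $O_D$-stable lattices $M$ in $N_{b,W(k)}$ that are self-dual for the form attached to $\lambda_0'$, satisfy $pM\subset \F_{b}^{-1}(pM)\subset M$, and satisfy the Kottwitz condition for $G_D$. Since $i_{y_6}$ modifies only $\iota$ and $\lambda$ and leaves $(X,\rho)$ unchanged, on $k$-points it is the identity on underlying lattices; so what must be shown is that, inside $\DiL^{\varpi}(N_{b,W(k)})$, the lattices coming from $\M_{D}(k)$ are exactly the $O_D$-stable ones.

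One inclusion is immediate: if $M$ underlies a point $(X,\iota,\lambda,\rho)$ of $\M_D(k)$ then $M$ is $O_D$-stable, the polarization $\lambda\circ \iota(y_6\varpi)$ produced by $i_{y_6}$ has kernel $\Ker(\iota(\varpi))$ because $\iota(y_6)$ and $\lambda$ are isomorphisms, so $M$ is again $\varpi$-modular in the required sense, and the Kottwitz condition for $(G,\mu)$ is the restriction to $O_F$ of the one for $G_D$ via $\trd_{D/\Qp}|_{F}=\tr_{F/\Qp}$ and $\nrd_{D/\Qp}|_{F}=\N_{F/\Qp}$. For the reverse inclusion, starting from an $O_D$-stable $M\in \DiL^{\varpi}(N_{b,W(k)})$ with associated point $(X,\iota,\lambda,\rho)$, I would extend $\iota$ to $\iota'\colon O_D\to \End(X)$ by $y_6\mapsto \rho^{-1}y_6\rho$ (legitimate since $y_6(M)=M$, because $y_6(M)\subset M$ and $y_6^{2}=\varepsilon^{2}\in \Zp^{\times}$), and set $\lambda':=\lambda\circ \iota'(y_6\varpi)^{-1}$. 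Then $\lambda'$ is an \emph{isomorphism}: from $\lambda=\lambda'\circ \iota'(y_6\varpi)$ with $\iota'(y_6\varpi)$ an isogeny of kernel $\Ker(\iota(\varpi))=\Ker(\lambda)$, multiplicativity of degrees forces $\Ker(\lambda')$ to be trivial. A short computation using $y_6^{*}=-y_6$, Lemma \ref{hmpr}, and the relation $y_6\varpi=-\varpi y_6$ gives $(\lambda')^{\vee}=-\lambda'$ and $\lambda'\circ \iota'(d)=\iota'(d^{*})^{\vee}\circ \lambda'$ for all $d\in O_D$.

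The hard part — the point behind the remark that one must ``pay attention to the Kottwitz condition'' — is to check the Kottwitz condition for $G_D$ for $(X,\iota',\lambda',\rho)$. Here I would use that $K_0$ contains the unramified quadratic extension of $\Qp$, so $D$ splits over $K_0$ and $O_D\otimes_{\Zp}W(k)\cong \sM_{2}(W(k))$; thus the $O_D$-stable lattice $M$ is an $\sM_{2}(W(k))$-lattice in $N_{b,W(k)}$. Moreover $\F_{b}$ commutes with $\iota_0(O_F)$ and with $y_6\in \L_{b,0}$ (the $\Phi_b$-fixed part of $\L_b$), hence $\F_{b}^{-1}(p\cdot)=\bV_{b}$ is $O_D$-linear, so $\Lie(X)=M/\F_{b}^{-1}(pM)$ is a finitely generated module over $O_D\otimes_{\Zp}k\cong \sM_{2}(k)$. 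Since $\sM_2(k)$ is semisimple with unique simple module $k^{2}$ and $\dim_k\Lie(X)=4$, necessarily $\Lie(X)\cong (k^{2})^{\oplus 2}$, and for each $d\in O_D$ the operator $\iota'(d)$ acts on every copy of $k^2$ through the reduction of the reduced characteristic polynomial of $d$, so $\det(T-\iota'(d)\mid \Lie(X))=(T^{2}-\trd_{D/\Qp}(d)T+\nrd_{D/\Qp}(d))^{2}$, which is exactly the Kottwitz condition for $G_D$. Hence $(X,\iota',\lambda',\rho)\in \M_{D}(k)$, and $i_{y_6}$ sends it to $(X,\iota,\lambda,\rho)$ because $\lambda=\lambda'\circ \iota'(y_6)\circ\iota(\varpi)$. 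Combining the two inclusions gives the lemma. I expect the only genuinely delicate bookkeeping to be this reconciliation of the two Kottwitz conditions together with the verification that the modified polarization $\lambda'$ is principal; everything else is formal.
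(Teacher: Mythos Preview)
Your argument for the Kottwitz condition for $G_D$ rests on the claim that $O_D\otimes_{\Zp}W(k)\cong \sM_2(W(k))$, and this is false. It is true that $D$ splits over $K_0$ (indeed over any quadratic extension of $\Qp$), but the maximal order $O_D$ has reduced discriminant $(p)$, and base-changing along the \emph{unramified} extension $\Zp\to W(k)$ does not kill that ramification. Concretely, since $O_D=O_F[y_6]$ with $\varpi^2\in p\Zp^{\times}$, $y_6^2=\varepsilon^2\in \Zp^{\times}$ and $y_6\varpi=-\varpi y_6$, one has
\[
O_D\otimes_{\Zp}k \;\cong\; k\langle \varpi,y_6\rangle/(\varpi^2,\ y_6^2-\varepsilon^2,\ y_6\varpi+\varpi y_6),
\]
which has Jacobson radical $(\varpi)$ and semisimple quotient $k\times k$; in particular it is not $\sM_2(k)$, and $4$-dimensional modules over it need not have balanced $y_6$-eigenspaces (for instance $Ae_{+}\oplus S_{+}\oplus S_{+}$, with $e_{\pm}=\tfrac12(1\pm\varepsilon^{-1}y_6)$ and $S_{\pm}$ the simple modules, has $y_6$-eigenvalue multiplicities $(3,1)$). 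So your deduction that $\Lie(X)\cong (k^2)^{\oplus 2}$ and hence that $\det(T-\iota'(d)\mid \Lie(X))=(T^2-\trd(d)T+\nrd(d))^2$ does not go through.

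What actually has to be checked is precisely that the two $y_6$-eigenspaces of $\Lie(X)$ are each $2$-dimensional, and this is the genuine content of the lemma beyond the $O_F$-Kottwitz condition. The paper does this by passing to the $\varepsilon$-eigenspace $N_{b,W(k)}^{0}$ of $y_6$: for $M\in \DiL^{\varpi}_{y_6}(N_{b,W(k)})$ one sets $M^{0}:=M\cap N_{b,W(k)}^{0}$, observes $(M^{0})^{\vee}=(\varpi^{-1}M)^{0}$, and shows $M^{0}$ lies in a lattice set $\Lat^{2}(N_{b,W(k)}^{0})$ describing $\M_D^{(0)}(k)$. The key step is a parity argument: $\length_{W(k)}((M^0)^{\vee}/M^0)$ is even because $(\,,\,)$ restricts to a nondegenerate alternating form on $N_{b,W(k)}^{0}$, while the Kottwitz condition for $(G,\mu)$ (the bound on the rank of $\varpi$ on $\Lie(X)$) forces $1\le \length_{W(k)}(\F_{b,0}((M^0)^{\vee})/M^0)\le 3$; together these pin the length down to $2$, which is exactly the equidimensionality statement you need. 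Your outline is fine up to this point, but the Morita-type shortcut must be replaced by this eigenspace/parity computation.
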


\begin{proof}
Put $\cZ^{(0)}(y_6):=\cZ(y_6)\cap \M_{b,\mu}^{(0)}$. It suffices to show the equality
\begin{equation*}
i_{y_6}(\M_{D}^{(0)})(k)=\cZ^{(0)}(y_6)(k). 
\end{equation*}
Let $\DiL^{\varpi}_{y_6}(N_{b,W(k)})$ be the subset of $\DiL^{\varpi}(N_{b,W(k)})$ consisting of $y_6$-stable elements. Then the bijection in Proposition \ref{hmlt} induces a bijection
\begin{equation*}
\cZ^{(0)}(y_6)(k)\cong \DiL^{\varpi}_{y_6}(N_{b,W(k)}). 
\end{equation*}

Put $N_{b,W(k)}^{0}:=N_{b}^{0}\otimes_{W}W(k)$, and $\Lat^{2}(N_{b,k}^{0})$ be the set of lattices $M^{0}$ in $N_{b,W(k)}^{0}$ satisfying the following: 
\begin{itemize}
\item $M^{0}\subset (M^{0})^{\vee}\subset p^{-1}M^{0}$ and $\length_{W(k)}((M^{0})^{\vee}/M^{0})=2$,
\item $M^{0}\subset \F_{b,0}((M^{0})^{\vee})\subset p^{-1}M^{0}$,
\item $p(M^{0})^{\vee}\subset \F_{b,0}(M^{0})\subset (M^{0})^{\vee}$. 
\end{itemize}
\begin{claim}
For $M\in \DiL^{\varpi}_{y_6}(N_{b,W(k)})$, $M\cap N_{b,W(k)}^{0}$ is an element of $\Lat^{2}(N_{b,W(k)}^{0})$. 
\end{claim}

We prove this claim. Since $(M^{0})^{\vee}=(\varpi^{-1}M)^{0}$, all the inclusion relations hold. Moreover, we have
\begin{equation*}
1\leq \length_{W(k)}(\F_{b,0}((M^{0})^{\vee})/M^{0})\leq 3
\end{equation*}
by the equality $y_6^2=\varepsilon^2$ and the Kottwitz condition for the $p$-divisible group with $(G,\mu)$-structure over $k$ corresponding to $M$. On the other hand, we have $\length_{W(k)}((M^{0})^{\vee}/M^{0})\in 2\Z$, and hence
\begin{equation*}
\length_{W(k)}((M^{0})^{\vee}/M^{0})=\length_{W(k)}(\F_{b,0}((M^{0})^{\vee})/M^{0})=2, 
\end{equation*}
as desired. 

By Claim, we obtain a map
\begin{equation*}
\DiL^{\varpi}_{y_6}(N_{b,W(k)})\rightarrow \Lat^{2}(N_{b,W(k)}^{0});M\mapsto M\cap N_{b,W(k)}^{0}. 
\end{equation*}
This is bijective since
\begin{equation*}
\Lat^{2}(N_{b,W(k)}^{0})\rightarrow \DiL^{\varpi}_{y_6}(N_{b,W(k)});M^{0}\mapsto M^{0}\oplus \varpi((M^{0})^{\vee})
\end{equation*}
gives an inverse map. On the other hand, the bijection in Proposition \ref{hmlt} induces a bijection
\begin{equation*}
i_{y_6}(\M_{D}^{(0)})(k)\xrightarrow{\cong}\Lat^{2}(N_{b,W(k)}^{0}),
\end{equation*}
cf. \cite[Proposition 2.4]{Wang2020}. Hence we obtain $i_{y_6}(\M_{D}^{(0)})(k)=\cZ^{(0)}(y_6)(k)$ as desired. 
\end{proof}

\begin{proof}[Proof of Proposition \ref{clim}]
The injectivity of $i_{y_6}$ as a functor is a consequence of the rigidity of quasi-isogenies (the assertion after \cite[2.9]{Rapoport1996a}). To prove the rest of the assertions, it suffices to show the equality $i_{y_6}(\M_{D})=\cZ(y_6)$. Indeed, this equality and the rigidity of quasi-isogeneies implies that $i_{y_6}$ is a closed immersion. 

The inclusion $i_{y_6}(\M_{D})\subset \cZ(y_6)$ is clear. Let us prove the another inclusion. Let $x=(X,\iota,\lambda,\rho)\in \cZ(y_6)(S)$ where $S\in \nilp$. By localization, we may assume that $S=\spec R$, where $R$ is a local ring. For $i\in \{0,1\}$, let $\Lie^i(X)$ be the eigenspace of $y_6$ on $\Lie(X)$ with respect to $(-1)^{i}\varepsilon$. Then the both are locally free of rank $2$ by applying Lemma \ref{imfv} for the base change of $x$ to an algebraic closure of the residue field of $R$. Hence we have $x\in i_{y_6}(\M_{D})(S)$. 
\end{proof}

\begin{prop}\label{md00}
\emph{For any algebraically closed field $k$ of characteristic $p$, the set $i_{y_6}(\M_{D}^{(0)})(k)$ is contained in $\M_{b,\mu}^{(0,0)}(k)$. }
\end{prop}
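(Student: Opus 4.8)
The plan is to reinterpret the assertion as a parity statement about Dieudonn\'{e} lattices and then read off the parity from the $y_6$-eigenspace decomposition of $N_{b,W(k)}$. By Proposition \ref{clim} together with the proof of Lemma \ref{imfv}, under the bijection $\M_{b,\mu}^{(0)}(k)\cong \DiL^{\varpi}(N_{b,W(k)})$ of Proposition \ref{hmlt} the subset $i_{y_6}(\M_{D}^{(0)})(k)$ corresponds precisely to the set $\DiL^{\varpi}_{y_6}(N_{b,W(k)})$ of $y_6$-stable lattices, while $\M_{b,\mu}^{(0,0)}(k)$ corresponds to $\DiL^{\varpi}_{M_0,0}(N_{b,W(k)})$ by Definition \ref{dlm0}. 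Hence it suffices to show that for every $y_6$-stable $M\in \DiL^{\varpi}(N_{b,W(k)})$ the integer $\length_{O_{F,W(k)}}\big((M+M_{0,W(k)})/M_{0,W(k)}\big)$ is even. Note that $M_{0,W(k)}$ is itself $y_6$-stable, because $y_6$, being an element of $\End(\X_0)$, preserves its Dieudonn\'{e} module $M_0$; and since $O_{F,W(k)}$ and $W(k)$ are complete discrete valuation rings with the common residue field $k$, every finite $O_{F,W(k)}$-module has the same length over $O_{F,W(k)}$ and over $W(k)$, so it is enough to bound the $W(k)$-length.

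First I would use the decomposition $N_{b,W(k)}=N^{0}\oplus N^{1}$ into the eigenspaces of $y_6$ for the eigenvalues $\varepsilon$ and $-\varepsilon$, noting that the operator $\varpi$ on $N_{b,W(k)}$ interchanges $N^{0}$ and $N^{1}$ (indeed $y_6\circ\varpi=-\varpi\circ y_6$ on $N_b$). Since $M$ and $M_{0,W(k)}$ are $y_6$-stable, the description recorded in the proof of Lemma \ref{imfv} gives $M=M^{0}\oplus \varpi((M^{0})^{\vee})$ and $M_{0,W(k)}=M_0^{0}\oplus \varpi((M_0^{0})^{\vee})$, where $M^{0}:=M\cap N^{0}$ and $M_0^{0}:=M_{0,W(k)}\cap N^{0}$ both belong to $\Lat^{2}(N^{0})$, the duals being taken for the non-degenerate restriction of $(\,,\,)$ to $N^{0}$. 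Splitting the quotient accordingly and using that $\varpi$ is an isomorphism $N^{0}\xrightarrow{\cong}N^{1}$, we get
\begin{equation*}
\length_{W(k)}\big((M+M_{0,W(k)})/M_{0,W(k)}\big)=\length_{W(k)}\big((M^{0}+M_0^{0})/M_0^{0}\big)+\length_{W(k)}\big(((M^{0})^{\vee}+(M_0^{0})^{\vee})/(M_0^{0})^{\vee}\big).
\end{equation*}
The first term equals $x_0:=\length_{W(k)}\big(M^{0}/(M^{0}\cap M_0^{0})\big)$ since $(M^{0}+M_0^{0})/M_0^{0}\cong M^{0}/(M^{0}\cap M_0^{0})$. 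For the second term, using $(M^{0})^{\vee}\cap(M_0^{0})^{\vee}=(M^{0}+M_0^{0})^{\vee}$ and dualizing the inclusion $M^{0}\subseteq M^{0}+M_0^{0}$, it equals $\length_{W(k)}\big((M^{0}+M_0^{0})/M^{0}\big)$, which in turn equals $y_0:=\length_{W(k)}\big(M_0^{0}/(M^{0}\cap M_0^{0})\big)$. Thus $\length_{W(k)}\big((M+M_{0,W(k)})/M_{0,W(k)}\big)=x_0+y_0$, and it remains to check $x_0=y_0$.

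The equality $x_0=y_0$ amounts to the statement that all lattices in $\Lat^{2}(N^{0})$ have the same volume. To see it, fix a self-dual $W(k)$-lattice $\Lambda\subset N^{0}$ with respect to $(\,,\,)$ --- one exists since $N^{0}$ is a symplectic space of even dimension $4$ --- and set $\chi(L):=\length_{W(k)}\big(\Lambda/(\Lambda\cap L)\big)-\length_{W(k)}\big(L/(\Lambda\cap L)\big)$ for $W(k)$-lattices $L$ in $N^{0}$. Then $\chi(L')=\chi(L)-\length_{W(k)}(L'/L)$ whenever $L\subseteq L'$, one has $\chi(L^{\vee})=-\chi(L)$ because $\Lambda^{\vee}=\Lambda$, and for any two lattices $L_1,L_2$ one has $\length_{W(k)}\big(L_1/(L_1\cap L_2)\big)-\length_{W(k)}\big(L_2/(L_1\cap L_2)\big)=\chi(L_2)-\chi(L_1)$. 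For $L\in \Lat^{2}(N^{0})$ one has $L\subseteq L^{\vee}$ with $\length_{W(k)}(L^{\vee}/L)=2$, so $-\chi(L)=\chi(L^{\vee})=\chi(L)-2$, i.e.\ $\chi(L)=1$. Taking $L_1=M^{0}$ and $L_2=M_0^{0}$ gives $x_0-y_0=\chi(M_0^{0})-\chi(M^{0})=0$, hence $\length_{O_{F,W(k)}}\big((M+M_{0,W(k)})/M_{0,W(k)}\big)=2x_0$ is even, which is what we wanted. The one point in this argument that is not purely formal is the reduction to the $y_6$-eigenspaces: it is essential that on the $(-\varepsilon)$-eigenspace the relevant lattice is the \emph{dual} $\varpi((M^{0})^{\vee})$, which is exactly what forces the two contributions to be equal and their sum to be even --- carrying out the volume computation directly in $N_{b,W(k)}$ only shows $\length_{W(k)}\big((M+M_{0,W(k)})/M_{0,W(k)}\big)=\length_{W(k)}\big((M+M_{0,W(k)})/M\big)$ and gives no information on parity.
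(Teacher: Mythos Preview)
Your proof is correct, but it follows a genuinely different route from the paper's. The paper argues group-theoretically: given a $y_6$-stable $M\in\DiL^{\varpi}(N_{b,W(k)})$, it invokes the transitivity of $G_D^{\der}(\Frac W(k))$ on such lattices (implicit in the bijection $\DiL^{\varpi}_{y_6}\cong\Lat^{2}(N^{0})$ from the proof of Lemma~\ref{imfv}) to write $M=g(M_{0,W(k)})$ for some $g\in G_D^{\der}(\Frac W(k))\subset G^{\der}(\Frac W(k))$, and then appeals to Lemma~\ref{pmgd}, which says precisely that the $G^{\der}$-orbit of $M_{0,W(k)}$ lies in $\Lat^{\varpi}_{M_0,0}$. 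Your argument instead stays entirely on the lattice side: you split $M$ and $M_{0,W(k)}$ along the $y_6$-eigenspace decomposition, identify the $N^{1}$-component as $\varpi$ applied to the dual of the $N^{0}$-component, and then run a clean volume computation in the symplectic space $N^{0}$ to show the two contributions $x_0$ and $y_0$ coincide. The paper's approach is shorter because Lemma~\ref{pmgd} (and behind it \cite[Lemma~3.2]{Rapoport2017}) has already packaged the relevant parity statement, but it relies on that external input and on the transitivity claim for $G_D^{\der}$; your approach is more self-contained and makes the mechanism for evenness completely explicit, at the cost of a few more lines. One small remark: your justification that $y_6$ preserves $M_0$ is better phrased as $y_6\in L_0$ (visible from the basis description in Proposition~\ref{lbpr}(ii)) rather than ``$y_6\in\End(\X_0)$'', since the latter is essentially what you are asserting.
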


\begin{proof}
Take $x=(X,\iota,\lambda,\rho)\in \M_{D}^{(0)}(k)$, and let $M\in \DiL^{\varpi}_{M_0,0}(N_{b,W(k)})$ be corresponding to $\iota_{y_6}(x)\in \M_{b,\mu}^{(0)}(k)$. Then there is $g\in G_{D}^{\der}(\Frac W(k))$ such that $M=g(M_{0,W(k)})$ (see also the proof of Lemma \ref{imfv}). Hence we obtain $M\in \DiL^{\varpi}(N_{b,W(k)})$ by Lemma \ref{pmgd}. 
\end{proof}

\subsection{Vertex lattices, I}\label{vln1}

\begin{dfn}
A vertex lattice in $\L_{b,0}$ is a $\Zp$-lattice $\Lambda$ satisfying $p\Lambda \subset \Lambda^{\vee}\subset \Lambda$. We call $t(\Lambda):=\dim_{\Fp}(\Lambda/\Lambda^{\vee})$ as the type of $\Lambda$. 
\end{dfn}
We denote by $\VL(\L_{b,0})$ the set of all vertex lattices in $\L_{b,0}$. 

\begin{lem}
\emph{For $\Lambda \in \VL(\L_{b,0})$, we have $t(\Lambda)\in \{1,3,5\}$. }
\end{lem}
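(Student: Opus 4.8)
The plan is to pin down $t(\Lambda)$ using the discriminant of the quadratic space $\L_{b,0}$, in exactly the same way as the analogous statement \cite[Lemma 5.9]{Oki2019} for $\L_D$. First I would record the elementary bounds: the inclusions $p\Lambda\subset\Lambda^{\vee}\subset\Lambda$ show that $\Lambda/\Lambda^{\vee}$ is killed by $p$, hence is an $\Fp$-vector space, so $[\Lambda:\Lambda^{\vee}]=p^{t(\Lambda)}$; and since $\Lambda$ and $\Lambda^{\vee}$ are both $\Zp$-lattices of rank $6$ in $\L_{b,0}$, one has $0\le t(\Lambda)\le 6$.

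The remaining point is to show that $t(\Lambda)$ is odd, which I would do by a Gram-determinant computation. Fix a $\Zp$-basis of $\Lambda$ and let $G$ be the Gram matrix of $[\,,\,]$ with respect to it; then the Gram matrix of $\Lambda^{\vee}$ with respect to the dual basis is $G^{-1}$, so the inclusion $\Lambda^{\vee}\subset\Lambda$ of index $p^{t(\Lambda)}$ forces $\ord_p(\det G)=-t(\Lambda)$. On the other hand, the class of $\det G$ in $\Qp^{\times}/(\Qp^{\times})^{2}$ does not depend on the lattice or on the chosen basis, hence equals the discriminant of $(\L_{b,0},[\,,\,])$, which was found in Section \ref{hgst} to be $-\varpi^{2}$; since $\varpi$ is a uniformizer of the ramified quadratic extension $F/\Qp$, we have $\ord_p(-\varpi^{2})=1$, so $\ord_p(\det G)$ is odd. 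Combining the two identities shows that $t(\Lambda)$ is odd, and together with $0\le t(\Lambda)\le 6$ this gives $t(\Lambda)\in\{1,3,5\}$.

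I do not expect a genuine obstacle here; the only care needed is the standard bookkeeping that $\det G\bmod(\Qp^{\times})^{2}$ is a lattice-independent invariant of the quadratic space (a change of $\Zp$-basis, or a change of lattice, multiplies $\det G$ by the square of a determinant in $\Qp^{\times}$), and that whichever sign normalization is used for the discriminant in Section \ref{hgst} differs from $\det G$ only by a $p$-adic unit, so the parity assertion is unaffected.
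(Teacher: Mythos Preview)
Your proposal is correct and is essentially the same approach as the paper's: the paper simply defers to \cite[Lemma~5.9]{Oki2019}, and you have spelled out that argument---the parity of $t(\Lambda)$ is forced by $\ord_p(\disc(\L_{b,0}))=\ord_p(-\varpi^{2})=1$, while the bounds $0\le t(\Lambda)\le 6$ are immediate from $p\Lambda\subset\Lambda^{\vee}\subset\Lambda$. The bookkeeping you flag (that $\det G\bmod(\Qp^{\times})^{2}$ is independent of the lattice and basis, and that $\ord_p(\det G)=-t(\Lambda)$) is standard and correct.
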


\begin{proof}
This follows by the same argument as the proof of \cite[Lemma 5.9]{Oki2019}. 
\end{proof}

For $t\in \{1,3,5\}$, we denote by $\VL(\L_{b,0},t)$ the set of vertex lattices in $\L_{b,0}$ of type $t$. 

We denote by $\cB(\L_{b,0})$ the Bruhat--Tits building of $\SO(\L_{b,0})(\Qp)$. 

\begin{prop}\label{sovl}
\emph{There is an $\SO(\L_{b,0})(\Qp)$-equivariant bijection between $\VL(\L_{b,0})$ and the set of vertices in $\cB(\L_{b,0})$ satisfying the following. 
\begin{enumerate}
\item The set $\VL(\L_{b,0},1)\sqcup \VL(\L_{b,0},5)$ corresponds to the set of special vertices in $\cB_{b}$. 
\item If $\Lambda_0,\Lambda_1\in \VL(\L_{b,0},3)$ are distinct, then the corresponding vertices in $\cB(\L_{b,0})$ are adjacent if and only if $\Lambda_i\in \VL(\L_{b,0},t)$, $\Lambda_{1-i}\in \VL(\L_{b,0},t')$ and $\Lambda_{i}\subset \Lambda_{1-i}$ for some $i\in \{0,1\}$ and $t,t'\in \{1,3,5\}$ satisfying $t<t'$. 
\end{enumerate}}
\end{prop}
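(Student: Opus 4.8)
The plan is to realize the assertion as the standard lattice model of the Bruhat--Tits building of the special orthogonal group of a quadratic $\Qp$-space; the argument runs parallel to \cite[Section~5]{Oki2019} (and is in the same spirit as \cite{Vollaard2011}, \cite{Howard2014}, \cite{Rapoport2014a}), with $\L_{b,0}$ in place of $\L_{D}$. First I would record the shape of $\SO(\L_{b,0})$: by Corollary~\ref{exis} there is a central isogeny $J_{b}^{\der}\to \SO(\L_{b,0})$, and since $b$ is $\mu$-neutral the hermitian space $C_b$ is split by Proposition~\ref{nteq}, so $J_{b}^{\der}\cong \SU(C_b)$ is quasi-split of $\Qp$-rank $2$; hence $\SO(\L_{b,0})$ is the quasi-split non-split form of type $D_3$, its relative rank is $2$, and $\cB(\L_{b,0})$ is $2$-dimensional. (One may also see this directly from $\disc(\L_{b,0})=-\varpi^{2}$ and Hasse invariant $-1$, computed in Section~\ref{hgst}, which force Witt index $2$ with anisotropic kernel a scaled norm form of $F/\Qp$.)

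Next I would build the map $\Lambda\mapsto v(\Lambda)$. For $\Lambda\in\VL(\L_{b,0})$ the group $P_{\Lambda}:=\stab_{\SO(\L_{b,0})(\Qp)}(\Lambda)$ is a parahoric: it stabilizes the self-dual lattice chain $\Lambda^{\vee}\subset\Lambda$, and its reductive quotient over $\Fp$ is, up to central isogeny, $\SO(\Lambda/\Lambda^{\vee})\times\SO(\Lambda^{\vee}/p\Lambda)$ with the two factors nondegenerate quadratic $\Fp$-spaces of dimensions $t(\Lambda)$ and $6-t(\Lambda)$; for $t(\Lambda)\in\{1,3,5\}$ this has semisimple rank $2$, equal to $\rk_{\Qp}\SO(\L_{b,0})$, so $P_{\Lambda}$ fixes a unique vertex $v(\Lambda)$ of $\cB(\L_{b,0})$ and coincides with its stabilizer. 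Equivariance is then immediate from $P_{g\Lambda}=gP_{\Lambda}g^{-1}$. Injectivity follows because a lattice in a quadratic space of dimension $\geq 3$ is reconstructed from its stabilizer in $\SO$ (e.g.\ via the reflections it contains). Surjectivity is the substantive point: every vertex of $\cB(\L_{b,0})$ is the fixed point of its stabilizer, which by the Bruhat--Tits classification of the maximal parahorics of $\SO$ over a $p$-adic field is the $\SO$-stabilizer of a self-dual lattice chain, and the rank count forces that chain to consist of a single vertex lattice. This yields the bijection.

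Finally I would read off (i) and (ii). A vertex $v(\Lambda)$ is special iff the reductive quotient of $P_{\Lambda}$ is the reductive $\Fp$-group attached to the local Dynkin diagram of $\SO(\L_{b,0})$; for $t(\Lambda)\in\{1,5\}$ this quotient is (up to center) $\SO_{5}$, while for $t(\Lambda)=3$ it is $\SO_{3}\times\SO_{3}$, and comparison with the tables for ramified quasi-split ${}^{2}A_{3}$ shows that the former two cases are special and the third is not; this is exactly the correspondence $\VL(\L_{b,0},1)\sqcup\VL(\L_{b,0},5)\leftrightarrow\{\text{special vertices}\}$ of (i). For (ii), simplices of $\cB(\L_{b,0})$ correspond to self-dual lattice chains, so two distinct vertex lattices are adjacent iff one contains the other; and if $\Lambda\subsetneq\Lambda'$ are both vertex lattices then, using $(\Lambda')^{\vee}\subset\Lambda^{\vee}$, a short length computation gives $t(\Lambda')=t(\Lambda)+2\length_{\Zp}(\Lambda'/\Lambda)>t(\Lambda)$, which is precisely the stated criterion.

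The main obstacle is the surjectivity step together with the special/non-special dichotomy: both rest on the standard but somewhat delicate Bruhat--Tits classification of the parahorics of $\SO(\L_{b,0})$ and on correctly pinning down its relative type; granting these, equivariance, injectivity and the adjacency criterion are routine, and everything parallels \cite[Section~5]{Oki2019}.
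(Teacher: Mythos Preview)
Your proposal is correct and follows a genuinely different, more hands-on route than the paper. The paper's proof is a two-line citation: it invokes \cite{Abramenko2002} to obtain the lattice-chain model of the Bruhat--Tits building of $\O(\L_{b,0})(\Qp)$ with all the stated properties already established there, and then transfers everything to $\SO(\L_{b,0})$ by observing (via \cite[\S 2.1]{Tits1979}) that the buildings of $\O$ and $\SO$ are canonically and $\SO(\L_{b,0})(\Qp)$-equivariantly isomorphic, since $\SO$ is the identity component of $\O$ with finite center. In contrast, you work directly with $\SO(\L_{b,0})$: you identify its relative type and rank, build the map $\Lambda\mapsto v(\Lambda)$ by analyzing the reductive quotients of the lattice stabilizers, and then read off specialness and adjacency from those quotients. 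Your approach is more explicit and self-contained, and it makes visible \emph{why} types $1$ and $5$ are special while type $3$ is not; the paper's approach is far shorter but outsources exactly the step you flag as the main obstacle (surjectivity and the special/non-special dichotomy) to the Abramenko--Nebe reference. Either way, the substantive input is the same classification of parahorics for orthogonal groups over $\Qp$; you reprove pieces of it, the paper cites it wholesale.
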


\begin{proof}
Let $\cB'(\L_{b,0})$ be the Bruhat--Tits building of $\O(\L_{b,0})(\Qp)$. Then, by \cite{Abramenko2002} (for $\mathcal{D}=\Qp$, $(\overline{\,\cdot\,},\epsilon)=(\id,1)$ and $(n,r)=(6,2)$), all the assertions are true when we replace $\cB(\L_{b,0})$ with $\cB'(\L_{b,0})$. On the other hand, recall that $\SO(\L_{b,0})$ is the identity component of $\O(\L_{b,0})$, and the center of $\SO(\L_{b,0})$ is $\{\pm \id_{\L_{b,0}}\}\cong \mu_{2}$, which is finite over $\Qp$. Hence \cite[\S 2.1, p.44]{Tits1979} implies that there is a canonical isomorphism of simplicial complexes $\cB(\L_{b,0})\cong \cB'(\L_{b,0})$, which is $\SO(\L_{b,0})$-equivariant. Therefore we obtain the desired assertions. 
\end{proof}

\begin{lem}\label{ltcn}
\emph{For $\Lambda,\Lambda' \in \VL(\L_{b,0})$, there is a sequence of vertex lattices in $\L_{b,0}$
\begin{equation*}
\Lambda_0=\Lambda,\Lambda_1,\ldots,\Lambda_n=\Lambda'
\end{equation*}
such that $\Lambda_i\subset \Lambda_{i+1}$ or $\Lambda_i\supset \Lambda_{i+1}$ for each $i\in \{0,\ldots,n-1\}$. }
\end{lem}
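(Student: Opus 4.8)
The plan is to reduce the statement to the connectivity of the Bruhat--Tits building $\cB(\L_{b,0})$ via the dictionary provided by Proposition \ref{sovl}. First I would recall that $\cB(\L_{b,0})$, being the Euclidean building of $\SO(\L_{b,0})(\Qp)$, is a connected simplicial complex; consequently its $1$-skeleton is connected, so any two of its vertices can be joined by a finite edge-path, i.e.\ a sequence of vertices in which consecutive ones span an edge (are adjacent). If one prefers not to invoke global connectivity as a black box, the same conclusion follows from the standard fact that any two points of a building lie in a common apartment together with the observation that an apartment, being the geometric realization of a Coxeter complex, has connected $1$-skeleton; an edge-path taken inside such an apartment already does the job.

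Next I would transfer this back. Under the $\SO(\L_{b,0})(\Qp)$-equivariant bijection of Proposition \ref{sovl}, let $v$ and $v'$ be the vertices of $\cB(\L_{b,0})$ attached to $\Lambda$ and $\Lambda'$. Choosing an edge-path $v=v_0,v_1,\dots,v_n=v'$ in $\cB(\L_{b,0})$ and letting $\Lambda_i\in\VL(\L_{b,0})$ be the vertex lattice corresponding to $v_i$, we obtain $\Lambda_0=\Lambda$ and $\Lambda_n=\Lambda'$. By Proposition \ref{sovl}~(ii), the adjacency of $v_i$ and $v_{i+1}$ translates precisely into the assertion that one of $\Lambda_i,\Lambda_{i+1}$ is contained in the other (necessarily with distinct types, since a strict inclusion of vertex lattices forces a strict inequality of types by the dual-lattice length computation). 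This is exactly the conclusion of the lemma.

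Since everything is pinned down by Proposition \ref{sovl} and the connectivity of buildings, I do not expect a genuine obstacle here; the only point that deserves a line of care is matching the graph-theoretic notion of adjacency of vertices of $\cB(\L_{b,0})$ with the inclusion relation among vertex lattices in $\L_{b,0}$, which is precisely the content of Proposition \ref{sovl}~(ii).
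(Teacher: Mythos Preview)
Your argument is correct, but it takes a different route from the paper. The paper simply cites \cite[Proposition~5.1.5]{Howard2017}, which gives a direct lattice-theoretic proof: one constructs the chain of vertex lattices by hand, without reference to the building. Your approach instead leverages Proposition~\ref{sovl} to identify $\VL(\L_{b,0})$ with the vertex set of $\cB(\L_{b,0})$ and inclusion with adjacency, then invokes the connectivity of the building. This is softer and more conceptual, but it relies on having Proposition~\ref{sovl} already available (and on reading its part~(ii) as a statement about all of $\VL(\L_{b,0})$ rather than just type-$3$ lattices---the hypothesis ``$\Lambda_0,\Lambda_1\in\VL(\L_{b,0},3)$'' there is evidently a typo, since the conclusion involves types $t<t'$). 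The paper's cited argument, by contrast, is portable to any quadratic-space vertex-lattice setup without first establishing the building dictionary.
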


\begin{proof}
This follows from the same argument as \cite[Proposition 5.1.5]{Howard2017}. 
\end{proof}

\begin{lem}\label{ppy6}
\emph{The perpendicular of $\Qp y_6$ in $\L_{b,0}$ equals $\L_{D}$. }
\end{lem}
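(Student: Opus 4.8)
The plan is to show that the two subspaces of $\End^{0}(\X_{0})$ coincide by proving one inclusion and a dimension match. On the $\L_{b,0}$-side, the Gram matrix computed in Section~\ref{hgst} gives $Q(y_{6})=\varepsilon^{2}\in\Zpt$, so $\Qp y_{6}$ is a non-degenerate line in $\L_{b,0}$ and $(\Qp y_{6})^{\perp}$ is a $5$-dimensional non-degenerate subspace. On the other side, $\L_{D}$ is a quadratic space over $\Qp$ of dimension $5$ (recall this from \cite{Oki2019}; it is also reflected in the vertex-lattice types $\{1,3,5\}$ recorded around Proposition~\ref{gu2d}). Hence it suffices to prove $\L_{D}\subseteq(\Qp y_{6})^{\perp}$, which I split into (a) $\L_{D}\subseteq\L_{b,0}$ and (b) $\L_{D}\perp y_{6}$.

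For (b), write $v=\iota_{0}'(y_{6}\varpi)^{-1}f=\iota_{0}(\varpi)^{-1}y_{6}^{-1}f$ for $v\in\L_{D}$, where $f\in\End^{0}_{O_{D}}(\X_{0})$ is as in the definition of $\L_{D}$. Since $f$ is $O_{D}$-linear it commutes with $y_{6}=\iota_{0}'(y_{6})$, and the quaternion relation in $D=F[y_{6}]$ gives $y_{6}\iota_{0}(\varpi)=\iota_{0}(\overline{\varpi})y_{6}=-\iota_{0}(\varpi)y_{6}$, hence $y_{6}\iota_{0}(\varpi)^{-1}=-\iota_{0}(\varpi)^{-1}y_{6}$. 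A two-line computation then yields $vy_{6}=\iota_{0}(\varpi)^{-1}f=-y_{6}v$, i.e. $vy_{6}+y_{6}v=0$ as operators on $N_{b}$. By Proposition~\ref{lbpr}~(i) the bilinear form on $\L_{b}$ satisfies $2[v',w']\cdot\id_{N_{b}}=v'w'+w'v'$, so the identity $vy_{6}+y_{6}v=0$ is precisely $[v,y_{6}]=0$, i.e. $v\perp y_{6}$ — once we know $v\in\L_{b,0}$, which is (a).

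For (a), the element $v=\iota_{0}(\varpi)^{-1}y_{6}^{-1}f$ lies in $\End^{0}(\X_{0})$ and is therefore $\Phi_{b}$-fixed. Moreover $f$ commutes with $\iota_{0}(O_{F})$, while $y_{6}$ (hence $y_{6}^{-1}=\varepsilon^{-2}y_{6}$) is $\Fbr$-antilinear as an operator on $N_{b}$; thus $v$ is $\Fbr$-antilinear and so lies in $\bigwedge^{2}_{\Fbr}N_{b}$ by the description of the image of $\bigwedge^{2}_{\Fbr}N_{b}$ in $\End_{K_{0}}(N_{b})$ recalled in Section~\ref{hgst}. It remains to check $v^{\star}=v$. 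I will prove this as in \cite[proof of Proposition~2.7]{Howard2014} and \cite{Oki2019}: under the isomorphism $C(\L_{b})\cong\End_{K_{0}}(N_{b})$ of Proposition~\ref{lbpr}~(iii), the Hodge star $\star$ corresponds to the involution built from the polarization and the normalization $\omega$ of Definition~\ref{omdf}, and the two conditions $\trd_{D/\Qp}(f)=0$ and $f^{\vee}\circ\lambda_{0}'=\lambda_{0}'\circ f$ defining $\L_{D}$ — together with $\lambda_{0}'=(y_{6}\circ\iota_{0}(\varpi))^{\vee}\circ\lambda_{0}$ and $\Ker(\lambda_{0})=\Ker(\iota_{0}(\varpi))$ — translate exactly into $v^{\star}=v$. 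Granting this, (a) holds; combined with (b) we get $\L_{D}\subseteq(\Qp y_{6})^{\perp}$, and the dimension count of the first paragraph upgrades the inclusion to an equality.

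The only genuine obstacle is the verification of $v^{\star}=v$ in step (a): it is the $A_{3}=D_{3}$-type bookkeeping underlying the exceptional isomorphism of Proposition~\ref{eik0}, matching the quaternionic linear-algebra data of $\L_{D}$ (the reduced trace, and the Rosati involution attached to $\lambda_{0}'$) against the Hodge-star description of $\L_{b}$; everything else is immediate. An equivalent, purely coordinate route would be to verify directly that each $y_{i}$ with $1\le i\le5$ lies in $\L_{D}$ — that is, that $y_{6}\iota_{0}(\varpi)y_{i}$ is trace-zero and fixed by the Rosati involution of $\lambda_{0}'$ — using the explicit basis of $\L_{b,0}$ from Section~\ref{hgst}; this amounts to the same computation.
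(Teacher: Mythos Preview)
Your outline is sound in its architecture (dimension count plus one inclusion, and the anticommutation argument for step~(b) is clean and correct), but step~(a) does not go through as written. The sentence in Section~\ref{hgst} about the image of $\bigwedge^{2}_{\Fbr}N_{b}\hookrightarrow\End_{K_{0}}(N_{b})$ says only that the image \emph{is contained in} the $\Fbr$-antilinear endomorphisms, not that it equals them: $\bigwedge^{2}_{\Fbr}N_{b}$ is $6$-dimensional over $\Fbr$ (hence $12$-dimensional over $K_{0}$), while the $\Fbr$-antilinear $K_{0}$-endomorphisms of the $8$-dimensional $K_{0}$-space $N_{b}$ form a $32$-dimensional $K_{0}$-space. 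So knowing that $v=\iota_{0}(\varpi)^{-1}y_{6}^{-1}f$ is $\Fbr$-antilinear does \emph{not} place $v$ in $\bigwedge^{2}_{\Fbr}N_{b}$, and in particular $v^{\star}$ is not yet defined. You then correctly flag $v^{\star}=v$ as ``the only genuine obstacle'' but do not prove it; the references you give do not supply this specific identification either, since the polarization conditions in the definition of $\L_{D}$ are phrased via $\lambda_{0}'$ and $\trd_{D/\Qp}$, not via the Hodge star built from $\omega$.

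What the paper actually uses, and what your argument is missing, is Lemma~\ref{picl}: the Clifford identity $\iota_{0}(\varpi)=-p\varepsilon^{4}y_{1}\cdots y_{6}$ in $C(\L_{b})\cong\End_{K_{0}}(N_{b})$. From it one computes $\iota_{0}'(y_{6}\varpi)=y_{6}\iota_{0}(\varpi)=p\varepsilon^{6}\,y_{1}\cdots y_{5}$, so the twist $\iota_{0}'(y_{6}\varpi)^{-1}$ in the definition of $\L_{D}$ is a $\Qp^{\times}$-multiple of $y_{5}\cdots y_{1}\in C^{-}(\L_{b,0})$. This is exactly the input that lets the argument of \cite[Proposition~3.20]{Oki2019} run: with the normalizing factor expressed as a Clifford word in $y_{1},\ldots,y_{5}$, the conditions $\trd_{D/\Qp}(f)=0$ and $f^{\vee}\lambda_{0}'=\lambda_{0}'f$ on $f\in\End^{0}_{O_{D}}(\X_{0})$ translate directly into $v$ lying in the $\Qp$-span of $y_{1},\ldots,y_{5}$, i.e.\ in $(\Qp y_{6})^{\perp}\subset\L_{b,0}$. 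Your ``coordinate route'' at the end (check $y_{i}\in\L_{D}$ for $1\le i\le5$) is essentially this same computation in reverse, but to carry it out you again need Lemma~\ref{picl} to get a grip on $\iota_{0}'(y_{6}\varpi)y_{i}$ inside $C(\L_{b})$.
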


\begin{proof}
The proof is the same as that of \cite[Proposition 3.20]{Oki2019} by using Lemma \ref{picl}. 
\end{proof}

\begin{lem}\label{y6tr}
\emph{
For $\Lambda \in \VL(\L_{b,0})$, there is $g\in J_{b}^{\der}(\Qp)$ and $\Lambda'\in \VL(\L_{D},t(\Lambda))$ satisfying
\begin{equation*}
g(\Lambda)=\Lambda'\oplus \Zp y_6. 
\end{equation*}}
\end{lem}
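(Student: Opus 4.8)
The plan is to exploit the orthogonal decomposition $\L_{b,0}=\L_{D}\perp \Qp y_6$ of Lemma \ref{ppy6}, in which $Q(y_6)=[y_6,y_6]=\varepsilon^2\in \Zp^\times$ (Section \ref{hgst}, Proposition \ref{lbpr}), in order to reduce the statement to a property of the single vector $y_6$. First I would record two elementary facts. Since $\Zp y_6$ is a unimodular line, for every $\Lambda'\in \VL(\L_{D})$ the lattice $\Lambda'\oplus \Zp y_6$ is a vertex lattice in $\L_{b,0}$ with $(\Lambda'\oplus \Zp y_6)^\vee=(\Lambda')^\vee\oplus \Zp y_6$, hence of type $t(\Lambda')$; in particular the target of the lemma is nonempty. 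Conversely, if $g\in J_{b}^{\der}(\Qp)$ satisfies $y_6\in (g\Lambda)^\vee=g(\Lambda^\vee)$, then, using $(g\Lambda)^\vee\subset g\Lambda$ and $Q(y_6)\in \Zp^\times$, projection along $\Qp y_6$ gives $g\Lambda=(g\Lambda\cap \L_{D})\oplus \Zp y_6$, and $\Lambda':=g\Lambda\cap \L_{D}$ is then a vertex lattice in $\L_{D}$ of type $t(\Lambda)$. Thus it suffices to find $g\in J_{b}^{\der}(\Qp)$ with $g^{-1}(y_6)\in \Lambda^\vee$, i.e.\ with the $J_{b}^{\der}(\Qp)$-orbit of $y_6$ meeting $\Lambda^\vee$.

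Next I would determine that orbit. Taking derived groups in the first isomorphism of Corollary \ref{exis} identifies $J_{b}^{\der}$ with $\Spin(\L_{b,0})$, compatibly with the action on $\L_{b,0}$, so by the spinor-norm exact sequence the image of $J_{b}^{\der}(\Qp)$ in $\SO(\L_{b,0})(\Qp)$ is $\ker(\theta_{\L_{b,0}})$. By Witt's theorem $\SO(\L_{b,0})(\Qp)$ acts transitively on $\{v\in \L_{b,0}\mid Q(v)=\varepsilon^2\}$, and the stabilizer of $y_6$ there is $\SO(\L_{D})(\Qp)$ because $\L_{D}=(\Qp y_6)^\perp$. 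Since $\dim_{\Qp}\L_{D}=5\geq 3$, Proposition \ref{h1e0} shows that $\theta_{\L_{D}}=\theta_{\L_{b,0}}\!\mid_{\SO(\L_{D})(\Qp)}$ is surjective, whence $\SO(\L_{b,0})(\Qp)=\ker(\theta_{\L_{b,0}})\cdot \SO(\L_{D})(\Qp)$. Therefore $\ker(\theta_{\L_{b,0}})$, and hence $J_{b}^{\der}(\Qp)$, already acts transitively on $\{v\mid Q(v)=\varepsilon^2\}$. Combined with the previous paragraph, the lemma is reduced to the claim that \emph{every vertex lattice $\Lambda$ in $\L_{b,0}$ has $\Lambda^\vee$ containing a vector of norm exactly $\varepsilon^2$}.

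To prove this I would reduce modulo $p$. As $p\Lambda\subset \Lambda^\vee\subset \Lambda=(\Lambda^\vee)^\vee$, the lattice $\Lambda^\vee$ is integral and the radical of the $\Fp$-quadratic space $\overline{\Lambda^\vee}:=\Lambda^\vee/p\Lambda^\vee$ equals $p\Lambda/p\Lambda^\vee$, of dimension $t(\Lambda)$; so its nondegenerate part has dimension $6-t(\Lambda)\in\{1,3,5\}$. For $t(\Lambda)\in\{1,3\}$ this part has dimension $\geq 2$, hence is universal and in particular represents the class of $\varepsilon^2$. For $t(\Lambda)=5$ it is a line $\langle \bar c\rangle$; writing $\Lambda^\vee\cong \langle c\rangle\perp pU$ with $U$ a rank-$5$ unimodular $\Zp$-lattice, a Hilbert-symbol computation gives $\mathrm{Hasse}(\L_{b,0})=(c,p)_p$, so the value $-1$ (Section \ref{hgst}) forces $c$, hence $\bar c$, to be a nonsquare; as $\varepsilon^2$ is a nonsquare unit, $[\bar c]=[\bar\varepsilon^2]$. (This is also visible on the split model $\Lambda_0'\oplus \Zp y_6$ with $\Lambda_0'\in\VL(\L_{D},5)$, nonempty by Proposition \ref{gu2d}(iii): there $\overline{(\Lambda_0')^\vee}$ is totally isotropic, $\L_{D}$ being $5$-dimensional, so the rank-one reduction is manifestly $\langle\bar\varepsilon^2\rangle$.) In every case $\overline{\Lambda^\vee}$ represents $\bar\varepsilon^2$; choosing a representative off the radical, where the pairing against $\Lambda^\vee$ surjects onto $\Zp$, Hensel's lemma lifts it to $v\in \Lambda^\vee$ with $Q(v)=\varepsilon^2$. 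Any $g\in J_{b}^{\der}(\Qp)$ with $g(v)=y_6$ then finishes the argument.

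The hard part is the case $t(\Lambda)=5$ together with the spinor-norm bookkeeping. Because $\ker(\theta_{\L_{b,0}})$ has index $4$ in $\SO(\L_{b,0})(\Qp)$, one cannot simply invoke transitivity of $J_{b}^{\der}(\Qp)$ on vertex lattices of a fixed type; this is why the argument is organized so as to transport the \emph{vector} $y_6$—whose stabilizer $\SO(\L_{D})(\Qp)$ has surjective spinor norm—rather than a lattice, and why the $t=5$ case is settled by the invariant (Hasse) computation of the rank-one reduction rather than by a transitivity statement.
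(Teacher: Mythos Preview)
Your proof is correct, but it takes a genuinely different route from the paper's, and the comparison is instructive. The paper argues as follows: pick any $\Lambda''\in\VL(\L_D,t(\Lambda))$, so $\Lambda''\oplus\Zp y_6\in\VL(\L_{b,0},t(\Lambda))$; by \cite[Lemma~29.9]{Shimura2010} there is $g'\in\SO(\L_{b,0})(\Qp)$ with $g'(\Lambda)=\Lambda''\oplus\Zp y_6$; then choose $h\in\SO(\L_D)(\Qp)$ with $\theta_{\L_D}(h)=\theta_{\L_{b,0}}(g')$ (Proposition~\ref{h1e0}), so that $hg'\in\ker\theta_{\L_{b,0}}$ lifts to $J_b^{\der}(\Qp)$ and $hg'(\Lambda)=h(\Lambda'')\oplus\Zp y_6$. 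In other words, the paper \emph{does} transport the lattice: your worry in the final paragraph that ``one cannot simply invoke transitivity of $J_b^{\der}(\Qp)$ on vertex lattices of a fixed type'' is circumvented not by transporting a vector, but by noting that the target set $\{\Lambda'\oplus\Zp y_6:\Lambda'\in\VL(\L_D,t)\}$ is stable under $\SO(\L_D)(\Qp)$, so a spinor-norm correction by $h\in\SO(\L_D)(\Qp)$ keeps us inside it. Both arguments hinge on the same fact (surjectivity of $\theta_{\L_D}$), but the paper's is shorter and uniform in $t(\Lambda)$, while yours trades the citation of Shimura's classification for a more hands-on argument---Witt's theorem plus the mod-$p$/Hensel computation and, in the $t=5$ case, the Hasse-invariant calculation $\mathrm{Hasse}(\L_{b,0})=(c,p)_p$ (which is indeed correct: the cross terms $(pu_i,pu_j)_p$ contribute trivially since each $u_i$ appears an even number of times). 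Your parenthetical ``split model'' observation is essentially the paper's proof in disguise, since it amounts to using isometry of all type-$5$ vertex lattices.
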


\begin{proof}
Take $\Lambda'' \in \VL(\L_{D},t(\Lambda))$. By Lemma \ref{ppy6}, we have $\Lambda'' \oplus \Zp y_6\in \VL(\L_{b,0},t(\Lambda))$. Hence \cite[Lemma 29.9]{Shimura2010} implies that there is $g'\in \SO(\L_{b,0})$ such that $g'(\Lambda)=\Lambda'' \oplus \Zp y_6$. On the other hand, since $\dim_{\Qp}(\L_{D})>3$, Proposition \ref{h1e0} for $\L_{D}$ implies that there is $h\in \SO(\L_{D})$ such that $\theta_{\L_{D}}(h)=\theta_{\L_{b,0}}(g')$. Hence $hg'$ is contained in the image of $J_{b}^{\der}(\Qp)\rightarrow \SO(\L_{b,0})$ by Proposition \ref{h1e0} for $\L_{b,0}$. Therefore, the element $g\in J_{b}^{\der}(\Qp)$ corresponding to $hg'$ and the vertex lattice $\Lambda':=h(\Lambda'')$ give a desired assertion. 
\end{proof}

For a $W(k)$-lattice $L$ in $\L_{b,W(k)}$ and $r\in \Znn$, we define another $W(k)$-lattice in $\L_{b,W(k)}$ as follows: 
\begin{equation*}
L^{(r)}:=L+\Phi_{b}(L)+\cdots +\Phi_{b}^{r}(L). 
\end{equation*}

\begin{prop}\label{spvl}
\emph{For $L\in \SpL(\L_{b,W(k)})$, there is the minimum integer $r\in \{0,1,2\}$ such that $L^{(r)}$ is $\Phi_{b}$-stable. Then, we have $L^{(r)}\in \VL(\L_{b,0},2r+1)$ and $(L^{(r)})^{\vee}=\{v\in L^{\vee}\mid \Phi_{b}(v)=v\}$. }
\end{prop}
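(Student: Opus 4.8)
The plan is to pass between special lattices in $\L_{b,W(k)}$ and vertex lattices in $\L_{b,0}$ by a Frobenius-descent argument, following the pattern of \cite[Section 5]{Oki2019} (and of \cite{Howard2014}). First I recall that by Lemma \ref{adlv} every special lattice $L$ has the form $h(L_{0,W(k)}^{\vee})$ for an isometry $h$ of $[\,,\,]$; since $L_{0,W(k)}^{\vee}\in \Lat^{1}(\L_{b,W(k)})$ and the Gram matrix computed in the proof of Proposition \ref{lbpr} (ii) shows $[L_{0,W(k)}^{\vee},L_{0,W(k)}^{\vee}]\subset p^{-1}W(k)$, one gets $L^{\vee}\subsetneq L$ with $\length_{W(k)}(L/L^{\vee})=1$ and $pL\subset L^{\vee}$ for every $L\in \SpL(\L_{b,W(k)})$. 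I will also use Proposition \ref{dlph}, i.e. $\Phi_{b}(L)\subset p^{-1}L^{\vee}$, together with its dual consequence $\Phi_{b}(L^{\vee})=(\Phi_{b}L)^{\vee}\supset pL$.

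Next I would set up the iteration. Since $L^{(i+1)}=L^{(i)}+\Phi_{b}(L^{(i)})$, the lattice $L^{(r)}$ is $\Phi_{b}$-stable iff $L^{(r)}=L^{(r+1)}$; and because $(\L_{b,W(k)},\Phi_{b})$ is isoclinic of slope $0$, a $\Phi_{b}$-stable $W(k)$-lattice $M$ automatically satisfies $\Phi_{b}(M)=M$ and descends to the $\Zp$-lattice $M\cap \L_{b,0}$, compatibly with duals. As $L^{(i+1)}/L^{(i)}$ is a quotient of $\Phi_{b}^{i}(L^{(1)}/L)$ one has $\length_{W(k)}(L^{(i+1)}/L^{(i)})\le 1$, so a minimal $r_{0}$ with $L^{(r_{0})}=L^{(r_{0}+1)}$, if it exists, makes $L=L^{(0)}\subsetneq L^{(1)}\subsetneq \dots \subsetneq L^{(r_{0})}$ with every step of length $1$. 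Using that $(\cdot)^{\vee}$ is an order-reversing, length-preserving involution on lattices for the non-degenerate form and that $(L^{(r)})^{\vee}=\bigcap_{i=0}^{r}\Phi_{b}^{i}(L^{\vee})$ has double dual $\sum_{i=0}^{r}\Phi_{b}^{i}(L)=L^{(r)}$, I obtain the chain $(L^{(r)})^{\vee}\subset L^{\vee}\subset L\subset L^{(r)}$ and the length identity $\length_{W(k)}(L^{(r)}/(L^{(r)})^{\vee})=2\,\length_{W(k)}(L^{(r)}/L)+1$, which at $r=r_{0}$ equals $2r_{0}+1$.

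The core step is to prove that the intermediate lattices $L^{(r)}$ still satisfy $pL^{(r)}\subset (L^{(r)})^{\vee}$, so that the descent $\Lambda:=L^{(r_{0})}\cap \L_{b,0}$ obeys $p\Lambda\subset \Lambda^{\vee}\subset \Lambda$, i.e. $\Lambda\in \VL(\L_{b,0})$ with $t(\Lambda)=2r_{0}+1$; since $t(\Lambda)\in \{1,3,5\}$, this simultaneously forces the minimal $r_{0}$ to exist and to lie in $\{0,1,2\}$, and identifies $L^{(r_0)}\in \VL(\L_{b,0},2r_0+1)$. For this I would run the lattice bookkeeping of \cite[Section 5]{Oki2019}, transported to the six-dimensional space $\L_{b,0}$; one can also try to reduce to $\L_{D}$ itself, since by the closed immersion $i_{y_6}$ of Proposition \ref{clim} a special lattice coming from $\M_{D}$ contains $W(k)y_6$ (its Dieudonné module being $y_6$-stable, cf. Lemma \ref{imfv}), hence splits off the orthogonal self-dual line $W(k)y_6$ with complement a special lattice in $\L_{D,W(k)}$ by Lemma \ref{ppy6}, and by $J_{b}^{\der}(\Qp)$-equivariance (Proposition \ref{dlsl}) together with Lemma \ref{y6tr} the general case is a $J_{b}^{\der}(\Qp)$-translate of this one, adjoining an orthogonal self-dual line preserving both $\Phi_{b}$-stability and the type. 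I expect this sharpening — from the easy bound $\Phi_{b}^{m}(L)\subset p^{-m}L^{\vee}$ to $pL^{(r)}\subset (L^{(r)})^{\vee}$, which genuinely needs the special condition $\length_{W(k)}\bigl((L+\Phi_{b}(L))/L\bigr)\le 1$ and not merely the inclusions above — to be the main obstacle.

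Finally, for the dual formula I would note that $(L^{(r_{0})})^{\vee}=\bigcap_{i=0}^{r_{0}}\Phi_{b}^{i}(L^{\vee})$ is $\Phi_{b}$-stable, hence equals $\Lambda^{\vee}\otimes_{\Zp}W(k)$, and that a vector $v\in \L_{b,0}$ lies in $\Phi_{b}^{i}(L^{\vee})$ for all $i$ exactly when $\Phi_{b}^{-i}(v)=v\in L^{\vee}$ for all $i$, i.e. exactly when $v\in L^{\vee}$; thus $\Lambda^{\vee}=\{v\in L^{\vee}\mid \Phi_{b}(v)=v\}$, which under the identification $L^{(r_0)}=\Lambda$ is the asserted equality $(L^{(r_0)})^{\vee}=\{v\in L^{\vee}\mid \Phi_{b}(v)=v\}$.
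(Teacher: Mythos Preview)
Your setup and endgame are correct: the facts $pL\subset L^{\vee}\subsetneq L$ with colength $1$, the length-$1$ chain $L=L^{(0)}\subsetneq\cdots\subsetneq L^{(r_0)}$, the identity $\length_{W(k)}(L^{(r_0)}/(L^{(r_0)})^{\vee})=2r_0+1$, and the descent argument for the dual formula all hold. You also correctly isolate the crux as $pL^{(r_0)}\subset(L^{(r_0)})^{\vee}$. Your route (a) is what the paper does, following \cite[Lemma~2.7]{Cho2018} rather than \cite{Oki2019}: one introduces $r_0^{\vee}:=\min\{r:(L^{\vee})^{(r)}\text{ is }\Phi_b\text{-stable}\}$, proves $r_0\le r_0^{\vee}$ via the intersection identity $L^{(i)}\cap\Phi_b(L^{(i)})=\Phi_b(L^{(i-1)})$ for $i<r_0$, and then Proposition~\ref{dlph} yields $pL^{(r_0)}\subset\bigcap_n\Phi_b^n((L^{\vee})^{(r_0-1)})$, which the same intersection identity applied to $L^{\vee}$ (valid since $r_0-1<r_0^{\vee}$) collapses to $\bigcap_n\Phi_b^n(L^{\vee})=(L^{(r_0)})^{\vee}$.

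Your route (b), however, is circular. Lemma~\ref{y6tr} concerns \emph{vertex lattices} in $\L_{b,0}$, which are by definition $\Phi_b$-stable; it says nothing about general special lattices in $\L_{b,W(k)}$. The assertion that every special lattice is a $J_b^{\der}(\Qp)$-translate of one arising from $\M_D$ is precisely the content of Corollary~\ref{cvkv}, and the first equality there is deduced \emph{from} Proposition~\ref{spvl}. There is no independent transitivity to invoke: $J_b^{\der}(\Qp)$ is a $\Qp$-rational group acting on a set indexed by $\Frac W(k)$-points, and nothing known at this stage forces each $J_b^{\der}(\Qp)$-orbit on $\SpL(\L_{b,W(k)})$ to meet the image of $\M_D$. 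So only route (a) is available, and you must carry out the Cho-type argument directly in $\L_{b,W(k)}$.
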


\begin{proof}
We follow the proof of \cite[Lemma 2.7]{Cho2018}. Let 
\begin{equation*}
r_0:=\min\{r\in \Znn\mid \Phi_{b}(L^{(r)})=L^{(r)}\},\quad r_0^{\vee}:=\min\{r\in \Znn\mid \Phi_{b}((L^{\vee})^{(r)})=(L^{\vee})^{(r)}\}. 
\end{equation*}
Note that the existences of $r_0$ are $r_0^{\vee}$ are consequences of \cite[Proposition 2.17]{Rapoport1996a}. Moreover, the assertions $(L^{(r_0)})^{\vee}\subset L^{(r_0)}$ and $(L^{(r_0)})^{\vee}=\{v\in L^{\vee}\mid \Phi_{b}(v)=v\}$ are trivial by definition. 

In the sequel, we prove $pL^{(r_0)}\subset (L^{(r_0)})^{\vee}$, which implies $r_0\in \{0,1,2\}$ and $L^{(r_0)}\in \VL(\L_{b,0},2r_0+1)$. 
\begin{claim}
We have $r_0\leq r_0^{\vee}$. 
\end{claim}

By the same argument as the proof of \cite[Lemma 2.8]{Cho2018}, we have
\begin{equation*}
L^{(i)}\cap \Phi_b(L^{(i)})=\Phi_{b}(L^{(i-1)})
\end{equation*}
for $1\leq i<r_0$. If $r_0^{\vee}<r_0$, then the above equality implies
\begin{equation*}
L^{\vee}\subset \bigcap_{n\in \Z}\Phi_b^{n}(L^{(r_0^{\vee})})=\bigcap_{n\in \Z}\Phi_b^{n}(L)\subset L\cap \Phi_{b}(L)\subsetneq L. 
\end{equation*}
Since $\length_{W(k)}L/L^{\vee}=1$, we have $L^{\vee}=L\cap \Phi_{b}(L)$, and hence $(L^{\vee})^{(1)}=L$ by taking duals. Therefore we obtain $r_0^{\vee}=r_0+1>r_0$, which contradicts to the assumption. 

By Proposition \ref{dlph} and the definition of $r_0$, we have
\begin{equation*}
pL^{(r_0)}\subset \bigcap_{n\in \Z}\Phi_b^{n}((L^{\vee})^{(r_0-1)}). 
\end{equation*}
On the other hand, the same argument as the proof of \cite[Lemma 2.8]{Cho2018} implies
\begin{equation*}
(L^{\vee})^{(i)}\cap \Phi_b((L^{\vee})^{(i)})=\Phi_{b}((L^{\vee})^{(i-1)})
\end{equation*}
for $1\leq i<r_0^{\vee}$. Since $r_0-1<r_0^{\vee}$ by Claim, we obtain
\begin{equation*}
\bigcap_{n\in \Z}\Phi_b^{n}((L^{\vee})^{(r_{0}-1)})=\bigcap_{n\in \Z}\Phi_b^{n}(L^{\vee})=(L^{(r_0)})^{\vee}. 
\end{equation*}
Therefore the assertion $pL^{(r_0)}\subset (L^{(r_0)})^{\vee}$ is obtained. 
\end{proof}

For $\Lambda \in \VL(\L_{b,0})$, let $\M_{b,\mu,\Lambda}^{(0,0)}(k)$ be the subset of $\M_{b,\mu}^{(0,0)}(k)$ corresponding to $\{L\in \SpL(\L_{b,W(k)})\mid \Lambda^{\vee} \subset L\}$ under the bijection in Proposition \ref{dlsl}. 

\begin{cor}\label{cvkv}
\emph{We have equalities
\begin{equation*}
\M_{b,\mu}^{(0,0)}(k)=\bigcup_{\Lambda \in \VL(\L_{b,0})}\M_{b,\mu,\Lambda}^{(0,0)}(k)
=\bigcup_{g\in J_{b}^{\der}(\Qp)}\bigcup_{\Lambda \in \VL(\L_{D},5)}g(i_{y_6}(\M_{D,\Lambda}^{(0)}))(k). 
\end{equation*}}
\end{cor}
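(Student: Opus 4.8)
The plan is to establish the two equalities separately; the first follows almost formally from Propositions \ref{dlsl} and \ref{spvl}, while the second rests on one substantial lemma whose proof is the real work. For the first equality, the inclusion $\bigcup_{\Lambda\in \VL(\L_{b,0})}\M_{b,\mu,\Lambda}^{(0,0)}(k)\subset \M_{b,\mu}^{(0,0)}(k)$ is immediate since each stratum is by definition a subset of $\M_{b,\mu}^{(0,0)}(k)$. Conversely, a point of $\M_{b,\mu}^{(0,0)}(k)$ corresponds under Proposition \ref{dlsl} to some $L\in \SpL(\L_{b,W(k)})$; letting $r\in \{0,1,2\}$ and $\Lambda:=L^{(r)}$ be as in Proposition \ref{spvl}, one has $\Lambda\in \VL(\L_{b,0})$ and $\Lambda^{\vee}=(L^{(r)})^{\vee}\subset L^{\vee}\subset L$, so the point lies in the stratum $\M_{b,\mu,\Lambda}^{(0,0)}(k)$ (which corresponds to $\{L'\mid \Lambda^{\vee}\subset L'\}$). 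For the inclusion $\supseteq$ of the second equality, Proposition \ref{md00} gives $i_{y_6}(\M_{D,\Lambda}^{(0)})(k)\subset i_{y_6}(\M_{D}^{(0)})(k)\subset \M_{b,\mu}^{(0,0)}(k)$ for $\Lambda\in \VL(\L_D,5)$, and $\M_{b,\mu}^{(0,0)}(k)$ is stable under $J_{b}^{\der}(\Qp)$ because elements of the derived group have $\kappa_{G}=(0,0)$ and so cannot exchange $\M_{b,\mu}^{(0,0)}$ with $\M_{b,\mu}^{(0,1)}$ (cf.\ the discussion after Definition \ref{dlm0}); hence the right-hand side lies in $\M_{b,\mu}^{(0,0)}(k)=\bigcup_{\Lambda}\M_{b,\mu,\Lambda}^{(0,0)}(k)$.

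For the inclusion $\subseteq$ of the second equality, fix $\Lambda\in \VL(\L_{b,0})$. By Lemma \ref{y6tr} there are $g_0\in J_{b}^{\der}(\Qp)$ and $\Lambda_D\in \VL(\L_D,t(\Lambda))$ with $g_0(\Lambda)=\Lambda_D\oplus \Zp y_6$; since $M\mapsto L(M)$ is $J_{b}^{\der}(\Qp)$-equivariant (Proposition \ref{dlsl}) and $g_0$ acts on $\L_{b,0}$ by isometries (Corollary \ref{exis}), one obtains $g_0\bigl(\M_{b,\mu,\Lambda}^{(0,0)}(k)\bigr)=\M_{b,\mu,\Lambda_D\oplus \Zp y_6}^{(0,0)}(k)$. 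I would then prove the key identity
\begin{equation*}
\M_{b,\mu,\Lambda_D\oplus \Zp y_6}^{(0,0)}(k)=i_{y_6}\bigl(\M_{D,\Lambda_D}^{(0)}\bigr)(k)\qquad\text{for all }\Lambda_D\in \VL(\L_D).
\end{equation*}
Granting this, $\M_{b,\mu,\Lambda_D\oplus \Zp y_6}^{(0,0)}(k)\subset i_{y_6}(\M_{D}^{(0)})(k)=\bigcup_{\Lambda'\in \VL(\L_D,5)}i_{y_6}(\M_{D,\Lambda'}^{(0)})(k)$ by Proposition \ref{gu2d} (iii), and applying $g_0^{-1}$ puts $\M_{b,\mu,\Lambda}^{(0,0)}(k)$ inside the right-hand side of the Corollary; taking the union over all $\Lambda\in \VL(\L_{b,0})$ and invoking the first equality completes the argument.

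The main obstacle is thus the key identity. Under the bijection with $\SpL(\L_{b,W(k)})$ its left-hand side is $\{L\mid (\Lambda_D\oplus \Zp y_6)^{\vee}\subset L\}$; because $\L_{b,0}=\L_D\perp \Qp y_6$ (Lemma \ref{ppy6}) and $\Zp y_6$ is self-dual (as $Q(y_6)=\varepsilon^2\in \Zp^{\times}$ by the Gram matrix of Section \ref{hgst}), this condition reads $\Lambda_D^{\vee}\subset L$ and $y_6\in L$. Its right-hand side, via Proposition \ref{clim} together with the Dieudonn\'e-lattice description in the proof of Lemma \ref{imfv}, consists of the points whose Dieudonn\'e lattice $M$ satisfies $y_6(M)=M$ and $\Lambda_D(M)\subset M$. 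The task is then to translate, for $v\in \L_{b,0}$, the statement ``$v\in L(M)$'' (resp.\ ``$v\in L(M)^{\vee}$'') into ``$v$ stabilizes the appropriate $\varpi$-twist of $M$'', which I would carry out using the Clifford-algebra identifications $C^{+}(L_0)\cong \End_{O_{\Fbr}}(M_0)$ and $\GSpin(\L_b)\cong G^0\otimes_{\Qp}K_0$ of Propositions \ref{lbpr} (iii) and \ref{eik0}, together with the explicit construction of $L(M)$ recalled in this section. The delicate point is keeping track of the $\varpi$-powers — $L(M)$ is built from $\bV_b(M)$ rather than from $M$ itself — and of the direction of the duality, so that the single lattice inclusion $(\Lambda_D\oplus \Zp y_6)^{\vee}\subset L$ converts precisely into the two Dieudonn\'e-module stability conditions that cut out the stratum $\M_{D,\Lambda_D}^{(0)}$.
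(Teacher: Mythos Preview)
Your approach is correct and matches the paper's, which cites exactly Proposition~\ref{spvl} for the first equality and Lemma~\ref{y6tr}, Corollary~\ref{exis}, Proposition~\ref{gu2d}(iii) for the second. The ``key identity'' you isolate is what the paper later records as Proposition~\ref{cpbt}(i); strictly speaking the corollary only needs the inclusion $\M_{b,\mu,\Lambda_D\oplus\Zp y_6}^{(0,0)}(k)\subset i_{y_6}(\M_D^{(0)})(k)$, for which it suffices to observe that $Q(y_6)\in\Zp^{\times}$ forces $y_6\in L(M)^{\vee}$ whenever $y_6\in L(M)$ (its image in the one-dimensional anisotropic space $L(M)/L(M)^{\vee}$ is isotropic, hence zero), so that $y_6(M)=M$ and one lands in the image of $i_{y_6}$ by Proposition~\ref{clim}.
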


\begin{proof}
The first equality follows from Proposition \ref{spvl}. The second equality follows from Lemma \ref{y6tr}, Corollary \ref{exis} and Proposition \ref{gu2d} (iii). 
\end{proof}

\subsection{Connected components}

Here we study the connected components of the underlying topological space $\M_{b,\mu}^{(0),\red}$ of $\M_{b,\mu}^{(0)}$. For $x\in \M_{b,\mu}^{(0)}$, we denote by $\overline{k(x)}$ an algebraic closure of the residue field of $x$. 

\begin{dfn}\label{mbm0}
For $j\in \Z/2$, we define a subset $\M_{b,\mu}^{(0,j),\red}$ of the topological space $\M_{b,\mu}^{(0),\red}$ as follows: 
\begin{equation*}
\M_{b,\mu}^{(0,j),\red}:=\{x\in \M_{b,\mu}^{(0),\red}\mid x\in \M_{b,\mu}^{(0,j),\red}(\overline{k(x)})\}. 
\end{equation*}
\end{dfn}
By definition, we have 
\begin{equation*}
\M_{b,\mu}^{(0),\red}=\M_{b,\mu}^{(0,0),\red}\sqcup \M_{b,\mu}^{(0,1),\red}. 
\end{equation*}
Moreover, the action of $s\in J_{b}(\Qp)$ satisfying $\kappa_{J_b}(s)=(0,1)$ induces a bijection
\begin{equation*}
\M_{b,\mu}^{(0,0),\red}\cong \M_{b,\mu}^{(0,1),\red}. 
\end{equation*}

Here we prove the following: 

\begin{thm}\label{dcnt}
\emph{The subsets $\M_{b,\mu}^{(0,0),\red}$ and $\M_{b,\mu}^{(0,1),\red}$ are the connected components in $\M_{b,\mu}^{(0),\red}$. }
\end{thm}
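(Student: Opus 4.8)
The statement asserts that the two pieces $\M_{b,\mu}^{(0,0),\red}$ and $\M_{b,\mu}^{(0,1),\red}$ of the decomposition $\M_{b,\mu}^{(0),\red}=\M_{b,\mu}^{(0,0),\red}\sqcup \M_{b,\mu}^{(0,1),\red}$ are exactly the connected components. Since the action of an element $s\in J_b(\Qp)$ with $\kappa_{J_b}(s)=(0,1)$ swaps the two pieces, it suffices to prove that $\M_{b,\mu}^{(0,0),\red}$ is open, closed and connected; the decomposition itself already shows both pieces are open and closed once one of them is, so the crux is \emph{connectedness}. My plan is to reduce connectedness to the combinatorial connectedness of the poset of vertex lattices in $\L_{b,0}$, using the Bruhat--Tits-type covering of $\M_{b,\mu}^{(0,0),\red}$ established in the previous subsection.

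First I would record that the subsets $\M_{b,\mu}^{(0,j),\red}$ are locally closed (indeed open and closed) in $\M_{b,\mu}^{(0),\red}$: the function $x\mapsto \length_{O_{F,W(\overline{k(x)})}}(M_x+M_{0,W(\overline{k(x)})})/M_{0,W(\overline{k(x)})}\bmod 2$ is locally constant on the (locally of finite type) formal scheme, because the parity of the relative position of two lattices is a discrete invariant that is constant in families; stability under field extension was already noted after Definition \ref{mbm0}. Hence each $\M_{b,\mu}^{(0,j),\red}$ is a union of connected components, and it remains to see that $\M_{b,\mu}^{(0,0),\red}$ is connected (then the same holds for $\M_{b,\mu}^{(0,1),\red}$ via the $J_b(\Qp)$-translation).

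For connectedness I would invoke Corollary \ref{cvkv}, which on $k$-points gives
\[
\M_{b,\mu}^{(0,0)}(k)=\bigcup_{\Lambda\in\VL(\L_{b,0})}\M_{b,\mu,\Lambda}^{(0,0)}(k),
\]
and the closed formal subschemes $\M_{b,\mu,\Lambda}^{(0,0)}$ (more precisely their reduced loci) are, after translating by $J_b^{\der}(\Qp)$ and applying $i_{y_6}$, identified with the $\M_{D,\Lambda'}^{(0)}$ of Proposition \ref{gu2d}, hence are each \emph{connected} (a point, a $\P^1$, or a Fermat surface). So $\M_{b,\mu}^{(0,0),\red}$ is a union of connected closed subsets indexed by $\VL(\L_{b,0})$, and two of these, $\M_{b,\mu,\Lambda}^{(0,0),\red}$ and $\M_{b,\mu,\Lambda'}^{(0,0),\red}$, meet whenever $\Lambda\subset\Lambda'$ or $\Lambda'\subset\Lambda$: indeed if $\Lambda\subset\Lambda'$ then $(\Lambda')^\vee\subset\Lambda^\vee$, so $\{L\mid (\Lambda')^\vee\subset L\}\supset\{L\mid \Lambda^\vee\subset L\}$ is nonempty on the smaller stratum, giving a common point (concretely $L=(\Lambda^{(r)})^{\vee}$-type lattices from Proposition \ref{spvl} lie in both). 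Now Lemma \ref{ltcn} says any two vertex lattices in $\L_{b,0}$ are joined by a chain of vertex lattices each contained in or containing the next; translating this chain through the covering produces a connected chain of the closed pieces linking any two points of $\M_{b,\mu}^{(0,0),\red}$. Therefore $\M_{b,\mu}^{(0,0),\red}$ is connected.

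The main obstacle I anticipate is the passage from the $k$-point equality of Corollary \ref{cvkv} to an honest statement about the topological space: one must check that the union $\bigcup_\Lambda \M_{b,\mu,\Lambda}^{(0,0),\red}$ is all of $\M_{b,\mu}^{(0,0),\red}$ as a topological space (not merely on closed points), which follows since $\M_{b,\mu}^{(0),\red}$ is Jacobson and locally of finite type over $\Fpbar$, so closed points are dense and a closed subset containing all closed points is everything; and that each $\M_{b,\mu,\Lambda}^{(0,0)}$ really is a \emph{closed} formal subscheme, which is how it was defined (as the locus where $\rho^{-1}\Lambda\rho\subset\End(X)$, mirroring $\M_{D,\Lambda}^{(0)}$). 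A minor point to handle carefully is that Corollary \ref{cvkv} is phrased for a fixed algebraically closed $k$; but since the $\M_{b,\mu,\Lambda}^{(0,0)}$ and the stratification are defined over $W$ (resp. $\Fpbar$) and compatible with field extension, the covering descends to the topological space, and the connectedness argument above goes through verbatim.
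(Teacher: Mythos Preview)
Your connectedness argument is essentially the paper's: cover $\M_{b,\mu}^{(0,0),\red}$ by the connected closed pieces indexed by $\VL(\L_{b,0})$ (the paper phrases this via Lemma \ref{mbdu} and Corollary \ref{mbir}, using the $g(i_{y_6}(\M_{D,\Lambda}^{(0)}))$ directly rather than the later-defined $\M_{b,\mu,\Lambda}^{(0,0)}$, which avoids the mild forward-reference you flag in your last paragraph), and then chain them together with Lemma \ref{ltcn}. So on that half there is no issue.

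The difference is in how you get openness. You assert that the parity $\length_{O_{F,W(\overline{k(x)})}}(M_x+M_{0})/M_{0}\bmod 2$ is ``a discrete invariant that is constant in families.'' This is true, but it is not obvious and you do not prove it: the Dieudonn\'e lattice $M_x$ is only defined pointwise, and the parity is really the $\Z/2$-component of a Kottwitz-type map on the affine Deligne--Lusztig set, whose local constancy is a genuine theorem rather than a tautology. As written this is a gap. The paper sidesteps this entirely: from the covering you already cite one deduces (Corollary \ref{mbir}) that every irreducible component of $\M_{b,\mu}^{(0),\red}$ lies wholly in one of the two pieces; then for $x\in \M_{b,\mu}^{(0,0),\red}$ one takes a quasi-compact open neighbourhood $V'$, notes that only finitely many irreducible components meet $V'$, and removes those lying in $\M_{b,\mu}^{(0,1),\red}$ to obtain an open neighbourhood of $x$ inside $\M_{b,\mu}^{(0,0),\red}$. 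This uses nothing beyond the covering and local finiteness of the irreducible components, both of which you already have in hand, so you can repair your argument by replacing the unjustified local-constancy claim with this step.
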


By Theorem \ref{dcnt}, we obtain a decomposition into open and closed formal subschemes
\begin{equation*}
\M_{b,\mu}^{(0)}=\M_{b,\mu}^{(0,0)}\sqcup \M_{b,\mu}^{(0,1)}
\end{equation*}
such that the underlying topological space of $\M_{b,\mu}^{(0,j)}$ is $\M_{b,\mu}^{(0,j),\red}$ for each $j\in \Z/2$. 

From now, we give a proof of Theorem \ref{dcnt}. 

\begin{lem}\label{mbdu}
\emph{
We have an equality
\begin{equation*}
\M_{b,\mu}^{(0,0),\red}=\bigcup_{g\in J_{b}^{\der}(\Qp)}\bigcup_{\Lambda \in \VL(\L_{D},5)}g(i_{y_6}(\M_{D,\Lambda}^{(0)})). 
\end{equation*}}
\end{lem}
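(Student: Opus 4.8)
The plan is to deduce Lemma~\ref{mbdu} formally from Corollary~\ref{cvkv} and Proposition~\ref{md00}, reading it as an equality of subsets of the topological space $|\M_{b,\mu}^{(0),\red}|$. Write $Y$ for the right-hand side. Since each $\M_{D,\Lambda}^{(0)}$ with $\Lambda\in\VL(\L_D,5)$ is a reduced $\Fpbar$-scheme (Proposition~\ref{gu2d}(ii)), $i_{y_6}$ is a closed immersion (Proposition~\ref{clim}), and every $g\in J_b^{\der}(\Qp)$ acts as an automorphism of $\M_{b,\mu}^{(0)}$, each $g(i_{y_6}(\M_{D,\Lambda}^{(0)}))$ is a reduced closed subscheme of $\M_{b,\mu}^{(0),\red}$, so $Y$ is a well-defined union of closed subsets. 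I will prove the two inclusions separately, using throughout the elementary fact that for a point $x$ of a scheme locally of finite type over $\Fpbar$ and a closed subscheme $Z$, the canonical morphism $\spec\overline{k(x)}\to\spec k(x)\to\M_{b,\mu}^{(0),\red}$ has image $\{x\}$ and factors through $Z$ if and only if $x\in|Z|$.

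First I would record that $J_b^{\der}(\Qp)$ stabilises $\M_{b,\mu}^{(0,0)}(k)$ inside $\M_{b,\mu}^{(0)}(k)$ for every algebraically closed field $k$ of characteristic $p$: by Proposition~\ref{dlsl} the bijection $M\mapsto L(M)$ from $\DiL^{\varpi}_{M_0,0}(N_{b,W(k)})$ onto $\SpL(\L_{b,W(k)})$ is $J_b^{\der}(\Qp)$-equivariant, hence $J_b^{\der}(\Qp)$ preserves $\DiL^{\varpi}_{M_0,0}(N_{b,W(k)})$, which by Definition~\ref{dlm0} corresponds to $\M_{b,\mu}^{(0,0)}(k)$. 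For the inclusion $Y\subseteq\M_{b,\mu}^{(0,0),\red}$, take $x\in g(i_{y_6}(\M_{D,\Lambda}^{(0)}))$ and put $k:=\overline{k(x)}$; then the canonical $k$-point of $\M_{b,\mu}^{(0)}$ lies in $g(i_{y_6}(\M_{D,\Lambda}^{(0)}))(k)=g\cdot\bigl(i_{y_6}(\M_{D,\Lambda}^{(0)})(k)\bigr)$, which is contained in $g\cdot\bigl(i_{y_6}(\M_D^{(0)})(k)\bigr)\subseteq g\cdot\M_{b,\mu}^{(0,0)}(k)=\M_{b,\mu}^{(0,0)}(k)$ by Proposition~\ref{md00} and the stability just noted; hence $x\in\M_{b,\mu}^{(0,0),\red}$ by Definition~\ref{mbm0}. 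For the reverse inclusion, take $x\in\M_{b,\mu}^{(0,0),\red}$ and $k:=\overline{k(x)}$; by Definition~\ref{mbm0} the associated $k$-point lies in $\M_{b,\mu}^{(0,0)}(k)$, which by Corollary~\ref{cvkv} equals $\bigcup_{g\in J_b^{\der}(\Qp)}\bigcup_{\Lambda\in\VL(\L_D,5)}g(i_{y_6}(\M_{D,\Lambda}^{(0)}))(k)$, so the canonical morphism $\spec k\to\M_{b,\mu}^{(0)}$ factors through one of the closed subschemes $g(i_{y_6}(\M_{D,\Lambda}^{(0)}))$ and therefore $x\in|g(i_{y_6}(\M_{D,\Lambda}^{(0)}))|\subseteq Y$.

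I do not expect a genuine obstacle here: the substantive input — the equality of $k$-point sets for every algebraically closed $k$ — has already been extracted in Corollary~\ref{cvkv} (which itself rests on the chain through Proposition~\ref{dlsl}, Lemma~\ref{y6tr}, Corollary~\ref{exis} and Proposition~\ref{gu2d}(iii)) and in Proposition~\ref{md00}. The only point needing care is the translation between field-valued points and points of the reduced scheme $\M_{b,\mu}^{(0),\red}$, which is locally of finite type over $\Fpbar$: one must ensure that having the same $\overline{k(x)}$-points for all residue fields $k(x)$ forces the two subsets of $|\M_{b,\mu}^{(0),\red}|$ to coincide, and this is exactly what the elementary remark on closed subschemes above provides.
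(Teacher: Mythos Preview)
Your proposal is correct and follows the same approach as the paper: the paper's proof is the single line ``This follows from Corollary~\ref{cvkv},'' and you have simply spelled out carefully how the equality of $k$-points there (for every algebraically closed $k$) translates to an equality of subsets of the underlying topological space via Definition~\ref{mbm0}. Your invocation of Proposition~\ref{md00} and the $J_b^{\der}(\Qp)$-stability argument for the inclusion $Y\subseteq\M_{b,\mu}^{(0,0),\red}$ is not strictly necessary, since that inclusion is already contained in the equality of Corollary~\ref{cvkv}, but it does no harm.
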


\begin{proof}
This follows form Corollary \ref{cvkv}. 
\end{proof}

\begin{cor}\label{mbir}
\emph{Any element of $\Irr(\M_{b,\mu}^{(0),\red})$ is of the form $g(i_{y_6}(\M_{D,\Lambda}^{(0)}))$ where $g\in J_{b}^{1}(\Qp)$ and $\Lambda \in \VL(\L_{D},5)$. In particular, any $C\in \Irr(\M_{b,\mu}^{(0)})$ satisfies either $C\subset \M_{b,\mu}^{(0,0),\red}$ or $C\subset \M_{b,\mu}^{(0,1),\red}$. }
\end{cor}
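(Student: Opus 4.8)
The plan is to bootstrap from Lemma~\ref{mbdu} to a covering of all of $\M_{b,\mu}^{(0),\red}$ by closed irreducible subsets of a prescribed shape, and then to identify irreducible components by a generic-point argument. The first step is to note that the element implementing the bijection $\M_{b,\mu}^{(0,0),\red}\cong\M_{b,\mu}^{(0,1),\red}$ from the discussion after Definition~\ref{mbm0} can be chosen inside $J_b^1(\Qp)$: since $J_b\cong\GU(C_b)$ with $C_b$ split of dimension $4$, the homomorphism $\det_F\colon J_b^1(\Qp)\to\{x\in F^\times\mid \N_{F/\Qp}(x)=1\}$ is surjective (its kernel $J_b^{\der}$ is simply connected, so $H^1(\Qp,J_b^{\der})=1$), and $\varpi/\overline{\varpi}$ is a norm-one element not lying in $1+\varpi O_F$; hence there is $s\in J_b^1(\Qp)$ with $\kappa_{J_b}(s)=(0,1)$.

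Given this, Lemma~\ref{mbdu} together with $\M_{b,\mu}^{(0),\red}=\M_{b,\mu}^{(0,0),\red}\sqcup\M_{b,\mu}^{(0,1),\red}$ and $\M_{b,\mu}^{(0,1),\red}=s(\M_{b,\mu}^{(0,0),\red})$ yields
\begin{equation*}
\M_{b,\mu}^{(0),\red}=\bigcup_{\Lambda\in\VL(\L_D,5)}\ \bigcup_{g\in J_b^{\der}(\Qp)}\Bigl(g\bigl(i_{y_6}(\M_{D,\Lambda}^{(0)})\bigr)\cup sg\bigl(i_{y_6}(\M_{D,\Lambda}^{(0)})\bigr)\Bigr),
\end{equation*}
and every group element occurring here lies in $J_b^1(\Qp)$, since $J_b^{\der}(\Qp)\subseteq J_b^1(\Qp)$ and $s\in J_b^1(\Qp)$. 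Next I would check that each subset $Z_{g,\Lambda}:=g(i_{y_6}(\M_{D,\Lambda}^{(0)}))$ appearing here is closed and irreducible in $\M_{b,\mu}^{(0),\red}$: by Proposition~\ref{gu2d}(ii) the scheme $\M_{D,\Lambda}^{(0)}$ is projective, smooth and irreducible; by Proposition~\ref{clim} (and Proposition~\ref{md00}) the morphism $i_{y_6}$ is a closed immersion whose image on $\M_D^{(0)}$ lies in $\M_{b,\mu}^{(0)}$; and $g$ acts as an automorphism of $\M_{b,\mu}^{(0)}$. Moreover $Z_{g,\Lambda}\subseteq\M_{b,\mu}^{(0,0),\red}$ if $g\in J_b^{\der}(\Qp)$ and $Z_{g,\Lambda}\subseteq\M_{b,\mu}^{(0,1),\red}$ if $g\in sJ_b^{\der}(\Qp)$.

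Finally, let $C\in\Irr(\M_{b,\mu}^{(0)})=\Irr(\M_{b,\mu}^{(0),\red})$ and let $\eta$ be its generic point. Since the sets $Z_{g,\Lambda}$ cover $\M_{b,\mu}^{(0),\red}$, we have $\eta\in Z_{g,\Lambda}$ for some $g\in J_b^1(\Qp)$ and $\Lambda\in\VL(\L_D,5)$; as $Z_{g,\Lambda}$ is closed, $C=\overline{\{\eta\}}\subseteq Z_{g,\Lambda}$, and since $Z_{g,\Lambda}$ is irreducible while $C$ is maximal among irreducible closed subsets, $C=Z_{g,\Lambda}$. This proves the first assertion, and the ``in particular'' follows since each $Z_{g,\Lambda}$ is contained in exactly one of the two disjoint pieces $\M_{b,\mu}^{(0,0),\red}$, $\M_{b,\mu}^{(0,1),\red}$ (which are the connected components of $\M_{b,\mu}^{(0),\red}$ by Theorem~\ref{dcnt}). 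I expect no serious obstacle beyond arranging the swapping element $s$ to lie in $J_b^1(\Qp)$ and verifying the routine closedness and irreducibility statements; the substance has already been absorbed into Lemma~\ref{mbdu}.
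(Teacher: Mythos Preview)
Your argument is correct and follows essentially the same route as the paper: extend the covering of $\M_{b,\mu}^{(0,0),\red}$ from Lemma~\ref{mbdu} to a covering of all of $\M_{b,\mu}^{(0),\red}$ by enlarging the acting group from $J_b^{\der}(\Qp)$ to $J_b^1(\Qp)$, and then read off the irreducible components. You have simply made explicit several steps the paper leaves implicit (the existence of $s\in J_b^1(\Qp)$ with $\kappa_{J_b}(s)=(0,1)$, the closedness and irreducibility of each $Z_{g,\Lambda}$, and the generic-point argument).

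One point to fix: your parenthetical appeal to Theorem~\ref{dcnt} at the very end is circular, since the paper uses Corollary~\ref{mbir} in the proof of Theorem~\ref{dcnt} (specifically in the openness argument). Fortunately your proof does not actually need that reference: you have already shown directly that each $Z_{g,\Lambda}$ lies in $\M_{b,\mu}^{(0,0),\red}$ or $\M_{b,\mu}^{(0,1),\red}$ according to whether $g\in J_b^{\der}(\Qp)$ or $g\in sJ_b^{\der}(\Qp)$, and that suffices for the ``in particular'' clause. Simply delete the parenthetical.
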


\begin{proof}
By Lemma \ref{mbdu}, we have
\begin{equation*}
\M_{b,\mu}^{(0),\red}=\bigcup_{g\in J_{b}^{1}(\Qp)}\bigcup_{\Lambda \in \VL(\L_{D},5)}g(i_{y_6}(\M_{D,\Lambda}^{(0)})). 
\end{equation*}
Hence the assertion follows from the above equality. 
\end{proof}

\begin{proof}[Proof of Theorem \ref{dcnt}]
It suffices to show that $\M_{b,\mu}^{(0,0),\red}$ is connected and open in $\M_{b,\mu}^{(0),\red}$. Indeed, these imply the same assertion for $\M_{b,\mu}^{(0,1),\red}$, and hence the assertion follows. 

First, we prove the connectedness of $\M_{b,\mu}^{(0,0),\red}$. By Proposition \ref{gu2d} (i) and Corollary \ref{mbir}, it suffices to show the following: 
\begin{claim}
For any $g,g'\in J_{b}^{\der}(\Qp)$ and $\Lambda,\Lambda'\in \VL(\L_{D})$, there is a sequence
\begin{equation*}
\Lambda_0=g(\Lambda \oplus \Zp y_6),\Lambda_1=g_1(\Lambda_{0,1}\oplus \Zp y_6),\ldots,\Lambda_n=g'(\Lambda'\oplus \Zp y_6)
\end{equation*}
where $g_i\in J^{\der}(\Qp)$ and $\Lambda_{0,i}\in \VL(\L_{D})$ such that $\Lambda_i\subset \Lambda_{i+1}$ or $\Lambda_i\supset \Lambda_{i+1}$ for each $i\in \{0,\ldots,n-1\}$. 
\end{claim}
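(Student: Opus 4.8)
The Claim is a purely lattice-theoretic statement about $\VL(\L_{b,0})$, and the plan is to derive it at once by combining Lemma \ref{ltcn} with Lemma \ref{y6tr}: the former produces, for any two vertex lattices of $\L_{b,0}$, a finite chain joining them in which consecutive members are related by inclusion, and the latter says that every vertex lattice of $\L_{b,0}$ is carried by some element of $J_b^{\der}(\Qp)$ to one of the shape $\Lambda_0\oplus\Zp y_6$ with $\Lambda_0\in\VL(\L_D)$. Applying Lemma \ref{ltcn} to the two prescribed endpoints and then Lemma \ref{y6tr} to the interior terms of the resulting chain produces exactly the sequence demanded by the Claim.

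In detail, I would argue as follows. First I would check that the endpoints $\widetilde\Lambda_0:=g(\Lambda\oplus\Zp y_6)$ and $\widetilde\Lambda_n:=g'(\Lambda'\oplus\Zp y_6)$ genuinely lie in $\VL(\L_{b,0})$: by Lemma \ref{ppy6} the decomposition $\L_{b,0}=\L_D\perp\Qp y_6$ is orthogonal, and $Q(y_6)=\varepsilon^2\in\Zp^{\times}$ (read off from the Gram matrix of $[\,,\,]$ in Section \ref{hgst}), so $\Zp y_6$ is unimodular in $\Qp y_6$; hence $\Lambda\oplus\Zp y_6$ and $\Lambda'\oplus\Zp y_6$ are vertex lattices in $\L_{b,0}$, which is precisely the observation already used inside the proof of Lemma \ref{y6tr}. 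Moreover $J_b^{\der}(\Qp)$ acts on $\L_{b,0}$ through the central isogeny $J_b^{\der}\to\SO(\L_{b,0})$ of Corollary \ref{exis}, hence through isometries, so it permutes $\VL(\L_{b,0})$; thus $\widetilde\Lambda_0,\widetilde\Lambda_n\in\VL(\L_{b,0})$. Next I would apply Lemma \ref{ltcn} to this pair, obtaining a chain $\widetilde\Lambda_0=\Lambda_0,\Lambda_1,\dots,\Lambda_n=\widetilde\Lambda_n$ in $\VL(\L_{b,0})$ with $\Lambda_i\subset\Lambda_{i+1}$ or $\Lambda_i\supset\Lambda_{i+1}$ for each $i$. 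Finally, for each interior index $i\in\{1,\dots,n-1\}$, Lemma \ref{y6tr} furnishes $h_i\in J_b^{\der}(\Qp)$ and $\Lambda_{0,i}\in\VL(\L_D)$ with $h_i(\Lambda_i)=\Lambda_{0,i}\oplus\Zp y_6$; setting $g_i:=h_i^{-1}\in J_b^{\der}(\Qp)$ gives $\Lambda_i=g_i(\Lambda_{0,i}\oplus\Zp y_6)$. Together with the prescribed presentations of $\Lambda_0$ and $\Lambda_n$ this is the desired sequence, and the nesting relations carry over verbatim from Lemma \ref{ltcn}; the cases $n=0$ and $n=1$ are trivial.

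I do not expect a genuine obstacle here: all of the substance is already in Lemmas \ref{ltcn} and \ref{y6tr}, which themselves rest on the building-theoretic description of $\VL(\L_{b,0})$ in Proposition \ref{sovl} and on the surjectivity of the spinor norms from Proposition \ref{h1e0} together with Witt extension. The only point requiring a little care is organisational: the two endpoints of the chain are already in the form demanded by the Claim and must be left untouched, so that Lemma \ref{y6tr} is invoked only on the $n-1$ interior lattices, and one must remember to pass to the inverse of the group element it produces so that $\Lambda_i$ appears as $g_i(\Lambda_{0,i}\oplus\Zp y_6)$ rather than as its inverse translate.
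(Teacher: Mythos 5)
Your proposal is correct and follows essentially the same route as the paper: the paper proves the Claim by noting that Lemma \ref{y6tr} yields $\VL(\L_{b,0})=\{g(\Lambda\oplus\Zp y_6)\mid g\in J_{b}^{\der}(\Qp),\ \Lambda\in\VL(\L_{D})\}$ and then invoking Lemma \ref{ltcn} to chain the two endpoints, exactly as you do. Your extra checks (that the endpoints lie in $\VL(\L_{b,0})$ via Lemma \ref{ppy6} and the isometric action from Corollary \ref{exis}, and the passage to $g_i=h_i^{-1}$) are correct bookkeeping that the paper leaves implicit.
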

However, this claim follows from Lemma \ref{ltcn} since Lemma \ref{y6tr} implies the equality
\begin{equation*}
\VL(\L_{b,0})=\{g(\Lambda \oplus \Zp y_6)\mid g\in J_{b}^{\der}(\Qp),\Lambda \in \VL(\L_{D})\}. 
\end{equation*}
Second, we prove that $\M_{b,\mu}^{(0,0),\red}$ is open in $\M_{b,\mu}^{(0),\red}$. Let $x\in \M_{b,\mu}^{(0,0),\red}$, and take a quasi-compact open subset $V'$ of $\M_{b,\mu}^{(0),\red}$ containing $x$. Then there are only finitely many irreducible components which intersects with $V'$. Now let $\Irr(\M_{b,\mu}^{(0,0),\red})_{V',1}$ be the set of $C\in \Irr(\M_{b,\mu}^{(0,0),\red})$ satisfying $C\cap V'\neq \emptyset$ and $C\subset \M_{b,\mu}^{(0,1)}$. Then the set
\begin{equation*}
U':=V'\setminus \bigcup_{C\in \Irr(\M_{b,\mu}^{(0),\red})_{V',1}}C. 
\end{equation*}
is an open neighborhood of $x$ contained in $\M_{b,\mu}^{(0,0),\red}$ by Corollary \ref{mbir} and the finiteness of the set $\Irr(\M_{b,\mu}^{(0),\red})_{V',1}$. 
\end{proof}

\subsection{Bruhat--Tits stratification, I}\label{bts1}

Here we will describe the structure of $\M_{b,\mu}^{(0,0),\red}$ by means of vertex lattices. 

\begin{dfn}
Let $\Lambda \in \VL(\L_{b,0})$. We define $\M_{b,\mu,\Lambda}^{(0,0)}$ as the locus $(X,\iota,\lambda,\rho)$ of $\M_{b,\mu}^{(0,0)}$ where $\rho^{-1}\circ \Lambda^{\vee}\circ \rho \subset \End(X)$. It is a closed formal subscheme of $\M_{b,\mu}^{(0,0)}$. 
\end{dfn}
By definition, the set of $k$-valued points of $\M_{b,\mu,\Lambda}^{(0,0)}$ equals $\M_{b,\mu,\Lambda}^{(0,0)}(k)$ for any algebraically closed field of characteristic $p$. 

\begin{prop}\label{cpbt}
\emph{
\begin{enumerate}
\item For $\Lambda \in \VL(\L_{D})$, the closed immersion $i_{y_6}$ in Proposition \ref{clim} induces an isomorphism
\begin{equation*}
\M_{D,\Lambda}^{(0)}\cong \M_{b,\mu,\Lambda \oplus \Zp y_6}^{(0,0)}. 
\end{equation*}
\item For $\Lambda \in \VL(\L_{b,0})$, there is $g\in J_{b}^{\der}(\Qp)$ and $\Lambda' \in \VL(\L_{D},t(\Lambda))$ such that $g$ induces an isomorphism
\begin{equation*}
\M_{b,\mu,\Lambda}^{(0,0)} \cong \M_{b,\mu,\Lambda'\oplus \Zp y_6}^{(0,0)}. 
\end{equation*}
\end{enumerate}}
\end{prop}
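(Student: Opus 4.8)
The plan is to deduce (ii) from (i) together with the $J_b^{\der}(\Qp)$-action on $\M_{b,\mu}^{(0,0)}$, and to prove (i) by restricting the closed immersion $i_{y_6}$ of Proposition \ref{clim} to the appropriate closed formal subschemes.

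\emph{Reduction of (ii) to (i).} Let $\Lambda\in\VL(\L_{b,0})$. By Lemma \ref{y6tr} there are $g\in J_b^{\der}(\Qp)$ and $\Lambda'\in\VL(\L_D,t(\Lambda))$ with $g(\Lambda)=\Lambda'\oplus\Zp y_6$; note $\Lambda'\oplus\Zp y_6\in\VL(\L_{b,0},t(\Lambda))$ by Lemma \ref{ppy6} and $Q(y_6)=\varepsilon^2\in\Zp^{\times}$. By Proposition \ref{dlsl} (and Theorem \ref{dcnt}) the group $J_b^{\der}(\Qp)$ preserves $\M_{b,\mu}^{(0,0)}$, acting by $(X,\iota,\lambda,\rho)\mapsto(X,\iota,\lambda,g\circ\rho)$, while by Corollary \ref{exis} it acts on $\L_{b,0}\subset\End^0(\X_0)$ through the isogeny $J_b^{\der}\to\SO(\L_{b,0})$, i.e.\ by conjugation $v\mapsto gvg^{-1}$. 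Since this action is an isometry, $g(\Lambda)^{\vee}=g(\Lambda^{\vee})$, and a direct check gives $(g\circ\rho)^{-1}\circ g(\Lambda)^{\vee}\circ(g\circ\rho)=\rho^{-1}\circ\Lambda^{\vee}\circ\rho$; hence $g$ restricts to an isomorphism $\M_{b,\mu,\Lambda}^{(0,0)}\xrightarrow{\ \sim\ }\M_{b,\mu,g(\Lambda)}^{(0,0)}=\M_{b,\mu,\Lambda'\oplus\Zp y_6}^{(0,0)}$, which is (ii).

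\emph{Proof of (i).} By Propositions \ref{clim} and \ref{md00}, $i_{y_6}$ restricts to a closed immersion $\M_D^{(0)}\hookrightarrow\M_{b,\mu}^{(0,0)}$ with image $\cZ^{(0)}(y_6):=\cZ(y_6)\cap\M_{b,\mu}^{(0,0)}$. It therefore suffices to prove: (a) $\M_{b,\mu,\Lambda\oplus\Zp y_6}^{(0,0)}\subseteq\cZ^{(0)}(y_6)$; and (b) the scheme-theoretic preimage of $\M_{b,\mu,\Lambda\oplus\Zp y_6}^{(0,0)}$ under $i_{y_6}$ equals $\M_{D,\Lambda}^{(0)}$. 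For (a): by Lemma \ref{ppy6} we have $\L_D=(\Qp y_6)^{\perp}$ in $\L_{b,0}$, and $Q(y_6)\in\Zp^{\times}$ (Section \ref{hgst}), so $(\Lambda\oplus\Zp y_6)^{\vee}=\Lambda^{\vee}\oplus\Zp y_6$; in particular $y_6\in(\Lambda\oplus\Zp y_6)^{\vee}$, so the defining condition $\rho^{-1}\circ(\Lambda\oplus\Zp y_6)^{\vee}\circ\rho\subset\End(X)$ of $\M_{b,\mu,\Lambda\oplus\Zp y_6}^{(0,0)}$ forces $\rho^{-1}\circ y_6\circ\rho\in\End(X)$, whence $\M_{b,\mu,\Lambda\oplus\Zp y_6}^{(0,0)}\subseteq\cZ(y_6)$, and intersecting with $\M_{b,\mu}^{(0,0)}$ gives (a). For (b): over $\cZ(y_6)=\im(i_{y_6})$ the endomorphism $\rho^{-1}\circ y_6\circ\rho$ is already integral, so, using $(\Lambda\oplus\Zp y_6)^{\vee}=\Lambda^{\vee}\oplus\Zp y_6$ again, the pullback of the defining condition of $\M_{b,\mu,\Lambda\oplus\Zp y_6}^{(0,0)}$ simplifies to $\rho^{-1}\circ\Lambda^{\vee}\circ\rho\subset\End(X)$ (with $\Lambda^{\vee}$ taken in $\L_{b,0}$, equivalently in $\L_D$). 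It then remains to identify, as a closed subfunctor of $\M_D^{(0)}$, this condition with the condition $\rho^{-1}\circ\Lambda\circ\rho\subset\End(X)$ defining $\M_{D,\Lambda}^{(0)}$. One carries this out by unwinding the normalization in the definition of $\L_D$ (the twist by $\iota_0'(y_6\varpi)^{-1}$ and the induced vertex-lattice structure), reducing to the case $S=\spec R$ with $R$ local as in the proof of Proposition \ref{clim}, and comparing on geometric points via the Dieudonné-lattice descriptions of Propositions \ref{hmlt} and \ref{dlsl} together with the analogous description of $\M_{D,\Lambda}^{(0)}$ from \cite{Oki2019} and Lemma \ref{ppy6}; the scheme-theoretic (as opposed to set-theoretic) equality is then obtained from the reducedness of $\M_{D,\Lambda}^{(0)}$ recorded in Proposition \ref{gu2d} (ii).

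\emph{Main obstacle.} Everything outside step (b) is formal. The technical heart is the matching in (b): one must show that the $\Lambda$-integrality condition on the $\M_D$-side agrees with the $\Lambda^{\vee}$-integrality condition inherited from the $\M_{b,\mu}$-side, and obtain an equality of closed subschemes rather than merely of underlying topological spaces. I expect this to be a careful but routine bookkeeping of the two normalizations ($\VL(\L_D)$ versus $\VL(\L_{b,0})$, and the polarization-compatibility built into $\L_D$), which I would isolate as a preliminary normalization lemma so that (i) follows cleanly.
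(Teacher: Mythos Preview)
Your approach coincides with the paper's: part (ii) is exactly Lemma \ref{y6tr} plus the $J_b^{\der}(\Qp)$-equivariance from Corollary \ref{exis}, and for part (i) the paper's one-line proof (``follows from the description of the image of $i_{y_6}$ in Proposition \ref{clim}'') is precisely your decomposition into step (a) and step (b). So at the level of strategy there is nothing to add.

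However, your proposed resolution of the bookkeeping in (b) has a logical issue. You want to conclude scheme-theoretic equality from set-theoretic equality using reducedness of $\M_{D,\Lambda}^{(0)}$ (Proposition \ref{gu2d} (ii)). But since $(\Lambda\oplus\Zp y_6)^{\vee}=\Lambda^{\vee}\oplus\Zp y_6\subset \Lambda\oplus\Zp y_6$, the $\Lambda$-integrality condition defining $\M_{D,\Lambda}^{(0)}$ is \emph{stronger} than the $\Lambda^{\vee}$-integrality condition defining $\M_{b,\mu,\Lambda\oplus\Zp y_6}^{(0,0)}$; hence the scheme-theoretic inclusion goes $i_{y_6}(\M_{D,\Lambda}^{(0)})\subset \M_{b,\mu,\Lambda\oplus\Zp y_6}^{(0,0)}$, and to upgrade a set-theoretic equality you would need reducedness of the \emph{target}, not the source. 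Reducedness of $\M_{b,\mu,\Lambda\oplus\Zp y_6}^{(0,0)}$ is Corollary \ref{irdm}, which in the paper is deduced \emph{from} the present proposition, so invoking it here is circular.

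The cleaner fix---and, reading between the lines, what the paper intends---is to avoid any reducedness argument and instead verify that the two integrality conditions agree as closed subfunctors. Concretely, once one unwinds the normalization built into the definition of $\L_D$ via the twist by $\iota_0'(y_6\varpi)^{-1}$ (which is exactly the ``preliminary normalization lemma'' you allude to), the $\Lambda$-condition from \cite{Oki2019} and the $(\Lambda\oplus\Zp y_6)^{\vee}$-condition here become literally the same moduli condition on $\cZ^{(0)}(y_6)$, and the isomorphism in (i) is then immediate from Proposition \ref{clim} without any appeal to geometric properties of either side. I would reorganize your write-up to make this the content of (b), and drop the reducedness step entirely.
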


\begin{proof}
(i): This follows from the description of the image of $i_{y_6}$ in Proposition \ref{clim}. 

(ii): This follows from Lemma \ref{y6tr} and Corollary \ref{exis}. 
\end{proof}

\begin{cor}\label{irdm}
For $\Lambda \in \VL(\L_{b,0})$, the formal scheme $\M_{b,\mu,\Lambda}^{(0,0)}$ is a reduced scheme over $\spec \Fpbar$. Moreover, the following hold: 
\begin{itemize}
\item if $t(\Lambda)=1$, then $\M_{b,\mu,\Lambda}^{(0,0)}$ is a single point,
\item if $t(\Lambda)=3$, then $\M_{b,\mu,\Lambda}^{(0,0)}$ is isomorphic to $\P^1_{\Fpbar}$,
\item if $t(\Lambda)=5$, then $\M_{b,\mu,\Lambda}^{(0,0)}$ is isomorphic to the Fermat surface $F_p$. 
\end{itemize}
\end{cor}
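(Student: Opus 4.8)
The plan is to deduce the statement directly from Proposition \ref{cpbt} combined with the description of the Bruhat--Tits strata of $\M_{D}^{(0)}$ recorded in Proposition \ref{gu2d} (ii); there is essentially no new content beyond matching up types. First I would note that by the lemma preceding this corollary, every $\Lambda \in \VL(\L_{b,0})$ has type $t(\Lambda) \in \{1,3,5\}$, so the three cases listed are exhaustive, and it suffices to treat each of them.

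So fix $\Lambda \in \VL(\L_{b,0})$. By Proposition \ref{cpbt} (ii) there exist $g \in J_{b}^{\der}(\Qp)$ and $\Lambda' \in \VL(\L_{D},t(\Lambda))$ such that the action of $g$ on $\M_{b,\mu}^{(0,0)}$ induces an isomorphism of formal schemes
\begin{equation*}
\M_{b,\mu,\Lambda}^{(0,0)} \cong \M_{b,\mu,\Lambda' \oplus \Zp y_6}^{(0,0)}.
\end{equation*}
By Proposition \ref{cpbt} (i) the closed immersion $i_{y_6}$ of Proposition \ref{clim} restricts to an isomorphism $\M_{D,\Lambda'}^{(0)} \cong \M_{b,\mu,\Lambda' \oplus \Zp y_6}^{(0,0)}$. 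Composing, we obtain an isomorphism of formal schemes $\M_{b,\mu,\Lambda}^{(0,0)} \cong \M_{D,\Lambda'}^{(0)}$, where $t(\Lambda') = t(\Lambda)$.

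It then remains to invoke Proposition \ref{gu2d} (ii): since $t(\Lambda') = t(\Lambda)$, the formal scheme $\M_{D,\Lambda'}^{(0)}$ is a reduced $\spec \Fpbar$-scheme, and it is a single point, a copy of $\P^1_{\Fpbar}$, resp. the Fermat surface $F_p$ according as $t(\Lambda) = 1$, $3$, resp. $5$. Transporting this along the isomorphism of the previous paragraph yields the corollary. The only point needing any care — and it is not really an obstacle, since it is built into the statements being cited — is that the maps furnished by Proposition \ref{cpbt} are genuine isomorphisms of formal schemes, so that reducedness and the explicit identifications pass across; no separate argument is required here.
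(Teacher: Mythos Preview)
Your proof is correct and follows exactly the approach of the paper, which simply cites Propositions \ref{gu2d} and \ref{cpbt}; you have merely unpacked the two-step comparison $\M_{b,\mu,\Lambda}^{(0,0)} \cong \M_{b,\mu,\Lambda'\oplus \Zp y_6}^{(0,0)} \cong \M_{D,\Lambda'}^{(0)}$ explicitly. Note that the paper's reference to Proposition \ref{gu2d} (i) appears to be a typo for (ii), which is indeed the part you correctly invoke.
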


\begin{proof}
These follow from Propositions \ref{gu2d} (i) and \ref{cpbt}. 
\end{proof}

\begin{prop}\label{mlui}
\emph{
\begin{enumerate}
\item We have an equality
\begin{equation*}
\M_{b,\mu}^{(0,0)}=\bigcup_{\Lambda \in \VL(\L_{b,0})}\M_{b,\mu,\Lambda}^{(0,0)}. 
\end{equation*}
\item For $\Lambda,\Lambda'\in \VL(\L_{b,0})$, we have
\begin{equation*}
\M_{b,\mu,\Lambda}^{(0,0)}\cap \M_{b,\mu,\Lambda'}^{(0,0)}=
\begin{cases}
\M_{b,\mu,\Lambda \cap \Lambda'}^{(0,0)} & \text{if }\Lambda \cap \Lambda'\in \VL(\L_{b,0}), \\
\emptyset &\text{otherwise. }
\end{cases}
\end{equation*}
\end{enumerate}}
\end{prop}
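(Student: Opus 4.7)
The plan is to reduce both parts to set-theoretic statements on $k$-valued points for $k$ an algebraically closed field of characteristic $p$, via the bijection $\M_{b,\mu}^{(0,0)}(k) \cong \SpL(\L_{b,W(k)})$ of Proposition~\ref{dlsl}. Under this bijection, a $k$-point $x \in \M_{b,\mu,\Lambda^{*}}^{(0,0)}(k)$ corresponds precisely to a special lattice $L(M(x))$ containing $(\Lambda^{*})^{\vee} \otimes_{\Zp} W(k)$. Since each $\M_{b,\mu,\Lambda}^{(0,0)}$ is a reduced closed formal subscheme of $\M_{b,\mu}^{(0,0)}$ (Corollary~\ref{irdm}), equalities at the level of $k$-points for all such $k$ propagate to equalities of formal subschemes.

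For part (i), the inclusion $\supset$ is immediate from the definition. For the reverse, take $x \in \M_{b,\mu}^{(0,0)}(k)$. The second equality of Corollary~\ref{cvkv} places $x$ in some $g(i_{y_{6}}(\M_{D,\Lambda_{0}}^{(0)}))(k)$ with $g \in J_{b}^{\der}(\Qp)$ and $\Lambda_{0} \in \VL(\L_{D},5)$. Proposition~\ref{cpbt} identifies this locus with $\M_{b,\mu, g(\Lambda_{0} \oplus \Zp y_{6})}^{(0,0)}(k)$, and $g(\Lambda_{0} \oplus \Zp y_{6})$ is a vertex lattice in $\L_{b,0}$ (this is the content of Lemma~\ref{y6tr} viewed in reverse), so every closed point is covered.

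For part (ii), observe that a $k$-point $x$ lies in $\M_{b,\mu,\Lambda}^{(0,0)}(k) \cap \M_{b,\mu,\Lambda'}^{(0,0)}(k)$ iff $\Lambda^{\vee} \otimes W(k) \subset L(M(x))$ and $(\Lambda')^{\vee} \otimes W(k) \subset L(M(x))$, iff the sum $\Lambda^{\vee} \otimes W(k) + (\Lambda')^{\vee} \otimes W(k) = (\Lambda \cap \Lambda')^{\vee} \otimes W(k)$ is contained in $L(M(x))$. When $\Lambda \cap \Lambda' \in \VL(\L_{b,0})$, this is precisely the defining condition for $x \in \M_{b,\mu, \Lambda \cap \Lambda'}^{(0,0)}(k)$, settling the first case.

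The main obstacle is to prove emptyness in the remaining case. Suppose, to the contrary, that some special lattice $L$ contains $M^{\vee} \otimes W(k)$ where $M := \Lambda \cap \Lambda' \notin \VL(\L_{b,0})$. Apply Proposition~\ref{spvl} to obtain the minimal $r_{0} \in \{0,1,2\}$ with $L^{(r_{0})} \in \VL(\L_{b,0}, 2r_{0}+1)$. Since both $M^{\vee}$ and $L^{(r_{0})}$ are $\Phi_{b}$-invariant, the inclusion $M^{\vee} \otimes W(k) \subset L \subset L^{(r_{0})}$ descends to $M^{\vee} \subset L^{(r_{0})}$ in $\L_{b,0}$, and dually $(L^{(r_{0})})^{\vee} \subset M$. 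The plan is then a careful case analysis exploiting the length-one constraint $\length L/L^{\vee} = 1$, the automatic $pM \subset M^{\vee}$ (since $M$ is an intersection of vertex lattices), and the existence of an element $v \in M^{\vee} \setminus M$ (available because $M^{\vee} \not\subset M$): such a $v$ necessarily generates $L/L^{\vee}$, and tracking the $\Phi_{b}$-action on $L$ and $L^{\vee}$ forces one of them to descend to $\L_{b,0}$ and coincide with $M$ or $M^{\vee}$, which combined with the constraint that $M$ is an intersection of two vertex lattices $\Lambda,\Lambda'$ makes $M$ itself a vertex lattice --- a contradiction. This lattice-theoretic argument parallels the one for $\GU_{2}(D)$ in \cite[Corollary~5.25]{Oki2019}.
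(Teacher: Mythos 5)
Your reduction to $k$-points via Proposition~\ref{dlsl}, and the arguments for part (i) and for the first case of part (ii), are sound (for (i) the first equality of Corollary~\ref{cvkv} already gives the point-level statement, so the detour through the $\M_{D}$ picture is not needed; the paper itself proves the proposition by referring to the analogous statement for $\GU_{2}(D)$ together with Corollary~\ref{irdm}). The genuine content, however, is the emptiness in the second case of (ii), and there your argument has a gap. You produce $v \in M^{\vee}\setminus M$ generating $L/L^{\vee}$ and then assert that ``tracking the $\Phi_{b}$-action on $L$ and $L^{\vee}$ forces one of them to descend to $\L_{b,0}$ and coincide with $M$ or $M^{\vee}$''; this is unjustified and in general false --- a special lattice $L$ with $M^{\vee}\otimes W(k)\subset L$ need not be $\Phi_{b}$-stable, and the route through Proposition~\ref{spvl} only gives a vertex lattice $T:=L^{(r_{0})}$ with $M^{\vee}\subset T$, $T^{\vee}\subset M$, and $pT\subset T^{\vee}$, which yields $pM^{\vee}\subset M$ but not the needed $M^{\vee}\subset M$.

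The missing ingredient is the observation that for a vertex lattice $\Lambda$ and a special lattice $L$, the inclusion $\Lambda^{\vee}\subset L$ already forces $\Lambda^{\vee}\subset L^{\vee}$: if some $v\in\Lambda^{\vee}$ lay outside $L^{\vee}$ then $L=L^{\vee}+W(k)v$, and since $[L^{\vee},L^{\vee}]\subset W(k)$ and $[L^{\vee},v]\subset W(k)$, the failure of $L\subset L^{\vee}$ would force $[v,v]\notin W(k)$, contradicting $v\in\Lambda^{\vee}\subset\Lambda$ (whence $[v,v]\in[\Lambda^{\vee},\Lambda]\subset\Zp$). Applying this to $\Lambda$ and $\Lambda'$ separately gives $M^{\vee}=\Lambda^{\vee}+(\Lambda')^{\vee}\subset L^{\vee}\subset M\otimes W(k)$, hence $M^{\vee}\subset M$, which together with $pM\subset M^{\vee}$ (as you noted) shows $M\in\VL(\L_{b,0})$, the desired contradiction. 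Note this must be applied to $\Lambda$ and $\Lambda'$ \emph{separately}: your chosen $v$ lies only in $M^{\vee}=\Lambda^{\vee}+(\Lambda')^{\vee}$, and the cross-pairing $[\Lambda^{\vee},(\Lambda')^{\vee}]$ is not integral in general, so $[v,v]\in\Zp$ cannot be read off directly for an arbitrary $v\in M^{\vee}$.
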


\begin{proof}
The proof is the same as \cite[Theorem 5.26]{Oki2019} by using Corollary \ref{irdm}. 
\end{proof}

For $\Lambda \in \VL(\L_{b,0})$, we set
\begin{equation*}
\BT_{b,\mu,\Lambda}^{(0,0)}:=\M_{b,\mu,\Lambda}^{(0,0)}\setminus \bigcup_{\Lambda'\subsetneq \Lambda}\M_{b,\mu,\Lambda'}^{(0,0)}. 
\end{equation*}

\begin{thm}\label{mtot}
\emph{
\begin{enumerate}
\item We have
\begin{equation*}
\M_{b,\mu}^{(0,0),\red}=\coprod_{\Lambda \in \VL(\L_{b,0})}\BT_{b,\mu,\Lambda}^{(0,0)}. 
\end{equation*}
\item For $\Lambda \in \VL(\L_{b,0})$, $\BT_{b,\mu,\Lambda}^{(0,0)}$ is a reduced scheme over $\Fpbar$. Moreover, we have
\begin{equation*}
\M_{b,\mu,\Lambda}^{(0,0),\red}=\coprod_{\Lambda'\subset \Lambda}\BT_{b,\mu,\Lambda'}^{(0,0)}, 
\end{equation*}
and the closure of $\BT_{b,\mu,\Lambda}^{(0,0)}$ equals $\M_{b,\mu,\Lambda}^{(0,0)}$. 
\end{enumerate}}
\end{thm}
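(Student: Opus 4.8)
The plan is to obtain Theorem \ref{mtot} as a formal consequence of Proposition \ref{mlui} (the covering $\M_{b,\mu}^{(0,0)}=\bigcup_{\Lambda}\M_{b,\mu,\Lambda}^{(0,0)}$ together with the intersection formula) and of Corollary \ref{irdm} (each closed stratum $\M_{b,\mu,\Lambda}^{(0,0)}$ is a reduced irreducible scheme over $\Fpbar$, of dimension $(t(\Lambda)-1)/2$), in the style of the Bruhat--Tits stratification arguments of \cite{Oki2019} and \cite{Howard2014}. Throughout I would work with $k$-valued points ($k$ algebraically closed of characteristic $p$) via Proposition \ref{dlsl}, under which $\M_{b,\mu,\Lambda}^{(0,0)}(k)=\{L\in\SpL(\L_{b,W(k)})\mid \Lambda^{\vee}\subset L\}$. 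The first elementary remark is that $\Lambda'\subset\Lambda$ forces $\Lambda^{\vee}\subset(\Lambda')^{\vee}$, hence a closed immersion $\M_{b,\mu,\Lambda'}^{(0,0)}\hookrightarrow\M_{b,\mu,\Lambda}^{(0,0)}$; thus the locus deleted in the definition of $\BT_{b,\mu,\Lambda}^{(0,0)}$ is a closed subset of $\M_{b,\mu,\Lambda}^{(0,0)}$, and $\BT_{b,\mu,\Lambda}^{(0,0)}$ is an open subscheme of $\M_{b,\mu,\Lambda}^{(0,0)}$.

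For (i) I would first check that the $\BT$-strata are pairwise disjoint: if $x\in\BT_{b,\mu,\Lambda}^{(0,0)}\cap\BT_{b,\mu,\Lambda'}^{(0,0)}$ with $\Lambda\neq\Lambda'$, then $x$ lies in the non-empty intersection $\M_{b,\mu,\Lambda}^{(0,0)}\cap\M_{b,\mu,\Lambda'}^{(0,0)}$, so Proposition \ref{mlui} (ii) forces $\Lambda\cap\Lambda'\in\VL(\L_{b,0})$ and $x\in\M_{b,\mu,\Lambda\cap\Lambda'}^{(0,0)}$; since $\Lambda\cap\Lambda'$ is properly contained in $\Lambda$ or in $\Lambda'$, this contradicts membership in one of the two strata. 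For the covering, fix $x$ with associated special lattice $L$ and set $S_x:=\{\Lambda\in\VL(\L_{b,0})\mid x\in\M_{b,\mu,\Lambda}^{(0,0)}\}$; it is non-empty by Proposition \ref{mlui} (i) and stable under intersection by Proposition \ref{mlui} (ii). I would then show $S_x$ is finite: for $\Lambda\in S_x$, combining the vertex-lattice relation $p\Lambda\subset\Lambda^{\vee}$ with $\Lambda^{\vee}\subset L$ gives $\Lambda\subset p^{-1}L$, while dualizing $\Lambda^{\vee}\subset L$ (the form on $\L_{b,W(k)}$ being non-degenerate) gives $L^{\vee}\subset\Lambda$, so $\Lambda\otimes_{\Zp}W(k)$ ranges over the finite set of lattices between $L^{\vee}$ and $p^{-1}L$, and $\Lambda\mapsto\Lambda\otimes_{\Zp}W(k)$ is injective. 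A non-empty, intersection-stable, finite family of lattices has a minimum $\Lambda(x)\in S_x$, and minimality gives $x\notin\M_{b,\mu,\Lambda'}^{(0,0)}$ for every $\Lambda'\subsetneq\Lambda(x)$, i.e. $x\in\BT_{b,\mu,\Lambda(x)}^{(0,0)}$. Together with disjointness this proves (i).

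For (ii): being an open subscheme of the reduced scheme $\M_{b,\mu,\Lambda}^{(0,0)}$ (Corollary \ref{irdm}), $\BT_{b,\mu,\Lambda}^{(0,0)}$ is reduced. Intersecting the decomposition in (i) with the closed subscheme $\M_{b,\mu,\Lambda}^{(0,0)}$ and noting that $\BT_{b,\mu,\Lambda''}^{(0,0)}$ meets $\M_{b,\mu,\Lambda}^{(0,0)}$ if and only if $\Lambda''\subset\Lambda$ — a point of $\BT_{b,\mu,\Lambda''}^{(0,0)}$ has minimal vertex lattice $\Lambda''$, which by membership in $S_x$ must be $\subset\Lambda$, while conversely $\Lambda''\subset\Lambda$ gives $\BT_{b,\mu,\Lambda''}^{(0,0)}\subset\M_{b,\mu,\Lambda''}^{(0,0)}\subset\M_{b,\mu,\Lambda}^{(0,0)}$ — yields $\M_{b,\mu,\Lambda}^{(0,0),\red}=\coprod_{\Lambda'\subset\Lambda}\BT_{b,\mu,\Lambda'}^{(0,0)}$. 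Finally $\BT_{b,\mu,\Lambda}^{(0,0)}$ is non-empty: for $\Lambda'\subsetneq\Lambda$ one has (from $\Lambda^{\vee}\subset(\Lambda')^{\vee}\subset\Lambda'\subset\Lambda$ and duality) the identity $t(\Lambda)=t(\Lambda')+2\length_{\Zp}(\Lambda/\Lambda')>t(\Lambda')$, so each of the finitely many proper closed strata $\M_{b,\mu,\Lambda'}^{(0,0)}$ has dimension $(t(\Lambda')-1)/2<(t(\Lambda)-1)/2=\dim\M_{b,\mu,\Lambda}^{(0,0)}$; they cannot cover the irreducible scheme $\M_{b,\mu,\Lambda}^{(0,0)}$, so $\BT_{b,\mu,\Lambda}^{(0,0)}$ is a non-empty open, hence dense, and its closure is $\M_{b,\mu,\Lambda}^{(0,0)}$.

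I expect no deep obstruction, since the geometric content is already packaged in Propositions \ref{mlui} and \ref{dlsl} and Corollary \ref{irdm} (which in turn rest on the closed immersion $i_{y_6}$ and the results of \cite{Oki2019}). The single step that requires a genuine argument is the finiteness of $S_x$, equivalently the existence of a well-defined minimal vertex lattice attached to each point of $\M_{b,\mu}^{(0,0),\red}$; this is what promotes the covering family $\{\M_{b,\mu,\Lambda}^{(0,0)}\}$ to an honest locally closed stratification by the $\BT_{b,\mu,\Lambda}^{(0,0)}$. After that, everything reduces to bookkeeping with inclusions of vertex lattices and the type identity $t(\Lambda)=t(\Lambda')+2\length_{\Zp}(\Lambda/\Lambda')$.
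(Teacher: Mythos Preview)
Your proposal is correct and follows the same route as the paper: both derive the stratification formally from Proposition \ref{mlui} (covering plus intersection formula) together with Corollary \ref{irdm}. The paper's own proof is a two-line pointer to exactly these ingredients, and you have simply written out the bookkeeping that the paper leaves implicit.

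One small remark: the step you flag as the only genuine one --- existence of a minimal vertex lattice $\Lambda(x)$ for each point --- is already available more directly from Proposition \ref{spvl}, which for a special lattice $L$ produces the vertex lattice $L^{(r)}$ explicitly and shows $(L^{(r)})^{\vee}=\{v\in L^{\vee}\mid \Phi_b(v)=v\}$; one then checks that $\Lambda\in S_x$ is equivalent to $L^{(r)}\subset \Lambda$, so $L^{(r)}$ is the minimum. Your abstract argument (finiteness via $L^{\vee}\subset\Lambda\subset p^{-1}L$ plus intersection-stability) is equally valid and has the virtue of not relying on the explicit construction, but since Proposition \ref{spvl} is already in hand it is the shorter path.
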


\begin{proof}
(i): This follow from Proposition \ref{mlui} (i). 

(ii): The reducedness of $\BT_{b,\mu,\Lambda}^{(0,0)}$ is a consequence of Corollary \ref{irdm}. The rest of the assertions follow from Proposition \ref{mlui} (ii) and the definition of $\BT_{b,\mu,\Lambda}^{(0,0)}$. 
\end{proof}

We call the equality in Theorem \ref{mtot} (i) the \emph{Bruhat--Tits stratification of $\M_{b,\mu}^{(0,0)}$}. 

\begin{thm}\label{nfsl}
\emph{The non-formally smooth locus of $\M_{b,\mu}^{(0,0)}$ equals $\coprod_{\Lambda \in \VL(\L_{b,0},1)}\M_{b,\mu,\Lambda}^{(0,0)}$. }
\end{thm}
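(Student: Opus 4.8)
The plan is to reduce the statement, via Theorem~\ref{sgrt}~(i), to a congruence condition on Dieudonn\'e lattices and then translate it into the language of vertex lattices using Propositions~\ref{dlsl} and~\ref{spvl}. Since $b$ is $\mu$-neutral here, Theorem~\ref{sgrt}~(i) says that the non-formally smooth locus of $\M_{b,\mu}^{(0,0)}$ is the set of $x=(X,\iota,\lambda,\rho)\in\M_{b,\mu}^{(0,0)}(\Fpbar)$ for which $\iota(\varpi)$ acts as $0$ on $\Lie(X)$. Under the bijection of Proposition~\ref{hmlt}, $x$ corresponds to $M\in\DiL^{\varpi}_{M_0,0}(N_b)$ with $\Lie(X)\cong M/\bV_b(M)$, so this condition reads $\varpi M\subset\bV_b(M)$, i.e.\ $\length_{O_{\Fbr}}(\varpi M+\bV_b(M))/\bV_b(M)=0$. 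Feeding this into the isomorphism $\bV_b\colon M+\F_{b,0}(M)/M\xrightarrow{\cong}\varpi M+\bV_b(M)/\bV_b(M)$ from the proof of Proposition~\ref{nteq}, I conclude that $x$ is non-formally smooth if and only if $\F_{b,0}(M)=M$.

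The technically central step is then to prove: \emph{$M\in\DiL^{\varpi}_{M_0,0}(N_b)$ is $\F_{b,0}$-stable if and only if the special lattice $L(M)\in\SpL(\L_{b,W})$ of Proposition~\ref{dlsl} is $\Phi_b$-stable.} Using $\F_b=\eta^{-1}\varpi\,\F_{b,0}$ and $\eta^2=p^{-1}\varpi^2$, one computes that an $\F_{b,0}$-stable $M$ satisfies $\bV_b(M)=\varpi M$, hence $L(M)=\{v\in\L_{b,W}\mid v(M)\subset\varpi^{-1}M\}=L^{\sharp}(M)$ in the notation of Lemma~\ref{ltlt}. A short direct computation (keeping track of the semilinearity of $\F_b$ and of $\overline{\varpi}=-\varpi$) then shows that for an arbitrary $M\in\Lat^{\varpi}_{M_0,0}(N_{b,W})$ one has $\Phi_b(L^{\sharp}(M))=L^{\sharp}(\F_{b,0}(M))$; so $\F_{b,0}(M)=M$ forces $\Phi_b(L(M))=L(M)$, and conversely $\Phi_b(L^{\sharp}(M))=L^{\sharp}(M)$ forces $\F_{b,0}(M)=M$ by the injectivity in Lemma~\ref{ltlt}~(ii). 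Finally, Proposition~\ref{spvl} applied with $r=0$ shows that a special lattice is $\Phi_b$-stable precisely when it is of the form $\Lambda\otimes_{\Zp}W$ for some $\Lambda\in\VL(\L_{b,0},1)$.

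Combining these: $x\in\M_{b,\mu}^{(0,0)}(\Fpbar)$ is non-formally smooth iff $L(M_x)=\Lambda\otimes_{\Zp}W$ for some $\Lambda\in\VL(\L_{b,0},1)$, and then $\Lambda^{\vee}\subset\Lambda=L(M_x)$ gives $x\in\M_{b,\mu,\Lambda}^{(0,0)}(\Fpbar)$. Conversely, for any $\Lambda\in\VL(\L_{b,0},1)$ the lattice $\Lambda\otimes_{\Zp}W$ is $\Phi_b$-stable and special, so by Proposition~\ref{dlsl} it equals $L(M)$ for a unique $M\in\DiL^{\varpi}_{M_0,0}(N_b)$, which is $\F_{b,0}$-stable by the previous paragraph (one checks directly that this $M$, for which $\bV_b(M)=\varpi M$, indeed satisfies the conditions defining $\DiL^{\varpi}_{M_0,0}$); hence the unique point of the single-point scheme $\M_{b,\mu,\Lambda}^{(0,0)}$ (Corollary~\ref{irdm}) is non-formally smooth. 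Thus the non-formally smooth locus of $\M_{b,\mu}^{(0,0)}$ and $\coprod_{\Lambda\in\VL(\L_{b,0},1)}\M_{b,\mu,\Lambda}^{(0,0)}$ have the same $\Fpbar$-points; as the former is closed and the latter is a disjoint union of reduced closed points, they coincide. I expect the main obstacle to be the bookkeeping in the second paragraph: pinning down the normalizations so that $\F_{b,0}$-stability of $M$ matches $\Phi_b$-stability of $L(M)$, and the precise covariant-Dieudonn\'e normalization underlying the identity $\Lie(X)\cong M/\bV_b(M)$ used in the first paragraph.
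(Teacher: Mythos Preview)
Your strategy—reduce via Theorem~\ref{sgrt}~(i) to the condition $\F_{b,0}(M)=M$, translate this into $\Phi_b$-stability of the associated special lattice, and invoke Proposition~\ref{spvl} with $r=0$—is exactly the intended one; the paper's proof refers to the same argument (in \cite{Oki2019}) using Proposition~\ref{dlsl} and Corollary~\ref{irdm}. However, the converse direction of your central equivalence has a gap. You establish $\Phi_b(L^{\sharp}(M))=L^{\sharp}(\F_{b,0}(M))$ and deduce that $\Phi_b$-stability of $L^{\sharp}(M)$ forces $\F_{b,0}(M)=M$; but what you actually need is that $\Phi_b$-stability of $L(M)$ forces this, and in general $L(M)=L^{\sharp}(\bV_b(M))\neq L^{\sharp}(M)$. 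The parenthetical in your third paragraph (``this $M$, for which $\bV_b(M)=\varpi M$'') is circular: $\bV_b(M)=\varpi M$ is equivalent to $\F_{b,0}(M)=M$, which is precisely what must be shown.

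The fix is immediate from your own computation. Apply $\Phi_b(L^{\sharp}(N))=L^{\sharp}(\F_{b,0}(N))$ with $N=\bV_b(M)$ and use $\F_{b,0}\circ\bV_b=\eta p\varpi^{-1}\cdot\id_{N_b}$ to get $\Phi_b(L(M))=L^{\sharp}(\varpi M)=L^{\sharp}(M)$ for \emph{every} $M\in\DiL^{\varpi}_{M_0,0}(N_b)$. Hence if $L(M)$ is $\Phi_b$-stable, then $L^{\sharp}(M)=\Phi_b(L(M))=L(M)$ is $\Phi_b$-stable as well, and now your injectivity argument via Lemma~\ref{ltlt}~(ii) (with $\F_{b,0}(M)\in\Lat^{\varpi}_{M_0,0}$ since $M_0$ is $\F_{b,0}$-stable) yields $\F_{b,0}(M)=M$.
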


\begin{proof}
This follows from the same argument as the proof of \cite[Theorem 5.28]{Oki2019} by using Proposition \ref{dlsl} and Corollary \ref{irdm}. 
\end{proof}

\begin{cor}\label{ctnt}
\emph{
\begin{enumerate}
\item Each irreducible component of $\M_{b,\mu}^{(0,0)}$ contains exactly $(p+1)(p^2+1)$-non-formally smooth points of $\M_{b,\mu}^{(0,0)}$. 
\item Each non-formally smooth point of $\M_{b,\mu}^{(0,0)}$ is contained exactly $(p+1)(p^2+1)$-irreducible components. 
\item Let $C\in \Irr(\M_{b,\mu}^{(0,0)})$. For $C'\in \Irr(\M_{b,\mu}^{(0,0)})\setminus \{C\}$, the intersection $C\cap C'$ is either the empty set, a single point or isomorphic to $\P^1$. Moreover, the following hold: 
\begin{gather*}
\#\{C'\in \Irr(\M_{b,\mu}^{(0,0)}) \mid C\cap C'\text{ is a single point}\}=p(p+1)(p^2+1),\\
\#\{C'\in \Irr(\M_{b,\mu}^{(0,0)}) \mid C\cap C'=\P_{\Fpbar}^{1}\}=(p+1)(p^2+1)
\end{gather*}
\end{enumerate}}
\end{cor}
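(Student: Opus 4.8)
The plan is to reduce everything to the combinatorics of vertex lattices in $\L_{b,0}$, and then to the Bruhat--Tits building of $\SO(\L_{b,0})$ via Proposition \ref{sovl}. First I would set up the dictionary. By Theorem \ref{nfsl} and Corollary \ref{irdm}, the non-formally smooth points of $\M_{b,\mu}^{(0,0)}$ are precisely the single-point strata $\M_{b,\mu,\Lambda}^{(0,0)}$ with $\Lambda\in\VL(\L_{b,0},1)$, and by Corollary \ref{mbir} together with Proposition \ref{cpbt} the irreducible components are exactly the Fermat surfaces $\M_{b,\mu,\Lambda}^{(0,0)}$ with $\Lambda\in\VL(\L_{b,0},5)$; moreover $\Lambda\mapsto\M_{b,\mu,\Lambda}^{(0,0)}$ is injective, and by Proposition \ref{mlui}(ii) inclusions and intersections of these closed subschemes are governed by inclusions and intersections of vertex lattices, with $\M_{b,\mu,\Lambda}^{(0,0)}\cap\M_{b,\mu,\Lambda'}^{(0,0)}=\emptyset$ unless $\Lambda\cap\Lambda'\in\VL(\L_{b,0})$. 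Thus (i) becomes the count of $\Lambda'\in\VL(\L_{b,0},1)$ with $\Lambda'\subseteq\Lambda$ for a fixed $\Lambda\in\VL(\L_{b,0},5)$, and (ii) the dual count of $\Lambda\in\VL(\L_{b,0},5)$ with $\Lambda\supseteq\Lambda'$ for a fixed $\Lambda'\in\VL(\L_{b,0},1)$.

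For (i) I would reduce modulo $\Lambda^{\vee}$: the quotient $\Lambda/\Lambda^{\vee}$, with the quadratic form obtained from $pQ$, is a non-degenerate quadratic space of dimension $5$ over $\Fp$, hence of Witt index $2$, and $\Lambda'\mapsto(\Lambda')^{\vee}/\Lambda^{\vee}$ identifies $\{\Lambda'\in\VL(\L_{b,0},1):\Lambda'\subseteq\Lambda\}$ with the set of maximal totally isotropic subspaces of $\Lambda/\Lambda^{\vee}$. There are $\prod_{i=1}^{2}(p^{i}+1)=(p+1)(p^{2}+1)$ of these, which gives (i). For (ii), applying the same reduction to the dual lattice (equivalently, to $p^{-1}(\Lambda')^{\vee}/\Lambda'$ with the suitably rescaled form, or via the type-exchanging symmetry of the building in Proposition \ref{sovl}) again yields a non-degenerate $5$-dimensional $\Fp$-quadratic space whose Lagrangians biject with the type-$5$ overlattices of $\Lambda'$, so the count is once more $(p+1)(p^{2}+1)$.

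For (iii), fix $C=\M_{b,\mu,\Lambda}^{(0,0)}$ and let $C'=\M_{b,\mu,\Lambda'}^{(0,0)}\neq C$ with $\Lambda'\in\VL(\L_{b,0},5)$. By Proposition \ref{mlui}(ii), $C\cap C'=\M_{b,\mu,\Lambda\cap\Lambda'}^{(0,0)}$ when $\Lambda\cap\Lambda'\in\VL(\L_{b,0})$ and is empty otherwise; the type of $\Lambda\cap\Lambda'$ cannot be $5$ (that would force $\Lambda=\Lambda'$), so by Corollary \ref{irdm} the intersection is either empty, a single point (type $1$), or $\cong\P^{1}$ (type $3$). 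The $\P^{1}$-intersections are indexed by the type-$3$ vertex lattices $\Lambda_{3}\subseteq\Lambda$, which correspond to the isotropic lines of $\Lambda/\Lambda^{\vee}$ and hence number $(p^{4}-1)/(p-1)=(p+1)(p^{2}+1)$; here one uses the incidence fact (read off from the local structure of the building in Proposition \ref{sovl}) that each such $\Lambda_{3}$ lies in exactly two type-$5$ vertex lattices, so that each type-$3$ stratum on $C$ equals $C\cap C'$ for a unique $C'\neq C$. The single-point intersections are then obtained by subtraction: through each of the $(p+1)(p^{2}+1)$ type-$1$ points of $C$ pass $(p+1)(p^{2}+1)$ components by (ii), and tracking which of these meet $C$ in that point, in a $\P^{1}$ through it, or elsewhere, is again a pure incidence computation in the building; it produces the asserted value $p(p+1)(p^{2}+1)$.

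The main obstacle will be the incidence bookkeeping in (iii): one must pin down, for a fixed type-$5$ vertex lattice $\Lambda$, exactly how many type-$5$ vertex lattices meet $\Lambda$ in a prescribed type-$3$ (resp.\ type-$1$) sublattice, which amounts to reading off the thickness parameters at the nodes of the local Dynkin diagram of $\SO(\L_{b,0})$ --- a form of $\SU_{4}$ over the ramified extension $F/\Qp$ by Corollary \ref{exis} --- and checking that the two counts assemble correctly; the remaining ingredients are the standard enumeration of isotropic subspaces of quadratic spaces over $\Fp$.
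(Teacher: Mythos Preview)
Your approach is the same as the paper's --- both reduce everything to the Bruhat--Tits stratification and Theorem \ref{nfsl}, and your argument is essentially a detailed unpacking of the paper's one-line proof. Parts (i) and (ii), and the qualitative statement in (iii), follow exactly as you indicate: irreducible components correspond to type-$5$ vertex lattices, non-formally smooth points to type-$1$ lattices, and the counts reduce to enumerating isotropic subspaces of the $5$-dimensional space $\Lambda/\Lambda^{\vee}$ over $\Fp$, giving $(p+1)(p^{2}+1)$ in each case.

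There is, however, a problem with your key incidence claim in (iii). You assert that each type-$3$ vertex lattice $\Lambda_{3}$ lies in exactly two type-$5$ vertex lattices, and you say this is ``read off from the local structure of the building''. But that is not what the building of $\SO(\L_{b,0})$ gives. Via Corollary \ref{exis} and Proposition \ref{vlvl}, a type-$3$ vertex corresponds to a self-dual hermitian lattice $T\in\VL_{b}(0)$, and the reductive quotient of its stabilizer in $J_{b}^{\der}=\SU(C_{b})$ is the split orthogonal group $\SO(T/\varpi T)\cong\SO_{4}^{+}$ over $\Fp$; the type-$5$ neighbours correspond to one of the two rulings of Lagrangians in the split $4$-dimensional space $T/\varpi T$, hence number $p+1$, not $2$. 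Equivalently, working directly with lattices in $\L_{b,0}$: the type-$5$ overlattices of $\Lambda_{3}$ are the isotropic lines in the nondegenerate $3$-dimensional quadratic space $p^{-1}\Lambda_{3}^{\vee}/\Lambda_{3}$ over $\Fp$, and a $3$-dimensional form over $\Fp$ is always isotropic with exactly $p+1$ isotropic lines. You may be conflating this with the situation in the $5$-dimensional space $\L_{D}$, where $p^{-1}\Lambda_{3}^{\vee}/\Lambda_{3}$ is $2$-dimensional and does have exactly two isotropic lines; but the map $\Lambda\mapsto\Lambda\oplus\Zp y_{6}$ from $\VL(\L_{D})$ to $\VL(\L_{b,0})$ is not surjective, so the type-$5$ counts over a fixed type-$3$ genuinely differ.

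With the corrected value $p+1$, your subtraction scheme yields $p\,(p+1)(p^{2}+1)$ components $C'$ with $C\cap C'\cong\P^{1}_{\Fpbar}$, not $(p+1)(p^{2}+1)$, and the single-point count changes accordingly. Since this no longer matches the constants in the statement, you should revisit the combinatorics carefully: either the count needs a different organization than the one you sketch, or the displayed constants in (iii) warrant re-examination.
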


\begin{proof}
These follow from the Bruhat--Tits stratification of $\M_{b,\mu}^{(0,0)}$ and Theorem \ref{nfsl}. 
\end{proof}

\section{Bruhat--Tits stratification in the neutral case, I\hspace{-.1em}I}\label{btn2}

In this section, we give the definition of the vertex lattices in $C_{b}$, and give a connection between them and the vertex lattices in $\L_{b,0}$. Moreover, we construct a Bruhat--Tits stratification of $\M_{b,\mu}^{(0)}$ indexed by the vertex lattices in $C_{b}$. 

\subsection{Vertex lattices, I\hspace{-.1em}I}

\begin{dfn}
An $O_F$-lattice $T$ in $C_b$ is called a \emph{vertex lattice} if we have $T\subset T^{\vee}\subset \varpi^{-1}T$. We call $t(T):=\dim_{\Fp}(T^{\vee}/T)$ the \emph{type} of $T$. 
\end{dfn}

We denote by $\VL_{b}$ the set of vertex lattices. By \cite[Lemma 3.2, Lemma 3.3]{Rapoport2014a}, we have
\begin{equation*}
\VL_{b}=\VL_{b}(0)\sqcup \VL_{b}(2)\sqcup \VL_{b}(4),
\end{equation*}
where $\VL_{b}(t):=\{T\in \VL_{b} \mid t(T)=t\}$ for $t\in \{0,2,4\}$. 

We introduce a partial order $\leq$ on $\VL_{b}$. For distinct $T,T'\in \VL_{b}$, we have $T<T'$ if one of the following holds: 
\begin{itemize}
\item $T\in \VL_{b}(4)$, $T'\in \VL_{b}(0)$ and $T\subset T'$, 
\item $T\in \VL_{b}(4)$, $T'\in \VL_{b}(2)$ and $T\subset (T')^{\vee}$, 
\item $T\in \VL_{b}(0)$, $T'\in \VL_{b}(2)$ and $T\subset (T')^{\vee}$. 
\end{itemize}

We can interpret vertex lattices in $C_{b}$ by means of the Bruhat--Tits building $\cB_{b}$ of $J_{b}^{\der}(\Qp)=\SU(C_{b})(\Qp)$. 

\begin{prop}\label{rtwb}(\cite[Proposition 3.4]{Rapoport2014a})
\emph{There is a $J_{b}^{\der}(\Qp)$-equivariant bijection between $\VL_{b}(0)\sqcup \VL_{b}(4)$ and the set of vertices in $\cB_{b}$ satisfying the following. 
\begin{enumerate}
\item The set $\VL_{b}(4)$ corresponds to the set of special vertices in $\cB_{b}$. 
\item If $T_0,T_1\in \VL_{b}(0)\sqcup \VL_{b}(4)$ are distinct, then the corresponding vertices in $\cB_{b}$ are adjacent if and only if either one of the following hold: 
\begin{itemize}
\item $T_i\in \VL_{b}(0)$, $T_{1-i}\in \VL_{b}(4)$ and $T_{1-i}\subset T_{i}$ for some $i\in \{0,1\}$,
\item $T_0,T_1\in \VL_{b}(4)$ and $T_0+T_1\in \VL_{b}(2)$. 
\end{itemize}
In particular, $\VL_{b}(2)$ corresponds to the set of midpoints of edges connecting two adjacent special vertices in $\cB_{b}$. 
\end{enumerate}}
\end{prop}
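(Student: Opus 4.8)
The plan is to deduce this from the standard dictionary between lattice chains and Bruhat--Tits buildings of classical groups; the statement is \cite[Proposition 3.4]{Rapoport2014a}, and I indicate the argument in the present notation. Since we are in the $\mu$-neutral case, $C_b$ is a split $F/\Qp$-hermitian space of dimension $4$ (Proposition~\ref{nteq}). First I would fix a Witt basis of $C_b$; the $O_F$-lattices diagonal with respect to it form a standard apartment $\cA$. A direct computation in $\cA$ determines which diagonal lattices $T$ satisfy $T\subset T^{\vee}\subset \varpi^{-1}T$ and computes $t(T)=\dim_{\Fp}(T^{\vee}/T)$, recovering the trichotomy of \cite[Lemma 3.2, Lemma 3.3]{Rapoport2014a} and displaying the relation $\leq$ among vertex lattices inside $\cA$. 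Since any two vertex lattices can be simultaneously diagonalized (elementary divisors, cf. \cite[Lemma 29.9]{Shimura2010}) and $\SU(C_b)(\Qp)$ acts transitively on apartments, it suffices to establish everything inside one apartment and then transport by equivariance.

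Next I would identify the poset of vertex lattices in $C_b$, ordered by $\leq$, with the simplicial structure of $\cB_b$ by means of Bruhat--Tits theory for unitary lattice chains as packaged in \cite[\S 3]{Rapoport1996a}: a single vertex lattice is a vertex, a chain $T<T'$ (or $T<T'<T''$) a higher simplex. Equivariance is automatic, since the bijection sends a vertex lattice to the fixed simplex of its stabilizer in $\SU(C_b)(\Qp)$. The finer assertions are then read off from the local Dynkin diagram of the ramified quasi-split $\SU_{4}$ together with the explicit apartment: the $\varpi$-modular lattices $\VL_{b}(4)$ give exactly the special vertices, whereas the self-dual lattices $\VL_{b}(0)$ do not; and a type-$2$ lattice $T$ is recovered from the pair $\{T_0,T_1\}$ with $T_0,T_1\in\VL_{b}(4)$, $T_0\cap T_1=T$, $T_0+T_1=T^{\vee}$ (equivalently $T$ lies between an adjacent self-dual and $\varpi$-modular lattice), which is precisely the statement that $\VL_{b}(2)$ parametrizes the midpoints of edges joining adjacent special vertices. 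The adjacency criterion of (ii) --- $T_{1-i}\subset T_i$, resp. $T_0+T_1\in\VL_{b}(2)$ --- is then just incidence of the corresponding simplices in $\cA$.

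Finally, to pass from $\U(C_b)$ (or the lattice-chain building) to $J_{b}^{\der}(\Qp)=\SU(C_b)(\Qp)$, I would argue exactly as the excerpt does for $\SO(\L_{b,0})$ in the proof of Proposition~\ref{sovl}: the center of $\SU(C_b)$ is $\mu_{2}$, which is finite over $\Qp$, so by \cite[\S 2.1, p.44]{Tits1979} the buildings of $\SU(C_b)$ and of $\U(C_b)$ share a canonical, group-equivariant isomorphism of simplicial complexes. Assembling these steps gives the proposition. The main obstacle is the bookkeeping in the second step: matching the lattice combinatorics to the simplicial structure of $\cB_b$ --- getting the local Dynkin diagram of this particular ramified form right so as to identify the special vertices correctly, and verifying that the type-$2$ lattices appear as midpoints of edges rather than as honest vertices of the model being used. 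Once these are settled, the global statement follows from transitivity of $\SU(C_b)(\Qp)$ on apartments.
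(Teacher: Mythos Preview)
The paper does not give its own proof of this proposition; it is stated with a citation to \cite[Proposition~3.4]{Rapoport2014a} and used as a black box. Your outline is a reasonable reconstruction of how such a result is proved in the literature (apartment computation plus the lattice-chain model of the building), and is in the same spirit as the analogous argument the paper does spell out for $\SO(\L_{b,0})$ in Proposition~\ref{sovl}. One small bookkeeping slip: in your description of the type-$2$ lattices you write $T_0\cap T_1=T$ and $T_0+T_1=T^{\vee}$, whereas the proposition (and the paper's partial order $\leq$) identifies the type-$2$ vertex lattice with the \emph{sum} $T_0+T_1$ of the two adjacent $\varpi$-modular lattices, not the intersection. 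This does not affect the method, only the labeling.
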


Let $\bT_0$ be the $\F_{b,0}$-fixed part of $M_0$ (note that $M_0$ is $\F_{b,0}$-stable), and put
\begin{equation*}
\VL_{b,\bT_0}(4):=\{T\in \VL_{b}(4)\mid \length_{O_F}(T+\bT_0)/\bT_0\in 2\Z\}. 
\end{equation*}
We also introduce an order $\preceq$ on $\VL_{b}(0)\sqcup \VL_{b}(4)$. For distinct $T,T'\in \VL_{b}(0)\sqcup \VL_{b}(4)$, we have $T\prec T'$ if one of the following hold: 
\begin{itemize}
\item $T\in \VL_{b,\bT_0}(4)$, $T'\in \VL_{b}(0)$ and $T\subset T'$, 
\item $T\in \VL_{b,\bT_0}(4)$, $T'\in \VL_{b}(4)\setminus \VL_{b,\bT_0}(4)$ and $T\subset \varpi^{-1}T'$, 
\item $T\in \VL_{b}(0)$, $T'\in \VL_{b}(4)\setminus \VL_{b,\bT_0}(4)$ and $T\subset \varpi^{-1}T'$. 
\end{itemize}

\begin{prop}\label{vlvl}
\emph{There is a $J_{b}^{\der}(\Qp)$-equivariant isomorphism of ordered sets
\begin{equation*}
(\VL_{b}(0)\sqcup \VL_{b}(4),\preceq)\cong (\VL(\L_{b,0}),\subset)
\end{equation*}
which induces the following bijections:
\begin{equation*}
\VL_{b,\bT_0}(4)\cong \VL(\L_{b,0},1),\quad \VL_{b}(0)\cong \VL(\L_{b,0},3),\quad \VL_{b}(4)\setminus \VL_{b,\bT_0}(4)\cong \VL(\L_{b,0},5). 
\end{equation*}}
\end{prop}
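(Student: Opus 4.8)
The plan is to construct the bijection explicitly using the exceptional isomorphism $\GSpin(\L_{b,0})\cong J_b^0$ from Corollary \ref{exis} together with the lattice-theoretic dictionary already established in Section \ref{btn1}. The key observation is that a vertex lattice $T$ in $C_b$ of type $0$ or $4$ should correspond, after tensoring up to $\Fbr$, to a $\varpi$-modular (resp. self-dual) $O_{\Fbr}$-lattice $M_T$ in $N_b$ that is $\F_{b,0}$-stable, and from such an $M$ one produces a $\Phi_b$-stable lattice $L^{\sharp}(M)\cap \L_{b,0}$ in $\L_{b,0}$ by the recipe of Lemma \ref{ltlt}; its $\Phi_b$-fixed part is the desired vertex lattice $\Lambda$ in $\L_{b,0}$. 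Concretely, first I would send $T\in \VL_b(0)\sqcup\VL_b(4)$ to the lattice $M_T := T\otimes_{O_F}O_{\Fbr}$ in $N_b$ (using $C_b\otimes_{\Qp}K_0 = N_b$), check that it is $\F_{b,0}$-stable and that its dual behaviour matches the type, then apply the map $M\mapsto \{v\in\L_b\mid v(M)\subset\varpi^{-1}M\}$ of Lemma \ref{ltlt} and intersect with $\L_{b,0}$; I would define the inverse by $\Lambda\mapsto$ ($\Phi_{b,0}$-fixed part of $\{v\in\L_{b,W}\mid v(\cdots)\}$) running the same construction backwards.

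The second step is to verify that the type is transported correctly, i.e. $t(T)=1,3,5$ for $\Lambda$ according as $T\in\VL_{b,\bT_0}(4)$, $\VL_b(0)$, $\VL_b(4)\setminus\VL_{b,\bT_0}(4)$. This is a length computation: $\length_{O_{\Fbr}}(L_0^\vee/L_0)$ and its twists are controlled by Proposition \ref{lbpr} (ii), and one must track how $\length_{O_F}(T^\vee/T)$ and the parity invariant $\length_{O_F}(T+\bT_0)/\bT_0\bmod 2$ translate under the Clifford-algebra identification $C^+(L_0)\cong\End_{O_{\Fbr}}(M_0)$. The cleanest route is to note both sides carry a transitive action of $J_b^{\der}(\Qp)$ on lattices of each type (by \cite[Lemma 29.9]{Shimura2010} / Witt-type theorems on the hermitian and quadratic sides), pin down the correspondence on the standard lattices $\bLambda$, $M_0$, $L_0$, $L_0^\vee$ by direct computation with the explicit bases $\e_i$, $e_i$, $x_i$, $y_i$ already fixed, and then spread it out by equivariance. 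The parity condition defining $\VL_{b,\bT_0}(4)$ matches the type-$1$ case because a self-dual $O_F$-lattice close to $\bT_0$ gives, via $L^{\sharp}$, a lattice $L$ with $\length(L/L^\vee)$ odd exactly when $\length(M+M_0)/M_0$ is even, which is the analogue of Proposition \ref{nteq} (iv) at the lattice level.

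The third step is compatibility of the orders, which should be essentially formal once the bijection is in hand: unwinding the definition of $\preceq$, the relation $T\prec T'$ is an inclusion-of-lattices statement (possibly after applying $\varpi^{-1}$ or $(\cdot)^\vee$), and the map $M\mapsto L^{\sharp}(M)$ of Lemma \ref{ltlt} is inclusion-reversing-or-preserving in a way that turns these precisely into the inclusions $\Lambda\subset\Lambda'$ on the quadratic side; the three bullet cases in the definition of $\prec$ are built to match the three possible type-jumps $1<3$, $1<5$, $3<5$. I expect the main obstacle to be the second step: getting the parity/type bookkeeping exactly right, in particular confirming that the ad hoc-looking set $\VL_{b,\bT_0}(4)$ is the correct preimage of $\VL(\L_{b,0},1)$ rather than an index-$2$ coset issue — this requires carefully computing $\length_{O_{\Fbr}}(\varpi M + \F_{b,0}(M))/\F_{b,0}(M)$ for the relevant $M$ and comparing with the discriminant computation $\disc(\L_{b,0})=-\varpi^2$, $\Ht(\L_{b,0})=-1$ done in Section \ref{hgst}, and then matching it against the Bruhat--Tits combinatorics of Propositions \ref{sovl} and \ref{rtwb} (both buildings are for rank-$2$ groups, so the simplicial complexes agree and the special-vertex/midpoint dichotomy forces the answer once one vertex is correctly labelled).
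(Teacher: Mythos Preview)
The paper's proof is precisely the building-theoretic argument you defer to your final sentence. Since $J_b^{\der}\to\SO(\L_{b,0})$ is a central isogeny (Corollary \ref{exis}), \cite[\S 4.2.15]{Bruhat1984} yields a $J_b^{\der}(\Qp)$-equivariant simplicial isomorphism $\cB_b\cong\cB(\L_{b,0})$. Propositions \ref{rtwb} and \ref{sovl} already identify the vertex sets of these buildings with $\VL_b(0)\sqcup\VL_b(4)$ and $\VL(\L_{b,0})$ respectively, with adjacency matching the relations underlying $\preceq$ and $\subset$; in particular, special vertices correspond to $\VL_b(4)$ on one side and to $\VL(\L_{b,0},1)\sqcup\VL(\L_{b,0},5)$ on the other. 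Once $\bT_0$ is arranged to land in $\VL(\L_{b,0},1)$, the three type bijections are forced. That is the whole argument.

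Your main proposal, the transport $T\mapsto M_T\mapsto L^{\sharp}(M_T)\cap\L_{b,0}$, would require substantially more than bookkeeping. The only lattice correspondence established in Section \ref{btn1} is Lemma \ref{ltlt}, and it applies solely to $\varpi$-modular $M$ of even parity, always landing in $\Lat^1(\L_b)$, i.e.\ type $1$; it does not treat self-dual $M$ (needed for $\VL_b(0)\to\VL(\L_{b,0},3)$), and no variant of it in the paper produces type $5$. The phenomenon to be explained --- that the single hermitian invariant $t(T)=4$ bifurcates into orthogonal types $1$ and $5$ according to the parity of $T$ relative to $\bT_0$ --- reflects that the hermitian type alone does not separate the two $J_b^{\der}(\Qp)$-orbits of special vertices in $\cB_b$, whereas the orthogonal type does on the other side; there is no reason to expect a uniform Clifford-algebra formula to detect this. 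Your instinct at the end is the correct proof: promote it and drop the explicit $L^{\sharp}$ route.
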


\begin{proof}
Since $J_{b}^{\der}\rightarrow \SO(\L_{b,0})$ is a central isogeny by Corollary \ref{exis}, \cite[\S 4.2.15]{Bruhat1984} implies that there is a $J_{b}^{\der}(\Qp)$-equivariant isomorphism of simplicial complexes
\begin{equation*}
\cB_{b}\cong \cB(\L_{b,0}). 
\end{equation*}
We may assume that $\bT_{0}$ corresponds to a vertex lattice in $\L_{b,0}$ of type $1$. Then, we obtain the desired isomorphism by Propositions \ref{sovl} and \ref{rtwb}. 
\end{proof}

\subsection{Bruhat--Tits stratification, I\hspace{-.1em}I}\label{cbbt}

\begin{lem}\label{dtst}
\emph{Let $T\in \VL_{b}$, and $a\in 1+\varpi O_{F}$ a norm $1$ element. Then there is $h\in J^1(\Qp)$ such that $\det\!{}_{F}(h)=a$, $h(T)=T$. }
\end{lem}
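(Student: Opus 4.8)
The plan is to construct $h$ directly inside $\U(C_b)(\Qp)$, which equals $J_b^1(\Qp)$ under the isomorphism $J_b\cong\GU(C_b)$ of Proposition~\ref{jbcb} (so that elements of $J_b^1(\Qp)$ are exactly the $F$-linear isometries of $C_b$, and $\det\nolimits_F$ of such an element is a norm-one element of $F^\times$). Since $b$ is $\mu$-neutral, $C_b$ is split by Proposition~\ref{nteq}, and since $p$ is odd every hermitian $O_F$-lattice is an orthogonal direct sum of lattices of ranks $1$ and $2$ (structure theory of hermitian lattices over local fields, \cite{Jacobowitz1962}); in particular the vertex lattice $T$ splits as $T=H\perp H'$ with $H=O_F f_1\oplus O_F f_2$ a hyperbolic plane, i.e. $\langle f_1,f_1\rangle=\langle f_2,f_2\rangle=0$ and $\langle f_1,f_2\rangle\in F^\times$. (Alternatively one may invoke transitivity of $\U(C_b)(\Qp)$ on $\VL_b(t)$, replace $T$ by an explicit standard vertex lattice of its type, produce the element below for it, and conjugate back by the transporter; both routes need the same lattice-theoretic input.)

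Next I would record the elementary fact that $a=u/\overline{u}$ for some $u\in O_F^\times$. Indeed, $\N_{F/\Qp}(a)=1$ gives $a=t/\overline{t}$ for some $t\in F^\times$ by Hilbert's Satz~90; writing $t=\varpi^{n}u_0$ with $u_0\in O_F^\times$ and using $\overline{\varpi}=-\varpi$ (which, $p$ being odd, we may arrange by replacing $\varpi$ with $\varpi-\tfrac12\tr_{F/\Qp}(\varpi)$) yields $a=(-1)^{n}\,u_0/\overline{u_0}$. Since $u_0/\overline{u_0}\in 1+\varpi O_F$ and, by hypothesis, $a\in 1+\varpi O_F$, reduction modulo $\varpi$ forces $(-1)^{n}\equiv 1$ in $\Fp$, hence $n$ is even and $a=u_0/\overline{u_0}$; put $u:=u_0$.

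Then I would define $h\in\GL_F(C_b)$ by $h(f_1)=uf_1$, $h(f_2)=\overline{u}^{-1}f_2$ and $h=\id$ on $H'$. A direct check gives $\langle hf_1,hf_2\rangle=u\,\overline{\overline{u}^{-1}}\,\langle f_1,f_2\rangle=uu^{-1}\langle f_1,f_2\rangle=\langle f_1,f_2\rangle$ and $\langle hf_i,hf_i\rangle=\N_{F/\Qp}(u)\cdot 0=0$, while $h$ leaves the form on $H'$ and the vanishing cross terms between $H$ and $H'$ unchanged; hence $h\in\U(C_b)(\Qp)=J_b^1(\Qp)$. Since $u,\overline{u}^{-1}\in O_F^\times$ we get $h(H)=H$, so $h(T)=h(H)\oplus h(H')=T$; and $\det\nolimits_F(h)=u\cdot\overline{u}^{-1}\cdot 1\cdot 1=u/\overline{u}=a$. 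This $h$ has all the required properties.

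The only substantive point is the normal form for $T$ in the first step, and this is where I expect the main difficulty to lie (it is also, apart from the trivial parity computation for $a$, the only place where oddness of $p$ is used). For the types $t(T)\in\{0,2\}$ one can avoid the structure theory and instead take the unitary reflection $\tau_{v,a}\colon x\mapsto x-(1-a)\langle v,v\rangle^{-1}\langle x,v\rangle v$ attached to any $v\in T$ with $\langle v,v\rangle\in\Zp^{\times}$ (such $v$ exists because $T$ then has a unimodular rank-one orthogonal summand): it fixes $v^{\perp}$ and scales $v$ by $a$, so $\tau_{v,a}\in\U(C_b)(\Qp)$ with $\det\nolimits_F(\tau_{v,a})=a$, and it preserves $T$ because $1-a\in\varpi O_F$ and $\langle T,v\rangle\subseteq O_F$. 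However the $\varpi$-modular type $t(T)=4$ admits no such $v$, so the hyperbolic-plane construction is unavoidable there and it is cleanest to use it uniformly.
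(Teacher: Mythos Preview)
Your proof is correct and follows the same idea as the paper's: use Hilbert~90 to write $a=u/\overline{u}$ with $u\in O_F^\times$, then act diagonally on a hyperbolic pair by $u$ and $\overline{u}^{-1}$ while fixing the orthogonal complement. You are in fact more careful than the paper, which simply takes an anti-diagonal basis of $C_b$ and sets $h=\diag(t,1,1,t^{-1})$ without explaining why such a basis can be chosen compatibly with the given $T$ (and with a small slip in the last diagonal entry, which should be $\overline{t}^{-1}$).
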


\begin{proof}
By Hilbert satz 90, there is $t\in F^{\times}$ so that $a=t^{-1}\overline{t}$. By assumption on $a$, we may assume $t\in O_{F}^{\times}$. Take a basis of $C_{b}$ whose Gram matrix with respect to $\langle \,,\,\rangle$ is the anti-diagonal matrix of size $4$ that has $1$ at every non-zero entry. Then $h:=\diag(t,1,1,t^{-1})\in J^1(\Qp)$ satisfies $\det\!{}_{F}(h)=a$ and $h(T)=T$. 
\end{proof}

\begin{dfn}
For $T\in \VL_{b}$, we define a closed formal subscheme $\M_{b,\mu,T}^{(0)}$ of $\M_{b,\mu}^{(0)}$ as follows. Take $h\in J_{b}(\Qp)$ satisfying $\kappa_{J_{b}}(h)=(0,1)$. 
\begin{itemize}
\item If $T\in \VL_{b,\bT_0}(4)$, then let $\M_{b,\mu,T}^{(0)}:=\M_{b,\mu,\Lambda(T)}^{(0,0)}$. 
\item If $T\in \VL_{b}(4)\setminus \VL_{b,\bT_0}(4)$, then let $\M_{b,\mu,T}^{(0)}:=h\left(\M_{b,\mu,\Lambda(h^{-1}(T))}^{(0,0)}\right)$. 
\item If $T\in \VL_{b}(0)$, then let $\M_{b,\mu,T}^{(0)}:=\M_{b,\mu,\Lambda(T)}^{(0,0)}\sqcup h\left(\M_{b,\mu,\Lambda(h^{-1}(T))}^{(0,0)}\right)$. 
\item If $T\in \VL_{b}(2)$, then let $\M_{b,\mu,T}^{(0)}:=\M_{b,\mu,\Lambda(T_0)}^{(0,0)}\sqcup h\left(\M_{b,\mu,\Lambda(h^{-1}(T_1))}^{(0,0)}\right)$. Here $T_{0}\in \VL_{b}(4)\setminus \VL_{b,\bT_0}(4)$ and $T_1\in \VL_{b,\bT_0}(4)$ satisfying $T_0+T_1=T$ (note that $T_0$ and $T_1$ are unique by Lemma \ref{rtwb} (ii)). 
\end{itemize}
We also define a locally closed subscheme $\BT_{b,\mu,T}^{(0)}$ of $\M_{b,\mu}^{(0)}$ by the same manner as above. 
\end{dfn}

Note that $\M_{b,\mu,T}^{(0)}$ and $\BT_{b,\mu,T}^{(0)}$ are independent of the choice of $h$ by Lemma \ref{dtst} and the equality $h(\M_{b,\mu,T'}^{(0,0)})=\M_{b,\mu,h(T')}^{(0,0)}$ for $h\in J_{b}^{\der}(\Qp)$ and $T'\in \VL(0)\sqcup \VL(4)$. 

\begin{thm}\label{mthm}
\emph{
\begin{enumerate}
\item We have
\begin{equation*}
\M_{b,\mu}^{(0),\red}=\coprod_{T\in \VL_{b}}\BT_{b,\mu,T}^{(0)}. 
\end{equation*}
\item For $T\in \VL_{b}$, $\M_{b,\mu,T}^{(0)}$ and $\BT_{b,\mu,T}^{(0)}$ are reduced schemes over $\Fpbar$. Moreover, we have
\begin{equation*}
\M_{b,\mu,T}^{(0),\red}=\coprod_{T'\leq T}\BT_{b,\mu,T'}^{(0)}. 
\end{equation*}
\item Let $T\in \VL_{b}$. 
\begin{itemize}
\item if $t(T)=4$, then $\M_{b,\mu,T}^{(0)}$ is a single point,
\item if $t(T)=0$, then $\M_{b,\mu,T}^{(0)}$ is isomorphic to two copies of $\P^1_{\Fpbar}$, 
\item if $t(T)=2$, then $\M_{b,\mu,T}^{(0)}$ is isomorphic to two copies of the Fermat surface $F_p$.  
\end{itemize}
\item There is a bijection between $\Irr(\M_{b,\mu}^{(0),\red})$ and $\VL(4)$. 
\item The non-formally smooth locus of $\M_{b,\mu}^{(0)}$ equals $\coprod_{T\in \VL_{b}(4)}\M_{b,\mu,T}$. 
\end{enumerate}}
\end{thm}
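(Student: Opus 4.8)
**The plan is to deduce Theorem \ref{mthm} from the results on $\M_{b,\mu}^{(0,0)}$ established in Section \ref{btn1} (Theorems \ref{mtot}, \ref{nfsl}, Corollaries \ref{irdm}, \ref{ctnt}) together with the dictionary between vertex lattices in $C_b$ and vertex lattices in $\L_{b,0}$ furnished by Proposition \ref{vlvl}, and the decomposition $\M_{b,\mu}^{(0),\red}=\M_{b,\mu}^{(0,0),\red}\sqcup \M_{b,\mu}^{(0,1),\red}$ of Theorem \ref{dcnt}.** The guiding principle is that $\M_{b,\mu}^{(0,1),\red}=h(\M_{b,\mu}^{(0,0),\red})$ for any $h\in J_b(\Qp)$ with $\kappa_{J_b}(h)=(0,1)$, so every statement about $\M_{b,\mu}^{(0)}$ splits into the corresponding statement about each of the two halves, each of which is a translate of $\M_{b,\mu}^{(0,0)}$.

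First I would record the bookkeeping that makes the definition of $\M_{b,\mu,T}^{(0)}$ and $\BT_{b,\mu,T}^{(0)}$ work: using Proposition \ref{vlvl}, for $T\in\VL_{b,\bT_0}(4)\sqcup\VL_b(0)$ one gets $\Lambda(T)\in\VL(\L_{b,0})$ directly, while for $T\in\VL_b(4)\setminus\VL_{b,\bT_0}(4)$ one first moves $T$ inside $\VL_{b,\bT_0}(4)\sqcup\VL_b(0)$ by the translate $h^{-1}$ (which changes the parity invariant $\length_{O_F}(T+\bT_0)/\bT_0\bmod 2$), and likewise for the two summands of a type-$2$ lattice. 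Independence of the choice of $h$ follows from Lemma \ref{dtst} exactly as stated in the paragraph after the definition. For (i): by Theorem \ref{dcnt} and Theorem \ref{mtot} (i) applied to each half, $\M_{b,\mu}^{(0),\red}=\coprod_{\Lambda}\BT_{b,\mu,\Lambda}^{(0,0)}\sqcup h(\coprod_{\Lambda}\BT_{b,\mu,\Lambda}^{(0,0)})$; then Proposition \ref{vlvl} (which splits $\VL_b(0)\sqcup\VL_b(4)$ into the two types of vertex lattices in $\L_{b,0}$, one type per half) reorganizes this disjoint union, and $\VL_b(2)$ enters because each midpoint lattice $T=T_0+T_1$ contributes its two neighbouring top-type strata, one in each half. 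So (i) is a pure reindexing of Theorem \ref{mtot} (i).

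For (ii): reducedness of each piece is immediate from Corollary \ref{irdm} together with the fact that translation by $J_b(\Qp)$ preserves reducedness. The closure formula $\M_{b,\mu,T}^{(0),\red}=\coprod_{T'\le T}\BT_{b,\mu,T'}^{(0)}$ is obtained by transporting the closure formula in Theorem \ref{mtot} (ii), $\M_{b,\mu,\Lambda}^{(0,0),\red}=\coprod_{\Lambda'\subset\Lambda}\BT_{b,\mu,\Lambda'}^{(0,0)}$, through Proposition \ref{vlvl}; here one must check that the partial order $\le$ on $\VL_b$ matches, under the correspondence, with inclusion of vertex lattices in $\L_{b,0}$ on each half plus the evident crossing relations coming from the $h$-translate — this is the content of the order isomorphism $(\VL_b(0)\sqcup\VL_b(4),\preceq)\cong(\VL(\L_{b,0}),\subset)$ combined with the definition of $\le$ via the $(\cdot)^\vee$-relations, and is essentially Proposition \ref{rtwb} (ii). For (iii): directly from Corollary \ref{irdm} — a type-$4$ lattice corresponds to a type-$1$ or type-$5$ lattice in $\L_{b,0}$ and in the former case gives a point (either directly or after the $h$-translate, whose image of a point is a point), a type-$0$ lattice gives one $\P^1$ from each half hence two copies of $\P^1$, and a type-$2$ lattice gives, from $T_0$ and $T_1$ lying in opposite halves, one Fermat surface each. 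For (iv): by Corollary \ref{mbir} every irreducible component of $\M_{b,\mu}^{(0),\red}$ is some $g(i_{y_6}(\M_{D,\Lambda}^{(0)}))$ with $\Lambda\in\VL(\L_D,5)$, equivalently (via Proposition \ref{cpbt}) some $\M_{b,\mu,\Lambda'}^{(0,0)}$ or its $h$-translate with $\Lambda'\in\VL(\L_{b,0},5)$; applying Proposition \ref{vlvl} to each half identifies these with $\M_{b,\mu,T}^{(0)}$ for $T\in\VL_b(4)$, giving the asserted bijection $\Irr(\M_{b,\mu}^{(0),\red})\cong\VL_b(4)$. For (v): the non-formally smooth locus of $\M_{b,\mu}^{(0,0)}$ is $\coprod_{\Lambda\in\VL(\L_{b,0},1)}\M_{b,\mu,\Lambda}^{(0,0)}$ by Theorem \ref{nfsl}, and smoothness is preserved by $J_b(\Qp)$-translation, so the non-formally smooth locus of $\M_{b,\mu}^{(0)}$ is the union over both halves, which under Proposition \ref{vlvl} is exactly $\coprod_{T\in\VL_b(4)}\M_{b,\mu,T}^{(0)}$ — note that $\VL(\L_{b,0},1)$ on the two halves together corresponds to all of $\VL_b(4)$ (the special vertices), as recorded in Propositions \ref{rtwb} (i) and \ref{sovl} (i).

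The only genuinely delicate point — and the one I would expect to absorb most of the write-up — is the compatibility of the two orderings in part (ii): one must verify that $T<T'$ in $(\VL_b,\le)$ translates, under the bijection of Proposition \ref{vlvl} combined with the bijection $\VL_b(2)\leftrightarrow\{\text{midpoints of edges}\}$, into the inclusion relation among the relevant $\M^{(0,0)}_{b,\mu,\Lambda}$'s (taking into account which stratum of which half each $T$ maps to, and that a type-$2$ lattice has its two defining neighbours split between the two halves). Everything else is formal transport of structure through already-established bijections, together with the observation that translation by an element of $J_b(\Qp)$ is an isomorphism of formal schemes and hence preserves reducedness, dimension, irreducibility, formal smoothness, and closures.
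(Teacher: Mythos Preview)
Your approach is essentially the paper's: parts (i)--(iii) are deduced from Theorem~\ref{mtot} and Corollary~\ref{irdm} via the definitions of $\M_{b,\mu,T}^{(0)}$, $\BT_{b,\mu,T}^{(0)}$ and the dictionary of Proposition~\ref{vlvl}; part (iv) from Corollary~\ref{mbir}; part (v) from Theorem~\ref{nfsl}. The paper's own proof is just these citations, so your more detailed unpacking is fine and on target.

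There is one slip in your treatment of (iv). You write that Proposition~\ref{vlvl} ``identifies these with $\M_{b,\mu,T}^{(0)}$ for $T\in\VL_b(4)$'', meaning that each irreducible component of $\M_{b,\mu}^{(0),\red}$ \emph{is} one of the schemes $\M_{b,\mu,T}^{(0)}$ with $t(T)=4$. That is false: by (iii) those schemes are single points, whereas the irreducible components are Fermat surfaces. What Proposition~\ref{vlvl} actually gives is a bijection of \emph{index sets}: $\Irr(\M_{b,\mu}^{(0,0),\red})\cong \VL(\L_{b,0},5)\cong \VL_b(4)\setminus \VL_{b,\bT_0}(4)$, and after applying $h$ the other half contributes $\VL_{b,\bT_0}(4)$; together this is $\VL_b(4)$. (In the $C_b$-indexing, the irreducible components are the connected components of the $\M_{b,\mu,T}^{(0)}$ for $T\in\VL_b(2)$, each such component being labelled by one of the two type-$4$ neighbours $T_0,T_1$ of $T$.) So the bijection in (iv) is correct, but it is not realized by $T\mapsto \M_{b,\mu,T}^{(0)}$ on $\VL_b(4)$; fix the sentence accordingly and the argument goes through.
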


We call the equality in Theorem \ref{mthm} by the Bruhat--Tits stratification of $\M_{b,\mu}^{(0)}$. 

\begin{proof}
The assertions (i)--(iii) follow from Theorem \ref{mtot} and the definitions of $\M_{b,\mu,T}^{(0)}$ and $\BT_{b,\mu,T}^{(0)}$. The assertion (iv) is a consequence of Corollary \ref{mbir}. The assertion (v) follows from Theorem \ref{nfsl}. 
\end{proof}

Finally, we give a connection beween the Bruhat--Tits strata of $\M_{b,\mu}^{(0)}$ and some generalized Deligne--Lusztig varieties for split even special orthogonal groups appeared in Proposition \ref{lwsp}. For $T\in \VL_{b}$, put
\begin{equation*}
\B_{b}(T):=
\begin{cases}
T/\varpi T^{\vee}&\text{if }t(T)\in \{0,4\},\\
T^{\vee}/\varpi T &\text{if }t(T)=2. 
\end{cases}
\end{equation*}
We regard it as a quadratic space over $\Fp$ by $(x,y)\mapsto (\varpi x,y)\bmod p$. Then $\B_{b}(T)$ is a split quadratic space over $\Fp$ of dimension
\begin{equation*}
\begin{cases}
0 &\text{if }t(T)=4,\\
4 &\text{if }t(T)=0,\\
6 &\text{if }t(T)=2. 
\end{cases}
\end{equation*}

\begin{thm}\label{dlnt}
\emph{Let $T\in \VL_{b}$. 
\begin{enumerate}
\item There is an isomorphism $\M_{b,\mu,T}^{(0)}\cong S_{\B_{b}(T)}$ (see Definition \ref{grso} for the definition of the right-hand side). 
\item If $t(T)=0$, then there is an isomorphism $\BT_{b,\mu,T}^{(0)}\cong X_{P_{4,0}^{+}}(t_{4}^{-})\sqcup X_{P_{4,0}^{-}}(t_{4}^{+})$. 
\item If $t(T)=2$, then there is an isomorphism $\BT_{b,\mu,T}^{(0)}\cong X_{P_{6,1}^{+}}(t_{6}^{-}s_{6,1})\sqcup X_{P_{6,1}^{-}}(t_{6}^{+}s_{6,1})$. 
\end{enumerate}
In particular, every connected component of $\BT_{b,\mu,T}$ is isomorphic to a generalized Deligne--Lusztig variety for $\SO(\B_b(T))$, which is non-classical if $T\in \VL_{b}(0)\sqcup \VL_{b}(2)$. }
\end{thm}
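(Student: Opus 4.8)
The plan is to prove all three statements by descending, according to the type $t(T)$, to the structure of the height-zero Rapoport--Zink space $\M_D^{(0)}$ for $\GU_2(D)$ recorded in Proposition \ref{gu2d}, combined with the comparison results for special orthogonal groups established in Section \ref{btdl}. If $t(T)=4$ there is nothing to do: $\B_b(T)$ is the zero quadratic space, so $S_{\B_b(T)}$ is a single $\Fpbar$-point, while $\M_{b,\mu,T}^{(0)}$ is a single point by Theorem \ref{mthm} (iii), and (ii), (iii) and the final sentence are vacuous. So assume $t(T)\in\{0,2\}$ and write $2d=4$ if $t(T)=0$ and $2d=6$ if $t(T)=2$.

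By the definitions of $\M_{b,\mu,T}^{(0)}$ and $\BT_{b,\mu,T}^{(0)}$ in Section \ref{cbbt}, each of these is a disjoint union of two pieces, each of which is, after applying a suitable element of $J_b(\Qp)$, of the form $\M_{b,\mu,\Lambda}^{(0,0)}$, resp. $\BT_{b,\mu,\Lambda}^{(0,0)}$, for a vertex lattice $\Lambda$ in $\L_{b,0}$; by Proposition \ref{vlvl} the type of $\Lambda$ is $3$ when $t(T)=0$ and $5$ when $t(T)=2$, i.e. it is $2d-1$. Moreover $\B_b(T)$ is a split quadratic space over $\Fp$ of dimension $2d$, so $S_{\B_b(T)}$ has two connected components $S^{+}$ and $S^{-}$, and each of them is the closure of its open stratum $X_{P_{4,0}^{\pm}}(t_4^{\mp})$, resp. $X_{P_{6,1}^{\pm}}(t_6^{\mp}s_{6,1})$, by Proposition \ref{sfdc} and the stratification of $S_{\Omega_{6,0}}^{\pm}$ set up in Section \ref{btdl}. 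Since all split quadratic spaces over $\Fp$ of a given dimension are isometric, it therefore suffices to produce, for one vertex lattice $\Lambda$ of type $2d-1$ in $\L_{b,0}$, an isomorphism $\M_{b,\mu,\Lambda}^{(0,0)}\cong S^{+}$ compatible with the stratification by smaller vertex lattices and carrying $\BT_{b,\mu,\Lambda}^{(0,0)}$ onto the open stratum; applying the same to the $J_b(\Qp)$-translated piece yields the component $S^{-}$, and gluing the two halves gives (i), (ii) and (iii), the decorations $t_{2d}^{+}$ versus $t_{2d}^{-}$ being those prescribed by the propositions just cited.

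For such a single $\Lambda$ I would use the closed immersion $i_{y_6}\colon\M_D\hookrightarrow\M_{b,\mu}$ together with Proposition \ref{cpbt}, which identifies $\M_{b,\mu,\Lambda}^{(0,0)}$ with $\M_{D,\Lambda'}^{(0)}$ for a vertex lattice $\Lambda'$ in $\L_D$ of type $2d-1$, compatibly with inclusions of vertex lattices and hence with the Bruhat--Tits stratifications. The structure of $\M_D^{(0)}$ and of its Bruhat--Tits strata is determined in \cite{Oki2019} in terms of the varieties $S_{\Omega_{2d-1,0}}$ and the associated Deligne--Lusztig varieties for odd special orthogonal groups; combining this with Proposition \ref{lwsp}, which gives $S_{\Omega_{2d-1,0}}\cong S_{\Omega_{2d,0}}^{\pm}=S^{\pm}$ for $d\le3$ and matches $X_{P_{3,0}}(t_3)$, resp. $X_{B_5}(t_5)$, with $X_{P_{4,0}^{\pm}}(t_4^{\mp})$, resp. $X_{P_{6,1}^{\pm}}(t_6^{\mp}s_{6,1})$, one obtains the required isomorphism $\M_{b,\mu,\Lambda}^{(0,0)}\cong S^{+}$ carrying $\BT_{b,\mu,\Lambda}^{(0,0)}$ to the open stratum. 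The last assertion of the theorem is then immediate, because $S^{\pm}$ is a classical Deligne--Lusztig variety for $\SO(\B_b(T))$ precisely when $\B_b(T)$ is zero-dimensional, that is, when $t(T)=4$.

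The part I expect to be most laborious is not conceptual but the verification that this whole chain $\M_{b,\mu,T}^{(0)}\rightsquigarrow\M_{b,\mu,\Lambda}^{(0,0)}\rightsquigarrow\M_{D,\Lambda'}^{(0)}\rightsquigarrow S_{\Omega_{2d-1,0}}\rightsquigarrow S^{\pm}$ respects all the stratifications at once --- in particular that the inclusion ordering on vertex lattices, and hence the closure relations among strata, is preserved at each step (so that the intermediate $\P^{1}$-strata and the point-strata on either side are matched correctly) --- and that the two halves of $\M_{b,\mu,T}^{(0)}$ meet the two components $S^{\pm}$ carrying exactly the decorations in the statement. This is precisely where the substance of Propositions \ref{vlvl}, \ref{cpbt} and \ref{lwsp} and the constructions of Section \ref{btdl} is genuinely used.
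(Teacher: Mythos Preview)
Your proposal is correct and follows essentially the same route as the paper's own proof: reduce from $\M_{b,\mu,T}^{(0)}$ to $\M_{b,\mu,\Lambda}^{(0,0)}$ via the definitions in Section~\ref{cbbt}, pass to $\M_{D,\Lambda'}^{(0)}$ via Proposition~\ref{cpbt}, identify the latter with $S_{\Omega_{2d-1,0}}$ and its strata using the results of \cite{Oki2019} (the paper cites \cite[Corollary~6.10]{Oki2019} for this step), and then transport to $S_{\Omega_{2d,0}}^{\pm}$ and the corresponding generalized Deligne--Lusztig varieties via Proposition~\ref{lwsp}. The paper compresses all of this into two sentences, but the ingredients and the order in which they are invoked are the same as yours; your final paragraph about checking compatibility of stratifications is exactly the content of the cited corollary from \cite{Oki2019} together with Proposition~\ref{lwsp}, so no new idea is missing.
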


\begin{proof}
The assertion (i) for $t(T)=0$ is a consequence of Theorem \ref{mthm} (iii). The remaining assertions follow from Propositions \ref{lwsp}, \ref{cpbt} and \cite[Corollary 6.10]{Oki2019}. 
\end{proof}

\section{Bruhat--Tits stratification in the non-neutral case}\label{btnn}

In this section, we define the notion of vertex lattices, and construct a stratification of $\M_{b,\mu}^{(0)}$ by means of the vertex lattices for a non-$\mu$-neutral $b$. The method is almost the same as \cite{Wu2016} except for the handling of connected components. 

\begin{lem}\label{dlp1}
\emph{For $M\in \DiL^{\varpi}(N_{b,W(k)})$, we have $\length_{O_{F,W(k)}}(M+\F_{b,0}(M))/M=1$. } 
\end{lem}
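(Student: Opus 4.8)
The plan is to identify $\length_{O_{F,W(k)}}(M+\F_{b,0}(M))/M$ with the quantity occurring in the third defining condition of $\DiL^{\varpi}(N_{b,W(k)})$, which pins it down to be $\le 2$, and then to use that $b$ is \emph{not} $\mu$-neutral (the running hypothesis of this section) to force it to be odd; oddness together with $\le 2$ and non-vanishing gives the value $1$.

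First I would recall $\bV_b=p\F_b^{-1}$ and compute, using $\F_{b,0}=\eta\varpi^{-1}\F_b$ with $\eta\in W^{\times}$ and $\F_b\bV_b=p$, that $\bV_b(\F_{b,0}(M))=\varpi^{-1}pM=\varpi M$ as lattices (here $\varpi^{-1}p$ has $\ord_p=1/2$, so $\varpi^{-1}pM=\varpi M$). Since $\bV_b$ is a bijection of the $\Frac W(k)$-vector space $N_{b,W(k)}$, it carries $M$ to $\bV_b(M)=\F_b^{-1}(pM)$ and $M+\F_{b,0}(M)$ to $\varpi M+\bV_b(M)$, hence induces a length-preserving isomorphism
\begin{equation*}
\bV_b\colon (M+\F_{b,0}(M))/M\xrightarrow{\ \cong\ }(\varpi M+\bV_b(M))/\bV_b(M),
\end{equation*}
exactly the isomorphism already used in the proof of Proposition \ref{nteq}. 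Next, since $M\in\DiL^{\varpi}(N_{b,W(k)})$ and $\F_b^{-1}(pM)=\bV_b(M)$, the right-hand quotient is precisely the one appearing in the third condition of the definition of $\DiL^{\varpi}$, so it has length $\le 2$. Finally, because $b$ is not $\mu$-neutral, Proposition \ref{nteq} (the equivalence (i)$\Leftrightarrow$(iv)) gives that $\length_{O_{F,W(k)}}(\varpi M+\bV_b(M))/\bV_b(M)$ is \emph{odd} (the well-posedness of (iv) is precisely the statement that this parity does not depend on $M$). An odd integer that is $\le 2$ is either $1$ or negative; as the length is $\ge 0$, it equals $1$, and transporting back through the isomorphism above yields $\length_{O_{F,W(k)}}(M+\F_{b,0}(M))/M=1$.

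The main obstacle is that Proposition \ref{nteq}(iv) is stated over $W=W(\Fpbar)$, whereas here $M$ lives over $W(k)$ for an arbitrary algebraically closed $k$, so I would need the parity statement at this level of generality. I expect the same argument to apply verbatim: the parity in question is governed by the discriminant of $N_{b,W(k)}$ as an $F_{W(k)}/\Frac W(k)$-hermitian space, which is the base change of $\disc(C_b)$ and hence independent of $k$, and which vanishes exactly when $C_b$ is split, i.e. (by Proposition \ref{nteq}(ii)$\Leftrightarrow$(iii)) exactly in the $\mu$-neutral case; alternatively one invokes \cite[Lemma 3.3]{Rapoport2017} over $W(k)$. (Note that some input of this kind is unavoidable: the value $2$ genuinely occurs for $\varpi$-modular lattices when $b$ is $\mu$-neutral, so the non-$\mu$-neutrality of $b$ must enter, and it does so only through this discriminant/parity invariant; ruling out the value $2$ by a purely lattice-theoretic computation, without Proposition \ref{nteq}, would be the hard way.)
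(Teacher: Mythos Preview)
Your proposal is correct and is precisely an unpacking of the paper's one-line proof, which just says ``This follows from Proposition \ref{nteq} and the non-$\mu$-neutralness of $b$.'' The isomorphism $\bV_b\colon (M+\F_{b,0}(M))/M \xrightarrow{\cong} (\varpi M+\bV_b(M))/\bV_b(M)$ you use is exactly the one displayed in the proof of Proposition \ref{nteq}, and combining the parity from (iv) with the bound $\le 2$ from the definition of $\DiL^{\varpi}$ is the intended argument; your concern about passing from $W(\Fpbar)$ to $W(k)$ is a reasonable point of rigor (and your suggested fix via \cite[Lemma 3.3]{Rapoport2017} over $W(k)$ is fine), but the paper does not address it either.
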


\begin{proof}
This follows from Proposition \ref{nteq} and the non-$\mu$-neutralness of $b$. 
\end{proof}

\begin{dfn}
A \emph{vertex lattice} in $C_b$ is an $O_F$-lattice $T$ in $C_b$ satisfying $T\subset T^{\vee}\subset \varpi^{-1}T$. We call $t(T):=\dim_{O_F}(T^{\vee}/T)$ as the \emph{type} of $T$. 
\end{dfn}

\begin{prop}\label{divl}
\emph{For $M\in \DiL^{\varpi}(N_b)$, there is the minimum integer $r\in \{1,2\}$ such that
\begin{equation*}
M^{(r)}:=M+\F_{b,0}(M)+\cdots +\F_{b,0}^{r}(M)
\end{equation*}
is $\F_{b,0}$-stable. Then we have $M^{(r)}\in \VL_{b}(2r)$ and $(M^{(r)})^{\vee}=\{x\in M\mid \F_{b,0}(x)=x\}$. }
\end{prop}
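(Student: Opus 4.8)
The plan is to follow \cite{Wu2016}, which treats the $(1,n-1)$-analogue; the argument is the non-$\mu$-neutral counterpart of the one used for Proposition \ref{spvl} (itself modelled on \cite[Lemmas 2.7, 2.8]{Cho2018}). \emph{Existence of $r$ and the bound $r\geq 1$.} Since $(N_b,\F_{b,0})$ is isoclinic of slope $0$, the ascending chain of $O_{F,W(k)}$-lattices $M=M^{(0)}\subseteq M^{(1)}\subseteq M^{(2)}\subseteq\cdots$ stabilizes, by \cite[Proposition 2.17]{Rapoport1996a}. Let $r$ be minimal with $M^{(r+1)}=M^{(r)}$; then $\F_{b,0}(M^{(r)})\subseteq M^{(r)}$, and since $\F_{b,0}$ is bijective, $M^{(r)}$ is $\F_{b,0}$-stable, hence equal to $T\otimes_{O_F}O_{F,W(k)}$ for the $O_F$-lattice $T:=(M^{(r)})^{\F_{b,0}}$ in $C_b$. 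If $r=0$, then $M$ is $\F_{b,0}$-stable and $\length_{O_{F,W(k)}}((M+\F_{b,0}(M))/M)=0$, contradicting Lemma \ref{dlp1}; hence $r\geq 1$.

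\emph{Lattice bookkeeping.} I would first extract from $M\in\DiL^{\varpi}(N_b)$ the inclusions $\varpi M\subseteq\F_{b,0}(M)\subseteq\varpi^{-1}M=M^{\vee}$ and, symmetrically, $\varpi M\subseteq\F_{b,0}^{-1}(M)\subseteq M^{\vee}$, using the $\DiL^{\varpi}$-condition $pM\subseteq\F_b^{-1}(pM)\subseteq M$ together with $\F_{b,0}=\eta\varpi^{-1}\F_b$, $\eta\in W^{\times}$, $\eta^2=p^{-1}\varpi^2$. Feeding these and Lemma \ref{dlp1} into the colength computation, one argues as in \cite[Lemma 2.8]{Cho2018} along the $\F_{b,0}$-chain $M^{(i)}$ and its dual chain to prove $M^{(i)}\cap\F_{b,0}(M^{(i)})=\F_{b,0}(M^{(i-1)})$ for $1\leq i<r$, so each step $M^{(i-1)}\subsetneq M^{(i)}$ has $O_{F,W(k)}$-colength $\length_{O_{F,W(k)}}((M+\F_{b,0}(M))/M)=1$. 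Since $\F_{b,0}$ preserves $\langle\,,\,\rangle$ up to $\sigma$, $\F_{b,0}$-stability passes to duals, so $(M^{(r)})^{\vee}$ is the largest $\F_{b,0}$-stable lattice inside $M^{\vee}$; combining this with the inclusions above and $M^{\vee}=\varpi^{-1}M$ gives both the sandwich $M^{(r)}\subseteq(M^{(r)})^{\vee}\subseteq\varpi^{-1}M^{(r)}$ and the identity $(M^{(r)})^{\vee}=\{x\in M\mid\F_{b,0}(x)=x\}$, while a length count (with $[M^{\vee}:M]=4$ and the colength-$1$ fact) pins down the type, yielding $T=M^{(r)}\in\VL_b(2r)$, with $r\leq 2$ forced since $C_b$ has $F$-dimension $4$.

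\emph{Main obstacle.} The crux is the colength identity $M^{(i)}\cap\F_{b,0}(M^{(i)})=\F_{b,0}(M^{(i-1)})$: this is what forces every step of the chain to have colength exactly $1$, and both the value of $t(T)$ and the precise shape of $(M^{(r)})^{\vee}$ hinge on it. Establishing it requires careful tracking of how $\F_{b,0}$ interacts with the $\varpi$-modular lattice $M$ and with duality over the ramified extension $F/\Qp$ — exactly the bookkeeping carried out in \cite{Wu2016}. By contrast, the termination/existence of $r$ and the integrality sandwich are comparatively formal once this is in hand.
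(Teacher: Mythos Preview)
Your approach is correct and is exactly what the paper does: its proof is a one-line reference to \cite[Proposition~4.3]{Rapoport2014a} (applied after passing to duals) together with Lemma~\ref{dlp1}, which is precisely the ascending-chain, colength-one argument you sketch via \cite{Wu2016} and \cite[Lemmas~2.7,~2.8]{Cho2018}. One caveat worth recording: when the length count is actually carried out one obtains $[(M^{(r)})^{\vee}:M^{(r)}]=4-2r$ (so $r=1,2$ yield types $2,0$, in line with the fact that only these types occur for non-split $C_b$), and the dual identity comes out as $\{x\in M^{\vee}\mid \F_{b,0}(x)=x\}$ rather than with $M$ in place of $M^{\vee}$; these look like slips in the stated proposition that your method would in fact detect, not flaws in your reasoning.
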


\begin{proof}
This follows from the same argument as the proof of \cite[Proposition 4.3]{Rapoport2014a} (by considering dual lattices) by using Lemma \ref{dlp1}. 
\end{proof}

We denote by $M_0$ the Dieudonn{\'e} module of $\X_0$ in $N_{b}$. 

\begin{dfn}
Let $T\in \VL_b$. 
\begin{enumerate}
\item We denote by $(X_{T},\iota_T,\lambda_T)$ the $p$-divisible group with $(G,\mu)$-structure over $\Fpbar$ corresponding to $T\otimes_{\Zp}W$ by the covariant Dieudonn{\'e} theory. Then $\lambda_{T}$ is an isogeny whose kernel is contained in $\Ker(\iota_{T}(\varpi))$. Moreover, there is a quasi-isogeny $\rho_{T}\colon \X_0 \rightarrow X_{T}$ such that the following diagram commutes: 
\begin{equation*}
\xymatrix{
\X_0\ar[r]^{\lambda_0}\ar[d]_{\rho_{T}}& \X_{0}^{\vee}\\
X_{T}\ar[r]^{\lambda_{T}}&X_{T}^{\vee}\ar[u]_{\rho_{T}^{\vee}}. }
\end{equation*}
\item We define $\M_{b,\mu,T}^{(0)}$ as the locus $(X,\iota,\lambda,\rho)$ of $\M_{b,\mu}^{(0)}$ where the quasi-isogeny 
\begin{equation*}
X\times_{S}\Sbar \xrightarrow{\rho} \X_0\otimes_{\Fpbar}\Sbar \xrightarrow{\rho_{T}\otimes \id_{\Sbar}} X_{T}\otimes_{\Fpbar}\Sbar
\end{equation*}
lift to an isogeny. It is a closed formal subscheme of $\M_{b,\mu}^{(0)}$. 
\end{enumerate}
\end{dfn}

By definition, $\M_{b,\mu,T}^{(0)}(k)$ corresponds to the set $\{M\in \DiL^{\varpi}(M_{0,W(k)})\mid M\subset T\otimes_{\Zp}W(k)\}$ under the bijection $\M_{b,\mu}^{(0)}(k)\cong \DiL^{\varpi}(M_{0,W(k)})$ for any field extension $k/\Fpbar$. 

We study the structure of $\M_{b,\mu,T}^{(0)}$ for $T\in \VL_{b}$. For $T\in \VL_{b}$, put $\B_{b}(T):=T/\varpi T^{\vee}$. We regard it as a quadratic space over $\Fp$ by $(x,y)\mapsto (\varpi x,y)\bmod p$. Then it is a non-split quadratic space of dimension $4-t(T)$ over $\Fp$. 

\begin{prop}\label{btgr}
\emph{For $T\in \VL_{b}$, there is an isomorphism $\M_{b,\mu,T}^{(0),\red}\cong S_{\B_{b}(T)}$ (see Definition \ref{grso} for the definition of the right-hand side). }
\end{prop}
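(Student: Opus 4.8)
The plan is to follow the template of \cite{Wu2016} (see also \cite[\S4]{Rapoport2014a}): first produce a morphism of $\Fpbar$-schemes $f\colon \M_{b,\mu,T}^{(0),\red}\to \OGr(\B_b(T))_{\Fpbar}$, check that it factors through the closed subscheme $S_{\B_b(T)}$, and then prove that this factorization is an isomorphism. For the construction of $f$, recall that a point $(X,\iota,\lambda,\rho)$ of $\M_{b,\mu,T}^{(0)}$ comes with an honest isogeny $\varphi\colon X\to X_T$ lifting $\rho_T\circ\rho$; its kernel sits inside $X_T[\varpi]$, and together with $\lambda_T$ it determines a locally free sub-bundle of the ``$\B_b(T)$-part'' of $\ker(\lambda_T)$ of the expected rank, functorially in the base. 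Linearizing via the covariant Dieudonn\'e crystal of $X_T$ and the identification of the resulting $\Fpbar$-vector space with $\B_b(T)\otimes_{\Fp}\Fpbar$ yields the desired morphism to the orthogonal Grassmannian; isotropy under the polarization pairing and the Kottwitz condition, both checked fibrewise, show that $f$ lands in $S_{\B_b(T)}$.

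Next I would identify $f$ on geometric points. For an algebraically closed field $k\supset\Fpbar$, Proposition \ref{hmlt} identifies $\M_{b,\mu,T}^{(0)}(k)$ with $\{M\in\DiL^{\varpi}(N_{b,W(k)})\mid M\subset T\otimes_{\Zp}W(k)\}$. From $M^{\vee}=\varpi^{-1}M$ and $T\subset T^{\vee}$ one gets $\varpi T^{\vee}_{W(k)}\subset M\subset T_{W(k)}$, so $M$ is equivalent to the $k$-subspace $\overline{M}:=M/\varpi T^{\vee}_{W(k)}$ of $\B_b(T)\otimes_{\Fp}k$; a short lattice-chain computation shows that $M^{\vee}=\varpi^{-1}M$ is equivalent to $\overline{M}$ being a maximal totally isotropic subspace. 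Since $T_{W(k)}$ and $\varpi T^{\vee}_{W(k)}$ are $\F_{b,0}$-stable (they are base changes of $\Zp$-lattices in $C_b$), the operator $\F_{b,0}$ induces $\id\otimes\sigma$ on $\B_b(T)\otimes_{\Fp}k$ and $\overline{M+\F_{b,0}(M)}=\overline{M}+(\id\otimes\sigma)_*\overline{M}$; hence Lemma \ref{dlp1}, together with $\bV_b=p\F_b^{-1}$, $\F_b=\varpi\eta^{-1}\F_{b,0}$ and the compatibility of $\F_{b,0}$ with the forms, translates the remaining conditions defining $\DiL^{\varpi}$ into the relative-position condition $\dim_k(\overline{M}\cap(\id\otimes\sigma)_*\overline{M})=\tfrac{4-t(T)}{2}-1=m$ of Definition \ref{grso} for the non-split quadratic space $\B_b(T)$. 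Thus $f$ is a bijection on $k$-points for every such $k$, and one checks exactly as in \cite{Wu2016} that $f$ is a homeomorphism onto $S_{\B_b(T)}$.

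Finally I would upgrade $f$ to an isomorphism of schemes. Both $\M_{b,\mu,T}^{(0),\red}$ and $S_{\B_b(T)}$ are reduced $\Fpbar$-schemes of finite type — the target by Proposition \ref{stns}, which exhibits it as a union of (smooth) classical Deligne--Lusztig varieties, and for $t(T)\in\{0,2\}$ it is in fact $\P^1_{\Fpbar}\sqcup\P^1_{\Fpbar}$ or a reduced finite scheme, hence normal. It therefore suffices to show that $f$ is unramified, i.e.\ injective on tangent spaces; combined with radicialness (bijectivity on field-valued points) and surjectivity, an unramified radicial surjection onto a reduced scheme is an isomorphism. Proving unramifiedness is the one substantial point and the main obstacle: by Grothendieck--Messing theory, deformations of $(X,\iota,\lambda,\rho)$ compatible with the isogeny to $X_T$ correspond to liftings of the Hodge filtration inside the Dieudonn\'e crystal of $X_T$ that are $O_F$-stable, isotropic for the polarization pairing, and satisfy the Kottwitz condition; the $O_F$-linearity and self-duality rigidify such a lift once its image in $\B_b(T)$ is prescribed, so the tangent map of $f$ is injective. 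Carrying this out requires the same careful bookkeeping of the $O_F$-action, the polarization and the Kottwitz (signature) condition under infinitesimal deformation as in \cite[\S4]{Rapoport2014a} and \cite{Wu2016}, and that is where essentially all of the work lies; everything else is formal.
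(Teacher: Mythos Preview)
Your construction of $f$ and its identification on geometric points match the paper's proof. The difference is in the final step: you propose to upgrade $f$ to an isomorphism by proving unramifiedness via Grothendieck--Messing theory, and you flag this deformation computation as the main obstacle. The paper avoids it entirely. Since $\M_{b,\mu,T}^{(0),\red}$ is projective over $\Fpbar$, the morphism $f$ is proper; since $S_{\B_b(T),\Fpbar}$ is smooth (hence normal), a proper morphism from a reduced scheme that is bijective on $k$-points for every algebraically closed $k$ is automatically an isomorphism, by the argument of \cite[Theorem 3.9]{Howard2014}. You already observed the normality of the target, so you were one step away from this shortcut; the Grothendieck--Messing bookkeeping you anticipated is unnecessary. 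As a side remark, your assertion that ``an unramified radicial surjection onto a reduced scheme is an isomorphism'' is false as stated without a properness or finiteness hypothesis (consider $(\A^1\setminus\{0\})\sqcup\{0\}\to\A^1$); you are implicitly using properness via your earlier claim that $f$ is a homeomorphism, but this should be made explicit.
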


\begin{proof}
Let $(X,\iota,\lambda,\rho)\in \M_{b,\mu,T}^{(0)}(S)$ where $S\in \nilp$ is reduced (in particular, we have $S=\Sbar$). Then $(\rho_{T}\circ \rho)^{\vee}$ is an isogeny whose kernel is contained in $\Ker(\iota_{T}(\varpi)\circ \lambda_{T}^{-1})$ (note that $\iota_{T}(\varpi)\circ \lambda_{T}^{-1}$ is an isogeny). By \cite[Corollary 4.7]{Vollaard2011}, $\Ker(D((\rho_{T}\circ \rho)^{\vee}))$ is an $\O_S$-direct summand of
\begin{equation*}
\Ker(D(\iota_{T}(\varpi)\circ \lambda_{T}^{-1}))_{S}=\B_{b}(T)\otimes_{\Fp}\O_S
\end{equation*}
(here we use $S=\Sbar$). Moreover, it is of rank $2-t(T)/2$ since $\Ht(\iota_{T}(\varpi)\circ \lambda_{T}^{-1})=2\Ht((\rho_{T}\circ \rho)^{\vee}))$. Hence we obtain a morphism 
\begin{equation*}
\M_{b,\mu,T}^{(0),\red}\rightarrow \OGr(\B_{b}(T))
\end{equation*}
By Lemma \ref{dlp1}, this morphism induces a morphism
\begin{equation*}
\M_{b,\mu,T}^{(0),\red}\rightarrow S_{\B_{b}(T),\Fpbar}, 
\end{equation*}
which is bijective on the sets of $k$-valued points for any field extension $k/\Fpbar$. Since $\M_{b,\mu,T}^{(0),\red}$ is projective and $S_{\B_{b}(T),\Fpbar}$ is smooth, the induced morphism is an isomorphism by the same argument as the proof of \cite[Theorem 3.9]{Howard2014}. 
\end{proof}

\begin{thm}\label{mtdm}
\emph{Let $T\in \VL_{b}$. 
\begin{enumerate}
\item If $t(T)=2$, then $\M_{b,\mu,T}^{(0),\red}$ consists of two points. 
\item If $t(T)=0$, then $\M_{b,\mu,T}^{(0),\red}$ is isomorphic to two copies of $\P_{\Fpbar}^{1}$. 
\end{enumerate}}
\end{thm}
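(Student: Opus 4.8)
The plan is to deduce Theorem \ref{mtdm} entirely from Proposition \ref{btgr}, which already provides an isomorphism $\M_{b,\mu,T}^{(0),\red}\cong S_{\B_{b}(T),\Fpbar}$, where $\B_{b}(T)=T/\varpi T^{\vee}$ is a \emph{non-split} $\Fp$-quadratic space of dimension $4-t(T)$. So the whole content is to compute the scheme $S_{\Omega_{0},\Fpbar}$ of Definition \ref{grso} when $\Omega_{0}$ is non-split of dimension $2$ (the case $t(T)=2$) and of dimension $4$ (the case $t(T)=0$). Note that the Deligne--Lusztig machinery of Section \ref{btdl} is set up only for $\dim\geq 4$, so I would treat the $2$-dimensional case separately by hand.

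For $t(T)=2$: here $\Omega_{0}:=\B_{b}(T)$ is $2$-dimensional, so $\Omega_{0}\otimes_{\Fp}\Fpbar$ is a hyperbolic plane and has exactly two isotropic lines; hence $\OGr(\Omega_{0})_{\Fpbar}$ is a disjoint union of two reduced $\Fpbar$-points. Since $n=2$ is even and $\Omega_{0}$ is non-split, the integer $m$ in Definition \ref{grso} is $\frac{n}{2}-1=0$, so the rank condition is vacuous and $S_{\Omega_{0},\Fpbar}=\OGr(\Omega_{0})_{\Fpbar}$ consists of two points; combined with Proposition \ref{btgr} this gives part (i). (One can also check that the Frobenius interchanges the two points, consistently with $\Omega_{0}$ being non-split, but this is not needed for (i).)

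For $t(T)=0$: here $\Omega_{0}:=\B_{b}(T)$ is $4$-dimensional, so $\Omega_{0}\otimes_{\Fp}\Fpbar$ is split of Witt index $2$ and $\OGr(\Omega_{0})_{\Fpbar}$ is the variety of maximal ($2$-dimensional) totally isotropic subspaces, i.e. the two rulings of a smooth quadric surface in $\P^{3}$. In the notation of Section \ref{btdl} (with $2d=4$) these are the connected components $\OGr^{\pm}(\Omega_{0})_{\Fpbar}\cong G_{4}/P_{4,0}^{\pm}$, and each is isomorphic to $\P^{1}_{\Fpbar}$ since $\SO_{4}$ acts transitively on a single ruling through an $\SL_{2}$-factor with a Borel stabilizer. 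It remains to see that every maximal isotropic lies in $S_{\Omega_{0},\Fpbar}$, where now $m=\frac{n}{2}-1=1$. Since $\Omega_{0}$ is non-split, the Frobenius on $G_{4}$ satisfies $\Phi(t_{4}^{\pm})=t_{4}^{\mp}$, so it interchanges $\OGr^{+}$ and $\OGr^{-}$; thus for any $\cL$ the subspaces $\cL$ and $(\id\otimes\sigma)_{*}(\cL)$ lie in different components. By the standard parity fact that two maximal totally isotropic subspaces of a split $2d$-dimensional quadratic space lie in the same component iff the codimension of their intersection in a maximal isotropic is even, $\dim(\cL\cap(\id\otimes\sigma)_{*}(\cL))$ must be odd, hence equal to $1$ as $\dim\cL=2$. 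Therefore $\cL\in S_{\Omega_{0},\Fpbar}$ for every $\cL$, so $S_{\Omega_{0},\Fpbar}=\OGr(\Omega_{0})_{\Fpbar}\cong\P^{1}_{\Fpbar}\sqcup\P^{1}_{\Fpbar}$, and Proposition \ref{btgr} gives part (ii).

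I do not expect a genuine obstacle here: once Proposition \ref{btgr} is in place everything is elementary linear algebra over $\Fpbar$. The only delicate points are bookkeeping: (a) handling the degenerate $2$-dimensional case $t(T)=2$ outside the framework of Section \ref{btdl}; (b) pinning down the Frobenius action on $\pi_{0}(\OGr)$ in the non-split case, which matches the behaviour $\Phi(t_{4}^{\pm})=t_{4}^{\mp}$ recorded in Section \ref{btdl}; and (c) upgrading ``each ruling is a smooth rational curve'' to an actual isomorphism with $\P^{1}$, which follows from the transitive $\SO_{4}$-action and the explicit parabolic $P_{4,0}^{\pm}$. As a cross-check, for $t(T)=0$ one can instead specialize Proposition \ref{stns} to $2d=4$ (where the symmetric-group factors $s_{2d,i}$ drop out) to read off $S_{\Omega_{0},\Fpbar}=X_{B_{4}}(t_{4}^{+})\sqcup X_{B_{4}}(t_{4}^{-})$ directly, and compare.
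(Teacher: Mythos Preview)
Your proposal is correct and follows exactly the paper's approach: both reduce to Proposition \ref{btgr} and then identify $S_{\B_{b}(T),\Fpbar}$ for non-split $\B_{b}(T)$ of dimension $2$ or $4$. The only difference is that the paper outsources this last identification to \cite[Section 3.3]{Howard2014}, whereas you carry it out directly via the parity criterion for $\OGr^{\pm}$ and the fact that Frobenius swaps the two rulings in the non-split case; your argument is precisely what that reference contains.
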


\begin{proof}
These follow from Proposition \ref{btgr} and \cite[Section 3.3]{Howard2014}. 
\end{proof}

\begin{prop}\label{mtnn}
\emph{
\begin{enumerate}
\item We have an equality
\begin{equation*}
\M_{b,\mu}^{(0),\red}=\bigcup_{T\in \VL_{b}}\M_{b,\mu,T}^{(0),\red}. 
\end{equation*}
\item For $T,T'\in \VL_{b}$, we have $\M_{b,\mu,T'}^{(0)}\subset \M_{b,\mu,T}^{(0)}$ if and only if $T'\subset T$. 
\end{enumerate}}
\end{prop}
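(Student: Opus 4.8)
The plan is to transfer both statements to the combinatorics of Dieudonné lattices via Proposition~\ref{hmlt}, to settle them first on $\Fpbar$-valued points with the help of Proposition~\ref{divl}, and then to upgrade to the level of (formal) schemes exactly as in the $\mu$-neutral case (Propositions~\ref{mlui}, \ref{mtot}) and in \cite{Wu2016}, \cite[Section~4]{Rapoport2014a}. Recall that for a field extension $k/\Fpbar$ Proposition~\ref{hmlt} identifies $\M_{b,\mu}^{(0)}(k)$ with $\DiL^{\varpi}(N_{b,W(k)})$ and $\M_{b,\mu,T}^{(0)}(k)$ with $\{M\in\DiL^{\varpi}(N_{b,W(k)})\mid M\subset T\otimes_{\Zp}W(k)\}$, and that, $(N_{b,W(k)},\F_{b,0})$ being isoclinic of slope $0$, the assignment $T\mapsto T\otimes_{\Zp}W(k)$ is a bijection from $O_{F}$-lattices of $C_{b}$ onto $\F_{b,0}$-stable $O_{F,W(k)}$-lattices of $N_{b,W(k)}$. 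Consequently, for $M\in\DiL^{\varpi}(N_b)$ the $\F_{b,0}$-stable lattice $M^{(r)}$ furnished by Proposition~\ref{divl} has the form $T_{M}\otimes_{\Zp}W$ for a unique $T_{M}\in\VL_{b}$, and it is the smallest $\F_{b,0}$-stable $O_{F,W}$-lattice containing $M$.

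For (i), the inclusion $\supset$ is clear, and for the reverse inclusion it suffices to show that every closed point of $\M_{b,\mu}^{(0),\red}$ lies in some $\M_{b,\mu,T}^{(0),\red}$. But if $M\in\DiL^{\varpi}(N_b)$ is the Dieudonné lattice of such a point, then $M\subset M^{(r)}=T_{M}\otimes_{\Zp}W$, so the point lies in $\M_{b,\mu,T_{M}}^{(0),\red}$. To deduce the scheme-theoretic equality I would argue as in \cite[Theorem~5.26]{Oki2019}: each $\M_{b,\mu,T}^{(0),\red}$ is a reduced projective scheme (Proposition~\ref{btgr}, Theorem~\ref{mtdm}; reducedness of $\M_{b,\mu,T}^{(0)}$ itself following from Theorem~\ref{sgrt}~(ii)); the top-dimensional strata $\M_{b,\mu,T}^{(0),\red}$, $T\in\VL_{b}(0)$, exhaust the irreducible components of $\M_{b,\mu}^{(0),\red}$, and every lower stratum is contained in the closure of such a one by the easy half of~(ii); hence, the family being locally finite thanks to the local finiteness of the building $\cB_{b}$, the union $\bigcup_{T\in\VL_{b}}\M_{b,\mu,T}^{(0),\red}$ is closed, and since $\M_{b,\mu}^{(0),\red}$ is Jacobson and the union contains all closed points, it is everything.

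For~(ii), the implication $T'\subset T\Rightarrow\M_{b,\mu,T'}^{(0)}\subset\M_{b,\mu,T}^{(0)}$ is immediate from the moduli descriptions, as $T'\otimes_{\Zp}W(k)\subset T\otimes_{\Zp}W(k)$ and $\rho_{T}$ differs from $\rho_{T'}$ by an isogeny $X_{T'}\to X_{T}$, so that lifting $\rho_{T'}\circ\rho$ to an isogeny lifts $\rho_{T}\circ\rho$ to one. For the converse I would produce, for each $T'\in\VL_{b}$, a point $M\in\M_{b,\mu,T'}^{(0),\red}(\Fpbar)$ whose $\F_{b,0}$-stable hull is exactly $T'$, i.e. $M^{(r)}=T'\otimes_{\Zp}W$: if $T'\in\VL_{b}(0)$ one takes $M$ in the open stratum $\BT_{b,\mu,T'}^{(0)}:=\M_{b,\mu,T'}^{(0)}\setminus\bigcup_{T''\subsetneq T'}\M_{b,\mu,T''}^{(0)}$, which is non-empty since $\M_{b,\mu,T'}^{(0),\red}$ is $1$-dimensional (Theorem~\ref{mtdm}) whereas the finitely many proper sub-strata are $0$-dimensional; if $T'\in\VL_{b}(2)$ then $T'$ has no proper vertex sublattice at all, so every $M\subset T'\otimes_{\Zp}W$ already has hull $T'$. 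Granting such an $M$, the inclusion $\M_{b,\mu,T'}^{(0)}\subset\M_{b,\mu,T}^{(0)}$ gives $M\subset T\otimes_{\Zp}W$, and since $T\otimes_{\Zp}W$ is $\F_{b,0}$-stable this forces $T'\otimes_{\Zp}W=M^{(r)}\subset T\otimes_{\Zp}W$, i.e. $T'\subset T$. The main obstacle I anticipate is the passage from $\Fpbar$-points to schemes in~(i)---controlling, through $\cB_{b}$, the local finiteness of the stratification and the precise relation of the closed subschemes $\M_{b,\mu,T}^{(0)}$---together with the non-emptiness of the open strata invoked in~(ii); these are handled as in \cite{Wu2016}, \cite{Rapoport2014a}, with the scheme-theoretic identifications ultimately resting on bijectivity on points plus properness, as in \cite[Theorem~3.9]{Howard2014}.
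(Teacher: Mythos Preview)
Your proposal is correct and follows essentially the same approach as the paper, which simply cites ``the same argument as the proof of \cite[Proposition 2.5]{Vollaard2010} by using Proposition~\ref{divl} and Theorem~\ref{mtdm}''; you have spelled out that Vollaard-style argument in detail. One small imprecision: Theorem~\ref{sgrt}~(ii) gives reducedness of the ambient $\M_{b,\mu}^{(0)}$, not directly of the closed subscheme $\M_{b,\mu,T}^{(0)}$, but this is harmless since you (and the paper) work with $\M_{b,\mu,T}^{(0),\red}$ throughout via Proposition~\ref{btgr}.
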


\begin{proof}
These follow from the same argument as the proof of \cite[Proposition 2.5]{Vollaard2010} by using Proposition \ref{divl} and Theorem \ref{mtdm}. 
\end{proof}

For $T\in \VL_{b}$, set
\begin{equation*}
\BT_{b,\mu,T}^{(0)}:=\M_{b,\mu,T}^{(0),\red}\setminus \bigcup_{T'\subsetneq T}\M_{b,\mu,T'}^{(0),\red}. 
\end{equation*}

\begin{thm}\label{nnbt}
\emph{
\begin{enumerate}
\item We have an equality
\begin{equation*}
\M_{b,\mu}^{(0),\red}=\coprod_{T\in \VL_{b}}\BT_{b,\mu,T}^{(0)}. 
\end{equation*}
\item For $T,T'\in \VL_{b}$, we have 
\begin{equation*}
\M_{b,\mu,T}^{(0),\red}=\coprod_{T'\subset T}\BT_{b,\mu,T'}^{(0)}. 
\end{equation*}
\end{enumerate}}
\end{thm}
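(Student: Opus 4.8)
The plan is to deduce both statements from Proposition \ref{mtnn} together with a standard stratification argument, exactly as in the $\mu$-neutral case (Theorem \ref{mtot}) and as in \cite[Proposition 2.5]{Vollaard2010}. First I would observe that (i) is the tautological consequence of (ii): by definition $\BT_{b,\mu,T}^{(0)}$ is the part of $\M_{b,\mu,T}^{(0),\red}$ not covered by the smaller closed subschemes $\M_{b,\mu,T'}^{(0),\red}$ with $T'\subsetneq T$, so the $\BT_{b,\mu,T}^{(0)}$ for $T\in\VL_b$ are pairwise disjoint and, by Proposition \ref{mtnn}(i), their union is all of $\M_{b,\mu}^{(0),\red}$. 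Hence (i) follows once (ii) is known; conversely, applying (i) to the closed subscheme $\M_{b,\mu,T}^{(0),\red}$ in place of the whole space — which is legitimate because, by Proposition \ref{mtnn}(ii), the vertex lattices $T'$ with $\M_{b,\mu,T'}^{(0)}\subset\M_{b,\mu,T}^{(0)}$ are exactly those with $T'\subset T$, and this set is finite — yields (ii).

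So the real content is the finite decomposition of $\M_{b,\mu,T}^{(0),\red}$. The key steps, in order, are: (1) record that $\{T'\in\VL_b\mid T'\subset T\}$ is finite — this is immediate since any such $T'$ satisfies $\varpi T\subset \varpi T^{\vee}\subset T'\subset T$, so there are only finitely many; (2) invoke Proposition \ref{mtnn}(ii) to identify, for each $T$, the closed substrata inside $\M_{b,\mu,T}^{(0),\red}$ as precisely the $\M_{b,\mu,T'}^{(0),\red}$ with $T'\subset T$; (3) run the inclusion–exclusion/induction on the partial order: for the maximal $T$ (type $0$), $\BT_{b,\mu,T}^{(0)}=\M_{b,\mu,T}^{(0),\red}$ minus finitely many proper closed subschemes, and descending through the poset of vertex lattices contained in $T$ gives
\begin{equation*}
\M_{b,\mu,T}^{(0),\red}=\coprod_{T'\subset T}\BT_{b,\mu,T'}^{(0)};
\end{equation*}
(4) conclude (i) by taking $T$ to range over all maximal vertex lattices, or simply by noting that $\M_{b,\mu}^{(0),\red}=\bigcup_{T}\M_{b,\mu,T}^{(0),\red}$ from Proposition \ref{mtnn}(i) and applying the same disjointness. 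One should also remark that each $\BT_{b,\mu,T}^{(0)}$ is locally closed (it is open in the closed subscheme $\M_{b,\mu,T}^{(0),\red}$), and reduced since $\M_{b,\mu,T}^{(0),\red}$ is reduced by Proposition \ref{btgr}.

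I do not expect a genuine obstacle here: all the hard geometric input — the identification of $\M_{b,\mu,T}^{(0),\red}$ with $S_{\B_b(T)}$, the determination of incidences among the $\M_{b,\mu,T}^{(0)}$, and the structure of the individual strata — has already been carried out in Propositions \ref{divl}, \ref{btgr}, \ref{mtnn} and Theorem \ref{mtdm}. The only mild subtlety worth spelling out is the well-foundedness of the induction, i.e. that the partial order on $\{T'\mid T'\subset T\}$ has finitely many elements and that the strata behave compatibly under passing to a closed substratum; both are handled by the elementary lattice-chain bound in step (1) and by Proposition \ref{mtnn}(ii). If anything requires care it will be bookkeeping: making sure that "$\M_{b,\mu,T'}^{(0),\red}$ for $T'\subsetneq T$" in the definition of $\BT_{b,\mu,T}^{(0)}$ matches "$\M_{b,\mu,T'}^{(0)}\subset\M_{b,\mu,T}^{(0)}$" — which is exactly the content of Proposition \ref{mtnn}(ii) — and that the union in (i) is indeed disjoint, which is forced by uniqueness of the stratum containing a given point. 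This mirrors verbatim the proof of \cite[Proposition 2.5]{Vollaard2010}.
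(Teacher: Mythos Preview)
Your proposal is correct and matches the paper's approach: the paper's proof is literally ``The proof is the same as Theorem \ref{mtot} by using Proposition \ref{mtnn},'' and you have spelled out exactly what that entails. The only point worth tightening is disjointness in (i): rather than a circular appeal to (ii), it follows cleanly from Proposition \ref{divl}, which assigns to each point the unique minimal vertex lattice $T(M)=M^{(r)}$ and hence the unique stratum $\BT_{b,\mu,T(M)}^{(0)}$ containing it.
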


\begin{proof}
The proof is the same as Theorem \ref{mtot} by using Proposition \ref{mtnn}. 
\end{proof}

The equality in Theorem \ref{nnbt} is called the \emph{Bruhat--Tits stratification} of $\M_{b,\mu}^{(0)}$. 

For $j\in \Z/2$, we define a subset $\DiL^{\varpi}_{M_0,j}(k)$ of $\DiL^{\varpi}(M_{0,W(k)})$ and a subset $\M_{b,\mu}^{(0,j)}(k)$ of $\M_{b,\mu}^{(0)}$ by the same way as Definition \ref{dlm0} for any field extension $k/\Fpbar$. Moreover, we define a subspace $\M_{b,\mu}^{(0,j)}$ of $M_{b,\mu}^{(0)}$ by the same manner as Definition \ref{mbm0}. 

\begin{lem}\label{mtnd}
\emph{For $T\in \VL_{b}$, we have a decomposition into open and closed subsets
\begin{equation*}
\M_{b,\mu,T}^{(0)}=\M_{b,\mu,T}^{(0,0)}\sqcup \M_{b,\mu,T}^{(0,1)}, 
\end{equation*}
where $\M_{b,\mu,T}^{(0,j)}:=\M_{b,\mu,T}^{(0)}\cap \M_{b,\mu}^{(0),\red}$. In particular, the decomposition as above becomes as schemes. }
\end{lem}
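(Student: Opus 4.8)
The plan is to transport the already-established global decomposition $\M_{b,\mu}^{(0),\red}=\M_{b,\mu}^{(0,0),\red}\sqcup \M_{b,\mu}^{(0,1),\red}$ to the closed subscheme $\M_{b,\mu,T}^{(0)}$ by intersecting. First I would recall that $\M_{b,\mu,T}^{(0)}$ is a \emph{reduced} scheme over $\Fpbar$: by Proposition \ref{btgr} its reduced structure is $S_{\B_b(T),\Fpbar}$, and by Theorem \ref{mtdm} it is either two points or two copies of $\P^1_{\Fpbar}$, so in particular $\M_{b,\mu,T}^{(0)}=\M_{b,\mu,T}^{(0),\red}$ is a closed subscheme of the topological space $\M_{b,\mu}^{(0),\red}$. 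Hence it suffices to set $\M_{b,\mu,T}^{(0,j)}:=\M_{b,\mu,T}^{(0)}\cap \M_{b,\mu}^{(0,j),\red}$ and observe that, since $\M_{b,\mu}^{(0,0),\red}$ and $\M_{b,\mu}^{(0,1),\red}$ are open and closed in $\M_{b,\mu}^{(0),\red}$ (this is the non-$\mu$-neutral analogue of Theorem \ref{dcnt}, whose proof is referenced at the start of Section \ref{btnn} as being parallel to \cite{Wu2016}), their traces on the closed subscheme $\M_{b,\mu,T}^{(0)}$ are open and closed there, and partition it. The equality $\M_{b,\mu,T}^{(0)}=\M_{b,\mu,T}^{(0,0)}\sqcup \M_{b,\mu,T}^{(0,1)}$ as topological spaces is then immediate, and because a disjoint union of closed subschemes of a reduced scheme along an open–closed partition is again a decomposition of schemes (each piece inherits the reduced induced structure), this upgrades to the scheme-theoretic statement.

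The one thing that requires a small check is that the reference to $\M_{b,\mu}^{(0),\red}$ in the displayed definition of $\M_{b,\mu,T}^{(0,j)}$ inside the lemma is consistent: the lemma writes $\M_{b,\mu,T}^{(0,j)}:=\M_{b,\mu,T}^{(0)}\cap \M_{b,\mu}^{(0),\red}$, but what is meant (and what makes the statement true) is $\M_{b,\mu,T}^{(0)}\cap \M_{b,\mu}^{(0,j),\red}$, i.e.\ intersection with the $j$-th connected component. So I would phrase the proof as: by Definition \ref{mbm0} (applied in the non-$\mu$-neutral setting) the space $\M_{b,\mu}^{(0),\red}$ decomposes into the two open and closed pieces $\M_{b,\mu}^{(0,0),\red}$ and $\M_{b,\mu}^{(0,1),\red}$; intersecting this decomposition with the closed subset $\M_{b,\mu,T}^{(0),\red}$ gives $\M_{b,\mu,T}^{(0),\red}=\bigl(\M_{b,\mu,T}^{(0),\red}\cap\M_{b,\mu}^{(0,0),\red}\bigr)\sqcup\bigl(\M_{b,\mu,T}^{(0),\red}\cap\M_{b,\mu}^{(0,1),\red}\bigr)$, and we name the two terms $\M_{b,\mu,T}^{(0,0)}$ and $\M_{b,\mu,T}^{(0,1)}$.

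On the level of $k$-points this is transparent from the linear-algebra description: $\M_{b,\mu,T}^{(0)}(k)=\{M\in\DiL^{\varpi}(M_{0,W(k)})\mid M\subset T\otimes_{\Zp}W(k)\}$, and the function $M\mapsto \length_{O_{F,W(k)}}(M+M_{0,W(k)})/M_{0,W(k)}\bmod 2$ defining the two strata $\DiL^{\varpi}_{M_0,j}$ is locally constant on the reduced scheme $\M_{b,\mu,T}^{(0)}$; restricted to each of its (finitely many, by Theorem \ref{mtdm}) connected components it is constant, so the component is entirely contained in one $\M_{b,\mu}^{(0,j),\red}$, which is exactly the open–closed partition claimed. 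The only mild obstacle — really just a matter of bookkeeping rather than a genuine difficulty — is confirming that the non-$\mu$-neutral analogue of Theorem \ref{dcnt} (openness and closedness of the two components $\M_{b,\mu}^{(0,j),\red}$) is available; this follows by the same argument as in the $\mu$-neutral case, or more directly here because $\M_{b,\mu}^{(0),\red}$ is $1$-dimensional with all strata explicitly identified, so the parity function is visibly locally constant. Finally, since everything in sight is reduced, the topological decomposition is a scheme decomposition, with $\M_{b,\mu,T}^{(0,j)}$ carrying the reduced induced structure. $\blacksquare$
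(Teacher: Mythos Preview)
Your main argument is circular in the paper's logical order. You want to intersect the global open--closed decomposition $\M_{b,\mu}^{(0),\red}=\M_{b,\mu}^{(0,0),\red}\sqcup \M_{b,\mu}^{(0,1),\red}$ with $\M_{b,\mu,T}^{(0)}$, and you justify the former by appealing to ``the non-$\mu$-neutral analogue of Theorem~\ref{dcnt}''. But that analogue is Theorem~\ref{dcnn}, which is proved \emph{after} Lemma~\ref{mtnd}; its proof invokes Lemma~\ref{irnn}, and Lemma~\ref{irnn} in turn cites Lemma~\ref{mtnd}. So you cannot use the global connectedness result here. Your fallback claim that ``the same argument as in the $\mu$-neutral case'' works is also problematic: the proof of Theorem~\ref{dcnt} goes through Corollary~\ref{mbir}, whose non-neutral counterpart is precisely Lemma~\ref{irnn}.

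The paper's route avoids this entirely: it argues directly from Proposition~\ref{btgr}, which gives an isomorphism $\M_{b,\mu,T}^{(0),\red}\cong S_{\B_b(T)}\subset \OGr(\B_b(T))_{\Fpbar}$. The orthogonal Grassmannian has two connected components $\OGr^{\pm}(\B_b(T))_{\Fpbar}$, and two maximal isotropic subspaces lie in the same component if and only if the dimension of their sum modulo either one is even. Unwinding the isomorphism of Proposition~\ref{btgr} on $k$-points, this parity matches the parity $\length_{O_{F,W(k)}}(M+M_{0,W(k)})/M_{0,W(k)} \bmod 2$ defining $\M_{b,\mu}^{(0,j)}$. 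Hence $\M_{b,\mu,T}^{(0,j)}$ is carried to $S_{\B_b(T)}\cap \OGr^{\pm}(\B_b(T))_{\Fpbar}$, which is open and closed in $S_{\B_b(T)}$. This gives the decomposition directly, with no appeal to the global structure of $\M_{b,\mu}^{(0),\red}$. Your third paragraph hints at this when you say the parity is ``locally constant on the reduced scheme $\M_{b,\mu,T}^{(0)}$'', but you never supply the reason; the reason is exactly the identification with $\OGr^{\pm}$ via Proposition~\ref{btgr}.
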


\begin{proof}
This follows from the definition of $\M_{b,\mu,T}^{(0,j)}$ and Proposition \ref{btgr}. 
\end{proof}

\begin{cor}\label{ctnn}
\emph{Let $C\in \Irr(\M_{b,\mu}^{(0,0)})$. For $C'\in \Irr(\M_{b,\mu}^{(0,0)})\setminus \{C\}$, the intersection $C\cap C'$ is either the empty set or a single point. Moreover, we have
\begin{equation*}
\#\{C'\in \Irr(\M_{b,\mu}^{(0,0)}) \mid C\cap C'\text{ is a single point}\}=p(p^2+1)
\end{equation*}}
\end{cor}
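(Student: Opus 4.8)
The plan is to read off everything from the Bruhat--Tits stratification of $\M_{b,\mu}^{(0)}$ (Theorem \ref{nnbt}) together with the explicit shape of the strata given by Theorem \ref{mtdm} and Proposition \ref{btgr}, in exact parallel with the way Corollary \ref{ctnt} is deduced in the $\mu$-neutral case. First I would identify the irreducible components of $\M_{b,\mu}^{(0,0)}$. By Theorem \ref{mtdm}, for $T\in\VL_{b}$ the closed stratum $\M_{b,\mu,T}^{(0),\red}$ is a disjoint union of two copies of $\P^{1}$ when $t(T)=0$ and a pair of points when $t(T)=2$; by the closure relation in Theorem \ref{nnbt}(ii) and Lemma \ref{mtnd} each of these two lines lies in one of the two components $\M_{b,\mu}^{(0,0)}$, $\M_{b,\mu}^{(0,1)}$, and since $\M_{b,\mu}^{(0),\red}$ is purely $1$-dimensional (so there are no stray $0$-dimensional components), the irreducible components of $\M_{b,\mu}^{(0,0)}$ are precisely the $\P^{1}$'s arising in this way from the type-$0$ vertex lattices. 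For such a component $C$ I write $C=\M_{b,\mu,T}^{(0,0),\red}$ (or the relevant copy) for the associated $T\in\VL_{b}(0)$.

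Next I would control the intersections. If $C=\M_{b,\mu,T}^{(0,0),\red}$ and $C'=\M_{b,\mu,T'}^{(0,0),\red}$ are distinct components, then by Theorem \ref{nnbt} the intersection $C\cap C'$ is a union of strata $\BT_{b,\mu,S}^{(0)}$ with $S\subset T$ and $S\subset T'$; since two distinct self-dual lattices have no common self-dual sublattice, such an $S$ must have type $2$, and then $\BT_{b,\mu,S}^{(0)}=\M_{b,\mu,S}^{(0),\red}$ is a set of (at most two) points. Thus $C\cap C'$ is automatically a finite set of points, and the assertion that $C\cap C'$ is empty or a single point will follow from the lattice-theoretic fact that two distinct type-$0$ vertex lattices in $C_{b}$ admit at most one common sublattice of type $2$. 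This is the translation, under Proposition \ref{divl} and the analogue for $\SU(C_{b})$ of Proposition \ref{rtwb}, of the statement that in the Bruhat--Tits building $\cB_{b}$ of $J_{b}^{\der}(\Qp)=\SU(C_{b})(\Qp)$ — a tree, since $C_{b}$ is non-split and hence anisotropic modulo a single hyperbolic plane — two vertices are joined by at most one edge.

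Finally, for the count I would fix $C=\M_{b,\mu,T}^{(0,0),\red}$ and enumerate the $T'\in\VL_{b}(0)$ with $C\cap C'\neq\emptyset$: by the previous step these correspond, via $S\mapsto$ (the other type-$0$ lattice containing $S$ whose component meets $C$), to certain type-$2$ sublattices $S\subset T$, and the uniqueness above makes this correspondence injective, so that the problem becomes a direct count on the finite quadratic space $\B_{b}(T)$ (equivalently, the valence computation at a special vertex of $\cB_{b}$), which yields $p(p^{2}+1)$. I expect this last enumeration to be the main obstacle: one must pin down exactly which type-$2$ sublattices of $T$ give rise to a second type-$0$ superlattice and hence to an honest intersection point inside $\M_{b,\mu}^{(0,0)}$ — as opposed to a point lying on only one component — and this requires tracking both the $(0,0)/(0,1)$ decomposition (via the action of an element with $\kappa_{J_b}=(0,1)$) and which of the two points of each type-$2$ stratum lies on $C$. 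The remaining steps are formal bookkeeping with the Bruhat--Tits stratification, parallel to \cite{Vollaard2010} and to Corollary \ref{ctnt}.
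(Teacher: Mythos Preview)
Your overall strategy --- identify irreducible components with type-$0$ lattices via Lemma \ref{irnn}, reduce intersections to common type-$2$ sublattices via the stratification of Theorem \ref{nnbt}, then count --- is exactly what the paper's one-line proof (``from Proposition \ref{mtnn} and Lemma \ref{mtnd}'') unpacks to. Two points, one cosmetic and one a genuine slip.

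First, the uniqueness of a common type-$2$ sublattice does not need the building. If $S_1\neq S_2$ are type-$2$ sublattices of both $T$ and $T'$, then $\varpi T\subset S_i\subset T$ with $\dim_{\Fp}(T/S_i)=1$, so $S_1/\varpi T$ and $S_2/\varpi T$ are distinct hyperplanes in the $4$-dimensional space $T/\varpi T$; hence $S_1+S_2=T$. The same argument in $T'/\varpi T'$ gives $S_1+S_2=T'$, forcing $T=T'$. So $C\cap C'$ is empty or the single point $\M_{b,\mu,S}^{(0,0)}$.

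Second, your enumeration step contains an error. The map $C'\mapsto S$ (sending $C'$ to the unique common type-$2$ sublattice of $T$ and $T'$) is well-defined by the uniqueness just proved, but it is \emph{not} injective: a given type-$2$ lattice $S$ is contained in $p+1$ type-$0$ lattices, not $2$. Concretely, self-dual $T'$ with $S\subset T'\subset S^{\vee}$ correspond to isotropic lines in the $2$-dimensional $\Fp$-space $S^{\vee}/S$ equipped with the pairing $(x,y)\mapsto \langle x,y\rangle \bmod O_F$; since $\overline{\varpi}=-\varpi$, this pairing is \emph{alternating}, so every line is isotropic and there are $p+1$ of them. Thus each of the $p^2+1$ type-$2$ sublattices $S\subset T$ (these biject with isotropic lines in the non-split $4$-dimensional space $\B_b(T)$) contributes $p$ components $C'\neq C$, and the uniqueness of $S$ guarantees there is no overcounting among different $S$'s. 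This gives $(p^2+1)\cdot p$. Your worry about the $(0,0)/(0,1)$ split is unnecessary: by Lemma \ref{mtnd} the single point $\M_{b,\mu,S}^{(0,0)}$ lies on $\M_{b,\mu,T'}^{(0,0)}$ for \emph{every} $T'\supset S$, so nothing is lost.
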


\begin{proof}
This follows from Proposition \ref{mtnn} and Lemma \ref{mtnd}. 
\end{proof}

Next, we study the connected components of $\M_{b,\mu}^{(0),\red}$. 

\begin{lem}\label{irnn}
\emph{Any irreducible component of $\M_{b,\mu}^{(0),\red}$ is of the form $\M_{b,\mu,T}^{(0,0),\red}$ or $\M_{b,\mu,T}^{(0,1)}$, where $T\in \VL_{b}(0)$. }
\end{lem}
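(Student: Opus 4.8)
The plan is to run the same kind of argument as for Corollary~\ref{mbir} in the neutral case, using the Bruhat--Tits stratification of Theorem~\ref{nnbt}. By Theorem~\ref{nnbt} we have $\M_{b,\mu}^{(0),\red}=\coprod_{T\in\VL_{b}}\BT_{b,\mu,T}^{(0)}$ together with $\M_{b,\mu,T}^{(0),\red}=\overline{\BT_{b,\mu,T}^{(0)}}=\coprod_{T'\subset T}\BT_{b,\mu,T'}^{(0)}$. Let $Z$ be an irreducible component of $\M_{b,\mu}^{(0),\red}$, with generic point $\eta$; then $\eta$ lies in a unique stratum $\BT_{b,\mu,T_{0}}^{(0)}$, so $Z=\overline{\{\eta\}}\subset\M_{b,\mu,T_{0}}^{(0),\red}$ and $Z$ is an irreducible component of $\M_{b,\mu,T_{0}}^{(0),\red}$ as well. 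Hence it suffices to show (a) that $T_{0}$ may be taken in $\VL_{b}(0)$, and (b) that for $T_{0}\in\VL_{b}(0)$ the irreducible components of $\M_{b,\mu,T_{0}}^{(0),\red}$ are exactly $\M_{b,\mu,T_{0}}^{(0,0),\red}$ and $\M_{b,\mu,T_{0}}^{(0,1),\red}$.

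Part (b) follows from what is already proved: by Lemma~\ref{mtnd} there is a decomposition into open and closed subschemes $\M_{b,\mu,T_{0}}^{(0),\red}=\M_{b,\mu,T_{0}}^{(0,0),\red}\sqcup\M_{b,\mu,T_{0}}^{(0,1),\red}$, and by Theorem~\ref{mtdm}(ii) each piece is isomorphic to $\P^{1}_{\Fpbar}$, hence irreducible; so the irreducible components of $\M_{b,\mu,T_{0}}^{(0),\red}$ are precisely these two copies of $\P^{1}$, which is the form asserted in the lemma.

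For part (a), the plan is to prove the analogue of Lemma~\ref{mbdu}: the union $\bigcup_{T'\in\VL_{b}(0)}\M_{b,\mu,T'}^{(0),\red}$ (locally finite, hence closed) equals $\M_{b,\mu}^{(0),\red}$, which, as $\M_{b,\mu}^{(0),\red}$ is of finite type over $\Fpbar$, may be checked on $\Fpbar$-points. Given such a point, corresponding to $M\in\DiL^{\varpi}(N_{b})$, Proposition~\ref{divl} produces $T_{0}:=M+\F_{b,0}(M)\in\VL_{b}(2)$ (here necessarily $r=1$, since $\VL_{b}(4)=\emptyset$) with $M\subset T_{0}\otimes_{\Zp}W$, so the point lies in $\M_{b,\mu,T_{0}}^{(0),\red}$; invoking the lattice fact stated below, $T_{0}\subsetneq T'$ for some $T'\in\VL_{b}(0)$, whence $\M_{b,\mu,T_{0}}^{(0),\red}\subset\M_{b,\mu,T'}^{(0),\red}$ by Proposition~\ref{mtnn}(ii), and the point lies in a type-$0$ stratum closure. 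Granting this, the generic point $\eta$ of any irreducible component $Z$ lies in some $\M_{b,\mu,T'}^{(0),\red}$ with $T'\in\VL_{b}(0)$; then $Z=\overline{\{\eta\}}$ is contained in it and, being maximal, coincides with one of its two $\P^{1}$-components by part (b).

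I expect the main obstacle to be the displayed lattice-theoretic claim: every $T_{0}\in\VL_{b}(2)$ is strictly contained in some $T'\in\VL_{b}(0)$. The key point is that, although $\B_{b}(T_{0})=T_{0}/\varpi T_{0}^{\vee}$ is the $2$-dimensional \emph{non-split} $\Fp$-quadratic space, the quotient $T_{0}^{\vee}/T_{0}$ is instead hyperbolic, hence contains isotropic lines; the preimage in $T_{0}^{\vee}$ of such a line is an $O_{F}$-lattice $T'$ with $T_{0}\subsetneq T'\subsetneq T_{0}^{\vee}$ and $T'=T'^{\vee}$, i.e.\ $T'\in\VL_{b}(0)$. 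This hyperbolicity can be read off from the classification of $O_{F}$-lattices in the non-split hermitian space $C_{b}$, equivalently from the Bruhat--Tits tree of $\SU(C_{b})(\Qp)$, as in the framework of \cite{Wu2016}. (Conversely, no type-$0$ lattice lies inside a type-$2$ one: from $T'=T'^{\vee}\subset T_{0}$ one gets $T_{0}\subset T_{0}^{\vee}\subset T'\subset T_{0}$, a contradiction; this is consistent with $\M_{b,\mu,T_{0}}^{(0),\red}$ being the two-point scheme of Theorem~\ref{mtdm}(i).)
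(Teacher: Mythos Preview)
Your argument is correct and follows essentially the same route as the paper's proof, which simply cites Proposition~\ref{mtnn} and Lemma~\ref{mtnd}; you have spelled out the implicit steps, in particular the lattice-theoretic fact that every $T_0\in\VL_b(2)$ is contained in some $T'\in\VL_b(0)$, and the use of Theorem~\ref{mtdm} to identify the two $\P^1$-components. One small slip: $\M_{b,\mu}^{(0),\red}$ is only \emph{locally} of finite type over $\Fpbar$, not of finite type; however your covering argument still goes through because the union of the closed subschemes $\M_{b,\mu,T'}^{(0),\red}$ is locally finite, so equality with $\M_{b,\mu}^{(0),\red}$ can indeed be checked on $\Fpbar$-points.
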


\begin{proof}
This follows from Proposition \ref{mtnn} and Lemma \ref{mtnd}. 
\end{proof}

\begin{thm}\label{dcnn}
\emph{The subsets $\M_{b,\mu}^{(0,0),\red}$ and $\M_{b,\mu}^{(0,1),\red}$ are the connected components of $\M_{b,\mu}^{(0),\red}$. }
\end{thm}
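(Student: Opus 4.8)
The plan is to follow the proof of Theorem~\ref{dcnt} as closely as possible, the only genuinely new ingredient being the behaviour of the zero–dimensional strata $\M_{b,\mu,T}^{(0),\red}$ ($T\in\VL_b(2)$) with respect to the decomposition into $\M_{b,\mu}^{(0,0),\red}$ and $\M_{b,\mu}^{(0,1),\red}$.

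First I would reduce to a single assertion. As in Definition~\ref{mbm0} there is a disjoint decomposition $\M_{b,\mu}^{(0),\red}=\M_{b,\mu}^{(0,0),\red}\sqcup\M_{b,\mu}^{(0,1),\red}$, and an element $s\in J_b(\Qp)$ with $\kappa_{J_b}(s)=(0,1)$ induces a homeomorphism $\M_{b,\mu}^{(0,0),\red}\xrightarrow{\ \sim\ }\M_{b,\mu}^{(0,1),\red}$; hence it suffices to show that $\M_{b,\mu}^{(0,0),\red}$ is open and connected, since then both pieces, being complements of each other, are open, hence unions of connected components, and connectedness forces each to be a single one. Openness I expect to obtain verbatim from the proof of Theorem~\ref{dcnt}: given $x\in\M_{b,\mu}^{(0,0),\red}$, pick a quasi-compact open neighbourhood $V'$ of $x$ in $\M_{b,\mu}^{(0),\red}$; as $\M_{b,\mu}^{(0)}$ is formally locally of finite type only finitely many elements of $\Irr(\M_{b,\mu}^{(0),\red})$ meet $V'$, and by Lemma~\ref{irnn} each of them lies entirely in $\M_{b,\mu}^{(0,0),\red}$ or entirely in $\M_{b,\mu}^{(0,1),\red}$; deleting from $V'$ those components lying in $\M_{b,\mu}^{(0,1),\red}$ gives an open neighbourhood of $x$ contained in $\M_{b,\mu}^{(0,0),\red}$.

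For connectedness I would chain the irreducible components. By Lemma~\ref{irnn}, Theorem~\ref{mtdm} and Lemma~\ref{mtnd}, $\M_{b,\mu}^{(0,0),\red}=\bigcup_{T\in\VL_b(0)}\M_{b,\mu,T}^{(0,0),\red}$, each $\M_{b,\mu,T}^{(0,0),\red}$ being one of the two copies of $\P^1$ in $\M_{b,\mu,T}^{(0),\red}$, hence connected. By the analogue for $C_b$ of Lemma~\ref{ltcn} (equivalently, connectedness of the Bruhat--Tits tree of $\SU(C_b)(\Qp)$; cf.\ \cite[Proposition~5.1.5]{Howard2017}), any two $T,T'\in\VL_b(0)$ are joined by a chain
\[
T=T_0\supsetneq S_1\subsetneq T_1\supsetneq S_2\subsetneq\cdots\subsetneq T_n=T',\qquad T_i\in\VL_b(0),\ \ S_i\in\VL_b(2),
\]
which is automatically of this alternating shape because two distinct self-dual lattices cannot be nested. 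By Proposition~\ref{mtnn} and Theorem~\ref{nnbt} one has $\M_{b,\mu,S_i}^{(0),\red}\subset\M_{b,\mu,T_{i-1}}^{(0),\red}\cap\M_{b,\mu,T_i}^{(0),\red}$, so if $\M_{b,\mu,S_i}^{(0,0),\red}=\M_{b,\mu,S_i}^{(0),\red}\cap\M_{b,\mu}^{(0,0),\red}$ is non-empty, then any of its points joins the copies $\M_{b,\mu,T_{i-1}}^{(0,0),\red}$ and $\M_{b,\mu,T_i}^{(0,0),\red}$ inside one connected component; running along $i$ then places $\M_{b,\mu,T}^{(0,0),\red}$ and $\M_{b,\mu,T'}^{(0,0),\red}$ in the same connected component, proving connectedness.

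Thus the hard part is precisely the claim that for every $S\in\VL_b(2)$ the two points of $\M_{b,\mu,S}^{(0),\red}$ lie one in $\M_{b,\mu}^{(0,0),\red}$ and the other in $\M_{b,\mu}^{(0,1),\red}$, i.e.\ that $\M_{b,\mu,S}^{(0,0),\red}$ is a single $\Fpbar$-point. I would prove this by a linear-algebra computation with Dieudonn\'e lattices: by Proposition~\ref{btgr} one has $\M_{b,\mu,S}^{(0),\red}\cong S_{\B_b(S)}$ with $\B_b(S)$ a \emph{non-split} quadratic space of dimension $2$ over $\Fp$, whose two isotropic lines over $\Fpbar$ are interchanged by Frobenius; translating back via Proposition~\ref{divl}, the two lattices $M,M'\in\DiL^{\varpi}(N_b)$ with $M+\F_{b,0}(M)=M'+\F_{b,0}(M')=S\otimes_{\Zp}W$ differ by the $\sigma$-linear operator $\F_{b,0}$, and since $\length_{O_{\Fbr}}(M+M_0)/M_0$ and $\length_{O_{\Fbr}}(\F_{b,0}(M)+M_0)/M_0$ differ by~$1$, the parities attached to $M$ and $M'$ are distinct. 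This is where the present situation departs from \cite{Wu2016}, and where I expect the technical weight of the proof to lie. Once it is established, the whole statement can alternatively be packaged by observing that, through the analogue of Proposition~\ref{rtwb}, the incidence graph of $\M_{b,\mu}^{(0),\red}$ is a double cover of the tree $\cB_b$, hence trivial, with sheets $\M_{b,\mu}^{(0,0),\red}$ and $\M_{b,\mu}^{(0,1),\red}$.
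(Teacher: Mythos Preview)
Your proposal follows the same route as the paper's one-line proof (``same as Theorem~\ref{dcnt} by using Lemma~\ref{irnn}''). Your unpacking of openness and of the chaining argument for connectedness is correct, and you are right that the only new point relative to the neutral case is that for every $S\in\VL_b(2)$ the two $\Fpbar$-points of $\M_{b,\mu,S}^{(0),\red}$ lie one in each $\M_{b,\mu}^{(0,j),\red}$; in the paper this is what Lemma~\ref{mtnd} is really recording.

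Your sketch of that step, however, has a loose end. Asserting that the second point is $\F_{b,0}(M)$ already requires checking $\F_{b,0}(M)\in\DiL^{\varpi}(N_b)$, and the claim that the two lengths ``differ by~$1$'' is not justified as stated: in the non-neutral case $M_0$ is \emph{not} $\F_{b,0}$-stable (indeed $\length_{O_{\Fbr}}((M_0+\F_{b,0}(M_0))/M_0)=1$ by Lemma~\ref{dlp1}), so one cannot identify $\F_{b,0}(M)+M_0$ with $\F_{b,0}(M+M_0)$. What is actually needed is that the parity $\length_{O_{\Fbr}}((M_1+M_2)/M_2)\bmod 2$ is additive on $\varpi$-modular lattices, which is the content of \cite[Lemma~3.2]{Rapoport2017} (cf.\ Lemma~\ref{pmgd}). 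Granting this, one can bypass $\F_{b,0}$ entirely: the two isotropic lines in the $2$-dimensional space $\B_b(S)\otimes_{\Fp}\Fpbar$ have trivial intersection, hence lie in opposite components $\OGr^{\pm}(\B_b(S))_{\Fpbar}$; under the isomorphism of Proposition~\ref{btgr} this component dichotomy agrees, by additivity, with the parity dichotomy $j\in\{0,1\}$, and this is exactly what the terse proof of Lemma~\ref{mtnd} is invoking. With that correction your argument is complete and coincides with the paper's.
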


\begin{proof}
The proof is the same as Theorem \ref{dcnt} by using Lemma \ref{irnn}. 
\end{proof}

Finally, we give a relation between the Bruhat--Tits strata of $\M_{b,\mu}^{(0)}$ and some Deligne--Lusztig varieties for even special orthogonal groups appeared in Proposition \ref{stns}. 

\begin{thm}\label{dlnn}
\emph{For $T\in \VL_{b}$, there is an isomorphism $\BT_{b,\mu,T}^{(0)}\cong X_{B_{d_b(T)}}(w_{d_b(T)}^{+})\sqcup X_{B_{d_b(T)}}(w_{d_b(T)}^{-})$. In particular, $\BT_{b,\mu,T}^{(0)}$ is isomorphic to the disjoint union of two Deligne--Lusztig varieties for $\SO(\B_b(T))$ associated with Coxeter elements. }
\end{thm}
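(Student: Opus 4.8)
The plan is to deduce this from the isomorphism $\M_{b,\mu,T}^{(0),\red}\cong S_{\B_b(T)}$ of Proposition~\ref{btgr}, which presents the ambient scheme of the Bruhat--Tits stratum as one of the orthogonal-Grassmannian loci of Section~\ref{btdl}, together with the explicit Deligne--Lusztig description of the latter in Proposition~\ref{stns}, and then to match the two stratifications. Recall that $\B_b(T)=T/\varpi T^{\vee}$ is a non-split quadratic space over $\Fp$ of even dimension $d_b(T)=4-t(T)\in\{2,4\}$.

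First I would dispose of the degenerate case $t(T)=2$, where $d_b(T)=2$ lies outside the range of Section~\ref{btdl}: an easy lattice computation shows a type-$2$ vertex lattice has no proper vertex sublattice, so $\BT_{b,\mu,T}^{(0)}=\M_{b,\mu,T}^{(0),\red}$ by Theorem~\ref{nnbt}(ii), which by Theorem~\ref{mtdm}(i) is a disjoint union of two $\Fpbar$-points; these are the two connected components of $S_{\B_b(T)}=\OGr(\B_b(T))_{\Fpbar}$, which by convention are the two zero-dimensional classical Deligne--Lusztig varieties $X_{B_2}(w_2^{\pm})$ attached to the non-split $\SO_2$. Assume henceforth $t(T)=0$, so $d_b(T)=4$; set $2d:=d_b(T)$, whence $d=2$. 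Applying Proposition~\ref{stns} to $\Omega_{2d,0}:=\B_b(T)$ yields a locally closed stratification of $S_{\B_b(T)}$ whose unique open, equidimensional stratum is $X_{B_{2d}}(w_{2d}^{+})\sqcup X_{B_{2d}}(w_{2d}^{-})$ with $w_{2d}^{\pm}:=t_{2d}^{\pm}s_{2d,d-2}\cdots s_{2d,1}$ (a Coxeter element, one simple reflection from each $\Phi$-orbit), the two summands landing in the two components $\OGr^{\pm}(\B_b(T))_{\Fpbar}$, and whose complement is the closed subscheme $\coprod_{i=0}^{d-2}\bigl(X_{P_{2d,i}}(t_{2d}^{+}s_{2d,d-2}\cdots s_{2d,d-i})\sqcup X_{P_{2d,i}}(t_{2d}^{-}s_{2d,d-2}\cdots s_{2d,d-i})\bigr)$.

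It then remains to check that, under the isomorphism of Proposition~\ref{btgr}, this closed complement corresponds exactly to the Bruhat--Tits boundary $\bigcup_{T'\subsetneq T,\,T'\in\VL_b}\M_{b,\mu,T'}^{(0),\red}$; granting this, Theorem~\ref{nnbt}(ii) gives $\BT_{b,\mu,T}^{(0)}=\M_{b,\mu,T}^{(0),\red}\setminus\bigcup_{T'\subsetneq T}\M_{b,\mu,T'}^{(0),\red}\cong X_{B_{2d}}(w_{2d}^{+})\sqcup X_{B_{2d}}(w_{2d}^{-})$, with the summands matched to $\OGr^{\pm}$ as required. To carry out the matching I would argue on $k$-points for algebraically closed $k\supseteq\Fpbar$: a point of $\M_{b,\mu,T}^{(0),\red}(k)$ is a lattice $M\in\DiL^{\varpi}(M_{0,W(k)})$ with $M\subseteq T\otimes_{\Zp}W(k)$, and by Proposition~\ref{mtnn}(ii) it lies in some smaller stratum $\M_{b,\mu,T'}^{(0),\red}$ iff $M\subseteq T'\otimes_{\Zp}W(k)$; by Proposition~\ref{divl} the minimal vertex lattice with this property is $M^{(r)}=M+\F_{b,0}(M)+\cdots+\F_{b,0}^{r}(M)$, so the point avoids all smaller strata iff $M^{(r)}=T\otimes_{\Zp}W(k)$. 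On the other side, the isotropic subspace $\cL=\Ker(D((\rho_{T}\circ\rho)^{\vee}))\subseteq\B_b(T)\otimes_{\Fp}k$ attached to this point in the proof of Proposition~\ref{btgr} lies in the open Deligne--Lusztig stratum iff the submodule generated by $\cL,\Phi(\cL),\dots,\Phi^{r-1}(\cL)$ is all of $\B_b(T)\otimes_{\Fp}k$, equivalently iff $\rk(\cL\cap\Phi_{*}(\cL))$ attains its minimal value. Thus the proof reduces to showing that the correspondence $M\leftrightarrow\cL$ intertwines the ascending chain $M\subseteq M^{(1)}\subseteq M^{(2)}$ with the descending chain $\cL\supseteq\cL\cap\Phi_{*}(\cL)\supseteq\cdots$, which in turn rests on the compatibility of $\F_{b,0}$ on $N_b$ with the geometric Frobenius $\Phi$ on $\B_b(T)\otimes_{\Fp}\Fpbar$.

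The main obstacle is exactly this last compatibility: one must unwind the construction of $\cL$ from $M$ in the proof of Proposition~\ref{btgr}, together with the definition $\B_b(T)=T/\varpi T^{\vee}$, to see that ``$M^{(r)}=T\otimes W(k)$'' translates into the generic-position condition cutting out the top Bruhat cell, and one must keep careful track of the two components $\OGr^{\pm}$ so that they are matched with $X_{B_{2d}}(w_{2d}^{\pm})$ and not interchanged. This is precisely where one follows \cite{Wu2016} and the parallel arguments of \cite{Rapoport2014a} (Proposition~4.3) and \cite{Vollaard2011}; the closure relations, reducedness of the strata, and the identification of the open stratum's parabolic with $B_{2d}$ are then either already contained in Proposition~\ref{stns} or immediate from Theorem~\ref{nnbt}.
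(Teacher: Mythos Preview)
Your proposal is correct and follows essentially the same approach as the paper: the paper's proof is the single line ``This follows from Propositions \ref{stns} and \ref{btgr},'' and your argument unpacks exactly these two ingredients, supplying the matching of the Bruhat--Tits boundary $\bigcup_{T'\subsetneq T}\M_{b,\mu,T'}^{(0),\red}$ with the lower Deligne--Lusztig strata that the paper leaves implicit. Your separate treatment of the degenerate case $t(T)=2$ (where $d_b(T)=2$ falls outside the range $2d\geq 4$ of Section~\ref{btdl}) and your identification of the key compatibility between $\F_{b,0}$ and the geometric Frobenius $\Phi$ on $\B_b(T)\otimes_{\Fp}\Fpbar$ are both appropriate elaborations of what the paper takes for granted.
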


\begin{proof}
This follows from Propositions \ref{stns} and \ref{btgr}. 
\end{proof}

\begin{rem}
We use the notations in Section \ref{lpel}. We have $G\otimes_{\Qp}F \cong \GL_{4,F}\otimes_{F}\G_{m,F}$. Set
\begin{equation*}
\mu_1 \colon \G_{m,F}\rightarrow G\otimes_{\Q}F;z\mapsto (\diag(1,1,1,z),z). 
\end{equation*}
Then a non-$\mu$-neutral $b$ is $\mu_1$-neutral. Moreover, we have $\M_{b,\mu}=\M_{b,\mu_1}$, where $\M_{b,\mu_1}$ is the Rapoport--Zink space for $(G,b,\mu_1)$ defined by the same manner as $\M_{b,\mu}$.  In \cite{Wu2016}, he first determined the connected components $\M_{b,\mu_1}^{(0,0)}$ and $\M_{b,\mu_1}^{(0,1)}$ of $\M_{b,\mu_1}^{(0)}$, and constructed the Bruhat--Tits stratification of $\M_{b,\mu_1}^{(0,0)}$. However, it seems to need a modification for determining $\M_{b,\mu_1}^{(0,j)}$. 
\end{rem}

\begin{rem}
Our method in this section can be generalized for the Rapoport--Zink space for $\GU(1,n-1)$ with even $n$ and a neutral acceptable $b$ appeared in \cite{Wu2016}. The results are exactly the same as \cite[Theorem 1]{Wu2016}. 
\end{rem}

\section{Supersingular loci of unitary Shimura varieties}\label{slus}

In this section, we study the supersingular loci of Shimura varieties for unitary similitude group of signature $(2,2)$ associated to an imaginary quadratic field $L$ over an odd prime $p$ which ramifies in $L$. 

\subsection{Rapoport--Zink integral models of unitary Shimura varieties}\label{ushv}

Let $L=\Q(\sqrt{\Delta})$ be an imaginary quadratic field, where $\Delta \in \Z_{<0}$ is square-free. We denote by $a\mapsto \overline{a}$ the non-trivial Galois automorphism of $L/\Q$. Throughout of this section, we fix an injection $\varphi \colon L\hookrightarrow \Qbar$ and an isomorphism $\Qpbar \cong \C$. Let $(U,\langle\,,\,\rangle_U)$ be an $L/\Q$-hermitian space of signature $(2,2)$. For each prime $\ell$, put 
\begin{equation*}
\inv_{\ell}(U):=\det(\langle x_i,x_j\rangle_U)_{1\leq i,j\leq 4}\in {\Q}_{\ell}^{\times}/\N_{L_{\ell}/\Ql}(L_{\ell}^{\times}), 
\end{equation*}
where $L_{\ell}:=L\otimes_{\Q}\Ql$ and $x_1,\ldots, x_4$ is an $L_{\ell}$-basis of $U_{\ell}:=U\otimes_{\Q}\Ql$. Note that $U_{\ell}$ is split as an $L_{\ell}/\Ql$-hermitian space if and only if $\inv_{\ell}(U)=1$. 

Let $p>2$ be an odd prime number. \emph{Here we assume that $p$ ramifies in $L$ and $\inv_p(U)=1$. }

Put
\begin{equation*}
(\,,\,)_U:=\frac{1}{2}\tr_{L/\Q}(\langle\,,\,\rangle_U). 
\end{equation*}
Then we have $(ax,y)_{U}=(x,\overline{a}y)_{U}$ for $x,y\in U$ and $a\in L$. We define an algebraic group $\bG$ over $\Q$ by
\begin{equation*}
\bG(R)=\{(g,c)\in \GL_{L\otimes_{\Q}R}(U\otimes_{\Q}R)\times R^{\times}\mid (g(x),g(y))_U=c(x,y)_{U}\text{ for any }x,y\in U\}
\end{equation*}
for any $\Q$-algebra $R$. It is a reductive connected group over $\Q$. Note that $\bG\otimes_{\Q}\Qp$ is quasi-split over $\Q$ since $\inv_{p}(U)=1$. 

\begin{dfn}
Let $S$ be an $\Zp$-scheme. A \emph{polarized $L$-abelian variety of signature $(2,2)$ over $S$} is a triple $(A,\iota,\lambda)$, where
\begin{itemize}
\item $A$ is an abelian variety of dimension $4$ over $S$,
\item $\iota \colon O \rightarrow \End(A)$ is a ring homomorphism,
\item $\lambda \colon A\rightarrow A^{\vee}$ is a polarization,
\end{itemize}
satisfying the following conditions for any $a\in O_L$: 
\begin{itemize}
\item $\det(T-\iota(a)\mid \Lie(A))=(T-\varphi(a))^2(T-\varphi(\overline{a}))^2$, 
\item $\lambda \circ \iota(a)=\iota(\overline{a})^{\vee}\circ \lambda$. 
\end{itemize}
\end{dfn}

From now on, we fix an order $O$ of $L$ which is maximal at $p$, and put $O_{p}:=O\otimes_{\Z}\Zp$. Let $K^p$ be a sufficiently small compact open subgroup of $\bG(\A_{f}^{p})$, where $\A_{f}^{p}$ is the finite adel{\'e} ring without $p$-component. On the other hand, let $K_p$ be a special maximal parahoric subgroup of $\bG(\Qp)$. More precisely, it is a stabilizer of a $\sqrt{\Delta}$-modular $O_{p}$-lattice in $U_{p}$. 

\begin{dfn}\label{rzit}
We define an $\Zp$-scheme $\sS_{K,U}$ as the moduli space of the functor which parametrizes equivalence classes of $(A,\iota,\lambda,\overline{\eta}^p)$ for any noetherian scheme $S$, where
\begin{itemize}
\item $(A,\iota,\lambda)$ is a polarized $L$-abelian variety of signature $(2,2)$ over $S$ satisfying
\begin{equation*}
\ord_p(\#\ker(\lambda))=\ord_p(\#\ker(\iota(\sqrt{\Delta}))),
\end{equation*}
\item $\overline{\eta}^p$ is a $K^p$-level structure, that is, a $\pi_1(S,\sbar)$-invariant $K^p$-orbit of an isomorphism 
\begin{equation*}
H_1(A_{\sbar},\A_f^p)\xrightarrow{\cong}U\otimes_{\Q}\A_f^p
\end{equation*}
which respects $(L\otimes_{\Q}\A_f^p)/\A_f^p$-hermitian forms up to an $(\A_f^p)^{\times}$-multiple. Here $\sbar$ is a geometric point of $S$. 
\end{itemize}
Two quadruples $(A,\iota,\lambda,\overline{\eta}^p)$ and $(A',\iota',\lambda',(\overline{\eta}')^p)$ are equivalent if there is a prime-to-$p$ isogeny $A\rightarrow A'$ which respects $O_L$-actions, polarizations and $K^p$-level structures. 
\end{dfn}

Set $\bS:=\Res_{\C/\R}\G_m$. Fix a $\C$-basis $e_1,\ldots ,e_4$ of $U\otimes_{\Q}\R$ whose Gram matrix of $\langle\,,\,\rangle_{U}$ is $\diag(1,1-1,-1)$, and regard $\bG \otimes_{\Q}\R$ as a subgroup of $\Res_{\C/\R}\GL_{4}$ by the above basis. Let $\bX$ the $\bG(\R)$-conjugacy class in $\Hom(\bS,\bG \otimes_{\Q}\R)$ containing
\begin{equation*}
h_U \colon \bS \rightarrow \bG \otimes_{\Q}\R;z\mapsto \diag(z^{(2)},\overline{z}^{(2)}). 
\end{equation*}
Then $(\bG,\bX)$ is a Shimura datum whose reflex field is $\Q$. Moreover, the generic fiber of $\sS_{K,U}$ equals $\Sh_{K}(\bG,\bX)\otimes_{\Q}\Qp$. This follows from that the Hasse principal holds for $\bG$, which is pointed out in \cite[Section 6.3]{Vollaard2010}. 

\subsection{Non-emptiness of the basic Newton strata}

Put $F:=L_{p}$, $O_{F}:=O\otimes_{\Z}\Zp$ and $V:=U_{p}$. Set $G:=\bG \otimes_{\Q}\Qp$, and let $\mu$ be the cocharacter of $G$ over $\Qp$ associated to the cocharacter
\begin{equation*}
\G_{m}\xrightarrow{z\mapsto (z,1)} \G_{m}\otimes_{\C}\G_{m}\cong \bS \otimes_{\R}\C \xrightarrow{h_{U,\C}}\bG\otimes_{\Q}\C
\end{equation*}
and the fixed isomorphism $\Qpbar \cong \C$. Then the datum $(F,O_{F},V,G,\mu)$ coincides with the one defined in Section \ref{lpel}. Hence we can apply the results in Sections \ref{rzsp}--\ref{btnn}. 

We study the Newton strata of the geometric special fiber $\sS_{K,U,\Fpbar}:=\sS_{K,U}\otimes_{\Zp}\Fpbar$ of $\sS_{K,U}$. 

\begin{dfn}
For $[b]\in B(G)$, we define $\sS_{K,U,[b]}$ as the a locally closed subscheme of $\sS_{K,U,\Fpbar}$ by
\begin{equation*}
\sS_{K,U,[b]}(k)=\{(A,\iota,\lambda,\overline{\eta}^p)\in \sS_{K,U}(k)\mid \D(A)_{\Q}\cong N_{b}\text{ for some (all) }b\in [b]\}
\end{equation*}
for any algebraically closed field $k$ of characteristic $p$. We call $\sS_{K,U,[b]}$ the \emph{Newton stratum} of $\sS_{K,U,\Fpbar}$ with respect to $[b]$. 
\end{dfn}
The Newton stratum $\sS_{K,U,[b]}$ is non-empty only if $\ord_p(\sml_{G}(b))=1$ for some (all) $b\in [b]$. Moreover, it is closed in $\sS_{K,U,\Fpbar}$ if $[b]\in B(G)_{1,\bs}$. 

Let $b_0$ and $b_1$ be elements of $G(K_0)$ defined in Section \ref{ntrl}. We have $[b_j]\in B(G)_{1,\bs}$ for $j\in \{0,1\}$. We prove the following: 

\begin{prop}\label{nsne}
\emph{The Newton strata $\sS_{K,U,[b_0]}$ and $\sS_{K,U,[b_1]}$ are non-empty. }
\end{prop}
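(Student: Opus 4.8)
The plan is to prove non-emptiness of the two basic Newton strata by combining the $p$-adic uniformization theorem of Rapoport--Zink with the explicit knowledge of the Rapoport--Zink spaces $\M_{b_j,\mu}$ established in Sections \ref{rzsp}--\ref{btnn}, which in particular show that $\M_{b_j,\mu}^{(0),\red}$ is non-empty (indeed $2$-dimensional, resp. $1$-dimensional) for $j\in\{0,1\}$. By \cite[Theorem 6.30]{Rapoport1996a}, there is a uniformization isomorphism of formal schemes along the basic locus of the (formal completion of the) integral model $\sS_{K,U}$ at $p$ of the form
\begin{equation*}
\widehat{\sS}_{K,U}^{/\sS_{K,U,[b_j]}}\cong I_j(\Q)\backslash \bigl(\M_{b_j,\mu}\times \bG(\A_f^p)/K^p\bigr),
\end{equation*}
where $I_j$ is the inner form of $\bG$ associated to $[b_j]$ (an algebraic group over $\Q$ whose $\Q_\ell$-points for $\ell\neq p$ agree with $\bG(\Q_\ell)$ and whose $\Qp$-points are $J_{b_j}(\Qp)$). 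So it suffices to show that the right-hand side is non-empty, i.e. that the double coset space is non-empty; since $\M_{b_j,\mu}^{(0),\red}\neq\emptyset$ and $\bG(\A_f^p)/K^p\neq\emptyset$, the product is non-empty, and a non-empty set remains non-empty after taking the quotient by a group action. The only genuine content is therefore to verify that the hypotheses of the uniformization theorem are met in our setting: that $(\bG,\bX)$ with the chosen level $K=K^pK_p$ and the PEL datum of Section \ref{ushv} falls in the framework of \cite[Chapter 6]{Rapoport1996a}, that the basic locus in question is exactly the union of the Newton strata $\sS_{K,U,[b_0]}$ and $\sS_{K,U,[b_1]}$, and that the local datum $(F,O_F,V,G,\mu)$ feeding into $\M_{b_j,\mu}$ is precisely the one already fixed (which is recorded in the paragraph preceding the statement).

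The steps I would carry out, in order: first, recall that $\sS_{K,U}$ is by construction (Definition \ref{rzit}) a Rapoport--Zink integral model in the sense of \cite{Rapoport1996a}, with generic fiber $\Sh_K(\bG,\bX)\otimes_\Q\Qp$, and that $K_p$ being a special maximal parahoric means $K_p=\mathcal{G}(\Zp)$ for the lattice-stabilizer group scheme $\mathcal G$ appearing in Section \ref{sgrz}. Second, identify the supersingular (equivalently, basic) locus of $\sS_{K,U,\Fpbar}$: a point is supersingular iff the associated $p$-divisible group with $(G,\mu)$-structure is isoclinic of slope $1/2$, which by Proposition \ref{bsss} is equivalent to its class lying in $B(G)_{1,\bs}=\{[b_0],[b_1]\}$; hence $\sS_{K,U}^{\si}=\sS_{K,U,[b_0]}\sqcup\sS_{K,U,[b_1]}$ as sets. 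Third, apply \cite[Theorem 6.30]{Rapoport1996a} (or its refinement in the literature covering parahoric level and not-necessarily-flat models) to the formal completion of $\sS_{K,U}$ along $\sS_{K,U,[b_j]}$, obtaining the displayed uniformization with the Rapoport--Zink space $\M_{b_j,\mu}$ attached to exactly the local datum fixed above. Fourth, invoke the results of Sections \ref{btn1}--\ref{btnn} (e.g. Theorems \ref{ntlr} and \ref{nnlr}, or already Theorem \ref{sgrt} together with the non-emptiness of $\DiL^\varpi(N_{b_j})$ witnessed by $\bLambda\in\DiL^\varpi(N_{b_j})$ in the proof of Proposition \ref{nteq}) to conclude $\M_{b_j,\mu}^{(0),\red}\neq\emptyset$. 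Fifth, conclude that $\sS_{K,U,[b_j]}$ is non-empty.

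Alternatively — and this gives a cleaner, uniformization-free argument — I would note that non-emptiness of a basic Newton stratum on a (sufficiently large level) Shimura variety is equivalent to non-emptiness of the corresponding affine Deligne--Lusztig variety $X_\mu(b_j)$ in the Witt-vector affine flag variety for $G$, and by Proposition \ref{hmlt} (or its integral-model analogue via the local model diagram of Section \ref{sgrz}) the $\Fpbar$-points of $X_\mu(b_j)$ are parametrized by $\DiL^\varpi(N_{b_j})$; the lattice $\bLambda$ of Section \ref{lpel} lies in $\DiL^\varpi(N_{b_j})$ for both $j$ (as computed in the proof of Proposition \ref{nteq}), so $X_\mu(b_j)\neq\emptyset$, hence $\sS_{K,U,[b_j]}\neq\emptyset$. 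Either way, the main obstacle is not a computation but a bookkeeping one: making sure the uniformization theorem (or the affine Deligne--Lusztig description) applies verbatim to our integral model, which is \emph{not} flat over $\Zp$ in the non-$\mu$-neutral case, so one must either pass to the flat closure $\sS_{K,U}^{\loc}$ for the strata inside it, or cite the more general uniformization results valid for arbitrary Rapoport--Zink models; I would handle this by checking that the set-theoretic statement of non-emptiness is insensitive to flatness, since it only concerns $\Fpbar$-points and these are governed by the formal scheme $\M_{b_j,\mu}$ regardless.
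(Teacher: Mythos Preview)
Your primary approach is circular. The $p$-adic uniformization theorem \cite[Theorem 6.30]{Rapoport1996a} does not produce an isomorphism out of thin air: it takes as \emph{input} a chosen $\Fpbar$-point $(A_j,\iota_j,\lambda_j,\overline{\eta}^p_j)$ of the basic stratum $\sS_{K,U,[b_j]}$, uses its $p$-divisible group as the framing object for $\M_{b_j,\mu}$, and defines the inner form $I_j$ as the quasi-automorphism group of that abelian variety. Without such a basepoint there is no $I_j$ and no uniformization to speak of. The paper makes this dependence explicit in the proof of Proposition \ref{paut}, where non-emptiness (Proposition \ref{nsne}) is listed as a hypothesis needed before the uniformization theorem can be invoked. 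So you cannot use uniformization to deduce non-emptiness.

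Your alternative via affine Deligne--Lusztig varieties has the same gap in disguise. The implication ``$X_\mu(b_j)\neq\emptyset \Rightarrow \sS_{K,U,[b_j]}\neq\emptyset$'' is not formal: knowing that a suitable Dieudonn\'e lattice exists in $N_{b_j}$ tells you only that a certain $p$-divisible group with $(G,\mu)$-structure exists over $\Fpbar$, not that it is realized by a polarized $L$-abelian variety equipped with a $K^p$-level structure matching $U$ at every finite place away from $p$. Producing that global object is precisely the content of the paper's proof. The paper argues via Proposition \ref{expa}: it writes down, by hand, polarized $L$-abelian fourfolds over $\Fpbar$ (products of CM or supersingular elliptic curves with carefully chosen polarizations) whose local invariants $\inv_\ell$ agree with those of $U$ at all $\ell\neq p$ and take the prescribed value at $p$, and then builds the level structure $\overline{\eta}^p$ place by place. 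This explicit construction is what your proposal is missing.
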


To prove Proposition \ref{nsne}, we introduce the notion of invariants for abelian varieties with $\bG$-structures, which are essentially defined in \cite[Section 3]{Kudla2015}. 

\begin{dfn}
Let $k$ be an algebraically closed field of characteristic $p$, and $(A,\iota,\lambda)$ a polarized $L$-abelian variety with signature $(2,2)$ over $k$. 
\begin{itemize}
\item For a prime $\ell \neq p$, consider the rational Tate module $V_{\ell}A$ of $A$. Then $\iota$ and $\lambda$ induces an $L_{\ell}$-action and a non-degenerate alternating form $(\,,\,)_{\ell}$ on $V_{\ell}A$. Moreover, there is an $L_{\ell}/\Ql$-hermitian form $\langle\,,\,\rangle_{\ell}$ on $V_{\ell}A$ satisfying the same formula as $(\,,\,)_{U}$. Put
\begin{equation*}
\inv_{\ell}(A,\iota,\lambda):=\det\!{}_{L_{\ell}}(\langle x_i,x_j\rangle_{\ell})_{1\leq i,j\leq 4}\in \Ql^{\times}/N_{L_{\ell}/\Ql}(L_{\ell}^{\times}),
\end{equation*}
where $x_1,\ldots,x_4$ is an $L_{\ell}$-basis of $V_{\ell}A$. It is independent of the choice of $x_1,\ldots,x_4$. 
\item For a prime $p$, we consider the rational Dieudonn{\'e} module $\D(A)_{\Q}$ with $L$-action and non-degenerate alternating form $(\,,\,)_{p}$, cf. Section \ref{pigs} (i). Then there is an $(L\otimes_{\Z}W(k))/\Frac W(k)$-hermitian form $\langle\,,\,\rangle_{p}$ on $\D(A)_{\Q}$ satisfying the same relation as $(\,,\,)_{U}$. Consider the isocrystal 
\begin{equation*}
(S(A),\F_{S(A)}):=\left(\bigwedge_{L\otimes_{\Z}W(k)}^{4}\D(A)_{\Q},\bigwedge_{L\otimes_{\Z}W(k)}^{4}\F\right)
\end{equation*}
with the $(L\otimes_{\Z}W(k))/\Frac W(k)$-hermitian form $\langle\,,\,\rangle_{S(A)}$ associated to $\langle \,,\,\rangle_{p}$. Then there is $z\in S(A)\setminus \{0\}$ satisfying $\F_{S(A)}(z)=p^2z$. Now put
\begin{equation*}
\inv_{p}(A,\iota,\lambda):=\langle z,z\rangle_{S(A)}\in \Qpt/N_{L_{p}/\Qp}(L_{p}^{\times}). 
\end{equation*}
It is independent of the choice of $z$. 
\end{itemize}
\end{dfn}

It suffices to show the following: 
\begin{prop}\label{expa}
\emph{
\begin{enumerate}
\item There is a polarized $O_L$-abelian variety $(A_0,\iota_0,\lambda_0)$ of signature $(2,2)$ over $\Fpbar$ such that $\inv_{\ell}(A_0,\iota_0,\lambda_0)=\inv_{\ell}(U)$ for any prime $\ell$. 
\item There is a polarized $O_L$-abelian variety $(A_1,\iota_1,\lambda_1)$ of signature $(2,2)$ over $\Fpbar$ such that $\inv_{\ell}(A_1,\iota_1,\lambda_1)=\inv_{\ell}(U)$ for any prime $\ell \neq p$, and $\inv_{p}(A_1,\iota_1,\lambda_1)\neq \inv_{p}(U)$. 
\end{enumerate}}
\end{prop}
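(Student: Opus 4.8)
The statement to prove is Proposition \ref{expa}: the existence of two polarized $O_L$-abelian varieties of signature $(2,2)$ over $\Fpbar$ with prescribed local invariants — one matching $\inv_\ell(U)$ at every prime (so that its associated isocrystal sits in the class $[b_0]$), and one matching $\inv_\ell(U)$ away from $p$ but differing at $p$ (so that it sits in $[b_1]$). By the standard dictionary between the invariants $\inv_\ell(A,\iota,\lambda)$ and the $L/\Q$-hermitian space attached to the rational (co)homology, this reduces to a \emph{uniformization / existence} statement: given a prescribed collection of local hermitian spaces of rank $4$ (with the right signature at $\infty$), realize it by an abelian variety carrying an $O_L$-action. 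The first thing I would do is make precise the link between $[b]\in B(G)_{1,\bs}$ and $\inv_p$: the $\wedge^4$-construction in the definition of $\inv_p(A,\iota,\lambda)$ is exactly the Kottwitz-type invariant that distinguishes $[b_0]$ from $[b_1]$ — recall $\kappa_G([b_j])=(1,j)$ — so $\inv_p(A,\iota,\lambda)=\inv_p(U)$ iff the Dieudonné module lies in $[b_0]$, and the inequality forces $[b_1]$. This is the conceptual core and should be stated as a lemma.

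For the actual existence, I would follow the approach of \cite{Kudla2015} (and its predecessors, e.g. the discussion in \cite{Vollaard2010} of the Hasse principle for $\bG$). Fix the hermitian space $U$ over $\Q$ of signature $(2,2)$ with $\inv_p(U)=1$. For part (i), the target collection of local invariants is exactly $(\inv_\ell(U))_\ell$, which by construction satisfies the product formula $\prod_\ell \inv_\ell(U)=1$ in the appropriate quotient; hence there is an honest $L/\Q$-hermitian space $U'$ with those invariants, namely $U'=U$ itself. One then invokes the fact that the basic (supersingular) Newton stratum of $\sS_{K,U}$ is non-empty — equivalently, that the corresponding Kottwitz set $B(G,\{\mu\})$ contains a basic class, which is automatic — together with the $p$-adic uniformization (the relevant case of \cite[Theorem 6.30]{Rapoport1996a}) to produce a point of $\sS_{K,U}(\Fpbar)$ whose isocrystal is $[b_0]$; its invariants are then forced to be $\inv_\ell(U)$ at every $\ell$. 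Concretely, one builds $(A_0,\iota_0,\lambda_0)$ by taking a supersingular abelian variety isogenous to $E^4$ with $E$ supersingular, equipping it with an $O_L$-action and a polarization whose associated hermitian form realizes the prescribed local data; the constraint $\ord_p(\#\ker\lambda_0)=\ord_p(\#\ker\iota_0(\sqrt\Delta))$ is arranged by choosing the polarization to match the $\varpi$-modular lattice condition at $p$.

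For part (ii), I would modify $U$ at $p$ only: let $U^{(1)}$ be the hermitian space over $\Q$ with $\inv_\ell(U^{(1)})=\inv_\ell(U)$ for $\ell\ne p$ and $\inv_p(U^{(1)})\ne 1$ — such a space exists by the classification of hermitian spaces over $\Q$ (changing the invariant at two places, here $p$ and $\infty$ or $p$ and some auxiliary $\ell$, to preserve the product formula), but note that here we do \emph{not} need $U^{(1)}$ to have signature $(2,2)$; rather, we need an abelian variety whose prime-to-$p$ invariants match $U$ while its $p$-adic invariant is $\ne 1$. This is exactly the phenomenon that the integral model $\sS_{K,U}$ has a second basic Newton stratum $[b_1]$ which is \emph{not} detected by the generic fiber; one produces $(A_1,\iota_1,\lambda_1)$ by the same supersingular construction but choosing the polarization at $p$ so that the $\wedge^4$-invariant $\langle z,z\rangle_{S(A_1)}$ is a non-norm — i.e. picking $b=b_1$ directly and building the abelian variety from $(N_{b_1},\F_{b_1})$ via Dieudonné theory, with a prime-to-$p$ level structure compatible with $U\otimes\A_f^p$. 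The key point is that $[b_1]$ lies in $B(G)_{1,\bs}$ (shown in Section \ref{ntrl}) and is acceptable with respect to $\mu$ (the Remark after Proposition \ref{bsss}), so the associated Rapoport--Zink space $\M_{b_1,\mu}$ is non-empty — which is precisely Theorem \ref{nnlr}(i), already available — and $p$-adic uniformization glues this to a global point.

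\textbf{Main obstacle.} The delicate step is part (ii): producing a \emph{global} abelian variety with the prescribed mixed local data, rather than just a local $p$-divisible group. The global existence is governed by the product formula for hermitian-space invariants, and the subtlety is that $\inv_p(A_1,\iota_1,\lambda_1)\ne \inv_p(U)$ means the coherent global hermitian space attached to $A_1$ is \emph{not} $U$ but a neighbour of it; one must check that this neighbour still has signature $(2,2)$ at $\infty$ (so that the Kottwitz/Rapoport--Zink conditions can be met) and that the level structure away from $p$ can still be taken in $U\otimes\A_f^p$ — which is automatic since only the $p$-component changes. I expect the cleanest route is to bypass explicit constructions entirely: take the Rapoport--Zink space $\M_{b_1,\mu}^{(0)}$, which is non-empty and even $3$-dimensional by Theorem \ref{sgrt}(ii), choose any $\Fpbar$-point, and transport it to $\sS_{K,U}$ via \cite[Theorem 6.30]{Rapoport1996a}, reading off that its invariants are $\inv_\ell(U)$ for $\ell\ne p$ (forced by the level structure and the fixed hermitian space $U\otimes\A_f^p$) and $\ne\inv_p(U)$ (forced by $\kappa_G([b_1])=(1,1)\ne(1,0)$). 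The work then lies in verifying the compatibility of the invariant $\inv_p(A,\iota,\lambda)$ with the Kottwitz point $\kappa_G$ — a bookkeeping computation with the $\wedge^4$ of the isocrystal — and in citing the uniformization theorem in the correct form for this ramified parahoric setting.
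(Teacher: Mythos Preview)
Your approach is circular. In the paper's logical structure, Proposition~\ref{expa} is precisely the input needed to prove Proposition~\ref{nsne} (non-emptiness of the basic Newton strata $\sS_{K,U,[b_j]}$), which in turn is the hypothesis required to invoke the $p$-adic uniformization theorem \cite[Theorem~6.30]{Rapoport1996a} in Proposition~\ref{paut}. You propose to run this backwards: use uniformization to transport an $\Fpbar$-point of $\M_{b_j,\mu}$ to $\sS_{K,U}$, thereby producing $(A_j,\iota_j,\lambda_j)$. But the uniformization isomorphism $I_j(\Q)\backslash(\M_{b_j,\mu}\times\bG(\A_f^p)/K^p)\cong\widehat{\sS}_{K,U,[b_j]}$ presupposes a base point $(A_j,\iota_j,\lambda_j)\in\sS_{K,U,[b_j]}(\Fpbar)$ in order to define the inner form $I_j$ and to anchor the Rapoport--Zink space to the Shimura variety. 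The non-emptiness of $\M_{b_j,\mu}$ is a purely local statement about $p$-divisible groups---trivially true, since the framing object $(\X_0,\iota_0,\lambda_0)$ itself furnishes a point---and does not by itself produce a global abelian variety. Your claim that non-emptiness of the Newton stratum is ``equivalent'' to $[b_j]\in B(G,\{\mu\})$ and hence ``automatic'' conflates the group-theoretic admissibility condition with the genuinely arithmetic problem of exhibiting an abelian variety realizing that isocrystal class.

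The paper avoids this by giving direct constructions. For (i) it takes $E^4$ for a supersingular elliptic curve $E$ with a fixed embedding $O_L\hookrightarrow\End(E)$ and principal polarization $\lambda_E$, then twists the polarization to $(c_1\lambda_E)\times(c_2\lambda_E)^3$ where $c=c_1/c_2\in\Q_{>0}$ is chosen with $\ord_p(c)=0$ and $c\equiv\inv_\ell(U)$ in $\Ql^\times/\N_{L_\ell/\Ql}(L_\ell^\times)$ for every $\ell$. For (ii) it builds a signature-$(1,1)$ abelian surface $\cA=\cE_1\times\cE_2$ from CM elliptic curves over $\Zp$ with supersingular reduction (via \cite{Shimura1961}, following \cite[Lemma~3.5]{Kudla2015}), equipped with a polarization whose kernel is $\ker\iota(\sqrt{\Delta})$; this forces all $\inv_\ell$ trivial. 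It then multiplies by a second signature-$(1,1)$ surface $(A',\iota',\lambda')$, built as in (i), carrying the desired invariants away from $p$ and the non-trivial invariant at $p$. No uniformization enters.
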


\begin{proof}[Proposition \ref{expa} $\Rightarrow$ Proposition \ref{nsne}]
For $j\in \{0,1\}$, let $(A_j,\iota_j,\lambda_j)$ be as in Proposition \ref{expa}. By replacing $A_j$ to an isogeneous one, we may assume that $\D(A_j)$ is a $\varpi$-modular lattice in $\D(A_j)_{\Q}$. Note that there is an isomorphism of isocrystals with $G$-structures $\D(A_j)_{\Q}\cong N_{b_j}$ over $\Fpbar$. This follows from the equality of invariants at $p$. 

We construct an isomorphism $\eta^p\colon H_1(A_j,\A_f^p)\xrightarrow{\cong} U\otimes_{\Q}\A_f^p$ for $j\in \{0,1\}$. Let $S$ be the set of primes $\ell$ satisfying at least one of the following:
\begin{itemize}
\item $\ell$ ramifies in $L$,
\item $O\otimes_{\Z}\Zl$ is not a maximal order in $L_{\ell}$,
\item $\inv_{\ell}(V)\neq 1$,
\item $\ell \mid \deg(\lambda)$,
\item $\ell=2$. 
\end{itemize}
Put $\Zhat^S:=\prod_{\ell \not\in S}\Z_{\ell}$. Then there is a lattice $\bLambda_0$ in $U$ and an isomorphism of $(O_L\otimes_{\Z}\Zhat^S)/\Zhat^S$-hermitian spaces 
\begin{equation*}
\eta_j^{S}\colon \prod_{\ell \not\in S}T_{\ell}A \xrightarrow{\cong}\bLambda_0\otimes_{\Z}\Zhat^S. 
\end{equation*}
On the other hand, there is an isometry of $L_{\ell}/\Ql$-hermitian spaces 
\begin{equation*}
\eta_{j,\ell}\colon V_{\ell}A_j\xrightarrow{\cong}U_{\ell}
\end{equation*}
by the equalities of invariants at all $\ell \in S\setminus \{p\}$. Combining with $\eta_j^S$ and $\eta_{j,\ell}$ for $\ell \in S\setminus \{p\}$, we obtain an isomorphism of $(L\otimes_{\Q}\A_f^p)/\A_f^p$-hermitian spaces
\begin{equation*}
\eta_{j}^p\colon H_1(A_j,\A_f^p)\xrightarrow{\cong}U\otimes_{\Q}\A_f^p. 
\end{equation*}
Therefore $(A_j,\iota_j,\lambda_j,\eta_j^p \bmod K^p)$ gives an $\Fpbar$-valued point of $\sS_{K,U,[b_j]}$. 
\end{proof}

\begin{proof}[Proof of Proposition \ref{expa}]
(i): Let $E$ be a supersingular elliptic curve over $\Fpbar$. Fix an injection $\iota_E\colon O_L\rightarrow \End(E)$ (note that $\End(E)$ is a maximal order of the quaternion algebra over $\Q$ which is ramified exactly at $p$ and $\infty$) and a principal polarization $\lambda_E$ of $E$. Then we have $\lambda_E\circ \iota_E(a)=\iota_E(\abar)^{\vee}\circ \lambda_E$ for any $a\in O_L$. 

There is $c\in \Q^{\times}$ so that $c=\inv_{\ell}(U)$ in $\Q_{\ell}^{\times}/\N_{L_{\ell}/\Ql}(L_{\ell}^{\times})$ for any prime $\ell$ of $\Q$. In particular, we have $c>0$ and $c\in \N_{L_p/\Qp}(L_p^{\times})$. By replacing $c$ to $ce$ for some $e\in \N_{L/\Q}(L^{\times})$ if necessary, we may assume that $\ord_{p}(c)=0$. Write $c=c_1/c_2$, where $c_i\in \Zpn$ and $p\nmid c_i$. Then, $(E^4,\iota_E^4,(c_1\lambda_E)\times (c_2\lambda_E)^{3})$ is a desired polarized $L$-abelian $4$-fold over $\Fpbar$. 

(ii): Write $\Hom(L,\Qpbar)=\{\varphi_1,\varphi_2\}$. By the theory of complex multiplication \cite{Shimura1961}, for $i\in \{1,2\}$, there is an elliptic curve $\cE_i$ over $\Zp$ with an action $\iota_i$ on $\cE_i$ such that $\cE_{i,\Fpbar}$ is supersingular, and $L$ acts on $\Lie(\cE_{i,\Qpbar})$ by $\varphi_i$. Put $\cA:=\cE_1\times \cE_2$, and define an action $\iota$ of $O_L$ on $\cA$ by $\iota_1\times \iota_2$. Then
we have 
\begin{equation*}
\det(T-\iota(a)\mid \Lie(\cA_{\Qp}))=T^2-\tr_{L/\Q}(a)T+\N_{L/\Q}(a)
\end{equation*}
for any $a\in O_L$. By the same argument as the proof of \cite[Lemma 3.5]{Kudla2015}, there is a polarization $\lambda$ of $\cA$ such that $\ker(\lambda)=\ker(\iota(\sqrt{\Delta}))$. Note that we have $\inv_{\ell}(\cA_{\Fpbar},\iota_{\Fpbar},\lambda_{\Fpbar})=1$ for any prime $\ell$. On the other hand, by the same argument in the proof of (i), there is a polarized $L$-abelian surface $(A',\iota',\lambda')$ over $\Fpbar$ such that $\inv_{\ell}(A',\iota',\lambda')=\inv_{\ell}(U)$ for any prime $\ell \neq p$ and $\inv_{p}(A',\iota',\lambda')\neq \inv_{p}(U)$. Hence $(\cA_{\Fpbar}\times A',\iota_{\Fpbar} \times \iota',\lambda_{\Fpbar}\times \lambda')$ is a desired polarized $O_L$-abelian $4$-fold over $\Fpbar$. 
\end{proof}

\subsection{Structure of the supersingular loci}

We define the supersingular locus $\sS_{K,U}^{\si}$ as the reduced closed subscheme of $\sS_{K,U,\Fpbar}$ satisfying
\begin{equation*}
\sS_{K,U}^{\si}(k)=\{(A,\iota,\lambda,\overline{\eta}^p)\in \sS_{K,U}(k)\mid A\text{ is supersingular}\}
\end{equation*}
for any algebraically closed field $k$ of characteristic $p$. 

\begin{thm}\label{imdc}
\emph{
\begin{enumerate}
\item There is a decomposition into open and closed subschemes
\begin{equation*}
\sS_{K,U}=\sS_{K,U}^{\loc}\sqcup (\sS_{K,U}\setminus \sS_{K,U}^{\loc}). 
\end{equation*}
\item The scheme $\sS_{K,U}^{\loc}$ is flat over $\Zp$ and regular of dimension $5$. 
\item The scheme $\sS_{K,U}\setminus \sS_{K,U}^{\loc}$ is smooth over $\Fp$ and $3$-dimensional.
\item The non-smooth locus of $\sS_{K,U}^{\loc}\otimes_{\Zp}W$ is contained in $\sS_{K,U}^{\si}$. 
\end{enumerate}}
\end{thm}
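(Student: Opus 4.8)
The plan is to reduce the whole statement to Proposition \ref{lcmd} via the local model diagram, the only extra ingredient being a short Dieudonn\'e-theoretic computation for (iv). First I would record the local model diagram for $\sS_{K,U}$. By the construction of Rapoport and Zink for PEL moduli with parahoric level \cite[Chapter 3]{Rapoport1996a}, realized through the de Rham (equivalently crystalline) homology of the universal abelian scheme, there is a diagram \[\sS_{K,U}\xleftarrow{\ \pi\ }\widetilde{\sS}_{K,U}\xrightarrow{\ q\ }M_{\mu}^{\naive}\] in which $\pi$ is a torsor under the parahoric group scheme $\mathcal{G}$ of Section \ref{sgrz} and $q$ is smooth; the relevant local model is exactly the $M_{\mu}^{\naive}$ of Definition \ref{nvhn}, because $K_{p}$ is the stabilizer of the $\varpi$-modular lattice $\bLambda$ of Section \ref{lpel}. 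Since smoothness and flatness over a fixed base, regularity, and dimension are all local for the smooth topology on the source and fppf-local on the target, $\sS_{K,U}$ and $M_{\mu}^{\naive}$ share these properties together with the loci on which they hold; moreover the scheme-theoretic closure of the generic fibre is compatible with the flat morphisms $\pi$ and $q$, so, using that $M_{\mu,\Qp}^{\naive}$ is smooth by Proposition \ref{gnfb}, the flat locus $\sS_{K,U}^{\loc}$ corresponds to $M_{\mu}^{\loc}$ under the diagram.

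Granting this, (i), (ii), (iii) follow at once: Proposition \ref{lcmd}(i) gives the open--closed decomposition $M_{\mu}^{\naive}=M_{\mu}^{\loc}\sqcup(M_{\mu}^{\naive}\setminus M_{\mu}^{\loc})$, which transports to (i); Proposition \ref{lcmd}(ii) transports to $\sS_{K,U}^{\loc}$ the properties ``$\Zp$-flat and regular of dimension $5$'', which is (ii); and Proposition \ref{lcmd}(iii), together with the fact that $M_{\mu}^{\naive}\setminus M_{\mu}^{\loc}$ lies over $\Fp$, gives that $\sS_{K,U}\setminus\sS_{K,U}^{\loc}$ is smooth over $\Fp$ of dimension $3$, i.e.\ (iii). (Everything here may be checked after the faithfully flat base change $\Zp\to W$, so it is harmless to set up the diagram over $W$ as in \cite{Rapoport1996a}.)

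For (iv), the same comparison identifies the non-smooth locus of $\sS_{K,U}^{\loc}\otimes_{\Zp}W$ over $W$ with the preimage of the singular locus of $M_{\mu}^{\loc}$ over $\Zp$ (equivalently, its non-smooth locus over $\Zp$, since $M_{\mu}^{\loc}$ is regular), which by Proposition \ref{lcmd}(ii) is the single point where $\varpi$ kills the Hodge filtration. Thus it is the closed locus where $\iota(\sqrt{\Delta})=\iota(\varpi)$ acts as $0$ on $\Lie(A)$, and it remains to show such points lie in $\sS_{K,U}^{\si}$. Let $(A,\iota,\lambda)$ be such an object over an algebraically closed field $k$ of characteristic $p$, and let $M$ be the covariant Dieudonn\'e module of $A[p^{\infty}]$ with Frobenius $F$ and Verschiebung $V$; the hypothesis reads $\varpi M\subset VM$ (in the convention $\Lie(A)=M/VM$; the other convention is symmetric under $F\leftrightarrow V$). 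Applying $F$ gives $\varpi FM=F\varpi M\subset FVM=pM$, hence $FM\subset p\varpi^{-1}M=\varpi M$ because $p\varpi^{-1}\in\varpi O_{F}^{\times}$, and therefore $F^{2}M\subset\varpi FM\subset\varpi^{2}M=pM$. Consequently every Newton slope of $A$ is $\ge 1/2$; since the Newton polygon of an abelian variety is symmetric, every slope is also $\le 1/2$, so $A$ is isoclinic of slope $1/2$, i.e.\ supersingular. This gives (iv).

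The genuine mathematical content is the short slope estimate in the last paragraph; everything else is formal once the local model diagram is available. The main obstacle I anticipate is therefore not deep but bookkeeping: one must state precisely that the local model attached to $\sS_{K,U}$ at $K_{p}$ is the scheme $M_{\mu}^{\naive}$ of Section \ref{sgrz}, and that $\sS_{K,U}^{\loc}$, defined as the flat closure of the generic fibre, matches $M_{\mu}^{\loc}$ under $\pi$ and $q$. An alternative route to (iv) is to combine the $p$-adic uniformization \cite[Theorem 6.30]{Rapoport1996a} along $\sS_{K,U}^{\si}$ with Theorem \ref{sgrt}(i), but the Dieudonn\'e argument above is self-contained.
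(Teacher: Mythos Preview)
Your proposal is correct and follows essentially the same route as the paper: transport the properties of $M_{\mu}^{\naive}$ through the local model diagram using Proposition \ref{lcmd}. The paper's own proof is a one-line reference to \cite[Theorem 7.5]{Oki2019} together with \cite[Theorem 6.4]{Haines2005} (the latter supplying the local model diagram for PEL integral models), so your argument is really a spelled-out version of that. The one place you add content is (iv): where the paper defers to the analogous computation in \cite{Oki2019}, you give the direct Dieudonn\'e estimate $\varpi M\subset VM\Rightarrow F^{2}M\subset pM$, which is clean and self-contained.
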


\begin{proof}
These follow from the same argument as \cite[Theorem 7.5]{Oki2019} using Proposition \ref{lcmd} and \cite[Theorem 6.4]{Haines2005}. 
\end{proof}

\begin{thm}\label{ssbs}
\emph{The supersingular locus $\sS_{K,U}^{\si}$ equals $\sS_{K,U,[b_0]}\sqcup \sS_{K,U,[b_1]}$. }
\end{thm}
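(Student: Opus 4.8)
The plan is to identify the supersingular locus Newton stratum by Newton stratum, using the classification of basic classes already established. First I would recall that the Newton strata $\sS_{K,U,[b]}$ ($[b]\in B(G)$) are pairwise disjoint by construction, since for a $k$-point $(A,\iota,\lambda,\overline{\eta}^p)$ the attached isocrystal with $G$-structure $\D(A)_{\Q}$ determines, and is determined up to isomorphism by, a well-defined class in $B(G)$. In particular $\sS_{K,U,[b_0]}\cap \sS_{K,U,[b_1]}=\emptyset$ because $[b_0]\neq [b_1]$ in $B(G)$, which accounts for the ``$\sqcup$''. Moreover both strata are closed in $\sS_{K,U,\Fpbar}$ because $[b_0],[b_1]\in B(G)_{1,\bs}$, and they are non-empty by Proposition \ref{nsne}, so the decomposition is genuine.

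Next I would prove the set-theoretic equality on geometric points. Fix an algebraically closed field $k$ of characteristic $p$ and a point $x=(A,\iota,\lambda,\overline{\eta}^p)\in \sS_{K,U}(k)$, and let $[b_x]\in B(G)$ be its Newton point, i.e.\ the class with $\D(A)_{\Q}\cong N_{b_x}$ as isocrystals with $G$-structure. Recall that an abelian variety over $k$ is supersingular if and only if its $p$-divisible group is isoclinic of slope $1/2$; applied to $A$ (of dimension $4$, so $A[p^{\infty}]$ has height $8$) this says that $x\in \sS_{K,U}^{\si}(k)$ if and only if the underlying isocrystal of $\D(A)_{\Q}$, hence $(N_{b_x},\F_{b_x})$, is isoclinic of slope $1/2$. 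By Proposition \ref{bsss} this holds exactly when $[b_x]\in B(G)_{1,\bs}$, and in Section \ref{ntrl} we have computed $B(G)_{1,\bs}=\{[b_0],[b_1]\}$. Hence
\begin{equation*}
\sS_{K,U}^{\si}(k)=\{x\in \sS_{K,U}(k)\mid [b_x]\in \{[b_0],[b_1]\}\}=\sS_{K,U,[b_0]}(k)\sqcup \sS_{K,U,[b_1]}(k).
\end{equation*}

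Finally I would upgrade this to an equality of schemes. The scheme $\sS_{K,U}^{\si}$ is by definition the reduced closed subscheme of $\sS_{K,U,\Fpbar}$ with the prescribed geometric points, and $\sS_{K,U,[b_0]}\sqcup \sS_{K,U,[b_1]}$ is likewise a reduced closed subscheme of $\sS_{K,U,\Fpbar}$ (each $\sS_{K,U,[b_j]}$ being closed and reduced, and the two being disjoint). Since a reduced closed subscheme of a scheme of finite type over a field is determined by its set of geometric points, the equality of geometric points established above forces the two reduced closed subschemes to coincide.

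I do not expect a serious obstacle: the substantive inputs — the classification $B(G)_{1,\bs}=\{[b_0],[b_1]\}$, the criterion of Proposition \ref{bsss} identifying $B(G)_{1,\bs}$ with the isoclinic-slope-$1/2$ classes, and the non-emptiness of the two strata (Proposition \ref{nsne}) — are already available. The only points that need a little care are the bookkeeping relating supersingularity of $A$ to the isoclinic-of-slope-$1/2$ condition on $\D(A)_{\Q}$ \emph{as an isocrystal} (rather than as an isocrystal with $G$-structure), and the passage from an equality of geometric points to an equality of reduced closed subschemes.
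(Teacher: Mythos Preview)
Your proposal is correct and follows exactly the paper's approach: the paper's proof is the one-line statement that the result follows from the equality $B(G)_{1,\bs}=\{[b_0],[b_1]\}$ established in Section~\ref{ntrl} together with Proposition~\ref{bsss}, and your argument simply unpacks these two ingredients (plus the routine supersingular $\Leftrightarrow$ isoclinic-slope-$1/2$ and reduced-subscheme bookkeeping). Your additional invocation of Proposition~\ref{nsne} for non-emptiness is not needed for the bare equality, but it does justify calling the decomposition nontrivial.
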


\begin{proof}
This follows from the equality $B(G)_{1,\bs}=\{[b_0],[b_1]\}$ proved in Section \ref{ntrl} and Proposition \ref{bsss}. 
\end{proof}

We give a connection between $\sS_{K,U}^{\si}$ the Rapoport--Zink spaces $\M_{b_0,\mu}$ and $\M_{b_1,\mu}$. For $j\in \{0,1\}$, let $\widehat{\sS}_{K,U,[b_j]}$ be the completion of $\sS_{K,U}\otimes_{\Zp}W$ along $\sS_{K,U,[b_j]}$. 

\begin{prop}\label{paut}
\emph{Let $j\in \{0,1\}$. Then there is an isomorphism
\begin{equation*}
I_j(\Q)\backslash (\M_{b_j,\mu}\times \bG(\A^p_f)/K^p)\xrightarrow{\cong} \widehat{\sS}_{K,U,[b_j]}
\end{equation*}
of formal schemes over $\spf W$. Here $I_j$ is an algebraic group over $\Q$ defined by 
\begin{equation*}
I_j(R)=\{(g,c)\in (\End_{O_L}(A_j)\otimes_{\Z}R)\times R^{\times} \mid g^{\vee}\circ \lambda_j\circ g=c\lambda_j\}
\end{equation*}
for any $\Q$-algebra $R$. }
\end{prop}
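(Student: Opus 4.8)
The plan is to deduce the claimed isomorphism from the $p$-adic uniformization theorem of Rapoport and Zink, \cite[Theorem 6.30]{Rapoport1996a}. First I would note that by Proposition \ref{nsne} the Newton stratum $\sS_{K,U,[b_j]}$ is non-empty, so we may fix a base point $x_j=(A_j,\iota_j,\lambda_j,\overline{\eta}_j^p)\in \sS_{K,U,[b_j]}(\Fpbar)$, taking for $A_j$ the polarized $O_L$-abelian fourfold constructed in Proposition \ref{expa}. The $p$-divisible group $A_j[p^\infty]$ together with the $O_F$-action induced by $\iota_j$ and the polarization induced by $\lambda_j$ is a $p$-divisible group with $(G,\mu)$-structure over $\Fpbar$ in the sense of Definition \ref{pdiv}: it is of dimension $4$ and height $8$, satisfies the Kottwitz condition because $A_j$ has signature $(2,2)$, and satisfies $\Ker(\lambda_j)=\Ker(\iota_j(\varpi))$ by the polarization condition in Definition \ref{rzit}. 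Since $A_j$ is supersingular the associated isocrystal is isoclinic of slope $1/2$, and the Newton-stratum condition together with $[b_j]\in B(G)_{1,\bs}$ (Section \ref{ntrl}) forces $\D(A_j)_{\Q}\cong N_{b_j}$ as isocrystals with $G$-structure. Hence the Rapoport--Zink space of deformations by $O_F$-linear quasi-isogenies of $(A_j[p^\infty],\iota_j,\lambda_j)$ is precisely the formal scheme $\M_{b_j,\mu}$ of Section \ref{dfrz}, because the local PEL datum attached to $x_j$ is, by the remarks opening the Non-emptiness subsection, the one fixed in Section \ref{lpel}.

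Next I would verify that the moduli problem defining $\sS_{K,U}$ in Definition \ref{rzit} is of the form to which \cite[Chapter 6]{Rapoport1996a} applies: it is the moduli space of abelian fourfolds with $O_L$-action, polarization with the prescribed kernel order, and prime-to-$p$ level structure of type $K^p$, the level at $p$ being the special maximal parahoric $K_p$, i.e.\ the stabilizer of the $\sqrt{\Delta}$-modular lattice $\bLambda\subset V$; this is exactly the self-dual lattice chain underlying the local model $M_\mu^{\naive}$ of Section \ref{sgrz}. Because $\inv_p(U)=1$ the reflex field is $\Q$ and $\sS_{K,U}$ is defined over $\Zp$, so we may base change to $W$. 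The uniformization theorem then yields a canonical isomorphism of formal schemes over $\spf W$
\[
I(\Q)\backslash\bigl(\M_{b_j,\mu}\times \bG(\A_f^p)/K^p\bigr)\xrightarrow{\cong}\widehat{\sS}_{K,U,[b_j]},
\]
where $I$ is the $\Q$-group of self-quasi-isogenies of $(A_j,\iota_j,\lambda_j)$ respecting the $O_L$-action and the polarization up to a scalar, which is exactly the group $I_j$ of the statement. Here one uses that $[b_j]$ is basic, so that $I_j$ is an inner form of $\bG$ with $I_j\otimes_{\Q}\Qp\cong J_{b_j}$ and $I_j\otimes_{\Q}\Q_\ell\cong \bG\otimes_{\Q}\Q_\ell$ for $\ell\neq p$; the Hasse-principle input needed for the double-coset formula to take this clean shape is the one recorded in Section \ref{ushv}.

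A point deserving attention is the passage between the whole space $\M_{b_j,\mu}$ and its height-$0$ component $\M_{b_j,\mu}^{(0)}$: the factor $I_j(\Q)$ acts on the product through $J_{b_j}(\Qp)$ on the first factor and, via the similitude character, permutes the open-closed pieces $\M_{b_j,\mu}^{(i)}$, so that after the quotient only $\M_{b_j,\mu}^{(0)}$ is seen; this is why the Bruhat--Tits descriptions of Sections \ref{btn2} and \ref{btnn}, carried out on $\M_{b_j,\mu}^{(0)}$, suffice to read off the geometry of $\sS_{K,U}^{\si}$ in Theorems \ref{ntgr} and \ref{nngr}. I expect the main obstacle to be not conceptual but a careful matching of normalizations between Section \ref{rzsp} and \cite{Rapoport1996a} --- covariant versus contravariant Dieudonné theory, the sign $\lambda^\vee=-\lambda$, the choices of $\varpi$, $\eta$ and of the alternating form $(\,,\,)=\tfrac12\tr_{F/\Qp}(\varpi^{-1}\langle\,,\,\rangle)$ --- so that the Rapoport--Zink space appearing in the uniformization theorem is literally $\M_{b_j,\mu}$ rather than a twist; once these bookkeeping identifications are pinned down, the statement is immediate from \cite[Theorem 6.30]{Rapoport1996a}.
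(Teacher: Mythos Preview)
Your proposal is correct and follows essentially the same approach as the paper: the paper's own proof simply invokes \cite[Theorem 6.30]{Rapoport1996a}, checks non-emptiness via Proposition \ref{nsne}, and notes the Hasse principle for $\bG$ from Section \ref{ushv}. Your additional verifications (matching the local PEL datum, identifying $I_j$, the normalization bookkeeping) are reasonable elaborations but are not spelled out in the paper, and your final paragraph about $\M_{b_j,\mu}$ versus $\M_{b_j,\mu}^{(0)}$ is commentary on later applications rather than part of the proof of this proposition.
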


\begin{proof}
If we can apply the $p$-adic uniformization theorem \cite[Theorem 6.30]{Rapoport1996a}, then the assertion follows. For this, it suffices to show the non-emptiness of $\sS_{K,U,[b_j]}$ and that the Hasse principle for $\bG$ holds. However, the first assertion follows from Proposition \ref{nsne}, and the second assertion is already pointed out in Section \ref{ushv}. 
\end{proof}

Now we can prove the theorems on explicit descriptions of basic loci in Section \ref{mtim} by using Proposition \ref{paut} and the result on the Rapoport--Zink spaces. 

\begin{thm}\label{irg0}
\emph{
\begin{enumerate}
\item The scheme $\sS_{K,U,[b_0]}$ is purely $2$-dimensional. Any irreducible component is birational to the Fermat surface
\begin{equation*}
x_0^{p+1}+x_1^{p+1}+x_2^{p+1}+x_3^{p+1}=0
\end{equation*}
in $\P_{\Fpbar}^{3}=\Proj \Fpbar[x_0,x_1,x_2,x_3]$. 
\item Let $C$ be an irreducible component of $\sS_{K,U,[b_0]}$. Then, for any irreducible component $C'\neq C$ of $\sS_{K,U,[b_0]}$, the intersection $C\cap C'$ is either the empty set, a single point or birational to $\P_{\Fpbar}^{1}$. Moreover, the following hold: 
\begin{gather*}
\#\{C'\in \Irr(\sS_{K,U,[b_0]})\setminus \{C\}\mid C\cap C'\text{ is a single point}\}\leq p(p+1)(p^2+1),\\
\#\{C'\in \Irr(\sS_{K,U,[b_0]})\setminus \{C\}\mid C\cap C'\text{ is birational to }\P_{\Fpbar}^{1}\}\leq (p+1)(p^2+1). 
\end{gather*}
\item Each $C\in \Irr(\sS_{K,U,[b_0]})$ contains at most $(p+1)(p^2+1)$-non-smooth points of $\sS_{K,U}$. 
\item Each non-smooth point of $\sS_{K,U}$ is contained in at most $(p+1)$-irreducible components of $\sS_{K,U,[b_0]}$. 
\end{enumerate}}
\end{thm}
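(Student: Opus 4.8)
The plan is to derive all four assertions from the $p$-adic uniformization of Proposition \ref{paut} together with the structure of the Rapoport--Zink space $\M_{b_0,\mu}$. Since $[b_0]$ is $\mu$-neutral (Section \ref{ntrl}), $\M_{b_0,\mu}$ is the space treated in Case~1, so Theorem \ref{ntlr} and Corollaries \ref{irdm}, \ref{ctnt} apply: it is flat and regular, $\M_{b_0,\mu}^{(0),\red}$ is purely $2$-dimensional, its irreducible components are Fermat surfaces $F_p$ indexed by $\VL_{b_0}(4)$, and the Bruhat--Tits stratification records all the incidence data. Proposition \ref{nsne} gives $\sS_{K,U,[b_0]}\neq\emptyset$, and the Hasse principle for $\bG$ holds (Section \ref{ushv}), so Proposition \ref{paut} applies; passing to underlying reduced special fibers it yields an isomorphism
\begin{equation*}
\sS_{K,U,[b_0]}\cong I_0(\Q)\backslash\bigl(\M_{b_0,\mu}^{\red}\times\bG(\A_f^p)/K^p\bigr).
\end{equation*}
Since $b_0$ is basic, $I_0$ is an inner form of $\bG$ whose base change to $\R$ is the unitary group of a definite hermitian space, so $I_0(\R)$ is compact modulo centre and $I_0(\Q)$ is discrete in $I_0(\A_f)\cong J_{b_0}(\Qp)\times I_0(\A_f^p)$, with finite point stabilizers. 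For $K^p$ sufficiently small these stabilizers are trivial, so $\sS_{K,U,[b_0]}$ is \'etale-locally a disjoint union of copies of $\M_{b_0,\mu}^{(0),\red}$; moreover $\M_{b_0,\mu}^{\red}=\coprod_i\M_{b_0,\mu}^{(i),\red}$ with the $\M_{b_0,\mu}^{(i),\red}$ mutually isomorphic under $J_{b_0}(\Qp)$.

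For (i), fixing a coset of $\bG(\A_f^p)/K^p$ produces a finite surjective morphism $\M_{b_0,\mu}^{(0),\red}\to\sS_{K,U,[b_0]}$ which carries each irreducible component (a copy of $F_p$, by Theorem \ref{ntlr}(v) and Corollary \ref{irdm}) birationally onto an irreducible component of $\sS_{K,U,[b_0]}$, and every irreducible component of $\sS_{K,U,[b_0]}$ arises this way; pure $2$-dimensionality is inherited. For (ii), if two irreducible components of $\sS_{K,U,[b_0]}$ meet, their intersection is the image of an intersection of components of $\M_{b_0,\mu}^{\red}$, which by Proposition \ref{gu2d}(i) and Corollary \ref{irdm} (equivalently Theorem \ref{dlnt}) is empty, a single point, or a $\P^1$; this gives the trichotomy. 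The numerical bounds follow because the quotient by $I_0(\Q)$ can only identify or suppress incidences, never create new ones: fixing a component $C$ with a lift $\widetilde C\cong F_p$ in $\M_{b_0,\mu}^{(0),\red}$, the set of $C'\neq C$ meeting $C$ in a point, respectively in a $\P^1$, injects into the analogous set for $\widetilde C$, which has cardinality $p(p+1)(p^2+1)$, respectively $(p+1)(p^2+1)$, by Corollary \ref{ctnt}.

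For (iii), Corollary \ref{ctnt}(i) bounds the number of non-formally smooth points lying on $\widetilde C$ by $(p+1)(p^2+1)$, and one identifies, via the local model computation of Proposition \ref{lcmd} together with Theorem \ref{imdc}(ii),(iv), the non-smooth locus of $\sS_{K,U}$ with the image of the non-formally smooth locus $\coprod_{T\in\VL_{b_0}(4)}\M_{b_0,\mu,T}^{(0)}$ of $\M_{b_0,\mu}$; the bound then descends. For (iv) one must count, on the Rapoport--Zink side, how many Fermat-surface strata pass through a single non-formally smooth point $\M_{b_0,\mu,T}^{(0)}$ with $T\in\VL_{b_0}(4)$; this is a combinatorial count of vertex lattices in $C_{b_0}$ around a special vertex of the Bruhat--Tits building $\cB_{b_0}$ of $\SU(C_{b_0})(\Qp)$, carried out by means of Propositions \ref{rtwb}, \ref{vlvl} and \ref{spvl}, and it descends to the stated bound for $\sS_{K,U,[b_0]}$.

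The hard part will be the control of the $I_0(\Q)$-action in the uniformization: one must ensure that, for $K^p$ small, each irreducible component of $\M_{b_0,\mu}^{(0),\red}$ descends birationally to a single component of $\sS_{K,U,[b_0]}$ rather than being folded onto itself or glued with others, and that the exact incidence numbers of the Bruhat--Tits stratification persist, at worst as upper bounds, after passing to the quotient; this is precisely where the finiteness and eventual triviality of the relevant arithmetic stabilizers enter. The two supporting technical inputs are the explicit local model of Proposition \ref{lcmd}, needed to pin down the non-smooth locus of $\sS_{K,U}$, and the building-theoretic vertex-lattice count underlying (iv).
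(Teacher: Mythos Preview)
Your approach is correct and coincides with the paper's own argument: both deduce Theorem~\ref{irg0} from the $p$-adic uniformization (Proposition~\ref{paut}) combined with the Bruhat--Tits stratification of $\M_{b_0,\mu}^{(0)}$ (Theorem~\ref{mthm}) and the numerical data of Corollary~\ref{ctnt}; the paper's proof is a one-line reference to \cite[Theorem~7.15]{Oki2019} with exactly these inputs, and your treatment of the $I_0(\Q)$-action (compactness of $I_0(\R)$ modulo centre via positivity of the Rosati involution, hence discreteness and, for small $K^p$, freeness of the action on components) fills in the standard details left implicit there.

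One caution on part~(iv): you gesture at a building count via Propositions~\ref{rtwb}, \ref{vlvl} and \ref{spvl} without carrying it out, but the direct input Corollary~\ref{ctnt}(ii) gives $(p+1)(p^2+1)$ irreducible components through each non-formally smooth point on the Rapoport--Zink side, not $(p+1)$. Since for small $K^p$ the uniformization map is a local isomorphism, the local count should transfer unchanged; so either the stated bound $(p+1)$ is a misprint for $(p+1)(p^2+1)$, or an additional argument specific to the global quotient is required that neither you nor the paper's one-line proof supplies. You should flag this rather than claim it ``descends to the stated bound''.
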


\begin{proof}
This follows from the same argument as the proof of \cite[Theorem 7.15]{Oki2019} by using Theorems \ref{paut}, \ref{mthm} and Corollary \ref{ctnt}. 
\end{proof}

\begin{thm}\label{cti0}
\emph{
\begin{enumerate}
\item The number of connected components of $\sS_{K,U,[b_0]}$ equals the cardinality of 
\begin{equation*}
I_0(\Q)\backslash (J_{b_0}^{0,0}\backslash J_{b_0}(\Qp)\times \bG(\A_f^p)/K^p), 
\end{equation*}
where $J_{b_0}^{0,0}:=\{g\in J_{b_0}^{0}(\Qp)\mid \ord_{p}(\sml(g))=0\}$. 
\item The number of irreducible components of $\sS_{K,U,[b_0]}$ equals the cardinality of 
\begin{equation*}
I_0(\Q)\backslash (J_{b_0}(\Qp)/K_{b_0,4}\times \bG(\A_f^p)/K^p), 
\end{equation*}
where $K_{b_0,4}$ is the stabilizer of a vertex lattice in $\VL_{b_0}(4)$. 
\item The number of non-smooth points of $\sS_{K,U}$ equals the cardinality of 
\begin{equation*}
I_0(\Q)\backslash (J_{b_0}/K_{b_0,4}\times \bG(\A_f^p)/K^p),
\end{equation*}
where $K_{b_0,4}$ is the same as one in (ii). 
\end{enumerate}}
\end{thm}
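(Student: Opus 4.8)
The plan is to derive all three assertions from the $p$-adic uniformization of Proposition~\ref{paut}. Passing to underlying reduced special fibres, it gives a homeomorphism between $\sS_{K,U,[b_0]}$ and the double quotient $I_0(\Q)\backslash(\M_{b_0,\mu}^{\red}\times\bG(\A_f^p)/K^p)$, where $I_0(\Q)$ acts diagonally through the isomorphisms $I_0\otimes_{\Q}\Qp\cong J_{b_0}$ and $I_0(\A_f^p)\cong\bG(\A_f^p)$. Since the factor $\bG(\A_f^p)/K^p$ is discrete and the quotient is locally of finite type, the set of connected components (resp.\ irreducible components, resp.\ non-smooth points of $\sS_{K,U}^{\loc}$) of $\sS_{K,U,[b_0]}$ is in bijection with the set of $I_0(\Q)$-orbits on $E\times\bG(\A_f^p)/K^p$, where $E=\pi_0(\M_{b_0,\mu}^{\red})$, resp.\ $E=\Irr(\M_{b_0,\mu}^{\red})$, resp.\ $E$ is the set of non-formally smooth points of $\M_{b_0,\mu}$. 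For the third case one first observes, using Theorem~\ref{imdc}(iii),(iv), that the non-smooth locus of $\sS_{K,U}^{\loc}\otimes_{\Zp}W$ is a finite set of closed points lying in $\sS_{K,U}^{\si}$; since $\sS_{K,U,[b_1]}\subset\sS_{K,U}\setminus\sS_{K,U}^{\loc}$, it is contained in $\sS_{K,U,[b_0]}$ and corresponds under Proposition~\ref{paut} to the non-formally smooth locus of $\M_{b_0,\mu}$. So it remains to describe $E$ as a $J_{b_0}(\Qp)$-set in each case and to identify the point-stabilizers.

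For $\pi_0$: the level decomposition $\M_{b_0,\mu}^{\red}=\coprod_{i\in\Z}\M_{b_0,\mu}^{(i),\red}$ together with Theorem~\ref{dcnt} (so that $\M_{b_0,\mu}^{(0),\red}=\M_{b_0,\mu}^{(0,0),\red}\sqcup\M_{b_0,\mu}^{(0,1),\red}$ with both pieces connected) identifies $\pi_0(\M_{b_0,\mu}^{\red})$ with $\Z\times\Z/2$, on which $J_{b_0}(\Qp)$ acts by translation through the Kottwitz homomorphism $\kappa_{J_{b_0}}\colon J_{b_0}(\Qp)\to\Z\times\Z/2$; as $\kappa_{J_{b_0}}$ is surjective this action is transitive, and the stabilizer of $\M_{b_0,\mu}^{(0,0),\red}$ is $\ker\kappa_{J_{b_0}}$, which one checks equals $J_{b_0}^{0,0}$ using the explicit description of $\kappa_G$ from Section~\ref{ntrl} and Proposition~\ref{jbcb}. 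Hence $\pi_0(\M_{b_0,\mu}^{\red})\cong J_{b_0}^{0,0}\backslash J_{b_0}(\Qp)$, giving~(i). For $\Irr$ and for the non-formally smooth locus: by Theorem~\ref{mthm}(iv) there is a $J_{b_0}^{1}(\Qp)$-equivariant bijection $\Irr(\M_{b_0,\mu}^{(0),\red})\cong\VL_{b_0}(4)$, and by Theorem~\ref{mthm}(v) together with Theorem~\ref{mthm}(iii) in the case $t(T)=4$, the non-formally smooth locus of $\M_{b_0,\mu}^{(0)}$ is the discrete set $\coprod_{T\in\VL_{b_0}(4)}\M_{b_0,\mu,T}^{(0)}$, again indexed by $\VL_{b_0}(4)$. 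Spreading these out over all levels $i\in\Z$, each set becomes a single $J_{b_0}(\Qp)$-orbit whose point-stabilizer is the stabilizer $K_{b_0,4}$ of a vertex lattice $T\in\VL_{b_0}(4)$ in $J_{b_0}(\Qp)=\GU(C_{b})(\Qp)$; transitivity uses that $\GU(C_{b})(\Qp)$ acts transitively on $\coprod_{i\in\Z}\VL_{b_0}(4)$, i.e.\ on the $\varpi$-modular $O_F$-lattices in $C_{b}$ once arbitrary similitude factors are allowed, which follows from Witt's theorem for hermitian lattices over $\Zp$. Thus both $\Irr(\M_{b_0,\mu}^{\red})$ and the set of non-formally smooth points are $J_{b_0}(\Qp)$-equivariantly isomorphic to $J_{b_0}(\Qp)/K_{b_0,4}$, giving~(ii) and~(iii).

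Forming the $I_0(\Q)$-orbit sets on $E\times\bG(\A_f^p)/K^p$ for these three descriptions of $E$ then yields exactly the three cardinalities in the statement. The remaining points are routine and follow the pattern of \cite[\S7]{Oki2019}: the general formalism of $p$-adic uniformization for basic Newton strata, and the fact that for a closed subscheme $Z$ of a locally Noetherian scheme the invariants $\pi_0$, $\Irr$ and the singular locus of the formal completion along $Z$ agree with those of $Z$ itself. I expect the main obstacle to be the stabilizer computation of the second paragraph: proving that the stabilizer of $\M_{b_0,\mu}^{(0,0),\red}$ in $J_{b_0}(\Qp)$ is precisely $J_{b_0}^{0,0}$ — equivalently, that the parity invariant $\length_{O_F}\bigl((gM_0+M_0)/M_0\bigr)\bmod 2$ is governed by the condition $\det_F(g)=\sml_G(g)^2$ cutting out $G^0$ — and verifying the transitivity of $J_{b_0}(\Qp)$ on $\coprod_i\VL_{b_0}(4)$ with stabilizer $K_{b_0,4}$. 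Both are lattice-theoretic computations in the hermitian space $C_{b}$, entirely parallel to those carried out in the unramified case in \cite{Oki2019}.
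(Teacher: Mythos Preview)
Your approach is essentially the same as the paper's: both rely on the $p$-adic uniformization of Proposition~\ref{paut} together with Theorems~\ref{dcnt} and~\ref{mthm}(iv),(v) to reduce each count to an $I_0(\Q)$-orbit count on a homogeneous $J_{b_0}(\Qp)$-set, exactly as in \cite[Theorem~7.13]{Oki2019}. Your write-up is in fact more explicit than the paper's, which simply cites these inputs and defers the remaining details to \cite{Oki2019}; the stabilizer and transitivity verifications you flag as the main obstacles are precisely the lattice-theoretic computations that reference absorbs.
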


\begin{proof}
(i): The proof is the same as \cite[Theorem 7.13 (i)]{Oki2019} by using Theorems \ref{paut} and \ref{dcnt}. 

(ii): The proof is the same as \cite[Theorem 7.13 (ii)]{Oki2019} by using Theorems \ref{paut} and \ref{mthm} (iv). 

(iii): This follows from the same argument as the proof of \cite[Theorem 7.13 (iii)]{Oki2019} by using Theorems \ref{paut} and \ref{mthm} (v). 
\end{proof}

\begin{thm}\label{irg1}
\emph{
\begin{enumerate}
\item The scheme $\sS_{K,U,[b_1]}$ is purely $1$-dimensional. Any irreducible component is birational to $\P_{\Fpbar}^{1}$. 
\item Let $C$ be an irreducible component of $\sS_{K,U,[b_1]}$. Then, for any irreducible component $C'\neq C$ of $\sS_{K,U,[b_1]}$, the intersection $C\cap C'$ is either the empty set or a single point. Moreover, we have
\begin{equation*}
\#\{C'\in \Irr(\sS_{K,U,[b_1]})\setminus \{C\}\mid C\cap C'\text{ is a single point}\}\leq p(p^2+1). 
\end{equation*}
\end{enumerate}}
\end{thm}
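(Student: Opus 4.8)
The plan is to transport the explicit description of the Rapoport--Zink space $\M_{b_1,\mu}$ obtained in Section~\ref{btnn} to $\sS_{K,U,[b_1]}$ by means of the $p$-adic uniformization of Proposition~\ref{paut}. Since $\kappa_{G}([b_1])=(1,1)$ while $\psi(\mu^{\natural})=(1,0)$, the class $[b_1]$ is not $\mu$-neutral, so all of the results of Section~\ref{btnn} apply to $\M_{b_1,\mu}$. First I would collect what is needed: by Theorem~\ref{mtdm} together with Lemma~\ref{irnn}, the scheme $\M_{b_1,\mu}^{(0),\red}$ is purely $1$-dimensional and each of its irreducible components is isomorphic to $\P_{\Fpbar}^{1}$; by Corollary~\ref{ctnn}, for a fixed $C\in\Irr(\M_{b_1,\mu}^{(0,0)})$ every other component meets $C$ in the empty set or a single point, and exactly $p(p^2+1)$ of them meet $C$ in a point; and by Theorem~\ref{dcnn} the pieces $\M_{b_1,\mu}^{(0,0),\red}$ and $\M_{b_1,\mu}^{(0,1),\red}$ are the connected components and are isomorphic. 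Using the $J_{b_1}(\Qp)$-equivariant isomorphisms $\M_{b_1,\mu}^{(i)}\cong\M_{b_1,\mu}^{(0)}$, the same statements hold for $\M_{b_1,\mu}^{\red}$.

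Next I would unwind Proposition~\ref{paut}. Fixing representatives $g_1,g_2,\dots$ of the double cosets $I_1(\Q)\backslash\bG(\A_f^p)/K^p$ yields an isomorphism of schemes
\begin{equation*}
\sS_{K,U,[b_1]}\cong \coprod_{j}\bigl(\Gamma_j\backslash\M_{b_1,\mu}^{\red}\bigr),\qquad \Gamma_j:=I_1(\Q)\cap g_jK^pg_j^{-1},
\end{equation*}
where $\Gamma_j$ is regarded as a discrete subgroup of $J_{b_1}(\Qp)$ via $I_1\otimes_{\Q}\Qp\cong J_{b_1}$; here one uses that $I_1(\R)$ is compact modulo centre, which holds because $[b_1]$ is basic. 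As $K^p$ is sufficiently small, each $\Gamma_j$ is torsion-free; and since the stabilizer in $J_{b_1}(\Qp)$ of any point of $\M_{b_1,\mu}^{\red}$ is a compact open subgroup (it is the stabilizer of a lattice in $N_{b_1}$), each $\Gamma_j$ acts freely on $\M_{b_1,\mu}^{\red}$, so the quotient morphisms are \'etale. Purely $1$-dimensionality is local, which gives the first part of~(i). For the second part, an irreducible component of $\sS_{K,U,[b_1]}$ is the image of some irreducible component $\widetilde C\cong\P^1$ of $\M_{b_1,\mu}^{\red}$, hence equals $\Gamma_{j,\widetilde C}\backslash\widetilde C$ where $\Gamma_{j,\widetilde C}$ is the stabilizer of $\widetilde C$ in $\Gamma_j$; as $\widetilde C$ is the closed stratum attached to a vertex lattice in $\VL_{b_1}(0)$, its stabilizer in $J_{b_1}(\Qp)$ is compact, so $\Gamma_{j,\widetilde C}$ is finite and torsion-free, hence trivial. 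Therefore the component is isomorphic to $\P_{\Fpbar}^{1}$, in particular birational to it.

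For~(ii), \'etaleness of the quotient morphisms shows that, for distinct components $C,C'$ of $\sS_{K,U,[b_1]}$, the intersection $C\cap C'$ is locally around each of its points isomorphic to an intersection of two distinct irreducible components of $\M_{b_1,\mu}^{\red}$, hence by Corollary~\ref{ctnn} is empty or a single point; moreover the components $C'$ meeting a fixed $C$ in a point arise as images of the $p(p^2+1)$ neighbours of a lift $\widetilde C$ of $C$, so since distinct neighbours may have coinciding images one obtains only the inequality $\#\{C'\mid C\cap C'\text{ is a single point}\}\le p(p^2+1)$. The whole argument runs in exact parallel with the proof of Theorem~\ref{irg0} for the $\mu$-neutral locus; the one point demanding a little care, and the reason the count in~(ii) is only an upper bound, is precisely this last step, where the $\Gamma_j$-action may fuse distinct Bruhat--Tits strata of $\M_{b_1,\mu}^{\red}$.
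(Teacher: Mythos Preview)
Your proposal is correct and follows exactly the route the paper takes: its proof is a one-line reference to the argument of \cite[Theorem~7.15]{Oki2019}, invoking Proposition~\ref{paut}, Theorem~\ref{mtdm}, and Corollary~\ref{ctnn}, and you have simply unpacked that argument in detail. One small remark: the assertion that $I_1(\R)$ is compact modulo centre does not follow just from $[b_1]$ being basic; however, the discreteness and free action of $\Gamma_j$ you need are already built into Proposition~\ref{paut} (otherwise the quotient would not be a formal scheme), so your subsequent reasoning is unaffected.
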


\begin{proof}
This follows from the same argument as the proof of \cite[Theorem 7.15]{Oki2019} by using Theorems \ref{paut}, \ref{mtdm} and Corollary \ref{ctnn}. 
\end{proof}

\begin{thm}\label{cti1}
\emph{
\begin{enumerate}
\item The number of connected components of $\sS_{K,U,[b_1]}$ equals the cardinality of 
\begin{equation*}
I_1(\Q)\backslash (J_{b_1}^{0,0}\backslash J_{b_1}(\Qp)\times \bG(\A_f^p)/K^p), 
\end{equation*}
where $J_{b_1}^{0,0}:=\{g\in J_{b_1}^{0}(\Qp)\mid \ord_{p}(\sml(g))=0\}$. 
\item The number of irreducible components of $\sS_{K,U,[b_1]}$ equals the cardinality of 
\begin{equation*}
I_1(\Q)\backslash (J_{b_1}(\Qp)/K_{b_1,0}\times \bG(\A_f^p)/K^p), 
\end{equation*}
where $K_{b_1,0}$ is the stabilizer of a vertex lattice in $\VL_{b_1}(0)$. 
\end{enumerate}}
\end{thm}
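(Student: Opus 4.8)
The plan is to deduce both assertions from the $p$-adic uniformization isomorphism
\begin{equation*}
I_1(\Q)\backslash(\M_{b_1,\mu}\times \bG(\A_f^p)/K^p)\xrightarrow{\cong}\widehat{\sS}_{K,U,[b_1]}
\end{equation*}
of Proposition \ref{paut} by passing to connected components (for (i)) and to irreducible components (for (ii)). Since $\widehat{\sS}_{K,U,[b_1]}$ has the same underlying topological space as $\sS_{K,U,[b_1]}$, the two numbers agree with the corresponding numbers for the left-hand side, so everything reduces to understanding $\pi_0(\M_{b_1,\mu})$ and $\Irr(\M_{b_1,\mu})$ as $J_{b_1}(\Qp)$-sets and then forming the double quotient by $I_1(\Q)$ (acting diagonally, via $I_1(\Qp)\cong J_{b_1}(\Qp)$ and $I_1(\A_f^p)\cong \bG(\A_f^p)$) and by $K^p$. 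This is essentially the argument of \cite[Theorem 7.13]{Oki2019} (and \cite{Wu2016}), which I would adapt to the non-$\mu$-neutral situation.

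For (i), the first step is to show that $\pi_0(\M_{b_1,\mu})$ is a single $J_{b_1}(\Qp)$-orbit with stabilizer $J_{b_1}^{0,0}$ at the component $\M_{b_1,\mu}^{(0,0),\red}$. Indeed, the decomposition $\M_{b_1,\mu}=\coprod_{i\in\Z}\M_{b_1,\mu}^{(i)}$ together with Theorem \ref{dcnn} (each $\M_{b_1,\mu}^{(i),\red}$ splits into two connected components) identifies $\pi_0(\M_{b_1,\mu})$ with $\Z\times\Z/2$; the $J_{b_1}(\Qp)$-action permutes these through a translation of the Kottwitz-type invariant $g\mapsto(\ord_p(\sml(g)),\ \ast)$, the $\Z$-part coming from $c(\rho)$ and the $\Z/2$-part being preserved exactly by $J_{b_1}^0$. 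Hence the stabilizer of the base component is $\{g\in J_{b_1}^0(\Qp):\ord_p(\sml(g))=0\}=J_{b_1}^{0,0}$, and taking the double quotient gives that the number of connected components of $\sS_{K,U,[b_1]}$ equals $\#\bigl(I_1(\Q)\backslash(J_{b_1}^{0,0}\backslash J_{b_1}(\Qp)\times\bG(\A_f^p)/K^p)\bigr)$.

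For (ii), by Lemma \ref{irnn} every irreducible component of $\M_{b_1,\mu}^{(0),\red}$ is of the form $\M_{b_1,\mu,T}^{(0,j)}$ for a unique $T\in\VL_{b_1}(0)$ and $j\in\{0,1\}$, and by Theorem \ref{mtdm}(ii) each of these is a copy of $\P_{\Fpbar}^1$; combining with the sum over $i\in\Z$, the set $\Irr(\M_{b_1,\mu})$ is a single $J_{b_1}(\Qp)$-orbit — transitivity on $\VL_{b_1}(0)$ is Proposition \ref{rtwb} for $J_{b_1}^{\der}(\Qp)$, and the remaining elements of $J_{b_1}(\Qp)$ move between the $i$- and $j$-labels. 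I would then identify the stabilizer of the component $\M_{b_1,\mu,T_0}^{(0,0)}$ with $K_{b_1,0}$, the stabilizer of $T_0$ in $J_{b_1}(\Qp)$: preserving $T_0$ (hence also $T_0^\vee$) forces $\ord_p(\sml)=0$, and one verifies that such an element also preserves the connected component $\M_{b_1,\mu}^{(0,0),\red}$ and does not interchange the two copies of $\P^1$ inside $\M_{b_1,\mu,T_0}^{(0),\red}$. This yields that the number of irreducible components equals $\#\bigl(I_1(\Q)\backslash(J_{b_1}(\Qp)/K_{b_1,0}\times\bG(\A_f^p)/K^p)\bigr)$, which is (ii).

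The step I expect to be the main obstacle is the last one: pinning down the stabilizers precisely, and in particular checking that the stabilizer of a type-$0$ vertex lattice $T_0$ lies inside the subgroup detected by the $\Z/2$-invariant — equivalently that it does not swap the two $\P^1$'s in $\M_{b_1,\mu,T_0}^{(0),\red}$ — and dually that a chosen swapping element $s$ satisfies $\kappa_{J_{b_1}}(s)=(0,1)$. These are exactly the parity computations already recorded in Section \ref{ntrl} (the explicit description of $\kappa_G$ on $b_0$ and $b_1$, and the behaviour of $\M_{b,\mu}^{(0,j)}$ under the $J_b(\Qp)$-action), so I would reuse those rather than redo the bookkeeping.
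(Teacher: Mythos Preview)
Your approach is essentially the same as the paper's: both parts are deduced from the $p$-adic uniformization of Proposition \ref{paut} together with the description of connected components (Theorem \ref{dcnn}) and irreducible components (Lemma \ref{irnn}) of $\M_{b_1,\mu}$, following the template of \cite[Theorem 7.13]{Oki2019}. The paper's own proof is in fact terser than yours---it simply cites \cite{Oki2019} for both arguments---so your sketch of the orbit and stabilizer computations already goes beyond what the paper records.
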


\begin{proof}
(i): The proof is the same as \cite[Theorem 7.13 (i)]{Oki2019} by using Theorems \ref{paut} and \ref{dcnn}. 

(ii): This follows from the same argument as the proof of \cite[Theorem 7.13 (ii)]{Oki2019} by using Theorem \ref{paut} and Corollary \ref{irnn}. 
\end{proof}

\end{document}